\documentclass[a4paper,twoside,10pt]{article}

\usepackage[a4paper,left=3cm,right=3cm, top=3cm, bottom=3cm]{geometry}

\usepackage{amsmath}
\usepackage{mathtools}
\usepackage{mathrsfs}
\usepackage{stmaryrd}
\usepackage{amssymb}
\usepackage{bm}
\usepackage{hyperref}
\usepackage{color}

\usepackage{amsmath}
\usepackage{amsthm}
\usepackage{amssymb}
\usepackage{stmaryrd}
\usepackage{float}
\usepackage{bigints}
\usepackage{cite}
\usepackage{color}
\usepackage[abs]{overpic}
\usepackage[font=footnotesize,labelfont=bf]{caption}
\usepackage{cases}
\usepackage{tikz}
\usepackage{algorithmic}
\usepackage{rotating}
\usepackage{blkarray}
\usetikzlibrary{matrix,calc,arrows}
\usepackage[author={Lorenzo}]{pdfcomment}
\usepackage{soul,xcolor}
\usepackage{enumitem}  
\usepackage{verbatim}
\usepackage{graphicx}
\usepackage{subcaption}
\usepackage{mwe}
\usepackage{algorithm}

\restylefloat{table}
\theoremstyle{plain}
\newtheorem{thm}{Theorem}[section]
\newtheorem{cor}[thm]{Corollary}
\newtheorem{lem}[thm]{Lemma}

\theoremstyle{definition}

\theoremstyle{remark}
\newtheorem{remark}{Remark}

\begin{document}


%
%

\title{Interpolation and stability estimates for edge and face   virtual elements of general order}

\author{L. Beir\~{a}o  da Veiga\thanks{Dipartimento di Matematica e Applicazioni, Universit\`{a} degli Studi di Milano-Bicocca,
20125, Milano, Italy, (lourenco.beirao@unimib.it, lorenzo.mascotto@unimib.it)}
\thanks{IMATI-CNR, 27100, Pavia, Italy},
L. Mascotto\footnotemark[1]~\footnotemark[2]~\thanks{Faculty of Mathematics, University of  Vienna, Oskar Morgenstern Platz 1090 Vienna,  Austria, (lorenzo.mascotto@univie.ac.at)},
J. Meng\thanks{School of Mathematics and Statistics,Xi'an Jiaotong University,
710049, Shaanxi, P.R. China, (mengjian0710@stu.xjtu.edu.cn)}}
\maketitle


\begin{abstract}
\noindent We develop interpolation error estimates for general order standard and serendipity edge and face  virtual elements in two and three dimensions.
Contextually, we investigate the stability properties of the associated~$L^{2}$ discrete bilinear forms.
These results are fundamental tools in the analysis of general order virtual elements, e.g., for electromagnetic problems.
\end{abstract}

\noindent\textbf{Keywords:} edge and face virtual element spaces;  serendipity spaces; polytopal meshes; interpolation properties;  stability analysis.

\noindent\textbf{AMS classification:} 65N12; 65N15.

\section{Introduction}
\label{intro}
The virtual element method (VEM)\cite{beiraobrezzimariniru2013basic} can be interpreted as an extension of the finite element method (FEM) to polytopal meshes.
Trial and test spaces typically contain a polynomial subspace \emph{plus} other nonpolynomial functions that are never computed explicitly.
Rather, these functions are evaluated via cleverly chosen degrees of freedom (DoFs) and allow for the design of (nodal, edge, face \dots) conforming global spaces.
Such DoFs can be used to compute certain polynomial projections and stabilizations:
the former are needed for the polynomial consistency of the scheme; the latter for its well-posedness.

A preliminary version of $\textbf{H}(\textrm{div})$ virtual elements was first introduced for 2D problems in Ref.~\cite{brezzirichardmarini2014}
as the extension of Raviart-Thomas or Brezzi-Douglas-Marini elements to polygonal meshes.
In order to cope with a sufficiently wide range of problems in mixed form and electromagnetic problems, see for instance Refs. \cite{boffibookmixed,monkmaxwell2003}, in Ref.~\cite{beiraobrezzimariniru2015} the authors developed
several variants of~$\textbf{H}(\textrm{div})$ and~$\textbf{H}(\textbf{curl})$ VE spaces in two and three dimensions.
Furthermore,  serendipity edge and face virtual element spaces were first considered in Ref.~\cite{bebremaru2016RLMASerendipity}; serendipity spaces allow for a reduction of the number of internal DoFs without affecting the convergence and stability properties of the VEM.
This fact has a paramount impact on the performance of the method in the  three dimensional case, notably in the reduction of the face DoFs,
as bulk DoFs in 3D can be removed by static condensation.
Although the spaces introduced in Ref.~\cite{bebremaru2016RLMASerendipity} are more efficient than those in Ref.~\cite{beiraobrezzimariniru2015},
they have the important drawback of missing the full discrete De-Rham diagram, only recovering part of it.
This shortcoming was finally handled in a series of paper, which represent the current ``state of the art'' of VEM De Rham complexes, dealing with the general order 2D case\cite{lourencobrezzidassi2017cmame},
the lowest order 3D case\cite{beiraobrezzidasimaru2018cmame}, and the 3D general order case\cite{beiraobrezzidasimaru2018siam}.
All these papers also treat the magnetostatic equations as a simple model problem; more involved problems can be found, e.g., in  
Refs.~\cite{BeiraodaVeiga-Dassi-Manzini-Mascotto:2021,Cao-Chen-Guo:2021}. The lowest order case\cite{beiraobrezzidasimaru2018cmame} was published independently of the general order case\cite{beiraobrezzidasimaru2018siam}
not only with the aim of reaching different communities, but also because the former case allows for a simpler definition of the VE spaces.

Compared to its nodal counterpart \cite{lalorusso2016,brennerguansung2017,brennerlisung2018m3as,caochenlongsiam2018,chenhuang2018calcolo,ganagianigeorgosuntton2017,morariverarodstklov20151},
the interpolation and stability theory for edge and face virtual elements is still rather limited.
In  Ref.~\cite{beiveigalomorariro2017acoustic}, interpolation estimates for  $\textbf{H}(\textrm{div})$ virtual element spaces in 2D were proved, while $\textbf{H}(\textbf{curl}^{2})$ virtual element spaces in 2D were tackled in Ref.~\cite{zhaom3as2021}.
The extension to two-dimensional face virtual elements with curved edges, including interpolation properties, was considered  in  Ref.~\cite{francofulosciscova2021CMAME}.
Most importantly, both interpolation error estimates and stability properties for the lowest order edge and face virtual element spaces of  Ref.~\cite{beiraobrezzidasimaru2018cmame} were derived in Ref.~\cite{beiraolourenco2020} in two and three dimensions.

The aim of this paper is to prove interpolation estimates and stability properties for general order standard and serendipity edge and face virtual element spaces in 2D and 3D\cite{lourencobrezzidassi2017cmame,beiraobrezzidasimaru2018siam,beiraobrezzimariniru2015,bebremaru2016RLMASerendipity}.
Amongst the several variants of \textbf{H}(\textrm{div}) and \textbf{H}(\textbf{curl}) spaces,
we focus on those in Ref.~\cite{beiraobrezzidasimaru2018siam}.
The ideas outlined in the paper can be extended to other settings as well.

Compared with the proofs for the lowest order spaces\cite{beiraolourenco2020}, the general order case hides many additional difficulties of technical nature.
For instance, many more DoFs types (moments of various kinds on edges, faces, volumes) appear
and serendipity spaces are employed.
Indeed, while in the lowest order spaces the serendipity construction can be avoided by a simpler, yet equivalent, definition,
it is in the general order case that the peculiar definition of serendipity VE spaces appears in its full complexity.
To the authors knowledge, this is the first contribution
where the interpolation and stability analysis of serendipity VE spaces (of any kind) is tackled.
Although many relevant ideas are contained in the proofs of the ``lesser'' lemmas, we give here a short guideline of our main results:
\begin{itemize}
\item Theorems~\ref{thm31edge2d} and~\ref{theorem31} contain interpolation estimates for 2D standard and serendipity edge elements, respectively;
\item Theorems~\ref{theorem41original} and~\ref{theorem41} quickly extend the above results to 2D standard and serendipity face elements, respectively;
\item Theorems~\ref{theorem3dedgeinter} and~\ref{theorem433dedges} contain interpolation estimates for 3D standard and serendipity edge elements, respectively;
\item Theorem~\ref{theorem3dfaceinter} contains interpolation estimates for 3D standard face elements;
\item Theorems~\ref{stab2dedge} and~\ref{stab2dedges} contain the stability estimates for 2D standard and serendipity edge spaces, respectively;
\item Remark~\ref{remark:stability-2D-face} extends the stability estimates to 2D standard and serendipity face spaces;
\item Theorem~\ref{thm3dstaedge} and Remark~\ref{remark:stability-3D-face} contain the stability estimates for 3D standard and serendipity edge spaces, respectively;
\item Theorem~\ref{them3dface} contains the stability estimates for 3D standard face spaces.
\end{itemize}

The remainder of the  paper is organized as follows:
in Section~\ref{sec2}, we introduce the necessary functional spaces and mesh assumptions, and recall some technical results needed for the error estimates;
in Sections~\ref{sec3} and~\ref{sec4}, we prove the interpolation error estimates for edge and face virtual element spaces in 2D and 3D, respectively;
in Section~\ref{sec5}, we define several stabilizations for edge and face virtual element spaces,
and prove their stability properties.

 \section{{Preliminaries}} \label{sec2}
The outline of this section is as follows:
in Section~\ref{sec20}, we introduce the functional space setting;
in Section~\ref{sec21}, we detail the assumptions on the regularity of   the mesh decompositions;
in Sections~\ref{subsec23}, \ref{subsec24}, and~\ref{subsec25}, we state some technical results,
namely polynomial inverse inequalities and decompositions, Sobolev trace inequalities, and Poincar\'{e} and Friedrichs inequalities, respectively.

\subsection{{Sobolev spaces}} \label{sec20}
Throughout the paper, given $m, p\in \mathbb{N}_{0}$ and a bounded Lipschitz domain~$D\subseteq \mathbb{R}^{d}$ ($d=1,2,3$) with boundary~$\partial D$,
we shall use standard notations\cite{brennerscott2008} for the scalar Sobolev space $W^{m,p}(D)$ equipped with the  norm  $\|\cdot\|_{W^{m,p}(D)}$ and the seminorm $|\cdot|_{W^{m,p}(D)}$.
If~$p=2$, we denote~$W^{m,2}(D)$ by~$H^{m}(D)$ equipped with the  norm  $\|\cdot\|_{m,D}$, the seminorm  $|\cdot|_{m,D}$, and the inner product $(\cdot,\cdot)_{D}$.
We set $H^{0}(D)=L^{2}(D)$; in the corresponding norm, we omit the subscript~$0$.
Let $H^{-m}(D)$ be  the dual space of $H^{m}(D)$ equipped with the negative norm $\|\cdot\|_{-m,D}$.
For~$k\in \mathbb{N}_{0}$, $\mathbb{P}_{k}(D)$ denotes the space of polynomials of degree at most~$k$ on~$D$ and~$\pi_{k,d}$ its dimension.
We set~$\mathbb{P}_{-\ell}(D)=\{0\}$ for all $\ell \in \mathbb{N}$.
Moreover, $\mathbb{P}_{k}^{0}(D)$ denotes the subspace of~$\mathbb{P}_{k}(D)$ of functions with zero average on either~$\partial D$ or~$D$.
We shall use the  boldface to denote vector variables and spaces;
for example, $\textbf{v}$, $\textbf{H}^{m}(D)$, and~$\textbf{L}^{2}(D)$
denote the vector version of a function~$v$, a Sobolev space, and a Lebesgue space.

With an abuse of notation, we denote local sets of coordinates
in two and three dimensions by~$[x_1,x_2]$ and~$[x_1,x_2,x_3]$, respectively.
Given a  function $\phi:F\subseteq\mathbb{R}^{2}\rightarrow \mathbb{R}$ and a field $ \textbf{v}=[v_{1},v_{2}]^{T}: F\subseteq\mathbb{R}^{2}\rightarrow \mathbb{R}^{2}$,  we define the operators
 \begin{equation*}
 \begin{aligned}
 \label{definition1}
&\bm{\nabla}_{F} \phi=\left[\frac{\partial \phi}{\partial x_{1}}, \frac{\partial  \phi}{\partial x_{2}}\right]^{T}, \hspace{0.25cm} \textbf{curl}_{F} \hspace{0.05cm}\phi=\left[\frac{\partial \phi}{\partial x_{2}}, -\frac{\partial  \phi}{\partial x_{1}}\right]^{T}, \hspace{0.25cm}
  \Delta_F \phi = \frac{\partial^2 \phi}{\partial^2 x_{1}} + \frac{\partial^2 \phi}{\partial^2 x_{2}}, \\
&\textrm{rot}_{F} \hspace{0.05cm}  \textbf{v}= \frac{\partial v_{2}}{\partial x_{1}}- \frac{\partial v_{1}}{\partial x_{2}}, \hspace{0.4cm} \textrm{div}_{F}\hspace{0.05cm}\textbf{v}= \frac{\partial v_{1}}{\partial x_{1}}+ \frac{\partial v_{2}}{\partial x_{2}}.
\end{aligned}
 \end{equation*}
In three dimensions, given a function $\phi: \mathbb{R}^{3}\rightarrow \mathbb{R}$ and a field $\textbf{v}=[v_{1},v_{2},v_{3}]^{T}:\mathbb{R}^{3}\rightarrow \mathbb{R}^{3}$, we define
\begin{equation*}
\begin{aligned} \label{definition13d}
&\bm{\nabla}  \phi=\left[\frac{\partial \phi}{\partial x_{1}}, \frac{\partial  \phi}{\partial x_{2}}, \frac{\partial  \phi}{\partial x_{3}}\right]^{T},
\hspace{0.4cm} \Delta \phi = \frac{\partial^2 \phi}{\partial^2 x_{1}} + \frac{\partial^2 \phi}{\partial^2 x_{2}} + \frac{\partial^2 \phi}{\partial^2 x_{3}},
 \\
 &\textbf{curl} \hspace{0.05cm}\textbf{v}=\left[\frac{\partial v_{3}}{\partial x_{2}}-\frac{\partial v_{2}}{\partial x_{3}}, \frac{\partial v_{1}}{\partial x_{3}}-\frac{\partial v_{3}}{\partial x_{1}}, \frac{\partial v_{2}}{\partial x_{1}}-\frac{\partial v_{1}}{\partial x_{2}}  \right]^{T}, \hspace{0.25cm}\textrm{div} \hspace{0.05cm}\textbf{v}= \frac{\partial v_{1}}{\partial x_{1}}+ \frac{\partial v_{2}}{\partial x_{2}}+ \frac{\partial v_{3}}{\partial x_{3}}.
\end{aligned}
\end{equation*}
Next,  given a polygon $F$ and a polyhedron $E$,
we denote the usual~$\textrm{div}$, $\textrm{rot}$, and~$\textbf{curl}$ spaces
by~$\mathbf{H}(\textrm{div}_{F}, F)$, $\mathbf{H}(\textrm{rot}_{F}, F)$, $\mathbf{H}(\textrm{div}, E)$, and~$\mathbf{H}(\textbf{curl}, E)$.

\subsection{Mesh regularity assumptions}  \label{sec21}
Let~$\mathcal{T}_{h}$ be a sequence of decompositions of a given polyhedral domain $\Omega\subseteq \mathbb{R}^{2}$ or~$\mathbb R^3$ into nonoverlapping polygonal/polyhedral elements~$E$.
For each~$E$, we denote its two-dimensional boundary by~$\partial E$
and the one-dimensional boundary of each face~$F$ in $\partial E$ by~$\partial F$.
For any geometric object~$D$ of dimension~$d$ ($d=1,2,3$),
i.e., an edge~$e$, a face~$F$, or an element~$E$, we denote its barycenter, its measure (length, area, or volume, respectively), and its diameter by~$\textbf{b}_{D}$,  $|D|$, and~$h_{D}$, respectively.
We denote the unit outer normal to the boundary $\partial E$ by $\textbf{n}_{\partial E}$ and  the restriction to the face $F$ of $\textbf{n}_{\partial E}$ by $\textbf{n}_{F}$.
For each face $F$, we also denote the unit outer normal to $\partial F$ in the plane containing $F$  by $\textbf{n}_{\partial F}$  and   the restriction to the edge $e$ of $\textbf{n}_{\partial F}$ in the plane containing $F$   by $\textbf{n}_{e}$.
Further, the unit   tangential vector $\textbf{t}_{e}$  along  the edge $e$ is defined as the vector pointed in counter-clockwise sense of $\textbf{n}_{e}$ (for example, $\textbf{t}_{e}=(-n_2,n_1)$ if $\textbf{n}_{e}=(n_1,n_2)$ in two dimensions), and  $\textbf{t}_{\partial F}$ is locally defined by  $\textbf{t}_{\partial F}|_{e}:=\mathbf{t}_{e}$.

Henceforth, we demand the following mesh regularity assumption:
\begin{itemize}
\item[(\textbf{M})] For~$d=2$, there exists a uniform constant~$\rho >0$ such that, for every polygon~$F$,
\begin{itemize}
\item[(i)] $F$ is star-shaped with respect to a disk of radius $\geq\rho h_{F}$;
\item[(ii)] every edge $e$ of~$\partial F$ satisfies $h_{e}\geq \rho h_{F}$.
\end{itemize}
For~$d=3$, there exists a uniform constant~$\rho >0$ such that, for every element~$E$,
 \begin{itemize}
\item[(i)] $E$ is star-shaped with respect to a ball of radius $\geq\rho h_{E}$;
\item[(ii)] every face $F$ of~$\partial E$ is star-shaped
with respect to  a disk with  radius $\geq\rho h_{F}$;
\item[(iii)] for every face~$F$ of~$\partial E$, every edge $e$ of~$\partial F$ satisfies $h_{e}\geq \rho h_{F}\geq \rho^{2} h_{E}$.
\end{itemize}
\end{itemize}
In certain cases that will be indicated explicitly,
we shall also require the following uniform convexity condition:
\begin{itemize}
\item[(\textbf{MC})] in two dimensions, every polygonal element $F$ is convex and there exists a constant $\varepsilon>0$ such that each internal angle $\theta$ of element $F$ satisfies $\varepsilon\leq \theta\leq \pi-\varepsilon$;
in the three dimensional case, each face $F$ of the mesh satisfies such condition.
\end{itemize}

\begin{remark}
\label{rem21}
An immediate consequence of the above mesh regularity assumptions is that each three-dimensional element~$E$
or each two-dimensional face~$F$
are uniformly Lipschitz domains that
admit a shape-regular tessellation~$\widetilde{\mathcal{T}}_{h}$ into simplices,
i.e., a partition of~$E$ into tetrahedra or~$F$ into triangles.
Such a decomposition is obtained by connecting each edge/face
(in two and three dimensions, respectively)
with the center of the ball in assumption (\textbf{M}).
\end{remark}

\noindent In what follows, given two positive quantities~$a$ and~$b$,
we use the short-hand notation ``$a\lesssim b$'' if there exists a positive constant~$c$ independent of the discretization parameters
such that ``$a\le c \ b$''.
Moreover, we write ``$a\approx b$'' if and only
if ``$a\lesssim b$'' and ``$b\lesssim a$''.
When keeping track of the constant is necessary, we shall use explicit generic constants 
$C$, $C'$, $C_1,\cdots$  that are independent of the mesh and may vary at different occurrences.
Furthermore, $D$ will denote a generic polytopal domain
(polygon in~$\mathbb R^2$ or polyhedron in~$\mathbb R^3$)
representing either an element or a face of the mesh, thus satisfying the above assumptions (\textbf{M}).

Throughout, the explanation of the identities
and upper and lower bounds will appear
either in the preceding text or as an equation reference above the equality symbol ``$=$''
or the inequality symbols ``$\le$'', ``$\ge$'' etc, whichever we believe it is easier for the reader.

\subsection{Polynomial properties} \label{subsec23}
The following polynomial inverse estimates in a polytopal domain $D \subset \mathbb{R}^{d}$ ($d=2,3$) are valid:
for all~$p_k \in \mathbb P_k(D)$,
\begin{equation} \label{Polynomialinverseestimates}
 \|p_{k}\|_{\partial D} \lesssim h_{D}^{-\frac12}\|{p_{k}}\|_{D} ,
 \qquad
 |p_{k}|_{1, D} \lesssim h_{D}^{-1}\|{p_{k}}\|_{D} ,
 \qquad
\|p_{k}\|_{D}   \lesssim h_{D}^{-1} \| p_{k} \|_{-1,D}.
\end{equation}
Furthermore, for each piecewise polynomial~$p_k$ of degree at most~$k$ over~$\partial D$, we have
\begin{equation}
\label{Polynomialinverseestimates1}
\|p_{k}\|_{\partial D} \lesssim  h_{D}^{-\frac12}\|{p_{k}}\|_{-\frac12,\partial D},
\end{equation}
where $\| \cdot \|_{-\frac12,\partial D}$ denotes the scaled $H^{-\frac12}(\partial D)$ dual norm
\[
\vert \cdot \vert_{-\frac12,\partial D}
:= \sup_{\varphi \in H^{\frac12}(\partial D)}
   \frac{(\cdot,\varphi)_{\partial D}}
   {\vert \varphi \vert_{\frac12,\partial D}+
   h_D^{\frac12} \Vert \varphi \Vert_{\partial D}}.
\]
The proof of the above inverse estimates hinges upon the existence of a shape-regular simplicial tessellation, see Remark~\ref{rem21},
and standard polynomial inverse estimates on simplices as in Section~$3.6$ of Ref.~\cite{verfurth2013}.

Let~$b_{D}$ be the cubic ($d=2$) or quartic ($d=3$) piecewise bubble function associated with the shape-regular tessellation of the element~$D$,
see Remark~\ref{rem21},
with unitary $L^\infty$ norm.
The following result which establishes standard estimate  for bubble functions will be useful:
\begin{align}
\label{bubblefuncionproer}
\|p_{k}\|_{D}^{2} 
\lesssim \int_{D} b_{D} p_{k}^{2}
\lesssim \|p_{k}\|_{D}^{2}\hspace{0.3cm} \forall p_{k}\in \mathbb{P}_{k}(D).
\end{align}
A proof of this result is obtained by using Theorem 3.3 in Ref.~\cite{apostfemcmame1997} and standard manipulations.

Moreover, the following decompositions of polynomial vector spaces  are valid; see, e.g.,  Refs.~\cite{lourencobrezzidassi2017cmame,beiraobrezzimariniru2015}.    {Given a polygon $F$}, we have
\begin{align}
\label{decompo1}
(\mathbb{P}_{k}(F))^{2}=\textbf{curl}_{F}\hspace{0.05cm} \mathbb{P}_{k+1}(F)\oplus \textbf{x}\mathbb{P}_{k-1}(F),
\end{align}
which  implies  that  $ \textrm{div}_{F}$ is  an  isomorphism  between  $\left\{\textbf{x}\mathbb{P}_{k}(F)\right\}$   and $\mathbb{P}_{k}(F)$.
Moreover,
\begin{align} \label{decompo2}
(\mathbb{P}_{k}(F))^{2}=\bm{\nabla}_{F}\mathbb{P}_{k+1}(F)\oplus \textbf{x}^{\perp}\mathbb{P}_{k-1}(F),
\end{align}
which implies that $\textrm{rot}_{F}$ is  an  isomorphism  between  $\left\{\textbf{x}^{\perp}\mathbb{P}_{k}(F)\right\}$  and $\mathbb{P}_{k}(F)$.

Given a polyhedron $E$, we have
\begin{align} \label{decompo13d}
(\mathbb{P}_{k}(E))^{3}=\textbf{curl}\hspace{0.05cm} (\mathbb{P}_{k+1}(E))^{3}\oplus \textbf{x}\mathbb{P}_{k-1}(E),
\end{align}
 which  implies  that   $\textrm{div}$  is  an  isomorphism  between  $\left\{\textbf{x}\mathbb{P}_{k}(E)\right\}$   and $\mathbb{P}_{k}(E)$.
Furthermore,
 \begin{align}
\label{decompo23d}
(\mathbb{P}_{k}(E))^{3}=\bm{\nabla}\mathbb{P}_{k+1}(E)\oplus \textbf{x}\wedge (\mathbb{P}_{k-1}(E))^{3},
\end{align}
which implies that for each $ \mathbf{p}_{k}\in (\mathbb{P}_{k}(E))^{3}$ with $\textrm{div}\hspace{0.05cm} \mathbf{p}_{k}=0$,
there exists $\mathbf{q}_{k}\in (\mathbb{P}_{k}(E))^{3}$
such that $\textbf{curl}(\textbf{x} \wedge \mathbf{q}_{k})= \mathbf{p}_{k}$.

 \subsection{Trace inequalities} \label{subsec24}
The following trace inequalities are valid; see, e.g., in~\cite[Theorem~A.$20$]{Schwab1998}:
given a polytopal domain~$D$,
representing either an element or a face of the mesh, there hold
\begin{align} \label{Traceinequality3_1}
\|v\|_{\partial D}&\lesssim h_{D}^{-\frac12}\|v\|_{D}+ h_{D}^{ \delta-\frac12}|v|_{ \delta, D} \qquad \forall v\in H^{ \delta}(D),  \frac12 < \delta < \frac32 \\
\label{Traceinequality3_1_1}
|v|_{\varepsilon,\partial D} &\lesssim h_{D}^{-(\varepsilon+\frac12)}\|v\|_{D}+  |v|_{\varepsilon+\frac12, D} \qquad  \forall v\in H^{\varepsilon+\frac12}(D), 0<\varepsilon < 1.
\end{align}
If additionally $1\slash2 <\delta\leq 1$ and $v$ has zero average on either~$\partial D$ or~$D$, then we have
\begin{align} \label{Traceinequality4}
 \|v\|_{\partial D}\lesssim h_{D}^{ \delta-\frac12}|v|_{ \delta, D}.
\end{align}
For functions~$v$ with zero average on either~$\partial D$ or~$D$,
we also recall the multiplicative trace inequality
\begin{align}  \label{Traceinequality4last}
 \|v\|_{\partial D}\lesssim \|v\|^{\frac12}_{D} |v|^{\frac12}_{1, D}.
\end{align}
Let $F$ be a polygon and $E$ be a polyhedron, respectively, 
representing either a face $F$ or an element $E$ of the mesh, thus satisfying the above assumptions (\textbf{M}).
For $\textbf{w}\in \textbf{H}(\textrm{div}_{F},F)$,
$\textbf{v}\in \textbf{H}(\textrm{rot}_{F},F)$,
$\bm{\phi}\in \textbf{H}(\textrm{div},E)$,
$\bm{\psi}\in \textbf{H}(\textbf{curl},E)$,
and $\bm{\chi}\in\textbf{H}(\textrm{div},E)\cap \textbf{H}(\textbf{curl},E)$,
the following trace inequalities are valid;
the following trace inequalities are valid; see, e.g.,
Theorems~$3.29$ and~$3.24$ in~\cite{co1990,monkmaxwell2003},
and page~$367$ in~\cite{co1990}:
\begin{align}  \label{Traceinequality2}
\|\textbf{v}\cdot \textbf{t}_{\partial F}\|_{-\frac12,\partial F} &\lesssim \|\textbf{v}\|_{F}+ h_{F}\|\textrm{rot}_{F}\hspace{0.05cm}\textbf{w}\|_{F},\\
\label{Traceinequality3}
\|\bm{\phi}\cdot \textbf{n}_{\partial E}\|_{-\frac12,\partial E}& \lesssim \|\bm{\phi}\|_{E} + h_{E} \|\textrm{div}\hspace{0.05cm}\bm{\phi}\|_{E},\\
\label{Traceinequality4_1}
\|\bm{\psi}\wedge \textbf{n}_{\partial E}\|_{-\frac12,\partial E} &\lesssim \|\bm{\psi}\|_{E}+ h_{E} \|\textbf{curl}\hspace{0.05cm}\bm{\psi}\|_{E},\\
\label{Traceinequality5}
\|\bm{\chi} \!\wedge\! \textbf{n}_{\partial E}\|_{\partial E} 
\!\lesssim\! h_{E}^{-\frac12} \|\bm{\chi}\|_{E}&
\!+\! h^{\frac12}_{E}\|\textrm{div}\hspace{0.05cm}\bm{\chi}\|_{E}
\!+\! h_{E}^{\frac12}\|\textbf{curl}\hspace{0.05cm}\bm{\chi}\|_{E}
\!+\! \|\bm{\chi} \!\cdot\! \textbf{n}_{\partial E}\|_{\partial E}.
\end{align}
All constants involved in the bounds above are uniform, i.e. independent of the particular element~$E$ or face~$F$ in $\{\mathcal{T}_{h}\}_h$, since the mesh assumptions {\bf (M)} guarantee that the parameters associated to the star-shaped and Lipschitz properties are uniform in the mesh family.

\subsection{Poincar\'{e} and Friedrichs inequalities}
\label{subsec25}
For each $v\in H^{1}(D)$, $D\subseteq \mathbb{R}^{d}$ ($d=2,3$),  if $v$ has zero average on either~$\partial D$ or~$D$, then we have the following Poincar\'{e} inequality; see, e.g., Section 5.3 in Ref.~\cite{brennerscott2008}:
\begin{align} \label{poincarefridine}
  h_{D}^{-1}\|v\|_{D} \lesssim  |v|_{1, D}.
\end{align}
Let $E\in \mathcal{T}_{h}$ be a polyhedral element and $\textbf{v}\in \textbf{H}(\textbf{curl}, E)\cap \textbf{H}(\textrm{div}, E)$ be a divergence free function satisfying  $\textbf{v}\wedge \textbf{n}_{\partial E}\in \textbf{L}^2(\partial E)$.
Then, the following Friedrichs inequality is valid; see, e.g., Corollary 3.51 in  Ref.~\cite{monkmaxwell2003} or Lemma 2.2 in Ref.~\cite{beiraolourenco2020}:
\begin{align} \label{poincarefridinecurl}
h_{E}^{-1}\|\textbf{v}\|_{E} \lesssim    h_{E}^{-\frac12}\| \textbf{v}\wedge  \textbf{n}_{\partial E}\|_{\partial E} +  \|\textbf{curl}\hspace{0.05cm}  \textbf{v}\|_{E}.
\end{align}
Similarly, let $\textbf{v}\in \textbf{H}(\textbf{curl}, E)\cap \textbf{H}(\textrm{div}, E)$ be a divergence free function satisfying  $\textbf{v}\cdot \textbf{n}_{\partial E}\in L^2(\partial E)$.
Then, the following  Friedrichs inequality is also valid; see Corollary~$3.51$ in Ref.~\cite{monkmaxwell2003}:
\begin{align} \label{poincarefridinediv}
h_{E}^{-1}\|\textbf{v}\|_{E} \lesssim h_{E}^{-\frac12} \| \textbf{v}\cdot \textbf{n}_{\partial E}\|_{\partial E}+ \|\textbf{curl}\hspace{0.05cm}  \textbf{v}\|_{E}.
\end{align}

 \section{{Interpolation properties} of edge and face virtual element spaces in 2D} \label{sec3}
Here, we prove interpolation properties of general order for standard and serendipity edge and face virtual element spaces on polygons.
These polygons can be interpreted as elements of a two-dimensional mesh or as faces of a three-dimensional mesh;
we shall often refer to them as ``faces''.
In what follows,  we shall concentrate on interpolation and stability results on local elements,
since  the corresponding global results follow by a summation on all the elements.
In  Section \ref{sec31}, we begin  with   edge virtual element spaces on polygons;
in Section~\ref{sec4_1}, we consider the serendipity  edge virtual element space  in 2D,
which allows us to reduce the number of internal DoFs  of the standard  edge   virtual element  space introduced in Section \ref{sec31};
in Section~\ref{sec4faceVEM}, we extend the results of edge virtual element spaces to face virtual element spaces in 2D.

\subsection{Standard edge virtual element space on polygons} \label{sec31}
Given a face $F$ and an integer  $k\geq 1$, the   edge virtual element space is defined as\cite{beiraobrezzidasimaru2018siam}
\begin{equation}
\begin{aligned}
\label{finitspace}
\textbf{V}_{k}^{e}(F)=\big\{\textbf{v}_{h}\in \textbf{L}^{2}(F): \textrm{div}_{F}\hspace{0.05cm} \textbf{v}_{h}\in \mathbb{P}_{k}(F), \   &\textrm{rot}_{F} \hspace{0.05cm}  \textbf{v}_{h} \in  \mathbb{P}_{k-1}(F), \\ &\textbf{v}_{h}\cdot \textbf{t}_{e}\in  \mathbb{P}_{k}(e) \ \ \forall e\subseteq \partial F \big\}.
\end{aligned}
\end{equation}
The following linear operators are a set of unisolvent DoFs:
\begin{align}
\label{dof1}
&\bullet \ \textrm{the\ moments}\ \int_{e} \textbf{v}_{h}\cdot \textbf{t}_{e}p_{k}
&&\forall p_{k}\in \mathbb{P}_{k}(e),  \hspace{0.1cm} \forall e\subseteq \partial F;\\
\label{dof2}
&\bullet \ \textrm{the\ moments}\ \int_{F} \textbf{v}_{h}\cdot \textbf{x}_{F} p_{k}  
&&\forall p_{k}\in \mathbb{P}_{k}(F);\\
\label{dof3}
&\bullet\ \textrm{the\ rot-moments}\  \int_{F} \textrm{rot}_{F}\hspace{0.05cm}\textbf{v}_{h}  p_{k-1}^{0}  
&&\forall p_{k-1}^{0}\in \mathbb{P}_{k-1}^{0}(F)   \hspace{0.2cm} \textrm{only\ for}\  k>1,
   \end{align}
where $\textbf{x}_{F} :=\textbf{x}-\textbf{b}_{F}$.

The inclusion $(\mathbb{P}_{k}(F))^{2}\subseteq \textbf{V}_{k}^{e}(F)$  is valid and the $\textbf{L}^{2}$ projection $ \mathbf{\Pi}_{{k+1}}^{0,F}: \textbf{V}_{k}^{e}(F)\rightarrow  (\mathbb{P}_{k+1}(F))^{2}$ is computable by the DoFs~\eqref{dof1}--\eqref{dof3}; see  Refs.~\cite{lourencobrezzidassi2017cmame,beiraobrezzidasimaru2018siam}.

\begin{remark}[Generality of the approach]
To keep the theoretical analysis as clear as possible, we chose the~$\textbf{V}_{k}^{e}(F)$ that corresponds to that of Ref.~\cite{beiraobrezzidasimaru2018siam}.
We might have employed other definitions;
see, e.g., Refs.~\cite{lourencobrezzidassi2017cmame,bebremaru2016RLMASerendipity}.
This would simply result in a change of the polynomial orders appearing in~\eqref{finitspace}, and \eqref{dof1}--\eqref{dof3}:
the notation would be heavier but the theoretical extension would trivially follow the same steps here shown for~\eqref{finitspace}.
This same consideration applies to all the virtual element spaces introduced in the following.
\end{remark}

We begin with the proof of the following auxiliary bound for   functions belonging to $  \textbf{V}_{k}^{e}(F)$.
\begin{lem}
\label{lem2degde}
For each $\textbf{v}_{h}\in \textbf{V}_{k}^{e}(F)$, we have
\begin{equation}
  \begin{aligned}
 \label{prioribound2dedge}
\|\textbf{v}_{h}\|_{F}
\!\lesssim\! 
h_{F}\|{\rm rot}_{F}\hspace{0.05cm}\textbf{v}_{h}\|_{F}  \!
+ \! h_{F}^{\frac12} \|\textbf{v}_{h}\!\cdot\!\textbf{t}_{\partial F}\|_{\partial F}  
\!+\!\!\!\! \sup_{p_{k}\in \mathbb{P}_{k}(F)}\!\!
\frac{ \int_{F}  \textbf{v}_{h} \cdot  \textbf{x}_{F} p_{k}}{\|\textbf{x}_{F} p_{k}\|_{F}}.
\end{aligned}
\end{equation}
\end{lem}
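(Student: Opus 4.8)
The plan is to decompose an arbitrary $\textbf{v}_h \in \textbf{V}_k^e(F)$ using the orthogonal Helmholtz-type splitting suggested by~\eqref{decompo1} and~\eqref{decompo2}, and to control each piece by one of the three terms on the right-hand side of~\eqref{prioribound2dedge}. Concretely, I would seek a decomposition $\textbf{v}_h = \textbf{curl}_F \, \psi + \textbf{w}$, where $\psi \in H^1(F)$ is chosen so that $\textrm{rot}_F \, \textbf{v}_h = -\Delta_F \psi$ (the scalar potential of the rotational part) and $\textbf{w}$ is the remaining irrotational contribution. Since $\textrm{rot}_F \, \textbf{v}_h \in \mathbb{P}_{k-1}(F)$ by the definition~\eqref{finitspace} of the space, the rotational part is controlled by solving a Poisson problem on $F$ with polynomial data, whose solution enjoys the stability and regularity needed to bound $\|\textbf{curl}_F \, \psi\|_F$ by $h_F \|\textrm{rot}_F \, \textbf{v}_h\|_F$ via the Poincar\'e inequality~\eqref{poincarefridine}.

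First I would make the rotational term precise: introduce $\psi$ as the solution of $-\Delta_F \psi = \textrm{rot}_F\,\textbf{v}_h$ on $F$ with a homogeneous boundary condition, so that $\textbf{curl}_F\,\psi$ carries exactly the rotation of $\textbf{v}_h$. The field $\textbf{w} := \textbf{v}_h - \textbf{curl}_F\,\psi$ then satisfies $\textrm{rot}_F\,\textbf{w} = 0$, so by the decomposition~\eqref{decompo2} it is a gradient, $\textbf{w} = \bm{\nabla}_F \, \varphi$ for some scalar $\varphi$. The scaling of the Poisson solve, combined with~\eqref{poincarefridine}, gives $\|\textbf{curl}_F\,\psi\|_F \lesssim h_F \|\textrm{rot}_F\,\textbf{v}_h\|_F$, accounting for the first term in the bound.

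Next I would estimate the irrotational remainder $\textbf{w} = \bm{\nabla}_F\,\varphi$. The standard strategy here is integration by parts: for any test field one writes $\int_F \textbf{w}\cdot \textbf{z}$ and moves the gradient off $\varphi$, producing a boundary term involving $\textbf{w}\cdot\textbf{n}$ (equivalently the tangential trace, after accounting for the rotated geometry) and a bulk term involving $\textrm{div}_F\,\textbf{w}$. The boundary contribution is where the tangential-trace term $h_F^{1/2}\|\textbf{v}_h\cdot\textbf{t}_{\partial F}\|_{\partial F}$ enters, using that $\textbf{curl}_F\,\psi$ has a controlled trace and that $\textbf{v}_h\cdot\textbf{t}_{\partial F}$ is polynomial on each edge by~\eqref{finitspace}; the inverse estimates~\eqref{Polynomialinverseestimates}--\eqref{Polynomialinverseestimates1} and the trace inequalities of Section~\ref{subsec24} convert fractional-order boundary norms into the scaled $L^2$ boundary norm. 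The bulk part, driven by $\textrm{div}_F\,\textbf{v}_h \in \mathbb{P}_k(F)$, is precisely what the supremum term $\sup_{p_k}\int_F \textbf{v}_h\cdot\textbf{x}_F\, p_k / \|\textbf{x}_F p_k\|_F$ is designed to capture, because by~\eqref{decompo1} the field $\textbf{x}_F\,\mathbb{P}_k(F)$ is a complement realizing the divergence, so testing against $\textbf{x}_F\,p_k$ probes exactly the divergence-carrying component of $\textbf{w}$.

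The main obstacle will be the boundary term handling: one must carefully relate the tangential trace of $\textbf{w}$ to that of $\textbf{v}_h$ after subtracting $\textbf{curl}_F\,\psi$, and ensure that the trace of the potential piece does not introduce uncontrolled contributions. This requires combining the trace inequality~\eqref{Traceinequality2} with the polynomial inverse estimate~\eqref{Polynomialinverseestimates1} on $\partial F$, and tracking the $h_F$ powers so that the fractional $H^{-1/2}$ norms collapse correctly into the scaled $L^2$ norm $h_F^{1/2}\|\textbf{v}_h\cdot\textbf{t}_{\partial F}\|_{\partial F}$. A secondary technical point is verifying that the supremum term genuinely dominates the divergence component despite the weight $\textbf{x}_F$; here the isomorphism property of $\textrm{div}_F$ on $\textbf{x}_F\,\mathbb{P}_k(F)$ stated after~\eqref{decompo1}, together with the equivalence $\|\textbf{x}_F p_k\|_F \approx h_F\|p_k\|_F$ on a shape-regular domain, makes the duality pairing an effective control and closes the estimate.
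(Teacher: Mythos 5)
Your overall architecture (a Helmholtz splitting, Poincar\'e for the rotational part, duality against $\textbf{x}_F p_k$ for the divergence-carrying part) is the right one and matches the paper's, but the way you assign boundary conditions creates a genuine gap. You solve $-\Delta_F\psi=\textrm{rot}_F\,\textbf{v}_h$ with a \emph{homogeneous} boundary condition, which indeed gives $\|\textbf{curl}_F\psi\|_F\lesssim h_F\|\textrm{rot}_F\,\textbf{v}_h\|_F$, but it pushes all of the boundary information onto the remainder $\textbf{w}=\textbf{v}_h-\textbf{curl}_F\psi=\bm{\nabla}_F\varphi$. Two things then go wrong. First, your claim that ``$\textbf{curl}_F\psi$ has a controlled trace'' is false: with $\psi=0$ on $\partial F$ one has $\textbf{curl}_F\psi\cdot\textbf{t}_{\partial F}=-\partial_{\mathbf n}\psi$, and the normal derivative of an $H^1$ solution of a Dirichlet problem on a polygon is not bounded in $L^2(\partial F)$ by $h_F\|\textrm{rot}_F\,\textbf{v}_h\|_F$ without extra elliptic regularity; consequently $\textbf{w}\cdot\textbf{t}_{\partial F}=\textbf{v}_h\cdot\textbf{t}_{\partial F}+\partial_{\mathbf n}\psi$ is not controlled by $\|\textbf{v}_h\cdot\textbf{t}_{\partial F}\|_{\partial F}$ alone. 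Second, when you integrate by parts in $\int_F\textbf{w}\cdot\textbf{z}$ with $\textbf{z}=\textbf{x}_F q_k$, the boundary term is $\int_{\partial F}\varphi\,(\textbf{x}_F q_k\cdot\textbf{n}_{\partial F})$, which involves the \emph{Dirichlet trace of the potential} $\varphi$, not the tangential trace of $\textbf{v}_h$; your parenthetical ``equivalently the tangential trace, after accounting for the rotated geometry'' conflates $\textbf{w}\cdot\textbf{n}$ with $\textbf{v}_h\cdot\textbf{t}$, and no rotation identifies them. Since $\varphi$ does not vanish on $\partial F$ in your splitting, this boundary term does not disappear and cannot be absorbed.

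The paper's proof fixes exactly this by distributing the boundary conditions the other way: the curl potential $\rho$ solves a \emph{Neumann}-type problem with $\textbf{curl}_F\rho\cdot\textbf{t}_{\partial F}=\textbf{v}_h\cdot\textbf{t}_{\partial F}$, so the curl part absorbs \emph{both} the rot term and the $h_F^{1/2}\|\textbf{v}_h\cdot\textbf{t}_{\partial F}\|_{\partial F}$ term (via $\int_{\partial F}\rho\,(\textbf{v}_h\cdot\textbf{t}_{\partial F})$, \eqref{Traceinequality4} and \eqref{poincarefridine}), while the gradient potential $\sigma$ satisfies $\sigma=0$ on $\partial F$, so the integration by parts against $\textbf{x}_F q_k$ produces \emph{no} boundary term and only the supremum survives. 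If you reassign the boundary data this way your argument closes. One further point you should make explicit: to conclude, one needs $\|\textbf{x}_F q_k\|_F\lesssim h_F\|\textrm{div}_F\,\textbf{v}_h\|_F\lesssim\|\textbf{v}_h\|_F$, and the second inequality is the nontrivial inverse estimate \eqref{inversediv2dedge} on the virtual space, proved with bubble functions; the scaling $\|\textbf{x}_F p_k\|_F\approx h_F\|p_k\|_F$ alone does not suffice.
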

\begin{proof}
Since $\textrm{rot}_{F}\hspace{0.05cm}\textbf{curl}_{F}=-\Delta_{F}$,
the following Helmholtz decomposition of $\textbf{v}_{h}$ is valid:
\begin{align} \label{vhHeldecom2dedge}
\textbf{v}_{h}= \textbf{curl}_{F}\hspace{0.05cm}\rho+\bm{\nabla}_{F} \sigma,
\end{align}
where $\rho\in H^{1}(F)\setminus \mathbb{R}$ and $\sigma\in H^{1}(F)$ satisfy weakly
\begin{align} \label{vhHeldecom12dedge}
-\Delta_{F} \rho= \textrm{rot}_{F}\hspace{0.05cm}\textbf{v}_{h}\ \textrm{in}\ F,\hspace{0.2cm} \textbf{curl}_{F}\hspace{0.05cm}\rho\cdot   \textbf{t}_{\partial F}= \textbf{v}_{h}\cdot   \textbf{t}_{\partial F}\ \textrm{on}\ \partial F,
\end{align}
and
\begin{align} \label{vhHeldecom22dedge}
\Delta_{F} \sigma= \textrm{div}_{F}\hspace{0.05cm} \textbf{v}_{h} \ \textrm{in}\ F,\hspace{0.2cm}  \sigma=0\ \textrm{on}\ \partial F.
\end{align}
By the orthogonality $(\textbf{curl}_{F}\hspace{0.05cm}\rho, \bm{\nabla}_{F} \sigma)_{F}=0$,  we also have
\begin{align}
 \label{vhHeldecom2_2dedge} \|\textbf{v}_{h}\|^{2}_{F}=\|\textbf{curl}_{F}\hspace{0.05cm}\rho\|^{2}_{F}+\|\bm{\nabla}_{F} \sigma\|^{2}_{F}.
\end{align}
We show an upper bound on the two  terms on  the right-hand side of \eqref{vhHeldecom2_2dedge}:
using $\textrm{rot}_{F}\hspace{0.05cm}\textbf{curl}_{F}=- \Delta_{F}$
and $\|\bm{\nabla}_{F} \rho\|_{F} = \|\textbf{curl}_{F} \hspace{0.05cm} \rho\|_{F}$,\footnote{Henceforth, IBP stands for integration by parts}
\begin{eqnarray}
\begin{aligned}[b]
 \label{vhHeldecom32dedge}
& \!\! \|\textbf{curl}_{F}\hspace{0.05cm}\rho\|^{2}_{F} 
\!\!\!\overset{\text{IBP}}{=}\!\!\!
-\!\int_{F} \!\!\rho(\Delta_{F}  \rho)
\!+\!\!\! \int_{\partial F} \!\!\!\!\!\!\! \rho(\textbf{curl}_{F}\hspace{0.05cm}\rho \!\cdot\! \textbf{t}_{\partial F})
\!\!\!\overset{\eqref{vhHeldecom12dedge}}{\lesssim}\!\!\!
\|\rho\|_{F} \| \textrm{rot}_{F}\hspace{0.05cm}\textbf{v}_{h}\|_{F} 
\!+\! \|\rho\|_{\partial F} \|  \textbf{v}_{h} \!\cdot\! \textbf{t}_{\partial F}\|_{\partial F}\\
& \overset{\eqref{Traceinequality4},\eqref{poincarefridine}}{\lesssim}
 h_{F}\|\bm{\nabla}_{F}  \rho\|_{F} \| \textrm{rot}_{F}\hspace{0.05cm}\textbf{v}_{h}\|_{F} + h_{F}^{\frac12}\|\bm{\nabla}_{F}  \rho\|_{F} \|  \textbf{v}_{h}\cdot \textbf{t}_{\partial F}\|_{\partial F}\\
& \lesssim \left(h_{F} \|\textrm{rot}_{F}\hspace{0.05cm}\textbf{v}_{h}\|_{F} 
+ h_{F}^{\frac12} \|  \textbf{v}_{h}\cdot \textbf{t}_{\partial F}\|_{\partial F} \right) 
\|\textbf{curl}_{F}\hspace{0.05cm}\rho\|_{F}.
\end{aligned}
 \end{eqnarray}
By using \eqref{decompo1}, the fact that $\textrm{div}_{F}\hspace{0.05cm}  \textbf{v}_{h} \in \mathbb{P}_{k}(F)$,
and a scaling argument, there exists a polynomial $q_{k}\in \mathbb{P}_{k}(F)$ such that
\begin{align} \label{standardinterpolation2_proof1_12dedge}
\textrm{div}_{F}\hspace{0.05cm} (\textbf{x}_{F}q_{k})=\textrm{div}_{F}\hspace{0.05cm}  \textbf{v}_{h}\  \textrm{and}\ \  \|\textbf{x}_{F}q_{k}\|_{F}
\lesssim h_{F}\|\textrm{div}_{F}\hspace{0.05cm}  \textbf{v}_{h}\|_{F}.
\end{align}
We  have the following inverse estimate involving edge virtual element functions:
 \begin{align} \label{inversediv2dedge}
\|\textrm{div}_{F}\hspace{0.05cm} \textbf{v}_{h}\|_{F}
&\lesssim h^{-1}_{F}\| \textbf{v}_{h}\|_{F}\hspace{0.7cm} \forall \textbf{v}_{h}\in \textbf{V}_{k-1}^{e}(F).
 \end{align}
To prove~\eqref{inversediv2dedge}, we  split the face~$F$ into a shape-regular sub-triangulation $\widetilde{\mathcal{T}}_{h}$; see Remark \ref{rem21}.
Let~$b_{{F}}$ be the usual positive cubic bubble function
over each triangle $\widetilde{F}\in\widetilde{\mathcal{T}}_{h}$
scaled such that~$\|b_{{F}}\|_{\infty,\widetilde{F}}=1$.
By using that~$\textrm{div}_{F}\hspace{0.05cm} \textbf{v}_{h}\in \mathbb{P}_{k}(F)$, and the polynomial inverse inequalities~\eqref{bubblefuncionproer} and~\eqref{Polynomialinverseestimates}, we have
\[
\begin{split}
\|\textrm{div}_{F}\hspace{0.05cm} \textbf{v}_{h} \!\|^{\!2}_{F}&
\! \lesssim\!  (b_{F}\textrm{div}_{F}\hspace{0.05cm} \textbf{v}_{h},\textrm{div}_{F}\hspace{0.05cm} \textbf{v}_{h}\!)_{F}
\!=\! - \! (\bm{\nabla}_{F}\!(b_{F}\textrm{div}_{F}\hspace{0.05cm}   \!\textbf{v}_{h}),\textbf{v}_{h}\!)_{F}
\!\lesssim\! h^{\!\!-1\!}_{F}\|\textrm{div}_{F}\hspace{0.05cm} \textbf{v}_{h}\|_{F}\| \textbf{v}_{h}\|_{F},
\end{split}
\]
which proves \eqref{inversediv2dedge}.

Next, we cope with the second term on the right-hand side of~\eqref{vhHeldecom2_2dedge}:
\begin{eqnarray}
\begin{aligned} \label{gradientestimates2dedge}
&\|\bm{\nabla}_{F}\sigma\|^{2}_{F} 
\overset{\text{IBP}, \eqref{vhHeldecom22dedge}, \eqref{standardinterpolation2_proof1_12dedge}}{=}
-\int_{F}\textrm{div}_{F}\hspace{0.05cm} (\textbf{x}_{F}q_{k})\sigma
\overset{\text{IBP}, \eqref{vhHeldecom22dedge}}{=}
\int_{F}  (\textbf{x}_{F}q_{k})\cdot \bm{\nabla}_{F}\sigma\\
& \overset{\eqref{vhHeldecom2dedge}}{=}
\int_{F}  (\textbf{x}_{F}q_{k})\cdot  (\textbf{v}_{h}- \textbf{curl}_{F}\hspace{0.05cm}\rho)\\
&\leq \|\textbf{x}_{F} q_{k}\|_{F}\sup_{p_{k}\in \mathbb{P}_{k}(F)}\frac{ \int_{F}  \textbf{v}_{h} \cdot \textbf{x}_{F} p_{k}}{\|\textbf{x}_{F} p_{k}\|_{F}} +  \|\textbf{x}_{F}q_{k}\|_{F}\|\textbf{curl}_{F}\hspace{0.05cm}\rho\|_{F}  \\
& \overset{\eqref{standardinterpolation2_proof1_12dedge}, \eqref{inversediv2dedge}}{\lesssim}
\left( \sup_{p_{k}\in \mathbb{P}_{k}(F)}\frac{ \int_{F}  \textbf{v}_{h} \cdot \textbf{x}_{F} p_{k}}{\|\textbf{x}_{F} p_{k}\|_{F}}+\|\textbf{curl}_{F}\hspace{0.05cm}\rho\|_{F}\right)  \|\textbf{v}_{h}\|_{F}.
\end{aligned}
\end{eqnarray}
Substituting~\eqref{vhHeldecom32dedge} and~\eqref{gradientestimates2dedge} into \eqref{vhHeldecom2_2dedge},
and using~\eqref{vhHeldecom2_2dedge} and~\eqref{vhHeldecom32dedge} again,
we can obtain~\eqref{prioribound2dedge}.
 \end{proof}
The following bound,
which generalizes Lemma~$4.4$ in Ref.~\cite{beiraolourenco2020}
will be useful in the sequel.
\begin{lem}
\label{lemma44}
For each face $ F \subseteq \partial E$ and given~$\varepsilon>0$,
let $\textbf{v}\in \textbf{H}^{\varepsilon}(F)\cap \textbf{H}({\rm rot}_{F}, F)$
such that $\textbf{v}\cdot \textbf{t}_{e}$ is integrable on each edge of~$F$.
Then, the following bound is valid: for all $e$ in~$\partial F$ and~$p_{k}$ in~$\mathbb{P}_{k}(e)$,
\begin{equation}
\begin{aligned}
\label{L1BOUND}
\left|\int_{e}(\textbf{v} \cdot\textbf{t}_{e})p_{k}\right|
\lesssim \|p_{k}\|_{L^{\infty}(e)} \left( \|\textbf{v} \| _{F}+ h_{F}^{\varepsilon}|\textbf{v}| _{\varepsilon,F}+h_{F}\|{\rm rot}_{F} \hspace{0.05cm}\textbf{v} \|_{F} \right) .
\end{aligned}
\end{equation}
The last term on the right-hand side can be neglected if~$\varepsilon>1/2$.
\end{lem}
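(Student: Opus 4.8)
The plan is to bound the right-hand side after extracting $\|p_k\|_{L^\infty(e)}$, treating separately the regimes $\varepsilon>1/2$ and $0<\varepsilon\le 1/2$; throughout I use that $h_e\approx h_F$ by assumption (\textbf{M}). When $\varepsilon>1/2$ the tangential trace of $\textbf v$ already makes sense, so I would argue directly: Cauchy--Schwarz on $e$ together with the polynomial inverse estimate $\|p_k\|_{L^2(e)}\lesssim h_e^{1/2}\|p_k\|_{L^\infty(e)}$ reduces matters to $\|\textbf v\cdot\textbf t_e\|_{L^2(e)}\le \|\textbf v\|_{\partial F}$, and the trace inequality \eqref{Traceinequality3_1} (with an exponent $\delta\in(1/2,3/2)$, interpolating the seminorms if $\delta\ne\varepsilon$) then yields the first two terms with no contribution from $\textrm{rot}_F\textbf v$. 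This already establishes the last sentence of the statement.

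For the harder range $0<\varepsilon\le 1/2$ the component $\textbf v\cdot\textbf t_e$ has no direct trace and the $\textrm{rot}$-term must be exploited. I would first \emph{lift the rotation}: extend $\textrm{rot}_F\textbf v\in L^2(F)$ by zero to a ball $B\supseteq F$ with $h_B\approx h_F$ and let $\phi\in H^2(B)\cap H^1_0(B)$ solve $-\Delta\phi=\textrm{rot}_F\textbf v$ on $B$; convexity of $B$ furnishes $H^2$-regularity, which circumvents the lack of elliptic regularity on the possibly nonconvex $F$. Setting $\textbf z:=\textbf{curl}_F\phi|_F\in\textbf H^1(F)$ gives $\textrm{rot}_F\textbf z=\textrm{rot}_F\textbf v$ together with the scaled bounds $\|\textbf z\|_F\lesssim h_F\|\textrm{rot}_F\textbf v\|_F$ and $|\textbf z|_{1,F}\lesssim\|\textrm{rot}_F\textbf v\|_F$. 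Since $\textrm{rot}_F(\textbf v-\textbf z)=0$ on the simply connected $F$, I may write $\textbf v-\textbf z=\bm\nabla_F s$ with $s\in H^{1+\varepsilon}(F)$, normalised to zero average, and split $\int_e(\textbf v\cdot\textbf t_e)p_k=\int_e(\textbf z\cdot\textbf t_e)p_k+\int_e(\bm\nabla_F s\cdot\textbf t_e)p_k$.

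The $\textbf z$-part is handled as in the easy case, now with $\delta=1$ in \eqref{Traceinequality3_1}, and together with the bounds on $\textbf z$ it contributes only the term $h_F\|\textrm{rot}_F\textbf v\|_F$. For the gradient part I would use $\bm\nabla_F s\cdot\textbf t_e=\partial_{\textbf t_e}(s|_e)$ and integrate by parts \emph{along} the edge; the hypothesis that $\textbf v\cdot\textbf t_e$ is integrable forces $s|_e\in W^{1,1}(e)$, so this is classical and produces an endpoint term plus $-\int_e s\,\partial_{\textbf t_e}p_k$. Both are bounded by $\|p_k\|_{L^\infty(e)}\|s\|_{L^\infty(\bar F)}$, the second via the inverse estimate $\|\partial_{\textbf t_e}p_k\|_{L^2(e)}\lesssim h_e^{-1}\|p_k\|_{L^2(e)}$. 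It then remains to estimate $\|s\|_{L^\infty(\bar F)}$, which is where the extra regularity pays off: the embedding $H^{1+\varepsilon}(F)\hookrightarrow C^0(\bar F)$ (valid precisely because $\varepsilon>0$) with Poincar\'e \eqref{poincarefridine} gives, after scaling, $\|s\|_{L^\infty(\bar F)}\lesssim|s|_{1,F}+h_F^{\varepsilon}|s|_{1+\varepsilon,F}$; substituting $\bm\nabla_F s=\textbf v-\textbf z$ and using $|\textbf z|_{\varepsilon,F}\lesssim h_F^{1-\varepsilon}|\textbf z|_{1,F}$ produces exactly $\|\textbf v\|_F+h_F^{\varepsilon}|\textbf v|_{\varepsilon,F}+h_F\|\textrm{rot}_F\textbf v\|_F$.

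The main obstacle is genuinely the endpoint/corner behaviour: one cannot isolate the edge $e$ by a test function vanishing on $\partial F\setminus e$ without creating an $H^{1/2}$-incompatibility at the two vertices of $e$ (a function equal to $p_k$ on $e$ and to $0$ on the neighbouring edges jumps there unless $p_k$ vanishes at the vertices). The decomposition $\textbf v=\textbf z+\bm\nabla_F s$ relocates this difficulty into the point values of $s$, which are controlled only thanks to the extra $\textbf H^{\varepsilon}$ smoothness of $\textbf v$; this explains why the term $h_F^{\varepsilon}|\textbf v|_{\varepsilon,F}$ is unavoidable when $\varepsilon\le 1/2$. The remaining work is bookkeeping of the $h_F$-powers through the scaling arguments and the justification of the fractional trace and interpolation estimates for $\textbf z$.
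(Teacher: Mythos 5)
Your argument is essentially correct, but it follows a genuinely different route from the paper's. The paper stays local: it isolates the sub-triangle $T$ of the shape-regular partition having $e$ as an edge, pulls back to a reference triangle, and builds a cut-off $\hat w\in W^{1,p'}(\hat T)$ equal to $1$ on $\hat e$ and $0$ on the rest of $\partial\hat T$ --- admissible precisely because for $p>2$ the trace space $W^{1/p,p'}(\partial\hat T)$ tolerates jumps at the vertices, which is the paper's answer to the corner obstruction you correctly identify. A two-dimensional integration by parts, H\"older duality between $L^{p}$ and $W^{1,p'}$, and the embedding $H^{\varepsilon}\hookrightarrow L^{p}$ with $p=2/(1-\varepsilon)$ then give the localized estimate \eqref{L1BOUND_proof1}, with no elliptic regularity and no potential. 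Your proof instead performs a global Helmholtz-type splitting $\textbf{v}=\textbf{z}+\bm{\nabla}_F s$ via an $H^{2}$ rot-lifting on a circumscribed ball, reduces the hard part to a one-dimensional integration by parts of the scalar potential along $e$, and exploits $H^{1+\varepsilon}(F)\hookrightarrow C^{0}(\bar F)$; this trades the fractional trace-space surjectivity argument for elliptic regularity on the ball plus the identification of the distributional tangential trace of $\bm{\nabla}_F s$ on a single edge with $\partial_{\textbf{t}_e}(s|_e)$. In both proofs the constant degenerates as $\varepsilon\to 0^{+}$ (through $\|\hat w\|_{W^{1,p'}}$ in the paper, through the $C^0$-embedding constant in yours), and both ultimately lean on the integrability hypothesis for $\textbf{v}\cdot\textbf{t}_e$ to make the edge pairing classical.

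Two small points you should tighten. First, $|\textbf{z}|_{\varepsilon,F}\lesssim h_F^{1-\varepsilon}|\textbf{z}|_{1,F}$ does not follow by itself; argue instead by space interpolation, $|\textbf{z}|_{\varepsilon,F}\lesssim\|\textbf{z}\|_F^{1-\varepsilon}|\textbf{z}|_{1,F}^{\varepsilon}\lesssim h_F^{1-\varepsilon}\|\textrm{rot}_F\hspace{0.05cm}\textbf{v}\|_F$, using both bounds on $\textbf{z}$, which still yields the desired $h_F\|\textrm{rot}_F\hspace{0.05cm}\textbf{v}\|_F$ after multiplication by $h_F^{\varepsilon}$. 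Second, since $s\in H^{1+\varepsilon}(F)$ with $\varepsilon\le 1/2$, the trace of $\bm{\nabla}_F s$ on $e$ is not given by the standard trace theorem; you need a short density/Green-formula argument showing that the tangential derivative of the continuous function $s|_e$ coincides, as a distribution on $e$, with the $L^{1}$ function $\textbf{v}\cdot\textbf{t}_e-\textbf{z}\cdot\textbf{t}_e$, so that $s|_e\in W^{1,1}(e)$ and the endpoint terms are legitimate. Also note that the scaled $C^0$-embedding and Poincar\'e constants must be shown uniform over the mesh family; the assumption (\textbf{M}) and the sub-triangulation of Remark~\ref{rem21} provide this, as in the paper's use of the reference triangle and of \eqref{Traceinequality3_1} and \eqref{poincarefridine}.
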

 \begin{proof}
The inequality is trivial for $\varepsilon>\frac12$ by using the trace inequality \eqref{Traceinequality3_1}.
Therefore, we assume~$0<\varepsilon\leq \frac12$.
Recalling Remark \ref{rem21}, we split
the face $F$ into a shape-regular triangulation $\widetilde{\mathcal{T}}_{h}$. Let $T\in \widetilde{\mathcal{T}}_{h}$ be the  triangle such that $e\subseteq \partial T$.
We first prove the following inequality: for all fixed~$p>2$
and $p_{k}\in \mathbb{P}_{k}(e) \ \forall e\subseteq \partial F$,
\begin{equation}
\begin{aligned} \label{L1BOUND_proof1}
\left|\int_{e}(\textbf{v} \cdot\textbf{t}_{e})p_{k}\right|   
\lesssim \|p_{k}\|_{L^{\infty}(e)}\left(   h_{F}^{1-2/p}\|\textbf{v}\|_{L^{p}(T)}+h_{F}\|{\rm rot}_{F} \hspace{0.05cm}\textbf{v} \|_{T} \right).
\end{aligned}
\end{equation}
Let $\hat{T}$ be the affine equivalent reference element to the triangle $T$ and $\hat{e}$ be the edge of $\hat{T}$ corresponding to the edge $e\subseteq \partial T$ through the Piola transform; see Definition 3.4.1 in Ref.~\cite{brennerscott2008}.
Let $\hat{q}_{k}: \hat{T}\rightarrow \mathbb{R}$ be the prolongation of $\hat{p}_{k}$ (~$\hat{\cdot}$ denoting the usual pull-back of $\cdot$~; see Remark~$3.4.2$ in Ref.~\cite{brennerscott2008}) by the constant extension along the normal direction to $\hat{e}$.
From the trace theorem on Lipschitz domains\cite{brennerscott2008}, the trace operator is surjective from $W^{1,p'}(\hat{T})$ to $W^{1/p,p'}(\partial \hat{T})$, where $p'$ denotes the dual index to $p$, i.e. $1/p+1/p'=1$, $p>2$.
Further, the space $W^{1/p,p'}(\partial \hat{T})$ contains piecewise discontinuous functions over $\partial \hat{T}$ since $p>2$.
In particular, there exists a function $\hat{w}$ such that $\hat{w}=1$ on $\hat{e}$, $\hat{w}=0$ on $\partial\hat{T}/\hat{e}$, and $\|\hat{w}\|_{W^{1,p'}(\hat{T})}< \infty$.
The function~$\hat{w}\hat{q}_{k}$ belongs to~$W^{1,p'}(\hat{T})$.

Using a scaling argument, an integration by parts,  the H\"{o}lder inequality, and the norm equivalence
of polynomial functions with fixed degree on the reference triangle~$\hat{T}$, we have
\[
\begin{split}
&\left|\int_{e}(\textbf{v} \cdot\textbf{t}_{e})p_{k} \right|   
\lesssim h_{F}  \left|\int_{\hat{e}}(\hat{\textbf{v}} \cdot\hat{\textbf{t}}_{\hat{e}})\hat{p}_{k}\right|
= h_{F}  \left|\int_{\partial \hat{T}}(\hat{\textbf{v}} \cdot\hat{\textbf{t}}_{\hat{e}})(\hat{w}\hat{q}_{k})\right|\\
& = h_{F} \left|\int_{\hat{T}} \hat{\textrm{rot}}_{\hat{F}}\hspace{0.05cm}\hat{\textbf{v}}  (\hat{w}\hat{q}_{k})   -\int_{\hat{T}}  \hat{\textbf{v}}\cdot \textbf{curl}_{\hat{F}} \hspace{0.05cm} (\hat{w}\hat{q}_{k}) \right|\\
& \!\lesssim\!  h_{F}\left(\! \|\hat{\textrm{rot}}_{\hat{F}}\hat{\textbf{v}} \|_{\hat{T}} \|\hat{w}\hat{q}_{k}\|_{\hat{T}}
\!+\! \| \hat{\textbf{v}} \|_{L^{p}(\hat{T})} \vert \hat{w}\hat{q}_{k}\vert_{W^{1,p'}(\hat{T})}  \!\right)
\!\lesssim\! h_{F} \bigg(\! \|\hat{\textrm{rot}}_{\hat{F}}\hat{\textbf{v}} \|_{\hat{T}} \|\hat{w}\|_{\hat{T}}\| \hat{q}_{k} \|_{L^{\infty}(\hat{T})}\\
& \hspace{0.5cm}+ \| \hat{\textbf{v}} \|_{L^{p}(\hat{T})}\left(\|\hat{w}\|_{W^{1,p'}(\hat{T})} \|\hat{q}_{k}\|_{L^{\infty}(\hat{T})}+\|\hat{w}\|_{L^{p'}(\hat{T})}\| \hat{q}_{k}\|_{W^{1,\infty}(\hat{T})} \right) \bigg)\\
& \lesssim h_{F}\left( \|\hat{\textrm{rot}}_{\hat{F}}\hat{\textbf{v}} \|_{\hat{T}}+ \| \hat{\textbf{v}} \|_{L^{p}(\hat{T})}\right)\|\hat{w}\|_{W^{1,p'}(\hat{T})}\|\hat{q}_{k}\|_{L^{\infty}(\hat{T})} \\
& \lesssim \|p_{k}\|_{L^{\infty}(e)} \left( h_{F}^{1-2/p}\|  \textbf{v}  \|_{L^{p}(T)}+ h_{F}\| \textrm{rot}_{F}\textbf{v}\|_{T}  \right),
\end{split}
\]
which completes the proof of \eqref{L1BOUND_proof1}.
By taking $p=2/(1-\varepsilon)>2$ in \eqref{L1BOUND_proof1},
noting that $T\subseteq F$,
and  using the (scaled) embedding $H^{\varepsilon}(F)\hookrightarrow L^{p}(F)$, we get~\eqref{L1BOUND}.
\end{proof}
 
The DoFs interpolation operator $\widetilde{\textbf{I}}^{e}_{h}$ on the space $\textbf{V}_{k}^{e}(F)$ is well defined for each function $\textbf{v}$ in $\textbf{H}^{s}(F)\cap \textbf{H}(\textrm{rot}_{F},F)$ with $\textbf{v} \cdot \textbf{t}_{e}$ integrable on each edge. We impose
\begin{subequations} \label{eq:standard_interpolation}
\begin{align}  \label{eq:standard_interpolation1}
&  \int_{e}(\textbf{v}-\widetilde{\textbf{I}}^{e}_{h}\textbf{v})\cdot \textbf{t}_{e}p_{k}=0 &&\forall p_{k}\in \mathbb{P}_{k}(e),  \hspace{0.1cm} \forall e\subseteq \partial F;\\
\label{eq:standard_interpolation2}
&\int_{F} (\textbf{v}-\widetilde{\textbf{I}}^{e}_{h}\textbf{v})\cdot \textbf{x}_{F} p_{k}  =0 && \forall p_{k}\in \mathbb{P}_{k}(F); \\
\label{eq:standard_interpolation3}
&\int_{F} \textrm{rot}_{F}\hspace{0.05cm}(\textbf{v}-\widetilde{\textbf{I}}^{e}_{h}\textbf{v} ) p_{k-1}^{0}  =0 && \forall p_{k-1}^{0}\in \mathbb{P}_{k-1}^{0}(F)  \hspace{0.2cm} \textrm{only\ for}\  k>1.
 \end{align}
\end{subequations}

Next, we prove interpolation properties of the operator~$\widetilde{\textbf{I}}^{e}_{h}$.
\begin{thm}
\label{thm31edge2d}
For each $\textbf{v}\in \textbf{H}^{s}(F)$, $0 < s\leq {k+1}$,  with ${\rm rot}_{F}\hspace{0.05cm}\textbf{v}\in  H^{r}(F) $, $0\leq r\leq {k}$, and $\textbf{v} \cdot \textbf{t}_{e}$ integrable on each edge, we have
  \begin{align}
 \label{standardinterpolation12dedge}
 &\|\textbf{v}-\widetilde{\textbf{I}}^{e}_{h}\textbf{v}\|_{F}
 \lesssim   h_{F}^{s} |\textbf{v}|_{s,F}
 + h_F \| {\rm rot}_{F}\hspace{0.05cm}\textbf{v} \|_{F} ,\\
 \label{standardinterpolation22dedge}
 &\|{\rm rot}_{F}\hspace{0.05cm}(\textbf{v}-\widetilde{\textbf{I}}_{h}^{e} \textbf{v})\|_{F} \lesssim h_{F}^{r}|{\rm rot}_{F}\hspace{0.05cm}\textbf{v}|_{r,F}.
\end{align}
The second term on the right-hand side  of \eqref{standardinterpolation12dedge} can be neglected if $s \ge 1$.
\end{thm}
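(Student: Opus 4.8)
The plan is to exploit that $\widetilde{\textbf{I}}^{e}_{h}$ reproduces every element of $\textbf{V}_{k}^{e}(F)$, in particular every vector polynomial in $(\mathbb{P}_{k}(F))^{2}\subseteq\textbf{V}_{k}^{e}(F)$, and to compare $\textbf{v}$ with a suitable polynomial approximant $\textbf{v}_{\pi}\in(\mathbb{P}_{k}(F))^{2}$. I would first dispatch~\eqref{standardinterpolation22dedge}, which does not require Lemma~\ref{lem2degde}. Since $\textrm{rot}_{F}\widetilde{\textbf{I}}^{e}_{h}\textbf{v}\in\mathbb{P}_{k-1}(F)$, it suffices to show that it coincides with the $L^{2}(F)$-orthogonal projection $\Pi^{0}_{k-1}(\textrm{rot}_{F}\textbf{v})$ onto $\mathbb{P}_{k-1}(F)$. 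The moments against $\mathbb{P}_{k-1}^{0}(F)$ match by the DoF condition~\eqref{eq:standard_interpolation3}, while the average matches because, integrating by parts, $\int_{F}\textrm{rot}_{F}\textbf{w}=\int_{\partial F}\textbf{w}\cdot\textbf{t}_{\partial F}$, and the zeroth edge moments of $\textbf{v}-\widetilde{\textbf{I}}^{e}_{h}\textbf{v}$ vanish by~\eqref{eq:standard_interpolation1}. Hence $\int_{F}\textrm{rot}_{F}(\textbf{v}-\widetilde{\textbf{I}}^{e}_{h}\textbf{v})\,p_{k-1}=0$ for all $p_{k-1}\in\mathbb{P}_{k-1}(F)$, and~\eqref{standardinterpolation22dedge} follows from the standard best-approximation estimate for $\textrm{rot}_{F}\textbf{v}\in H^{r}(F)$, $0\le r\le k$. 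This also settles the case $k=1$, in which~\eqref{eq:standard_interpolation3} is void.

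For~\eqref{standardinterpolation12dedge}, I set $\bm{\delta}:=\widetilde{\textbf{I}}^{e}_{h}\textbf{v}-\textbf{v}_{\pi}=\widetilde{\textbf{I}}^{e}_{h}(\textbf{v}-\textbf{v}_{\pi})\in\textbf{V}_{k}^{e}(F)$, apply Lemma~\ref{lem2degde} to $\bm{\delta}$, and use $\|\textbf{v}-\widetilde{\textbf{I}}^{e}_{h}\textbf{v}\|_{F}\le\|\textbf{v}-\textbf{v}_{\pi}\|_{F}+\|\bm{\delta}\|_{F}$. Each of the three terms in~\eqref{prioribound2dedge} reduces to a quantity involving only $\textbf{v}-\textbf{v}_{\pi}$, because $\widetilde{\textbf{I}}^{e}_{h}$ matches the DoFs of $\textbf{v}$: the volume term becomes $\int_{F}(\textbf{v}-\textbf{v}_{\pi})\cdot\textbf{x}_{F}p_{k}$ by~\eqref{eq:standard_interpolation2}, hence is $\lesssim\|\textbf{v}-\textbf{v}_{\pi}\|_{F}$ by Cauchy--Schwarz; the rot term satisfies $\textrm{rot}_{F}\bm{\delta}=\Pi^{0}_{k-1}(\textrm{rot}_{F}(\textbf{v}-\textbf{v}_{\pi}))$ by the previous paragraph, so $h_{F}\|\textrm{rot}_{F}\bm{\delta}\|_{F}\lesssim h_{F}\|\textrm{rot}_{F}(\textbf{v}-\textbf{v}_{\pi})\|_{F}$; and the boundary term is treated edge by edge by testing $\bm{\delta}\cdot\textbf{t}_{e}\in\mathbb{P}_{k}(e)$ against itself via~\eqref{eq:standard_interpolation1}, invoking Lemma~\ref{lemma44} on $\textbf{v}-\textbf{v}_{\pi}$ together with the edge inverse inequality $\|\cdot\|_{L^{\infty}(e)}\lesssim h_{F}^{-1/2}\|\cdot\|_{L^{2}(e)}$. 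Summing over the uniformly bounded number of edges yields
\[
\|\textbf{v}-\widetilde{\textbf{I}}^{e}_{h}\textbf{v}\|_{F}\lesssim\|\textbf{v}-\textbf{v}_{\pi}\|_{F}+h_{F}^{\varepsilon}|\textbf{v}-\textbf{v}_{\pi}|_{\varepsilon,F}+h_{F}\|\textrm{rot}_{F}(\textbf{v}-\textbf{v}_{\pi})\|_{F},
\]
with $\varepsilon\in(0,1)$ the free parameter of Lemma~\ref{lemma44}.

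It then remains to choose $\textbf{v}_{\pi}$ and $\varepsilon$. I would take $\textbf{v}_{\pi}$ to be the degree-$k$ averaged Taylor polynomial of $\textbf{v}$, for which the fractional Bramble--Hilbert bounds $|\textbf{v}-\textbf{v}_{\pi}|_{m,F}\lesssim h_{F}^{s-m}|\textbf{v}|_{s,F}$ ($0\le m\le s\le k+1$) hold. If $s\ge1$, pick $\varepsilon\in(1/2,1)$ and bound the last term by $h_{F}|\textbf{v}-\textbf{v}_{\pi}|_{1,F}\lesssim h_{F}^{s}|\textbf{v}|_{s,F}$; all three terms then collapse to $h_{F}^{s}|\textbf{v}|_{s,F}$ and the rot contribution disappears, as asserted. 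If $s<1$, pick any $\varepsilon\in(0,s)$, so that the first two terms are again $\lesssim h_{F}^{s}|\textbf{v}|_{s,F}$; the delicate term is the third, since now $\textbf{v}\notin\textbf{H}^{1}(F)$ and $|\textbf{v}-\textbf{v}_{\pi}|_{1,F}$ is unavailable.

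I expect this last point to be the main obstacle, and I would resolve it using the commuting property of the averaged Taylor operator, namely that $\textrm{rot}_{F}\textbf{v}_{\pi}$ equals the degree-$(k-1)$ averaged Taylor polynomial of $\textrm{rot}_{F}\textbf{v}$. This identity holds for smooth fields and extends to $\textbf{v}\in\textbf{L}^{2}(F)$ with $\textrm{rot}_{F}\textbf{v}\in L^{2}(F)$ by density in the $\textbf{H}(\textrm{rot}_{F},F)$-graph norm. Consequently $\textrm{rot}_{F}(\textbf{v}-\textbf{v}_{\pi})$ is the degree-$(k-1)$ averaged Taylor remainder of $\textrm{rot}_{F}\textbf{v}$, and its $L^{2}$-stability gives $h_{F}\|\textrm{rot}_{F}(\textbf{v}-\textbf{v}_{\pi})\|_{F}\lesssim h_{F}\|\textrm{rot}_{F}\textbf{v}\|_{F}$, which is exactly the admissible second term of~\eqref{standardinterpolation12dedge}. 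The essential difficulty relative to the nodal case is precisely this: in the low-regularity regime $s<1$ one cannot control $\textrm{rot}_{F}(\textbf{v}-\textbf{v}_{\pi})$ through derivative interpolation, so the whole argument hinges on selecting a polynomial approximant whose rotor reproduces (the projection of) $\textrm{rot}_{F}\textbf{v}$, thereby transferring the unavoidable deficit onto $h_{F}\|\textrm{rot}_{F}\textbf{v}\|_{F}$.
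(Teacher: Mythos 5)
Your proposal is correct and follows essentially the same route as the paper: identify $\textrm{rot}_{F}(\widetilde{\textbf{I}}^{e}_{h}\textbf{v})$ with $\Pi^{0,F}_{k-1}(\textrm{rot}_{F}\textbf{v})$ via the DoFs to get~\eqref{standardinterpolation22dedge}, then compare $\widetilde{\textbf{I}}^{e}_{h}\textbf{v}$ with a polynomial (reproduced by the interpolator), apply the stability bound~\eqref{prioribound2dedge}, reduce each term through the matching conditions~\eqref{eq:standard_interpolation}, and treat the boundary term with Lemma~\ref{lemma44} plus the edge inverse inequality. The one place where you genuinely diverge is the low-regularity regime $0<s<1$: the paper simply downgrades the comparison polynomial to the constant $\mathbf{\Pi}_{0}^{0,F}\mathbf{v}$, so that $\textrm{rot}_{F}$ of the approximant vanishes identically and the rot contribution is trivially $h_{F}\|\textrm{rot}_{F}\textbf{v}\|_{F}$, whereas you keep the degree-$k$ averaged Taylor polynomial and invoke its commuting property with $\textrm{rot}_{F}$ (extended to $\textbf{H}(\textrm{rot}_{F},F)$ by density) together with $L^{2}$-stability of the remainder. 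Both devices are sound and land on the same admissible term; the paper's is more elementary, while yours has the mild advantage of retaining the full-order approximant throughout and makes explicit why the obstruction is structural rather than an artifact of the projection choice. A further cosmetic difference is that you take $\varepsilon<s$ strictly in Lemma~\ref{lemma44} and rely on fractional Bramble--Hilbert bounds for the Taylor polynomial, while the paper takes $\varepsilon=s$ and uses $H^{s}$-stability of the $L^{2}$ projection; either is fine.
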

\begin{proof}
For each $p_{k-1}\in \mathbb{P}_{k-1}(F)$, we write
\[
\int_{F}\textrm{rot}_{F}\hspace{0.05cm}(\textbf{v} -\widetilde{\textbf{I}}^{e}_{h}\textbf{v})p_{k-1} 
\overset{\text{IBP},\eqref{eq:standard_interpolation1},\eqref{eq:standard_interpolation3}}{=} 0. 
\]
This and the fact that $\textrm{rot}_{F}\hspace{0.05cm}( \widetilde{\textbf{I}}^{e}_{h}\textbf{v})\in \mathbb{P}_{k-1}(F)$ imply that
\begin{align}
 \label{standardinterpolation22dedge_proof1}
 \textrm{rot}_{F}\hspace{0.05cm}(\widetilde{\textbf{I}}^{e}_{h}\textbf{v}) =  \Pi_{k-1}^{0,F}(\textrm{rot}_{F}\hspace{0.05cm} \textbf{v}).
\end{align}
Then, \eqref{standardinterpolation22dedge} follows from standard polynomial approximation properties.

Next, we focus on \eqref{standardinterpolation12dedge}.
By \eqref{eq:standard_interpolation1} and  the fact that $\widetilde{\textbf{I}}^{e}_{h}\textbf{v} \cdot \textbf{t}_{e}\in \mathbb{P}_{k}(e)$, we have
\begin{align}
\label{projectiondefine}
 \Pi_{k}^{0,e}(\textbf{v}\cdot \textbf{t}_{e})=\widetilde{\textbf{I}}^{e}_{h}\textbf{v}\cdot \textbf{t}_{e} \hspace{0.3cm}    \forall e\subseteq \partial F.
\end{align}
Since $\mathbf{\Pi}_{k}^{0,F} \mathbf{v}\in(\mathbb{P}_{k}(F))^{2}\subseteq  \textbf{V}_{k}^{e}(F)$, we have
\[
\begin{split}
& \| \mathbf{\Pi}_{k}^{0,F} \mathbf{v}-\widetilde{\textbf{I}}^{e}_{h}\textbf{v} \|_{F}
\overset{\eqref{prioribound2dedge}}{\lesssim} 
h_{F}\|\textrm{rot}_{F}\hspace{0.05cm}(\mathbf{\Pi}_{k}^{0,F} \mathbf{v}-\widetilde{\textbf{I}}^{e}_{h}\textbf{v})\|_{F}\\
&\hspace{0.5cm} +h_{F}^{\frac12}\|(\mathbf{\Pi}_{k}^{0,F} \mathbf{v}-\widetilde{\textbf{I}}^{e}_{h}\textbf{v})\cdot\textbf{t}_{\partial F}\|_{\partial F}
+\sup_{p_{k}\in \mathbb{P}_{k}(F)} \frac{\int_{F}  (\mathbf{\Pi}_{k}^{0,F} \mathbf{v} - \widetilde{\textbf{I}}^{e}_{h}\textbf{v}) \cdot \textbf{x}_{F} p_{k}}{\|\textbf{x}_{F} p_{k}\|_{F}}.
\end{split}
\]
As for the boundary term, also using~\eqref{projectiondefine}, we have
\[
\begin{aligned}
h_{F}^{\frac12} \|(\mathbf{\Pi}_{k}^{0,F} \mathbf{v} - \widetilde{\textbf{I}}^{e}_{h} \textbf{v}) \cdot \textbf{t}_{\partial F} \|_{\partial F}
& \lesssim 
h_{F}^{\frac12} \sum_{e \subset \partial F} \sup_{p_k \in \mathbb P_k(e)}
\frac{\big((\mathbf{\Pi}_{k}^{0,F} \mathbf{v} - \widetilde{\textbf{I}}^{e}_{h} \textbf{v}) 
\cdot \textbf{t}_{\partial F} , p_k \big)_{e}}{\Vert p_k\Vert_{e}} \\
& =
h_{F}^{\frac12} \sum_{e \subset \partial F} \sup_{p_k \in \mathbb P_k(e)}
\frac{\big((\textbf{v} - \mathbf{\Pi}_{k}^{0,F} \mathbf{v} ) \cdot \textbf{t}_{\partial F} , p_k \big)_{e}}{\Vert p_k\Vert_{e}}.
\end{aligned}
\]
Using~\eqref{L1BOUND} with~$\varepsilon=s$ and a polynomial inverse inequality, we deduce
\[
\begin{split}
h_{F}^{\frac12} \|(\mathbf{\Pi}_{k}^{0,F} \mathbf{v} - \widetilde{\textbf{I}}^{e}_{h} \textbf{v}) \cdot \textbf{t}_{\partial F} \|_{\partial F}
\lesssim & \|\textbf{v}-\mathbf{\Pi}_{k}^{0,F} \mathbf{v}\|_{F}
+
h_{F}^{s} \vert \textbf{v} - \mathbf{\Pi}_{k}^{0,F} \mathbf{v} \vert_{s,F}\\
& +
h_{F} \Vert \textrm{rot}_{F} \hspace{0.05cm} (\textbf{v} - \mathbf{\Pi}_{k}^{0,F} \mathbf{v}) \Vert_{F}.
\end{split} 
\]
Further, the definition of~$\widetilde{\textbf{I}}^{e}_{h}$ in~\eqref{eq:standard_interpolation} entails
\[
\int_{F}  (\mathbf{\Pi}_{k}^{0,F} \mathbf{v} - \widetilde{\textbf{I}}^{e}_{h}\textbf{v}) \cdot \textbf{x}_{F} p_{k}
=
\int_{F}  (\mathbf{\Pi}_{k}^{0,F} \mathbf{v} - \textbf{v}) \cdot \textbf{x}_{F} p_{k}.
\]
Thus, we write
\begin{eqnarray}
\begin{aligned} \label{standardinterpolation12dedge_proof1111}
\| \mathbf{\Pi}_{k}^{0,F} \mathbf{v}-\widetilde{\textbf{I}}^{e}_{h}\textbf{v} \|_{F}
\lesssim
& \|\textbf{v}-\mathbf{\Pi}_{k}^{0,F} \mathbf{v}\|_{F}
+ h_{F}^{s} \vert \textbf{v} - \mathbf{\Pi}_{k}^{0,F} \mathbf{v} \vert_{s,F}\\
& + h_{F}\|\textrm{rot}_{F}\hspace{0.05cm}(\textbf{v}-\mathbf{\Pi}_{k}^{0,F} \mathbf{v})\|_{F}
+ h_{F}\|\textrm{rot}_{F}\hspace{0.05cm}( \mathbf{v}-\widetilde{\textbf{I}}^{e}_{h}\textbf{v})\|_{F}. 
\end{aligned}
\end{eqnarray}
If~$s\ge 1$, then we apply~\eqref{standardinterpolation12dedge_proof1111}, \eqref{standardinterpolation22dedge} with~$r=s-1$,
and standard polynomial approximation properties, leading to
\begin{eqnarray}
\begin{aligned}[b] \label{eq50}
\| \mathbf{\Pi}_{k}^{0,F} \textbf{v}
& -\widetilde{\textbf{I}}^{e}_{h}\textbf{v}\|_{F}
\!\lesssim\! \|\textbf{v} \!-\! \mathbf{\Pi}_{k}^{0,F} \mathbf{v}\|_{F}
\!+\! h_{F}  | \textbf{v} - \mathbf{\Pi}_{k}^{0,F} \mathbf{v}|_{1,  F}
\!+\! h_{F}\|\textrm{rot}_{F}\hspace{0.05cm}(\textbf{v} \!-\! \mathbf{\Pi}_{k}^{0,F} \mathbf{v})\|_{F}\\
& \!+\! h_{F}\|\textrm{rot}_{F}\hspace{0.05cm}( \mathbf{v} \!-\! \widetilde{\textbf{I}}^{e}_{h}\textbf{v})\|_{F}
\lesssim h_{F}^{s}\left(|\textbf{v}|_{s,F}
+ |\textrm{rot}_{F}\hspace{0.05cm}\mathbf{v}|_{s-1,F}\right) 
\lesssim  h_{F}^{s} |\textbf{v}|_{s,F}.
\end{aligned}
 \end{eqnarray}
Instead, if~$0 < s < 1$, we replace the term~$\mathbf{\Pi}_{k}^{0,F} \mathbf{v}$ by~$\mathbf{\Pi}_{0}^{0,F} \mathbf{v}$ in~\eqref{standardinterpolation12dedge_proof1111}.
Then, we apply \eqref{standardinterpolation22dedge} with~$r=0$
and standard polynomial approximation properties, yielding
\begin{eqnarray}
\begin{aligned}[b] \label{standardinterpolation12dedge_proof3492}
\| \mathbf{\Pi}_{0}^{0,F} \textbf{v}-\widetilde{\textbf{I}}^{e}_{h}\textbf{v}\|_{F} 
& \!\lesssim\! \|\textbf{v} \!-\!\mathbf{\Pi}_{0}^{0,F} \mathbf{v}\|_{F}
\!+\! h_{F}^{s}| \textbf{v} - \mathbf{\Pi}_{0}^{0,F} \mathbf{v}|_{s,F} 
\!+\! h_{\!F}\|\textrm{rot}_{F}\hspace{0.05cm}(\textbf{v} \!-\! \mathbf{\Pi}_{0}^{0,F} \mathbf{v})\|_{F} \\
& \quad \!+\! h_{\!F}\|\textrm{rot}_{F}\hspace{0.05cm}( \mathbf{v} \!-\! \widetilde{\textbf{I}}^{e}_{h} \textbf{v})\|_{F}
\lesssim  h_{F}^{s} |\textbf{v}|_{s,F} + 
h_F \| \textrm{rot}_{F}\hspace{0.05cm}\mathbf{v} \|_{F}.
\end{aligned}
\end{eqnarray}
Bounds~\eqref{eq50} and~\eqref{standardinterpolation12dedge_proof3492}
combined with a triangle inequality
and standard polynomial approximation results
prove the assertion \eqref{standardinterpolation12dedge}.
\end{proof}

\subsection{Serendipity  edge virtual element space on polygons}
\label{sec4_1}
As in Refs.~\cite{beiraobrezzidasimaru2018siam,bebremaru2016CFSerendipity,bebremaru2016RLMASerendipity}, we set $\eta_{F}$
as the minimum number of straight lines necessary to cover the boundary of~$F$ and define $\beta_{F}:={k+1}-\eta_{F}$.
Next, we introduce a well defined projection $\mathbf{\Pi}_{S}^{e}:  \textbf{V}_{k}^{e}(F)\rightarrow  (\mathbb{P}_{k}(F))^{2}$ as\cite{beiraobrezzidasimaru2018siam}
  \begin{subequations}
 \label{eq:standard_projection}
\begin{align}
 \label{eq:standard_projection1}
&  \int_{\partial F}[(\textbf{v}_{h}-\mathbf{\Pi}_{S}^{e} \textbf{v}_{h})\cdot \textbf{t}_{\partial F}][\bm{\nabla}_{F} p_{k+1}\cdot \textbf{t}_{\partial F}]=0 \hspace{0.4cm}\forall p_{k+1}\in \mathbb{P}_{k+1}(F); \\
\label{eq:standard_projection2}
&\int_{\partial F} (\textbf{v}_{h}- \mathbf{\Pi}_{S}^{e} \textbf{v}_{h})\cdot \textbf{t}_{\partial F}  =0;\\
\label{eq:standard_projection3}
&\int_{F} \textrm{rot}_{F}\hspace{0.05cm}(\textbf{v}_{h}-  \mathbf{\Pi}_{S}^{e} \textbf{v}_{h}) p_{k-1}^{0}   =0 \hspace{0.4cm}\forall p_{k-1}^{0}\in \mathbb{P}_{k-1}^{0}(F) \hspace{0.2cm} \textrm{only\ for}\ k>1;\\
 \label{eq:standard_projection4}
&\int_{F}  (\textbf{v}_{h}-  \mathbf{\Pi}_{S}^{e} \textbf{v}_{h})\cdot \textbf{x}_{F} p_{\beta_F}    =0\hspace{0.6cm}\forall p_{\beta_F} \in \mathbb{P}_{\beta_F}(F) \hspace{0.2cm} \textrm{only\ for}\ \beta_{F}\geq 0.
 \end{align}
\end{subequations}
\begin{remark}
To handle the serendipity VEM  in the present section we assume the additional (uniform) convexity condition  (\textbf{MC})  in Section \ref{sec21}.
For the particular case $\beta_F < 0$, such a condition could be relaxed at the price of additional technicalities that we prefer to avoid.
\end{remark}
Based on the space~$\textbf{V}_{k}^{e}(F)$ in~\eqref{finitspace} and the projection operator~$\mathbf{\Pi}_{S}^{e}$ in~\eqref{eq:standard_projection},
we define the serendipity edge virtual element space on the face $F$ as
 \begin{align}
 \label{svspace}
 \textbf{SV}_{k}^{e}(F)=\left\{\textbf{v}_{h}\in \textbf{V}_{k}^{e}(F): \int_{F}(\textbf{v}_{h}-\mathbf{\Pi}_{S}^{e} \textbf{v}_{h})\cdot\textbf{x}_{F} p =0\hspace{0.2cm} \forall p  \in \mathbb{P}_{\beta_{F}|k}(F)\right\},
\end{align}
where~$\mathbb{P}_{\beta_{F}|k}(F)$ is chosen to satisfy
$\mathbb P_k(F) = \mathbb P_{\beta_F} \oplus \mathbb P_{\beta_{F}|k}(F)$.
It can be checked that $ (\mathbb{P}_{k}(F))^{2}\subseteq \textbf{SV}_{k}^{e}(F) \subseteq \textbf{V}_{k}^{e}(F) $.
A set of unisolvent  DoFs $\{\textrm{dof}_{i}^{F}\}_{i=1}^{N_{d}}$
for the space $ \textbf{SV}_{k}^{e}(F)$
with  $N_{d}=N_{e}\pi_{k,1}+\pi_{k-1,2}+\pi_{\beta_{F},2}-1$ is given by \eqref{dof1}, \eqref{dof3}, and the internal moments
\begin{align}
\label{DoFsinternal}
\int_{F} \textbf{v}_{h}\cdot \textbf{x}_{F} p_{\beta_{F}}    \hspace{0.3cm} \forall p_{\beta_{F}}\in \mathbb{P}_{\beta_{F}}(F)  \hspace{0.2cm} \textrm{only\ for}\ \beta_{F}\geq 0.
\end{align}
This choice reduces the internal  DoFs of the standard edge virtual element space $\textbf{V}_{k}^{e}(F)$ by~$(\pi_{k,2}-\pi_{\beta_{F},2})$.
Notably, we   can compute the moments of order up to $\beta_{F}$ given in \eqref{DoFsinternal},
whereas  the remaining moments of order up to $k$ can be computed by those of the projection $\mathbf{\Pi}_{S}^{e}$; see \eqref{svspace}.

By Proposition 5.2 in Ref.~\cite{lourencobrezzidassi2017cmame}, we have that a set of unisolvent DoFs  $\{\textrm{DoF}^{F}_{i}\}_{i=1}^{N_{D} }$ with $N_{D}=2\pi_{k,2}$  for the space $ (\mathbb{P}_{k}(F))^{2}$
is given by the functionals used to define~$\mathbf{\Pi}_{S}^{e}$ in~\eqref{eq:standard_projection}.

For  sufficiently large  constants $\gamma, \hat{\gamma}\in \mathbb{R}^{+}$,  which we shall fix in the proofs of Corollary~\ref{cor31}
and Lemma~\ref{lemma41} below,
we introduce a norm $\interleave \cdot\interleave_{F}$  on~$(\mathbb{P}_{k}(F))^{2}$ induced by~\eqref{eq:standard_projection}:
 \begin{equation}
\begin{aligned}
 \label{newnorm1}
& \interleave \textbf{s}_{k}\interleave_{F}:=\widetilde{\gamma}\left| \int_{\partial F}  \textbf{s}_{k} \cdot \textbf{t}_{\partial F}  \right | +\gamma\sup_{p_{k-1}^{0}\in \mathbb{P}_{k-1}^{0}(F)}\frac{h_{F} \int_{F} \textrm{rot}_{F}\hspace{0.05cm} \textbf{s}_{k} p_{k-1}^{0}   }{\|p_{k-1}^{0}\|_{F} } \\ 
& \quad +\!
\hat{\gamma}
\!\!\!\!\!\!\!\!\sup_{p_{k+1}\in \mathbb{P}_{k+1}(F)}\!\!\!\!\!\!\!\!
\frac{h_{F}^{\frac12}\!
\int_{\partial F}(\textbf{s}_{k} \!\cdot\! \textbf{t}_{\partial F})
(\bm{\nabla}_{F} p_{k+1} \!\cdot\! \textbf{t}_{\partial F}) }{\|\bm{\nabla}_{F} p_{k+1}\cdot \textbf{t}_{\partial F}\|_{\partial F}}
+
\!\!\!\!\!\!\!\!\sup_{p_{\beta_{F}}\in \mathbb{P}_{\beta_{F}}(F)}\!\!\!\!\!\!\!\!
\frac{ h_{F}^{-1} \int_{F} \textbf{s}_{k} \!\cdot\! \textbf{x}_{F} p_{\beta_{F}}  }{\|p_{\beta_{F}} \|_{F}},
\end{aligned}
\end{equation}
where $\widetilde{\gamma}:= \gamma h_{F}/|F|^{\frac12}$.

By the mesh regularity assumptions in Section~\ref{sec21}, $h_{F}/|F|^{\frac12}$ is a uniformly bounded constant.
Further, the operator $\interleave \cdot\interleave_{F}$ can be applied to all sufficiently smooth functions.

We first  prove  a critical polynomial estimate that we shall employ in the following analysis.
\begin{lem} \label{pro31}
If the assumption (\textbf{MC}) in Section~\ref{sec21} is valid,
then the following bound holds true:
\begin{align} \label{eq:polunomialimportantcase1}
\| p_{k}\|_{F}
\lesssim h_{F}^{\frac12}\|  p_{k} \|_{\partial F}+\sup_{p_{\beta_{F}}\in \mathbb{P}_{\beta_{F}}(F)}\frac{  \int_{F}  {p}_{k} p_{\beta_{F}} }{\|p_{\beta_{F}} \|_{F}} \hspace{0.5cm} \forall p_{k}\in \mathbb{P}_{k}(F),
 \end{align}
where~$C$ only depends on~$\varepsilon$, $k$, and the shape-regularity parameter~$\rho$.
\end{lem}
\begin{proof}
It suffices to prove the result  when $h_{F}=1$ and then use a scaling argument.
It is not  restrictive to assume that $F$ has a vertex in the origin of the~$[x,y]$ coordinate axes and an edge lies on the ``$y=0$'' axis.
Given any vertex~$v_{i}$ of~$F$, we denote its coordinates by $[v_{i,x},v_{i,y}]$.
We define the set of admissible polygons
\begin{equation*}
\begin{aligned}
\textrm{S}:=  \big\{&F:\   F\  \textrm{is\ a\ convex\ polygon\ with}\ \eta_{F} \ \textrm{edges\ and\ vertices\  counter-clockwise }  \\& \textrm{    ordered } \{v_1,v_2,\cdots,v_{\eta_{F}}\} \textrm{ with } v_{1}=(0,0), v_{2,y}=0; \textrm{ furthermore } h_{F}=1 ,
\\& \ h_{e}\geq \rho\ \forall e\subseteq \partial F, \ \varepsilon <\theta< \pi-\varepsilon\  \textrm{for\ each\ internal\  angle}\  \theta\ \textrm{of}\ F \big\}.
\end{aligned}
\end{equation*}
We also define the (injective) application $\mathcal{I}: \textrm{S} \rightarrow \mathbb{R}^{2\eta_{F}}$  by
\[
F \longmapsto
[v_{1,x},v_{1,y},v_{2,x},v_{2,y},\cdots,v_{\eta_{F},x},v_{\eta_{F},y}].
\]
Under the geometric assumptions of Section~\ref{sec21},
$\mathcal{I}(\textrm{S})$ is a bounded and closed subset in $\mathbb{R}^{2\eta_{F}}$.
 \begin{figure}
\begin{center}
 \includegraphics[scale=0.3]{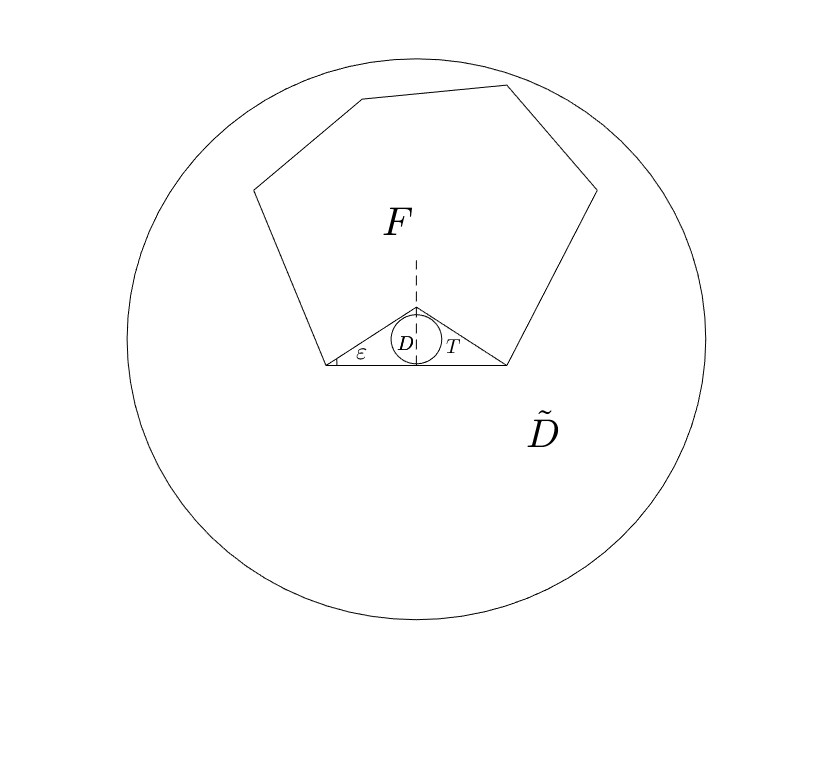}
\end{center}
 \setlength{\abovecaptionskip}{-1.3cm}
\caption{The sample figure on the element $F\in \textrm{S}$.}
\label{fig1}
\end{figure}
For each polygon $F\in$ S, we denote  the edge connecting $v_{i}$ to $v_{i+1}$ by $e_i$, with the usual notation $v_{\eta_{F}+1}=v_{1}$.
By the assumptions that $h_{e}\geq \rho$ $\forall e\subseteq \partial F$ and    each internal  angle $\theta$ of the convex polygon $F$ satisfies $\varepsilon <\theta< \pi-\varepsilon$,
there exists an isosceles triangle $T$ with   basis $e_{1}$ and  height $h\geq\beta$ (for a uniformly  positive constant $\beta$) that is contained in all $F$ of $S$.
Therefore, it exists a disk $D\subseteq T \subseteq F$ such that its radius is uniformly bounded by $h_{F}$ from below.
Meanwhile, we denote the disk of radius $R=1$ that is concentric with $D$ and containing $F$ by~$\widetilde{D}$;
see Figure~\ref{fig1} for a graphical example.
We have
 \begin{align}
 \label{liftingoperator235}
    D\subseteq F \subseteq \widetilde{D} \qquad \forall F\in \textrm{S}.
 \end{align}
We are now in the position of proving~\eqref{eq:polunomialimportantcase1}  by contradiction.
If \eqref{eq:polunomialimportantcase1} were false, then we could find a sequence of elements $\{F_{m}\}_{m\in \mathbb{N}}$ in S
and a sequence of polynomials $\{p_{m}\}_{m\in \mathbb{N}}\in  \mathbb{P}_{k}(F_{m}) $  such that
\begin{align} \label{contradictioncondition}
 \|p_{m}\|_{F_{m}}=1, 
 \qquad 
 \|p_{m}\|_{\partial F_{m}}\leq \frac{1}{m},
 \qquad 
  \!\!\!\!\!\!\sup_{p_{\beta}\in \mathbb{P}_{\beta_{F}}(F)}\!\!\!\!
  \frac{  \int_{F}  {p}_{m} p_{\beta_{F}} }{\|p_{\beta_{F}} \|_{F}}\leq \frac{1}{m}  \hspace{0.2cm}\forall m\in \mathbb{N}.
\end{align}
Since $\mathcal{I}$(S) is bounded and closed, there exists a subsequence   $\mathcal{I}(F_{m_{j}})\subseteq \mathbb{R}^{2\eta_{F}}$ that converges to  $\mathcal{I}(F)$ for some $F\in \textrm{S}$  as $j\rightarrow+\infty$.
In particular, all vertexes of $F_{m_{j}}$ converge  to those of  $F\in \textrm{S}$  as $j\rightarrow+\infty$.
By \eqref{liftingoperator235} and \eqref{contradictioncondition}, we have
\[
     \|p_{m_{j}}\|_{D}\leq  \|p_{m_{j}}\|_{F_{m_{j}}}=1,
\]
which implies that $\{p_{m_{j}}\}_{j\in\mathbb{N}}\in  \mathbb{P}_{k}(D) $ is a bounded sequence.

Then, there exists a subsequence $\{p_{m_{j_{l}} }\}_{l\in\mathbb{N}}$ such that $p_{m_{j_{l}} }\rightarrow p_{k}\in  \mathbb{P}_{k}(D) $ as $l\rightarrow+\infty$.
By \eqref{liftingoperator235} and standard polynomial properties, it follows that
\[
  1=\|p_{m_{j_{l}} }\|_{F_{m_{j_{l}} }}\leq \|p_{m_{j_{l}} }\|_{\widetilde{D}} \lesssim  \|p_{m_{j_{l}} }\|_{D}.
\]
By taking $l\rightarrow +\infty$, this yields
\begin{align}
\label{contrac11}
    p_{k}\neq 0\ \textrm{in}\  D\subseteq F.
\end{align}
Since the ordered vertices of $F_{m_j}$ converge to those of $F$ due to $\mathcal{I}(  F_{m_j})\rightarrow\mathcal{I}(F)$,
we have the boundary convergence $\partial F_{m_j} \rightarrow \partial F $  as $j\rightarrow+\infty$.
By~\eqref{contradictioncondition}, we know that the subsequences  $\{p_{m_{j_{l}}} \}_{l\in\mathbb{N}}$ and $\{\partial F_{m_{j_{l}}}\}_{l\in\mathbb{N}}$  satisfy
\[
    \|p_{m_{j_{l}}}\|_{\partial F_{m_{j_{l}}}} \leq \frac{1}{m_{j_{l}}},
\]
which entails that  $p_{k}{|_{\partial F}}=0$  by  taking $l\rightarrow + \infty$.
Then, there exists $\hat{p}_{\beta_{F}}\in \mathbb{P}_{\beta_{F}}(F)$ such that
\begin{align} \label{eq731}
 p_{k}=b_{\eta_{F}}  \hat{p}_{\beta_{F}},
\end{align}
where $b_{\eta_{F}}$ is the polynomial of degree  $\eta_{F}$ that vanishes identically on  $\partial F$ and  is equal to 1 at the barycenter   of the element~$F$.
Since~$F$ is convex, we have $b_{\eta_{F}}>0$ in $F$;
see, e.g., Ref.~\cite{beiraobrezzidasimaru2018cmame}.
Letting $\ell \rightarrow +\infty$, recalling the last inequality of \eqref{contradictioncondition},
and combining the resulting inequality and~\eqref{eq731} together,
we arrive at
\[
 \int_{F}b_{\eta_{F}}  (\hat{p}_{\beta_{F}})^{2} =0,
\]
which implies  that $\hat{p}_{\beta_{F}}\equiv0$. By \eqref{eq731}, it follows  that
\[
    p_{k}\equiv 0\  \textrm{in} \ F.
\]
Yet, this and~\eqref{contrac11} contradict each  other, whence the assertion follows.
\end{proof}
\begin{cor} \label{cor31}
Under the same assumptions of Lemma \ref{pro31},
for $\hat\gamma$ sufficiently large and independent of~$F$,
and each $p^{0}_{k}\in \mathbb{P}_{k}^{0}(F)$, we have
\[
\|\bm{\nabla}_{F} p^{0}_{k}\|_{F}
\lesssim \hat\gamma h_{F}^{\frac12}\|\bm{\nabla}_{F} p^{0}_{k}\cdot \textbf{t}_{\partial F}\|_{\partial F}+
 \!\!\!\!\!\!\sup_{p_{\beta_{F}}\in \mathbb{P}_{\beta_{F}}(F)}\!\!\!\!\!\!
 \frac{ h_{F}^{-1} \int_{F}  \bm{\nabla}_{F} p^{0}_{k} \cdot \textbf{x}_{F}p_{\beta_{F}} }{\|p_{\beta_{F}} \|_{F}}.
 \]
\end{cor}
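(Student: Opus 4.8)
The plan is to bound $\|\bm{\nabla}_{F} p^{0}_{k}\|_{F}$ by first passing to $\|p^{0}_{k}\|_{F}$ through a polynomial inverse inequality and then invoking Lemma~\ref{pro31}. Since $\bm{\nabla}_{F} p^{0}_{k}$ is unchanged when a constant is added to $p^{0}_{k}$, and every quantity appearing in the claimed estimate depends on $p^{0}_{k}$ only through its gradient, I would first normalise $p^{0}_{k}$ to have zero average on $\partial F$. The second inverse estimate in~\eqref{Polynomialinverseestimates} then gives $\|\bm{\nabla}_{F} p^{0}_{k}\|_{F}\lesssim h_{F}^{-1}\|p^{0}_{k}\|_{F}$, so it suffices to control $h_{F}^{-1}\|p^{0}_{k}\|_{F}$. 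Applying Lemma~\ref{pro31} to $p^{0}_{k}\in\mathbb{P}_{k}(F)$ yields
\[
h_{F}^{-1}\|p^{0}_{k}\|_{F}
\lesssim h_{F}^{-\frac12}\|p^{0}_{k}\|_{\partial F}
+ h_{F}^{-1}\sup_{p_{\beta_{F}}\in\mathbb{P}_{\beta_{F}}(F)}\frac{\int_{F} p^{0}_{k}\, p_{\beta_{F}}}{\|p_{\beta_{F}}\|_{F}},
\]
and the remaining work is to rewrite the two terms on the right in the form required by the statement.

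For the boundary term I would use that $p^{0}_{k}|_{\partial F}$ is continuous and piecewise polynomial, hence lies in $H^{1}(\partial F)$, and apply the one-dimensional Poincar\'e inequality on the closed curve $\partial F$ (whose length is $\approx h_{F}$ under~(\textbf{M})): with the chosen zero-average normalisation this gives $\|p^{0}_{k}\|_{\partial F}\lesssim h_{F}\|\bm{\nabla}_{F} p^{0}_{k}\cdot\textbf{t}_{\partial F}\|_{\partial F}$, so that $h_{F}^{-1/2}\|p^{0}_{k}\|_{\partial F}\lesssim h_{F}^{1/2}\|\bm{\nabla}_{F} p^{0}_{k}\cdot\textbf{t}_{\partial F}\|_{\partial F}$, which is exactly the first term up to the constant to be absorbed by $\hat\gamma$. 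For the interior moment I would convert the scalar moment of $p^{0}_{k}$ into the vector moment of $\bm{\nabla}_{F} p^{0}_{k}$ against $\textbf{x}_{F}$. The key observation is that the shifted Euler operator $L\,q:=\textrm{div}_{F}(\textbf{x}_{F} q)=2q+\textbf{x}_{F}\cdot\bm{\nabla}_{F} q$ maps $\mathbb{P}_{\beta_{F}}(F)$ into itself and is invertible, since on the homogeneous components (in $\textbf{x}_{F}$) of degree $d$ it acts as multiplication by $d+2>0$; moreover $\|L^{-1}p_{\beta_{F}}\|_{F}\approx\|p_{\beta_{F}}\|_{F}$ uniformly by shape regularity. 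Writing $p_{\beta_{F}}=L\,\tilde p$ and integrating by parts,
\[
\int_{F} p^{0}_{k}\, p_{\beta_{F}}
=\int_{F} p^{0}_{k}\,\textrm{div}_{F}(\textbf{x}_{F}\tilde p)
=-\int_{F}\bm{\nabla}_{F} p^{0}_{k}\cdot\textbf{x}_{F}\,\tilde p
+\int_{\partial F} p^{0}_{k}\,(\textbf{x}_{F}\cdot\textbf{n}_{\partial F})\,\tilde p .
\]
The first summand is controlled by $h_{F}$ times the desired vector-moment supremum, while the boundary summand is bounded using $|\textbf{x}_{F}|\lesssim h_{F}$, the inverse trace $\|\tilde p\|_{\partial F}\lesssim h_{F}^{-1/2}\|\tilde p\|_{F}$ from~\eqref{Polynomialinverseestimates}, and the same boundary Poincar\'e inequality, producing another multiple of $h_{F}^{3/2}\|\bm{\nabla}_{F} p^{0}_{k}\cdot\textbf{t}_{\partial F}\|_{\partial F}$. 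Dividing by $\|p_{\beta_{F}}\|_{F}\approx\|\tilde p\|_{F}$ and by $h_{F}$ then reproduces the two terms of the statement.

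The main obstacle, and the step deserving the most care, is this conversion of the scalar interior moment into the prescribed vector moment: it rests on the invertibility of $L$ on $\mathbb{P}_{\beta_{F}}(F)$ together with the uniform norm equivalence $\|L^{-1}p_{\beta_{F}}\|_{F}\approx\|p_{\beta_{F}}\|_{F}$, which I would justify by the compactness of the admissible configurations introduced in the proof of Lemma~\ref{pro31}, and on a careful tracking of the boundary contribution generated by the integration by parts. Collecting the estimates gives $\|\bm{\nabla}_{F} p^{0}_{k}\|_{F}\lesssim C\,h_{F}^{1/2}\|\bm{\nabla}_{F} p^{0}_{k}\cdot\textbf{t}_{\partial F}\|_{\partial F}+\sup_{p_{\beta_{F}}}h_{F}^{-1}\int_{F}\bm{\nabla}_{F} p^{0}_{k}\cdot\textbf{x}_{F} p_{\beta_{F}}/\|p_{\beta_{F}}\|_{F}$ with an explicit constant $C$ depending only on $k$, $\rho$ and $\varepsilon$; choosing $\hat\gamma\geq C$ absorbs this constant into the first term and yields the claim. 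Only assumption~(\textbf{MC}), inherited through Lemma~\ref{pro31}, is needed beyond~(\textbf{M}); alternatively, a self-contained compactness argument mirroring that of Lemma~\ref{pro31}, in which the limit field satisfies $\bm{\nabla}_{F} p\cdot\textbf{t}_{\partial F}=0$ and all vector moments vanish, would give the same conclusion.
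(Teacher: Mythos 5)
Your proposal is correct and follows essentially the same route as the paper's proof: reduce $\|\bm{\nabla}_F p_k^0\|_F$ to $h_F^{-1}\|p_k^0\|_F$ by the inverse estimate \eqref{Polynomialinverseestimates}, invoke Lemma~\ref{pro31}, control the boundary term by the one-dimensional Poincar\'e inequality on $\partial F$, and convert the scalar interior moment into the vector moment against $\textbf{x}_F$ by integrating by parts and using the invertibility of $q\mapsto\textrm{div}_F(\textbf{x}_F q)$ on $\mathbb{P}_{\beta_F}(F)$ (which the paper obtains from \eqref{decompo1} and a scaling argument, and which you justify directly via the Euler-operator eigenvalues). The only differences are presentational: the paper runs the chain of inequalities in the reverse direction, starting from the right-hand side.
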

\begin{proof}
We write
\[
\begin{split}
&\hat\gamma h_{F}^{\frac12}\|\bm{\nabla}_{F} p^{0}_{k}\cdot \textbf{t}_{\partial F}\|_{\partial F}
+\sup_{p_{\beta_{F}}\in \mathbb{P}_{\beta_{F}}(F)}\frac{ h_{F}^{-1} \int_{F}  \bm{\nabla}_{F} p^{0}_{k} \cdot \textbf{x}_{F}p_{\beta_{F}} }{\|p_{\beta_{F}} \|_{F}}\\
& \!\!\!\!\overset{\eqref{poincarefridine},\text{IBP}}{\geq }\!\!\!\!
C' \hat\gamma h_{F}^{-\frac12} \| p^{0}_{k} \|_{\partial F}
+ \!\!\!\!\!\!\!\!\!\!\sup_{p_{\beta_{F}}\in \mathbb{P}_{\beta_{F}}(F)}\!\!\!\!\!\!\!\!\!\!
\frac{h_{F}^{-1} \int_{F}  p^{0}_{k}\textrm{div} (\textbf{x}_{F} p_{\beta_{F}}) 
-h_{F}^{-1} \int_{\partial F}  p^{0}_{k} \textbf{x}_{F}\cdot \textbf{n}_{\partial F} p_{\beta_{F}} }{\|p_{\beta_{F}} \|_{F}}\\
& \overset{\eqref{decompo1}}{\geq}
(\hat\gamma C'-C'')h_{F}^{-\frac12} \| p^{0}_{k} \|_{\partial F} 
+  \!\!\!\!\!\!\!\!\!\!\sup_{p'_{\beta_{F}}\in \mathbb{P}_{\beta_{F}}(F)}\!\!\!\!\!\!\!\!
\frac{h_{F}^{-1} \int_{F}  p^{0}_{k}  p'_{\beta_{F}}}{\|p'_{\beta_{F}} \|_{F}}
\!\!\overset{\eqref{eq:polunomialimportantcase1}}{\geq}\!\!
C h_{F}^{-1}\| p^{0}_{k} \|_{F} 
\!\!\overset{\eqref{Polynomialinverseestimates}}{\geq}\!\!
C\|\bm{\nabla}_{F} p^{0}_{k}\|_{F},
\end{split}
\]
where we have chosen $\hat\gamma$ sufficiently large.
\end{proof}

Next, we prove lower and upper bounds on the operator $\interleave \cdot \interleave_{F}$ introduced in~\eqref{newnorm1}
with respect to the $L^{2}$ norm $\|\cdot\|_{F}.$
\begin{lem}
\label{lemma41}
For given $\varepsilon>0$,  the following bounds are valid:
\begin{align}  \label{equivalencenorm}
&\|\textbf{s}_{k}\|_{F} \lesssim \interleave \textbf{s}_{k} \interleave_{F} \hspace{0.5cm} \forall \textbf{s}_{k}\in (\mathbb{P}_{k}(F))^{2}, \\
   \label{equivalencenorm2}
&\interleave\hspace{-0.08cm}\textbf{v}_{h}\interleave_{F}\lesssim  \|\textbf{v}_{h}\|_{F} \hspace{0.5cm} \forall \textbf{v}_{h}\in \textbf{V}_{k}^{e}(F),\\
     \label{equivalencenorm3}
&\interleave \hspace{-0.08cm}\textbf{v}\interleave_{F}\lesssim  \|\textbf{v}\|_{F}+h^{\varepsilon}_{F}|\textbf{v}|_{\varepsilon,F}+h_{F}\|{\rm rot}_{F}\textbf{v}\|_{F} \hspace{0.2cm} \forall \textbf{v}\in \textbf{H}^{\varepsilon}(F)\cap \textbf{H}({\rm rot}_{F}, F).
\end{align}
\end{lem}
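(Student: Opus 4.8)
The plan is to establish the three bounds \eqref{equivalencenorm}, \eqref{equivalencenorm2}, and \eqref{equivalencenorm3} essentially independently, since each has a different character. The norm $\interleave\cdot\interleave_F$ in \eqref{newnorm1} is a weighted sum of four functionals that exactly match the four groups of conditions defining $\mathbf{\Pi}_S^e$ in \eqref{eq:standard_projection}, so the key conceptual point is that $\interleave\cdot\interleave_F$ is a genuine norm on $(\mathbb{P}_k(F))^2$ precisely because these functionals are unisolvent (by Proposition~5.2 of Ref.~\cite{lourencobrezzidassi2017cmame}, as recalled after \eqref{DoFsinternal}).

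For the lower bound \eqref{equivalencenorm}, I would argue that $\interleave\cdot\interleave_F$ controls $\|\cdot\|_F$ on the finite-dimensional space $(\mathbb{P}_k(F))^2$. First I would use the Helmholtz-type decomposition \eqref{decompo1}, writing any $\textbf{s}_k \in (\mathbb{P}_k(F))^2$ as $\textbf{s}_k = \textbf{curl}_F\, p^0_{k+1} + \textbf{x}_F q_{k-1}$ for some $p^0_{k+1}\in\mathbb{P}^0_{k+1}(F)$ and $q_{k-1}\in\mathbb{P}_{k-1}(F)$, so that $\|\textbf{s}_k\|_F^2 = \|\bm{\nabla}_F p^0_{k+1}\|_F^2 + \|\textbf{x}_F q_{k-1}\|_F^2$ by orthogonality. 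The gradient part is controlled by Corollary~\ref{cor31} (whose two terms correspond to the third and fourth functionals in \eqref{newnorm1}), after first relating $\textbf{s}_k\cdot\textbf{t}_{\partial F}$ on $\partial F$ to $\bm{\nabla}_F p^0_{k+1}\cdot\textbf{t}_{\partial F}$ using that $\textbf{curl}_F\,p\cdot\textbf{t}_{\partial F} = \bm{\nabla}_F p\cdot\textbf{n}_{\partial F}$ and the $\textbf{x}_F q_{k-1}$ contribution on the boundary is absorbed. The $\textbf{x}_F q_{k-1}$ part is handled by the rot-functional (second term of \eqref{newnorm1}) together with the isomorphism property that $\textrm{rot}_F$ is invertible on $\{\textbf{x}^\perp\mathbb{P}_k\}$ from \eqref{decompo2}, plus the zero-average/boundary functionals to pin down the constant modes. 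I expect this direction to be the main obstacle, as it requires carefully matching each orthogonal component to the correct functional and choosing $\gamma,\hat\gamma$ large enough.

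For the upper bounds \eqref{equivalencenorm2} and \eqref{equivalencenorm3}, the strategy is to bound each of the four functionals in \eqref{newnorm1} directly by the right-hand sides. For \eqref{equivalencenorm2}, applied to $\textbf{v}_h\in\textbf{V}_k^e(F)$: the first two functionals (boundary integrals of $\textbf{v}_h\cdot\textbf{t}_{\partial F}$) are controlled by trace inequalities—specifically \eqref{Traceinequality2}, which bounds $\|\textbf{v}_h\cdot\textbf{t}_{\partial F}\|_{-1/2,\partial F}$ by $\|\textbf{v}_h\|_F + h_F\|\textrm{rot}_F\textbf{v}_h\|_F$, combined with the inverse estimate $\|\textrm{rot}_F\textbf{v}_h\|_F\lesssim h_F^{-1}\|\textbf{v}_h\|_F$ valid for virtual element functions (the analog of \eqref{inversediv2dedge}). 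The rot-functional is handled by the same inverse estimate directly, and the interior $\textbf{x}_F$-moment functional by Cauchy--Schwarz. For \eqref{equivalencenorm3} applied to a smooth $\textbf{v}\in\textbf{H}^\varepsilon(F)\cap\textbf{H}(\textrm{rot}_F,F)$, the boundary functionals are now estimated using Lemma~\ref{lemma44} (the bound \eqref{L1BOUND}, which produces exactly the three terms $\|\textbf{v}\|_F + h_F^\varepsilon|\textbf{v}|_{\varepsilon,F} + h_F\|\textrm{rot}_F\textbf{v}\|_F$), while the rot- and interior-moment functionals follow from Cauchy--Schwarz directly. Throughout I would rely on the uniform boundedness of $h_F/|F|^{1/2}$ noted after \eqref{newnorm1} to control $\widetilde\gamma$.
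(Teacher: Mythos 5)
Your handling of the two upper bounds \eqref{equivalencenorm2} and \eqref{equivalencenorm3} is correct and coincides with the paper's proof: the boundary functionals are passed to the scaled $H^{-\frac12}(\partial F)$ norm via \eqref{Polynomialinverseestimates1} and then bounded by \eqref{Traceinequality2} together with the inverse estimate for $\textrm{rot}_F$ on virtual functions, while for smooth fields one uses Lemma~\ref{lemma44}; the remaining functionals are Cauchy--Schwarz.

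The gap is in the lower bound \eqref{equivalencenorm}: you split $\textbf{s}_k$ with \eqref{decompo1} as $\textbf{curl}_F\,p^0_{k+1}+\textbf{x}_F q_{k-1}$, but the functionals in \eqref{newnorm1} are adapted to the \emph{other} decomposition \eqref{decompo2}, $\textbf{s}_k=\bm{\nabla}_F q^0_{k+1}+\textbf{x}_F^{\perp}q_{k-1}$, which is what the paper uses. With your splitting the matching of components to functionals fails on both sides. First, $\textrm{rot}_F$ is not injective on $\textbf{x}\,\mathbb{P}_{k-1}(F)$ (e.g.\ $\textrm{rot}_F\,\textbf{x}_F=0$), so the rot-moments cannot control $\|\textbf{x}_F q_{k-1}\|_F$; the isomorphism you invoke holds on $\textbf{x}^{\perp}\mathbb{P}_{k-1}(F)$, i.e.\ for the second component of \eqref{decompo2}, whose rot equals $\textrm{rot}_F\,\textbf{s}_k$ because the gradient part is rot-free. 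Second, Corollary~\ref{cor31} controls $\|\bm{\nabla}_F p^0_k\|_F$ through the \emph{tangential} trace $\bm{\nabla}_F p^0_k\cdot\textbf{t}_{\partial F}$, which is precisely what the third functional of \eqref{newnorm1} tests (up to the $\textbf{x}^{\perp}$ contamination, absorbed by taking $\gamma$ large relative to $\hat\gamma$); the tangential trace of your curl component is instead the normal derivative $\bm{\nabla}_F p^0_{k+1}\cdot\textbf{n}_{\partial F}$, and pairing it against $\bm{\nabla}_F p_{k+1}\cdot\textbf{t}_{\partial F}$ does not yield the quantity the corollary requires (the diagonal choice $p_{k+1}=p^0_{k+1}$ no longer produces a square). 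A minor additional point: \eqref{decompo1} and \eqref{decompo2} are direct sums, not $L^2$-orthogonal ones, so the Pythagorean identity you assert is false, though harmless here since a triangle inequality suffices. Replacing \eqref{decompo1} by \eqref{decompo2} and otherwise following your outline — rot-moments plus the mean-value functional give $\|\textbf{x}_F^{\perp}q_{k-1}\|_F\lesssim h_F\|\textrm{rot}_F\,\textbf{s}_k\|_F$, and Corollary~\ref{cor31} gives $\|\bm{\nabla}_F q^0_{k+1}\|_F$ — reproduces the paper's argument.
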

 \begin{proof}
First, we prove \eqref{equivalencenorm}.
From \eqref{decompo2} and  $\textbf{s}_{k} \in (\mathbb{P}_{k}(F))^{2}$,
there exist $q_{k+1}^{{0}}\in \mathbb{P}^{0}_{k+1}(F)$ and $q_{k-1}\in  \mathbb{P}_{k-1}(F)$ such that
\begin{align}  \label{thisdecom}
 \textbf{s}_{k}= \bm{\nabla}_{F} q^{0}_{k+1}+\textbf{x}_{F}^{\perp}q_{k-1}.
 \end{align}
Define $\widetilde{\textrm{rot}_{F}\hspace{0.05cm}\textbf{s}_{k}}:=\textrm{rot}_{F}\hspace{0.05cm}\textbf{s}_{k}-\frac{1}{|F|}\int_{F}\textrm{rot}_{F}\hspace{0.05cm}\textbf{s}_{k}$ and observe that
\begin{equation} \label{orthogonality-average-rotor}
\int_{F} \textrm{rot}_{F}\hspace{0.05cm} \textbf{s}_{k} \widetilde{\textrm{rot}_{F}\hspace{0.05cm}\textbf{s}_{k}} 
=\int_{F} \widetilde{\textrm{rot}_{F}\hspace{0.05cm}\textbf{s}_{k}} \widetilde{\textrm{rot}_{F}\hspace{0.05cm}\textbf{s}_{k}} .
\end{equation}
By taking $p_{k-1}^{0}=\widetilde{\textrm{rot}_{F}\hspace{0.05cm}\textbf{s}_{k}}$ and $p_{k+1}=q_{k+1}^{0}$ that realize~\eqref{thisdecom}
in the second and third terms involving supremum of \eqref{newnorm1}
and using the property~\eqref{orthogonality-average-rotor},
we write
\begin{eqnarray}
\begin{aligned} \label{equivalencenorm_proof2}
\interleave \textbf{s}_{k}\interleave_{F}  &\geq  \widetilde{\gamma}\left| \int_{\partial F}  \textbf{s}_{k} \cdot \textbf{t}_{\partial F}  \right | + \frac{\gamma h_{F} \int_{F} \widetilde{\textrm{rot}_{F}\hspace{0.05cm}\textbf{s}_{k}} \widetilde{\textrm{rot}_{F}\hspace{0.05cm}\textbf{s}_{k}}  }{\|\widetilde{\textrm{rot}_{F}\hspace{0.05cm}\textbf{s}_{k}}\|_{F} }\\
&\hspace{0.4cm}  + \frac{\hat{\gamma}h_{F}^{\frac12} \int_{\partial F}((\bm{\nabla}_{F} \hspace{0.05cm}q^{0}_{k+1}+\textbf{x}_{F}^{\perp}q_{k-1})\cdot \textbf{t}_{\partial F})(\bm{\nabla}_{F} q^{0}_{k+1}\cdot \textbf{t}_{\partial F}) }{\|\bm{\nabla}_{F} q^{0}_{k+1}\cdot \textbf{t}_{\partial F}\|_{\partial F}} \\
&\hspace{0.4cm}+\sup_{p_{\beta_{F}}\in \mathbb{P}_{\beta_{F}}(F)}\frac{h_{F}^{-1} \int_{F} (\bm{\nabla}_{F} q^{0}_{k+1}+\textbf{x}_{F}^{\perp}q_{k-1})\cdot \textbf{x}_{F} p_{\beta_{F}}   }{\|p_{\beta_{F}} \|_{F}}\\
& \geq  \widetilde{\gamma}\left| \int_{\partial F}  \textbf{s}_{k} \cdot \textbf{t}_{\partial F}  \right | +\gamma  h_{F}  \|\widetilde{\textrm{rot}_{F}\hspace{0.05cm}\textbf{s}_{k}}\|_{F}+\hat{\gamma} h_{F}^{\frac12} \|\bm{\nabla}_{F} q^{0}_{k+1}\cdot \textbf{t}_{\partial F}\|_{\partial F} \\&\hspace{0.4cm}   - \hat{\gamma}h_{F}^{\frac12}  \| \textbf{x}_{F}^{\perp}q_{k-1}\cdot \textbf{t}_{\partial F}\|_{\partial F}  +  \sup_{p_{\beta_{F}}\in \mathbb{P}_{\beta_{F}}(F)}\frac{h_{F}^{-1} \int_{F} \bm{\nabla}_{F} q^{0}_{k+1}\cdot \textbf{x}_{F} p_{\beta_{F}}    }{\|p_{\beta_{F}} \|_{F}}.
 \end{aligned}
\end{eqnarray}
We estimate every term  on the right-hand side of~\eqref{equivalencenorm_proof2} from below.
We begin with the term involving $\|\widetilde{\textrm{rot}_{F}\hspace{0.05cm}\textbf{s}_{k}}\|_{F}$:
\begin{eqnarray}
\begin{aligned} \label{equivalencenorm_proof3}
& \|\textrm{rot}_{F}\hspace{0.05cm}\textbf{s}_{k}\|_{F} 
\leq   \|\widetilde{\textrm{rot}_{F}\hspace{0.05cm}\textbf{s}_{k}}\|_{F} +\left\|\frac{1}{|F|}\int_{F}\textrm{rot}_{F}\hspace{0.05cm}\textbf{s}_{k}  \right\|_{F}\\
&= \|\widetilde{\textrm{rot}_{F}\hspace{0.05cm}\textbf{s}_{k}}\|_{F} +\frac{1}{|F|}\left\|\int_{\partial F}  \textbf{s}_{k}\cdot \textbf{t}_{\partial F} \right\|_{F}
=  \|\widetilde{\textrm{rot}_{F}\hspace{0.05cm}\textbf{s}_{k}}\|_{F} +\frac{1}{ |F|^{\frac12}} \left|\int_{\partial F}  \textbf{s}_{k}\cdot \textbf{t}_{\partial F} \right|.
 \end{aligned}
\end{eqnarray}
Further, using \eqref{Polynomialinverseestimates}  and the fact that $\textrm{rot}_{F}\hspace{0.05cm}\textbf{s}_{k}=\textrm{rot}_{F}\hspace{0.05cm}(\textbf{x}_{F}^{\perp}q_{k-1}) $,   we obtain
\[
\begin{split}
& h_{F}^{\frac12}  \| \textbf{x}_{F}^{\perp}q_{k-1} \!\cdot\! \textbf{t}_{\partial F}\|_{\partial F}
\!\lesssim\! \| \textbf{x}_{F}^{\perp}q_{k-1}\|_{F}
\!\lesssim\!  h_{F}\| \textrm{rot}_{F} \hspace{0.05cm}  (\textbf{x}_{F}^{\perp}q_{k-1})\|_{F} 
\!= \!h_{F}\| \textrm{rot}_{F} \hspace{0.05cm} \textbf{s}_{k}\|_{F}.
 \end{split}
\]
Inserting this and~\eqref{equivalencenorm_proof3} in~\eqref{equivalencenorm_proof2},
recalling that~$\widetilde \gamma = (\gamma h_F) / \vert F \vert^{\frac12}$,
and using $\textrm{rot}_{F}\hspace{0.05cm}\textbf{s}_{k}=\textrm{rot}_{F}\hspace{0.05cm}(\textbf{x}^{\perp}q_{k-1})$, Corollary~\ref{cor31}, and~\eqref{thisdecom},
we arrive at
\begin{eqnarray*}
\begin{aligned} \label{equivalencenorm_proof2_1}
&\interleave \textbf{s}_{k}\interleave_{F}
   \geq \widetilde{\gamma}\left| \int_{\partial F}  \textbf{s}_{k} \cdot \textbf{t}_{\partial F}  \right | +\gamma  h_{F}  \| \textrm{rot}_{F}\hspace{0.05cm}\textbf{s}_{k} \|_{F}-  \frac{\gamma h_{F}}{|F|^{\frac12}}\left| \int_{\partial F}  \textbf{s}_{k} \cdot \textbf{t}_{\partial F}  \right | \\
&\hspace{0.4cm} + \hat{\gamma}h_{F}^{\frac12} \|\bm{\nabla}_{F} q^{0}_{k+1}\cdot \textbf{t}_{\partial F}\|_{\partial F} 
-C\hat{\gamma}h_{F}\| \textrm{rot}_{F} \hspace{0.05cm} \textbf{s}_{k}\|_{F}   
+ \!\!\!\!\!\!\!\!\sup_{p_{\beta_{F}}\in \mathbb{P}_{\beta_{F}}(F)}\!\!\!\!\!\!\!\!
\frac{h_{F}^{-1} \int_{F} \bm{\nabla}_{F} q^{0}_{k+1}\cdot \textbf{x}_{F} p_{\beta_{F}}    }{\|p_{\beta_{F}} \|_{F}}\\
&\geq (\gamma \!-\! C\hat{\gamma})h_{F} \| \textrm{rot}_{F} \hspace{0.05cm} \textbf{s}_{k}\|_{F}
   \!+\! \hat{\gamma} h_{F}^{\frac12} \|\bm{\nabla}_{F} q^{0}_{k+1} \!\cdot\! \textbf{t}_{\partial F}\|_{\partial F}
   +  \!\!\!\!\!\!\!\!\sup_{p_{\beta_{F}}\in \mathbb{P}_{\beta_{F}}(F)}\!\!\!\!\!\!\!\!
   \frac{h_{F}^{-1} \int_{F} \bm{\nabla}_{F} q^{0}_{k+1}\!\cdot\! \textbf{x}_{F} p_{\beta_{F}}    }{\|p_{\beta_{F}} \|_{F}} \\
&\geq  (\gamma \!-\! C\hat{\gamma})h_{F} \| \textrm{rot}_{F} \hspace{0.05cm} \textbf{s}_{k}\|_{F}
   \!+\! C  \|\bm{\nabla}_{F} q^{0}_{k+1}\|_{F}
\gtrsim \|\textbf{x}_{F}^{\perp}q_{k-1}\|_{F} + \|\bm{\nabla}_{F} q^{0}_{k+1}\|_{F}
\gtrsim \| \textbf{s}_{k}\|_{F},
 \end{aligned}
\end{eqnarray*}
where we have fixed the parameter~$\gamma=2C\hat{\gamma}$. 
The parameter $\hat\gamma$ was fixed in the proof of Corollary \ref{cor31}, sufficiently large but independent of~$F$.
Thus, \eqref{equivalencenorm} follows.

Before proceeding with the proof of the other two bounds,
we observe the validity of the following inverse estimate on the space $\textbf{SV}_{k}^{e}(F)$,
which can be proven as inequality~\eqref{inversediv2dedge}:
\begin{align} \label{inverserot}
\|\textrm{rot}_{F}\hspace{0.05cm} \textbf{v}_{h}\|_{F}
& \lesssim h^{-1}_{F}\| \textbf{v}_{h}\|_{F}\hspace{0.7cm} \forall \textbf{v}_{h}\in \textbf{SV}_{k}^{e}(F).
  \end{align}
Estimate~\eqref{equivalencenorm2} is proven using~\eqref{newnorm1}, \eqref{Polynomialinverseestimates1} recalling that~$\textbf{v}_{h} \cdot \textbf{t}_{\partial F}$ is a piecewise polynomial, \eqref{Traceinequality2}, and \eqref{inverserot}:
\[
\begin{split}
\interleave \textbf{v}_{\!h}\interleave_{\!F}
& \!\lesssim\! h_{\!F}^{\frac12}\|\textbf{v}_{\!h} \!\cdot\! \textbf{t}_{\partial F}\|_{\!\partial F}
\!+\! \|\textbf{v}_{\!h}\|_{F}
\!+\! h_{\!F}\|\textrm{rot}_{F}\hspace{0.05cm}\textbf{v}_{\!h}\|_{\!F}
\!\lesssim\! \|\textbf{v}_{\!h}\|_{\!F}
\!+\! h_{\!F} \|\textrm{rot}_{F}\hspace{0.05cm}\textbf{v}_{\!h}\|_{\!F}
\!\lesssim\! \|\textbf{v}_{\!h} \|_{\!F}.
\end{split}
\]
As for estimate~\eqref{equivalencenorm3},
from~\eqref{newnorm1}, \eqref{L1BOUND},
the inequality $\|\bm{\nabla}_{F} p_{k+1}\cdot \textbf{t}_{e}\|_{L^{\infty}(e)} \lesssim  h_{e}^{-\frac12} \|\bm{\nabla}_{F} p_{k+1}\cdot \textbf{t}_{e}\|_{e}$ for all~$e$ in~$\partial F$,
and the fact that the number of edges on each face $F$ is uniformly bounded, it follows that
\[
\begin{split}
\interleave \textbf{v}\interleave_{F} 
& \!\lesssim\! \sum_{e\subseteq \partial F}\left| \int_{e}  \textbf{v}  \!\cdot\! \textbf{t}_{e}  \right | +h_{F} \|\textrm{rot}_{F}\hspace{0.05cm}\textbf{v}   \|_{F} 
\!+\! \|\textbf{v} \|_{F}
+ \!\!\!\!\!\!\!\!\!\!\sup_{p_{k+1}\in \mathbb{P}_{k+1}(F)} \!\!\!\!\!\!\!\!\!\!
\frac{h_{F}^{\frac12} \!\!\!\! \sum\limits_{e\subseteq \partial F}\!\!\!\!
\int_{e}(\textbf{v}  \cdot \textbf{t}_{e})(\bm{\nabla}_{F} p_{k+1}\!\cdot\! \textbf{t}_{\partial F}) }{\|\bm{\nabla}_{F} p_{k+1}\!\cdot\! \textbf{t}_{\partial F}\|_{\partial F}} \\
& \lesssim  \|\textbf{v} \|_{F} +h^{\varepsilon}_{F}  | \textbf{v}    |_{\varepsilon,F} 
+h_{F} \|\textrm{rot}_{F}\hspace{0.05cm}\textbf{v}   \|_{F}. 
\end{split}
\]
\end{proof}

The following result contains a useful estimate for the projection $\mathbf{\Pi}_{S}^{e}$.
\begin{thm}
For each $\textbf{v}\in \textbf {H}^{s}(F)$, $0 < s\leq k+1$ with ${\rm rot}_{F}\hspace{0.05cm}\textbf{v}\in  L^{2}(F)$,
 we have
\begin{align}
 \label{projectionerror1}
 &\|\textbf{v}-\mathbf{\Pi}_{S}^{e} \textbf{v}\|_{F} 
 \lesssim  h_{F}^{s} |\textbf{v}|_{s,F}
 + h_{F} \| {\rm rot}_{F}\hspace{0.05cm}\textbf{v} \|_{F}.
\end{align}
The second term on the right-hand side can be neglected if $s \ge 1$.
\end{thm}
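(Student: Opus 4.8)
The plan is to compare $\mathbf{\Pi}_{S}^{e}\textbf{v}$ against a suitable polynomial best approximation and to exploit the fact that the four terms entering the norm $\interleave\cdot\interleave_{F}$ in \eqref{newnorm1} are \emph{exactly} the functionals defining $\mathbf{\Pi}_{S}^{e}$ in \eqref{eq:standard_projection}. First I would recall that, by the unisolvence statement preceding \eqref{newnorm1}, the functionals of \eqref{eq:standard_projection} are unisolvent on $(\mathbb{P}_{k}(F))^{2}$, so that $\mathbf{\Pi}_{S}^{e}$ reproduces $(\mathbb{P}_{k}(F))^{2}$. Hence, for any $\textbf{q}\in(\mathbb{P}_{k}(F))^{2}$ one has the telescoping identity
\[
\textbf{v}-\mathbf{\Pi}_{S}^{e}\textbf{v}
=(\textbf{v}-\textbf{q})-\mathbf{\Pi}_{S}^{e}(\textbf{v}-\textbf{q}),
\]
whence $\|\textbf{v}-\mathbf{\Pi}_{S}^{e}\textbf{v}\|_{F}\le\|\textbf{v}-\textbf{q}\|_{F}+\|\mathbf{\Pi}_{S}^{e}(\textbf{v}-\textbf{q})\|_{F}$ by a triangle inequality.

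The central step is then the polynomial stability bound $\|\mathbf{\Pi}_{S}^{e}(\textbf{v}-\textbf{q})\|_{F}\lesssim\interleave\textbf{v}-\textbf{q}\interleave_{F}$. Since $\mathbf{\Pi}_{S}^{e}(\textbf{v}-\textbf{q})\in(\mathbb{P}_{k}(F))^{2}$, I would apply \eqref{equivalencenorm}, and then observe that each of the four terms of $\interleave\mathbf{\Pi}_{S}^{e}(\textbf{v}-\textbf{q})\interleave_{F}$ coincides, by the very definition \eqref{eq:standard_projection} of $\mathbf{\Pi}_{S}^{e}$, with the corresponding term of $\interleave\textbf{v}-\textbf{q}\interleave_{F}$; this yields the identity $\interleave\mathbf{\Pi}_{S}^{e}(\textbf{v}-\textbf{q})\interleave_{F}=\interleave\textbf{v}-\textbf{q}\interleave_{F}$. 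The boundary contributions remain meaningful in the low-regularity range because they are controlled through Lemma~\ref{lemma44}, exactly as in the well-posedness of $\widetilde{\textbf{I}}^{e}_{h}$; in particular this guarantees that $\mathbf{\Pi}_{S}^{e}$ extends to the functions $\textbf{v}$ considered here.

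It then remains to bound $\|\textbf{v}-\textbf{q}\|_{F}+\interleave\textbf{v}-\textbf{q}\interleave_{F}$ by invoking \eqref{equivalencenorm3} and choosing $\textbf{q}$ according to the regularity of $\textbf{v}$, mirroring the split carried out in Theorem~\ref{thm31edge2d}. For $0<s<1$ I would take $\textbf{q}=\mathbf{\Pi}_{0}^{0,F}\textbf{v}$ constant and use \eqref{equivalencenorm3} with $\varepsilon=s$: standard approximation gives $\|\textbf{v}-\textbf{q}\|_{F}\lesssim h_{F}^{s}|\textbf{v}|_{s,F}$ and $|\textbf{v}-\textbf{q}|_{s,F}=|\textbf{v}|_{s,F}$, while crucially $\textrm{rot}_{F}\hspace{0.05cm}(\textbf{v}-\textbf{q})=\textrm{rot}_{F}\hspace{0.05cm}\textbf{v}$, producing the claimed $h_{F}^{s}|\textbf{v}|_{s,F}+h_{F}\|\textrm{rot}_{F}\hspace{0.05cm}\textbf{v}\|_{F}$. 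For $s\ge 1$ I would instead take $\textbf{q}=\mathbf{\Pi}_{k}^{0,F}\textbf{v}$ and use \eqref{equivalencenorm3} with $\varepsilon=1$; here $\|\textbf{v}-\textbf{q}\|_{F}\lesssim h_{F}^{s}|\textbf{v}|_{s,F}$, $h_{F}|\textbf{v}-\textbf{q}|_{1,F}\lesssim h_{F}^{s}|\textbf{v}|_{s,F}$, and—the point that lets the rot term be dropped—$h_{F}\|\textrm{rot}_{F}\hspace{0.05cm}(\textbf{v}-\textbf{q})\|_{F}\lesssim h_{F}|\textbf{v}-\textbf{q}|_{1,F}\lesssim h_{F}^{s}|\textbf{v}|_{s,F}$.

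The main obstacle I anticipate is the bookkeeping of the $\textrm{rot}_{F}$ term: for $s\ge 1$ one must absorb it into $h_{F}^{s}|\textbf{v}|_{s,F}$, which forces the use of the genuine degree-$k$ projection $\mathbf{\Pi}_{k}^{0,F}$ together with its $H^{1}$-seminorm approximation estimate, whereas for $0<s<1$ one must retain it and therefore select a constant $\textbf{q}$ precisely so that $\textrm{rot}_{F}\hspace{0.05cm}\textbf{q}=0$. A secondary point to verify with care is the identity $\interleave\mathbf{\Pi}_{S}^{e}(\textbf{v}-\textbf{q})\interleave_{F}=\interleave\textbf{v}-\textbf{q}\interleave_{F}$, namely that every term of $\interleave\cdot\interleave_{F}$ is indeed one of the defining functionals of $\mathbf{\Pi}_{S}^{e}$, so that no spurious contribution survives.
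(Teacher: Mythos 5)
Your proposal is correct and follows essentially the same route as the paper: a triangle inequality around a polynomial $\textbf{q}$, the polynomial reproduction of $\mathbf{\Pi}_{S}^{e}$, the chain \eqref{equivalencenorm} $\rightarrow$ invariance of $\interleave\cdot\interleave_{F}$ under $\mathbf{\Pi}_{S}^{e}$ $\rightarrow$ \eqref{equivalencenorm3}, and finally the choice $\textbf{q}=\mathbf{\Pi}_{0}^{0,F}\textbf{v}$ for $0<s<1$ versus a degree-$k$ best approximation for $s\ge 1$. The only cosmetic difference is that the paper applies \eqref{equivalencenorm3} with $\varepsilon=s$ in both regimes rather than switching to $\varepsilon=1$, which changes nothing in the outcome.
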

\begin{proof}
For any $\textbf{p}_{k}\in (\mathbb{P}_{k}(F))^{2}$,
from~\eqref{equivalencenorm},
the fact that~$\interleave  \mathbf{\Pi}_{S}^{e} \cdot \interleave_{F}$ is equal to~$\interleave \cdot \interleave_{F}$, and finally \eqref{equivalencenorm3}, we obtain
\begin{eqnarray}
\begin{aligned} \label{proj_est_1}
\|\textbf{v}-\mathbf{\Pi}_{S}^{e} \textbf{v}\|_{F}
&\!\leq\! \|\textbf{v} \!-\! \textbf{p}_{k}\|_{F}
\!+\! \|\mathbf{\Pi}_{S}^{e}( \textbf{v} \!-\! \textbf{p}_{k})\|_{F}
\!\lesssim\! \|\textbf{v} \!-\! \textbf{p}_{k}\|_{F}
\!+\! \interleave\mathbf{\Pi}_{S}^{e}( \textbf{v} \!-\! \textbf{p}_{k})\interleave_{F} \\
& = \|\textbf{v} \!-\! \textbf{p}_{k}\|_{F}
\!+\! \interleave  \textbf{v} \!-\! \textbf{p}_{k} \interleave_{F} \\
&\lesssim \|  \textbf{v}-\textbf{p}_{k} \|_{F}+h^{s}_{F}|  \textbf{v}-\textbf{p}_{k} |_{s,F}+ h_{F} \|\textrm{rot}_{F}\hspace{0.05cm} (  \textbf{v}-\textbf{p}_{k}) \|_{F}.
\end{aligned}
\end{eqnarray}
If $s\geq 1$, then \eqref{proj_est_1} and standard polynomial approximation estimates yield
\[
\|\textbf{v} \!-\! \mathbf{\Pi}_{S}^{e} \textbf{v}\|_{F}
\!\lesssim\! h_{F}^{s} |\textbf{v}|_{s,F}.
\]
Instead, if $0 <s< 1$, then we replace~$\textbf{p}_{k}$ by the average vector constant~$\textbf{p}_{0}$ of~$\textbf{v}$
over~$F$ in~\eqref{proj_est_1}.
The Poincar\'e inequality gives
\[
\begin{split}
 \|\textbf{v}-\mathbf{\Pi}_{S}^{e} \textbf{v}\|_{F}
& \lesssim \|  \textbf{v}-\textbf{p}_{0} \|_{F}+h^{s}_{F}|  \textbf{v}-\textbf{p}_{0} |_{s,F}+ h_{F} \|\textrm{rot}_{F}\hspace{0.05cm} (  \textbf{v}-\textbf{p}_{0}) \|_{F}\\
& \lesssim h_{F}^{s}|\textbf{v}|_{s,F} + h_{F}\|\textrm{rot}_{F}\hspace{0.05cm}\mathbf{v}\|_{F}.
\end{split}
\]
\end{proof}

We define an interpolation operator ${\textbf{I}}_{h}^{e}$ for functions in~$\textbf{SV}_{k}^{e}(F)$
by requiring that the values of the DoFs \eqref{dof1}, \eqref{dof3}, and~\eqref{DoFsinternal} of~$\textbf{I}^{e}_{h} \textbf{v}$ are equal to those of~$\textbf{v}$.
Combining \eqref{dof1}  with \eqref{dof3}, we obtain the  following  property:
\begin{align} \label{commuproperty}
\textrm{rot}_{F}\hspace{0.05cm} (\textbf{I}^{e}_{h} \textbf{v})= \Pi_{k-1}^{0,F}(\textrm{rot}_{F} \hspace{0.05cm} \textbf{v}).
\end{align}
We prove the following interpolation estimates for $\textbf{I}^{e}_{h}$ on the serendipity edge virtual element space  $\textbf{SV}_{k}^{e}(F)$.
\begin{thm}
\label{theorem31}
For each $\textbf{v}\in \textbf {H}^{s}(F) $, $0 < s\leq k+1$, with ${\rm rot}_{F}\hspace{0.05cm}\textbf{v}\in  H^{r}(F)$,  $0\leq r\leq k$, we have
\begin{align} \label{interpolation1}
&\|\textbf{v}-{\textbf{I}}_{h}^{e}\textbf{v}\|_{F}
\lesssim h_{F}^{s} |\textbf{v}|_{s,F}
 + h_{F} \| {\rm rot}_{F}\hspace{0.05cm}\textbf{v}\|_{F},\\
 \label{interpolation2}
& \|{\rm rot}_{F}\hspace{0.05cm}(\textbf{v}-{\textbf{I}}_{h}^{e}\textbf{v})\|_{F}
\lesssim h_{F}^{r}|{\rm rot}_{F}\hspace{0.05cm}\textbf{v}|_{r,F}.
\end{align}
The second term on the right-hand side of \eqref{interpolation1} can be neglected if $s \ge 1$.
\end{thm}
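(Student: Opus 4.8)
The plan is to treat the two estimates separately: \eqref{interpolation2} follows directly from the commuting relation \eqref{commuproperty}, while \eqref{interpolation1} is obtained by comparing $\textbf{I}_{h}^{e}\textbf{v}$ with the serendipity projection $\mathbf{\Pi}_{S}^{e}\textbf{v}$. For \eqref{interpolation2}, note that \eqref{commuproperty} gives $\textrm{rot}_{F}(\textbf{v}-\textbf{I}_{h}^{e}\textbf{v})=\textrm{rot}_{F}\textbf{v}-\Pi_{k-1}^{0,F}(\textrm{rot}_{F}\textbf{v})$, so the bound is immediate from the $L^{2}$-orthogonality of $\Pi_{k-1}^{0,F}$ and standard polynomial approximation, exactly as in \eqref{standardinterpolation22dedge}.

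The core is \eqref{interpolation1}. I would write $\textbf{v}-\textbf{I}_{h}^{e}\textbf{v}=(\textbf{v}-\mathbf{\Pi}_{S}^{e}\textbf{v})+\textbf{w}_{h}$ with $\textbf{w}_{h}:=\mathbf{\Pi}_{S}^{e}\textbf{v}-\textbf{I}_{h}^{e}\textbf{v}\in\textbf{SV}_{k}^{e}(F)\subseteq\textbf{V}_{k}^{e}(F)$. The first summand is controlled by the projection error \eqref{projectionerror1}. For the virtual remainder $\textbf{w}_{h}$ I apply the a priori bound \eqref{prioribound2dedge} of Lemma~\ref{lem2degde}, whose right-hand side has a rotor term, a boundary tangential term, and a supremum over the internal moments $\int_{F}\textbf{w}_{h}\cdot\textbf{x}_{F}p_{k}$. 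The plan is to show that the rotor term and the moment term vanish identically, so that only the boundary term survives.

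The main obstacle, and the step where the serendipity structure is essential, is to prove that $\mathbf{\Pi}_{S}^{e}\textbf{w}_{h}=0$. I would first check that every functional defining $\mathbf{\Pi}_{S}^{e}$ in \eqref{eq:standard_projection1}--\eqref{eq:standard_projection4} vanishes on $\textbf{w}_{h}$, i.e.\ $\interleave\textbf{w}_{h}\interleave_{F}=0$: using that $\textbf{I}_{h}^{e}\textbf{v}$ matches the DoFs \eqref{dof1}, \eqref{dof3}, \eqref{DoFsinternal} of $\textbf{v}$, that $\textbf{I}_{h}^{e}\textbf{v}\cdot\textbf{t}_{e}=\Pi_{k}^{0,e}(\textbf{v}\cdot\textbf{t}_{e})$ on each edge, and that $\bm{\nabla}_{F}p_{k+1}\cdot\textbf{t}_{e}\in\mathbb{P}_{k}(e)$, each of the four functionals of $\textbf{w}_{h}$ collapses to the corresponding functional of $\mathbf{\Pi}_{S}^{e}\textbf{v}-\textbf{v}$, which is zero by \eqref{eq:standard_projection1}--\eqref{eq:standard_projection4}. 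By the norm equivalence \eqref{equivalencenorm} together with the identity $\interleave\mathbf{\Pi}_{S}^{e}\,\cdot\,\interleave_{F}=\interleave\cdot\interleave_{F}$, one gets $\|\mathbf{\Pi}_{S}^{e}\textbf{w}_{h}\|_{F}\lesssim\interleave\mathbf{\Pi}_{S}^{e}\textbf{w}_{h}\interleave_{F}=\interleave\textbf{w}_{h}\interleave_{F}=0$, hence $\mathbf{\Pi}_{S}^{e}\textbf{w}_{h}=0$. Consequently the serendipity constraint in \eqref{svspace} (for test polynomials in $\mathbb{P}_{\beta_{F}|k}(F)$) together with the already established vanishing of the moments up to degree $\beta_{F}$ yields $\int_{F}\textbf{w}_{h}\cdot\textbf{x}_{F}p_{k}=0$ for all $p_{k}\in\mathbb{P}_{k}(F)$, killing the supremum in \eqref{prioribound2dedge}; likewise, testing $\textrm{rot}_{F}\textbf{w}_{h}\in\mathbb{P}_{k-1}(F)$ against $\mathbb{P}_{k-1}^{0}(F)$ (via \eqref{eq:standard_projection3} and \eqref{dof3}) and against constants (via Stokes and the vanishing boundary mean) gives $\textrm{rot}_{F}\textbf{w}_{h}=0$.

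It then remains that $\|\textbf{w}_{h}\|_{F}\lesssim h_{F}^{\frac12}\|\textbf{w}_{h}\cdot\textbf{t}_{\partial F}\|_{\partial F}$. To bound the boundary term I would use, on each edge, $(\textbf{w}_{h}\cdot\textbf{t}_{e},p_{k})_{e}=((\mathbf{\Pi}_{S}^{e}\textbf{v}-\textbf{v})\cdot\textbf{t}_{e},p_{k})_{e}$, and insert the $L^{2}$ projection $\mathbf{\Pi}_{k}^{0,F}\textbf{v}$: the purely polynomial part $(\mathbf{\Pi}_{S}^{e}\textbf{v}-\mathbf{\Pi}_{k}^{0,F}\textbf{v})\cdot\textbf{t}_{e}$ is estimated by a trace inverse inequality \eqref{Polynomialinverseestimates} and split by the triangle inequality into $\|\textbf{v}-\mathbf{\Pi}_{S}^{e}\textbf{v}\|_{F}$ (controlled by \eqref{projectionerror1}) and $\|\textbf{v}-\mathbf{\Pi}_{k}^{0,F}\textbf{v}\|_{F}$, whereas the non-polynomial part $((\mathbf{\Pi}_{k}^{0,F}\textbf{v}-\textbf{v})\cdot\textbf{t}_{e},p_{k})_{e}$ is estimated by Lemma~\ref{lemma44}, i.e.\ \eqref{L1BOUND}, applied to $\textbf{v}-\mathbf{\Pi}_{k}^{0,F}\textbf{v}$. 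Because the comparison polynomial is now the $L^{2}$ projection, the arising seminorm $|\textbf{v}-\mathbf{\Pi}_{k}^{0,F}\textbf{v}|$ enjoys optimal approximation bounds in terms of $|\textbf{v}|_{s,F}$, which is the reason the insertion is needed. Collecting all contributions and mirroring the concluding case distinction of Theorem~\ref{thm31edge2d} ($s\ge 1$, where the rotor term is dropped since $s>1/2$, versus $0<s<1$, where $\mathbf{\Pi}_{k}^{0,F}$ is replaced by the mean $\mathbf{\Pi}_{0}^{0,F}$ and the term $h_{F}\|\textrm{rot}_{F}\textbf{v}\|_{F}$ is retained) yields \eqref{interpolation1}.
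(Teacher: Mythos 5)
Your proposal is correct, but it follows a genuinely different route from the paper's. The paper writes $\textbf{v}-\textbf{I}_{h}^{e}\textbf{v}=(\textbf{v}-\widetilde{\textbf{I}}_{h}^{e}\textbf{v})+(\widetilde{\textbf{I}}_{h}^{e}\textbf{v}-\textbf{I}_{h}^{e}\textbf{v})$, controls the first piece by Theorem~\ref{thm31edge2d}, and observes that the second piece has vanishing rotor and vanishing tangential trace, hence equals $\bm{\nabla}_{F}\phi$ for some $\phi\in H^1_0(F)$ solving \eqref{standardinterpolation1_proof3}; its norm is then bounded by a single duality computation against $\textbf{x}_{F}q_{k}$ in \eqref{standardinterpolation2_proof10}, which is exactly where \eqref{eq:standard_interpolation2}, \eqref{svspace}, and \eqref{eq:standard_projection4} convert the pairing into $(\textbf{x}_{F}q_{k},\textbf{v}-\mathbf{\Pi}_{S}^{e}\textbf{v})_{F}$ and \eqref{projectionerror1} finishes. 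You instead compare with $\mathbf{\Pi}_{S}^{e}\textbf{v}$ and apply the a priori bound \eqref{prioribound2dedge} to $\textbf{w}_{h}=\mathbf{\Pi}_{S}^{e}\textbf{v}-\textbf{I}_{h}^{e}\textbf{v}$, showing that the rotor and internal-moment terms vanish (your verification that all four functionals of \eqref{eq:standard_projection} annihilate $\textbf{w}_{h}$, hence $\mathbf{\Pi}_{S}^{e}\textbf{w}_{h}=0$ by \eqref{equivalencenorm}, and that \eqref{svspace} then kills the moments against all of $\mathbb{P}_{k}(F)=\mathbb{P}_{\beta_F}\oplus\mathbb{P}_{\beta_F|k}(F)$, is sound — though you could reach $\mathbf{\Pi}_{S}^{e}\textbf{w}_{h}=0$ more directly from $\mathbf{\Pi}_{S}^{e}\textbf{I}_{h}^{e}\textbf{v}=\mathbf{\Pi}_{S}^{e}\textbf{v}$ and idempotency of $\mathbf{\Pi}_{S}^{e}$ on polynomials). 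In a sense the two decompositions are dual: the paper's remainder carries only the internal component, yours carries only the boundary trace. The price of your route is that you must re-derive the boundary estimate via \eqref{L1BOUND} and \eqref{Polynomialinverseestimates}, essentially repeating the boundary argument inside the proof of Theorem~\ref{thm31edge2d} rather than invoking its conclusion; what you gain is a proof independent of the standard interpolant $\widetilde{\textbf{I}}_{h}^{e}$ and its Poisson auxiliary problem, and a more transparent display of how each serendipity DoF controls one term of the stability bound. Both arguments rest on the same quantitative inputs, namely \eqref{equivalencenorm}, \eqref{projectionerror1}, and Lemma~\ref{lemma44}, and your concluding case split ($s\ge 1$ versus $0<s<1$ with $\mathbf{\Pi}_{k}^{0,F}$ replaced by $\mathbf{\Pi}_{0}^{0,F}$) matches the paper's.
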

\begin{proof}
As for \eqref{interpolation2}, by \eqref{commuproperty} and standard polynomial approximation properties, we have
\[
\|\textrm{rot}_{F}\hspace{0.05cm}(\textbf{v}-{\textbf{I}}_{h}^{e}\textbf{v})\|_{F}
= \|\textrm{rot}_{F}\hspace{0.05cm}\textbf{v}- {\Pi}_{k-1}^{0,F}(\textrm{rot}_{F} \hspace{0.05cm} \textbf{v})\|_{F}
\lesssim h_{F}^{r}|\textrm{rot}_{F}\hspace{0.05cm}\textbf{v}|_{r,F}.
\]
The remainder of the proof is devoted to proving~\eqref{interpolation1}.
Observe that~\eqref{standardinterpolation22dedge_proof1} and~\eqref{commuproperty} imply
$\textrm{rot}_F (\widetilde{\textbf{I}}^{e}_{h}\textbf{v}-{\textbf{I}}^{e}_{h}\textbf{v})=0$,
which yields the existence of a function $\phi\in H^{1}(F)$ such that $\widetilde{\textbf{I}}^{e}_{h}\textbf{v}-{\textbf{I}}^{e}_{h}\textbf{v} = \nabla_F \phi$, satisfying weakly
\begin{align} \label{standardinterpolation1_proof3}
\Delta_{F} \phi= \textrm{div}_{F}\hspace{0.05cm} (\widetilde{\textbf{I}}^{e}_{h}\textbf{v} - {\textbf{I}}^{e}_{h}\textbf{v}) \ \textrm{in}\ F,
\ \  \phi=0\ \textrm{on}\ \partial F.
\end{align}
The boundary conditions in~\eqref{standardinterpolation1_proof3} follow from the fact that
\[
\partial _t \phi{}_{|\partial F} = 
(\widetilde{\textbf{I}}^{e}_{h}\textbf{v}-{\textbf{I}}^{e}_{h}\textbf{v}) \cdot \mathbf t{}_{|\partial F} = 0,
\]
since the definitions of ${\textbf{I}}^{e}_{h}$ and $\widetilde{\textbf{I}}^{e}_{h}$ entail
\[
 (\widetilde{\textbf{I}}^{e}_{h}\textbf{v}-{\textbf{I}}^{e}_{h}\textbf{v})\cdot {\textbf{t}}_{e} =0 \hspace{0.2cm} \forall e\subseteq \partial F.
\]
Since
\begin{align}
 \label{standardinterpolation1_proof7} \|\widetilde{\textbf{I}}_{h}^{e}\textbf{v}-{\textbf{I}}^{e}_{h}\textbf{v}\|_{F}
 = 
 \|\bm{\nabla}_{F} \phi\|_{F},
\end{align}
it suffices to estimate the right-hand side of \eqref{standardinterpolation1_proof7}.
By the fact that $\textrm{div}_{F}\hspace{0.05cm} (\widetilde{\textbf{I}}^{e}_{h}\textbf{v}-{\textbf{I}}^{e}_{h}\textbf{v})\in \mathbb{P}_{k}(F)$ and \eqref{decompo1}, there exists a polynomial $q_{k}\in \mathbb{P}_{k}(F)$ such that
  \begin{align}
   \label{standardinterpolation2_proof1}
\textrm{div}_{F}\hspace{0.05cm}( \textbf{x}_{F} q_{k})=\textrm{div}_{F}\hspace{0.05cm} (\widetilde{\textbf{I}}_{h}^{e} \textbf{v}-{\textbf{I}}^{e}_{h}\textbf{v}),
  \end{align}
with
\begin{align} \label{standardinterpolation2_proof2}
\|\textbf{x}_{F}q_{k}\|_{F}
\overset{\eqref{inversediv2dedge}}{\lesssim}
h_{F}\|\textrm{div}_{F}\hspace{0.05cm} (\widetilde{\textbf{I}}_{h}^{e} \textbf{v}- \textbf{I}^{e}_{h}\textbf{v})\|_{F}
\lesssim \| \widetilde{\textbf{I}}_{h}^{e} \textbf{v}-{\textbf{I}}^{e}_{h}\textbf{v} \|_{F} .
  \end{align}
Moreover, $\mathbf{\Pi}_{S}^{e}{\textbf{I}}_{h}^{e}\textbf{v}=\mathbf{\Pi}_{S}^{e}{\textbf{v}}$
since $\textbf{I}_{h}^{e}\textbf{v}$ and $\textbf{v}$ share the same DoFs~\eqref{dof1}, \eqref{dof3}, \eqref{DoFsinternal},
and the value of the projection $\mathbf{\Pi}_{S}^{e}$ only depends on such DoFs. Thus, we write
\begin{eqnarray}
\begin{aligned}[t] \label{standardinterpolation2_proof10}
& \|\bm{\nabla}_{F}\phi\|^{2}_{F} =
(\widetilde{\textbf{I}}^{e}_{h}\textbf{v}-{\textbf{I}}^{e}_{h}\textbf{v},\bm{\nabla}_{F}\phi)_{F}
\overset{\text{IBP}}{=}
-(\textrm{div}_{F}\hspace{0.05cm} (\widetilde{\textbf{I}}_{h}^{e} \textbf{v}-{\textbf{I}}^{e}_{h}\textbf{v}),\phi)_{F}\\
& \overset{\eqref{standardinterpolation2_proof1}}{=}
-(\textrm{div}_{F}\hspace{0.05cm} ( \textbf{x}_{F}q_{k}),\phi)_{F}
\overset{\eqref{standardinterpolation1_proof3}}{=}
(\textbf{x}_{F}q_{k},\bm{\nabla}_{F}\phi)_{F}
= (\textbf{x}_{F}q_{k}, \widetilde{\textbf{I}}^{e}_{h}\textbf{v}-{\textbf{I}}^{e}_{h}\textbf{v})_{F} \\
&  \!\!\overset{\eqref{eq:standard_interpolation2}, \eqref{svspace}}{=}\!
(\textbf{x}_{F}q_{k}, \textbf{v}-\mathbf{\Pi}_{S}^{e}{\textbf{I}}^{e}_{h}\textbf{v})_{F}
\!\overset{\eqref{eq:standard_projection4}}{=}\!
(\textbf{x}_{F}q_{k}, \textbf{v}-\mathbf{\Pi}_{S}^{e}\textbf{v})_{F}
\!\lesssim\! \|\textbf{x}_{F}q_{k}\|_{F}\|\textbf{v}-\mathbf{\Pi}_{S}^{e}\textbf{v} \|_{F}\\
& \overset{\eqref{standardinterpolation2_proof2}}{\lesssim}
\|\widetilde{\textbf{I}}_{h}^{e}\textbf{v}-{\textbf{I}}^{e}_{h}\textbf{v}\|_{F}\|\textbf{v}-\mathbf{\Pi}_{S}^{e}\textbf{v}\|_{F}
\overset{\eqref{projectionerror1}}{\lesssim}
\left( h_{F}^{s} |\textbf{v}|_{s,F} + h_F \|\textrm{rot}_{F}\hspace{0.05cm}\textbf{v}\|_{F}\right)
\|\widetilde{\textbf{I}}^{e}_{h}\textbf{v}-{\textbf{I}}^{e}_{h}\textbf{v}\|_{F},
\end{aligned}
\end{eqnarray}
where the term~$\Vert \textrm{rot}_{F}\hspace{0.05cm}\textbf{v} \Vert_{F}$ can be ignored if $s\geq 1$.

Substituting~\eqref{standardinterpolation2_proof10} into \eqref{standardinterpolation1_proof7},
and by using the triangle inequality and~\eqref{standardinterpolation12dedge}, estimate~\eqref{interpolation1} follows.
\end{proof}

 \subsection{Face virtual element spaces on polygons} \label{sec4faceVEM}
Since 2D face virtual element spaces can be viewed as a $\pi/2$ rotation of the 2D edge ones,
we can extend all above definitions and results to standard and serendipity face virtual element spaces in 2D;
see Refs.~\cite{lourencobrezzidassi2017cmame,beiraobrezzidasimaru2018siam,beiraobrezzimariniru2015,bebremaru2016RLMASerendipity}.
The face virtual element space on the face~$F$ is defined as
\[
\textbf{V}_{k}^{f}(F)
\!=\! 
\big\{\! \textbf{v}_{h} \!\in\! \textbf{L}^{2}(F) \!:\! \textrm{div}_{F}\hspace{0.05cm} \textbf{v}_{h} \!\in\! \mathbb{P}_{k-1}(F), \  
\textrm{rot}_{F} \hspace{0.05cm}  \textbf{v}_{h} \!\in\!  \mathbb{P}_{k}(F), \
\textbf{v}_{h} \!\cdot\! \textbf{n}_{e} \!\in\! \mathbb{P}_{k}(e) \ \ \forall e \!\subseteq\! \partial F \!\big\},
\]
and is endowed  with  the unisolvent DoFs\cite{lourencobrezzidassi2017cmame,beiraobrezzidasimaru2018siam}
\begin{align}
\label{dof1face}
&\bullet  \  \int_{e} \textbf{v}_{h}\cdot \textbf{n}_{e} p_{k}&&\forall p_{k}\in \mathbb{P}_{k}(e),  \hspace{0.1cm} \forall e\subseteq \partial F;\\
\label{dof2face}
&\bullet\  \int_{F}  \textbf{v}_{h}\cdot \textbf{x}_{F}^{\perp}p_{k}    &&\forall p_{k} \in \mathbb{P}_{k}(F);\\
\label{dof3face}
&\bullet \  \int_{F} \textrm{div}_{F}
\hspace{0.05cm}\textbf{v}_{h}   p^{0}_{k-1}  &&\forall p^{0}_{k-1}\in \mathbb{P}_{k-1}^{0}(F)   \hspace{0.2cm} \textrm{only\ for}\  k>1.
   \end{align}
We define the DoFs interpolation operator $\widetilde{\textbf{I}}_{h}^{f}$ on the space $\textbf{V}_{k}^{f}(F)$
by requiring that the values of the DoFs~\eqref{dof1face}, \eqref{dof2face}, and~\eqref{dof3face} of $\widetilde{\textbf{I}}^{f}_{h} \textbf{v}$
are equal to those of $\textbf{v}\in \textbf{H}^{s}(F) \cap \textbf{H}(\textrm{div}_{F},F)$,   $s>0$.
We  can easily extend the interpolation estimates of edge virtual element spaces, to the  face case; see Theorem \ref{thm31edge2d}.
\begin{thm} \label{theorem41original}
For each $\textbf{v}\in \textbf {H}^{s}(F) $,  $0 <s\leq k+1$ with ${\rm div}_{F}\hspace{0.05cm}\textbf{v}\in  H^{r}(F)$,  $0\leq r\leq k$, we have
\begin{align} \label{interpolation1div2dfaceor}
& \|\textbf{v}-\widetilde{\textbf{I}}_{h}^{f}\textbf{v}\|_{F}
\lesssim h_{F}^{s} |\textbf{v}|_{s,F}
 + h_F \|{\rm div}_{F}\hspace{0.05cm}\textbf{v} \|_{F},\\
 \label{interpolation2div}
& \|{\rm div}_{F}\hspace{0.05cm}(\textbf{v}-\widetilde{\textbf{I}}_{h}^{f}\textbf{v})\|_{F}
\lesssim h_{F}^{r}|{\rm div}_{F}\hspace{0.05cm}\textbf{v}|_{r,F}.
\end{align}
The second term on the right-hand side of \eqref{interpolation1div2dfaceor} can be neglected if $s \ge 1$.
\end{thm}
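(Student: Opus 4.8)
The plan is to exploit the $\pi/2$ rotation that identifies the 2D face space with the 2D edge space, thereby reducing the claim to Theorem~\ref{thm31edge2d}. Concretely, I would introduce the rotation operator $R$ acting pointwise on vector fields by $R[v_1,v_2]^T = [-v_2,v_1]^T$. Being orthogonal, $R$ is an isometry for the $L^2$ norm and preserves all Sobolev seminorms $|\cdot|_{s,F}$. A direct computation gives the intertwining relations $\textrm{div}_F(R\textbf{v}) = -\textrm{rot}_F\hspace{0.05cm}\textbf{v}$ and $\textrm{rot}_F(R\textbf{v}) = \textrm{div}_F\hspace{0.05cm}\textbf{v}$, together with $R\textbf{v}\cdot\textbf{n}_e = -\textbf{v}\cdot\textbf{t}_e$ on each edge (using $\textbf{t}_e = R\textbf{n}_e$ from the sign convention of Section~\ref{sec21}).

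First I would verify that $R$ maps $\textbf{V}_k^e(F)$ bijectively onto $\textbf{V}_k^f(F)$: the relations above turn the three defining conditions of \eqref{finitspace} (the degrees of $\textrm{div}_F$, $\textrm{rot}_F$, and the tangential edge trace) exactly into the three defining conditions of $\textbf{V}_k^f(F)$ (the degrees of $\textrm{rot}_F$, $\textrm{div}_F$, and the normal edge trace). Next I would check that $R$ carries the edge DoFs \eqref{dof1}--\eqref{dof3} onto the face DoFs \eqref{dof1face}--\eqref{dof3face} up to harmless signs: the tangential moments become the normal moments, the rot-moments become the div-moments, and---choosing $\textbf{x}_F^\perp = R\textbf{x}_F$ as in \eqref{dof2face}---the bulk moments $\int_F \textbf{v}\cdot\textbf{x}_F\, p_k$ become $\int_F R\textbf{v}\cdot\textbf{x}_F^\perp\, p_k$ by the orthogonality of $R$. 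Since the two DoF sets correspond, the interpolation operators commute with the rotation, i.e. $\widetilde{\textbf{I}}_h^f(R\textbf{u}) = R(\widetilde{\textbf{I}}_h^e\textbf{u})$ for every admissible $\textbf{u}$.

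With this dictionary in place, the estimates follow by transport. Given $\textbf{v}\in\textbf{H}^s(F)$ with $\textrm{div}_F\hspace{0.05cm}\textbf{v}\in H^r(F)$, I set $\textbf{u} := R^{-1}\textbf{v}$, so that $\textbf{u}\in\textbf{H}^s(F)$ with $|\textbf{u}|_{s,F} = |\textbf{v}|_{s,F}$ and, by the intertwining relation, $\textrm{rot}_F\hspace{0.05cm}\textbf{u} = -\textrm{div}_F\hspace{0.05cm}\textbf{v}\in H^r(F)$ with matching norms. Applying Theorem~\ref{thm31edge2d} to $\textbf{u}$ and using $\widetilde{\textbf{I}}_h^f\textbf{v} = R\widetilde{\textbf{I}}_h^e\textbf{u}$ together with the isometry property of $R$ gives
\[
\|\textbf{v}-\widetilde{\textbf{I}}_h^f\textbf{v}\|_F = \|\textbf{u}-\widetilde{\textbf{I}}_h^e\textbf{u}\|_F
\lesssim h_F^s|\textbf{u}|_{s,F} + h_F\|\textrm{rot}_F\hspace{0.05cm}\textbf{u}\|_F
= h_F^s|\textbf{v}|_{s,F} + h_F\|\textrm{div}_F\hspace{0.05cm}\textbf{v}\|_F,
\]
which is \eqref{interpolation1div2dfaceor}, with the second term droppable for $s\ge 1$ exactly as in Theorem~\ref{thm31edge2d}; the bound \eqref{interpolation2div} follows identically from $\textrm{div}_F(\textbf{v}-\widetilde{\textbf{I}}_h^f\textbf{v}) = -\textrm{rot}_F(\textbf{u}-\widetilde{\textbf{I}}_h^e\textbf{u})$ and the second estimate of Theorem~\ref{thm31edge2d}.

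I do not expect a serious obstacle: the entire content is the rotational duality, and once the sign conventions for $\textbf{t}_e$, $\textbf{n}_e$, and $\textbf{x}_F^\perp$ are pinned down, every step is a mechanical translation. The only point demanding a little care is confirming that $R$ genuinely matches the DoF sets, so that the two interpolation operators truly commute with the rotation; any mismatch there would break the transport identity $\widetilde{\textbf{I}}_h^f R = R\widetilde{\textbf{I}}_h^e$ on which the whole argument rests.
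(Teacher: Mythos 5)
Your proposal is correct and follows exactly the route the paper takes: the paper proves Theorem~\ref{theorem41original} by observing that the 2D face space is a $\pi/2$ rotation of the 2D edge space and invoking Theorem~\ref{thm31edge2d}, which is precisely the transport argument you spell out. Your verification of the intertwining relations, the DoF correspondence, and the commutation $\widetilde{\textbf{I}}_h^f R = R\,\widetilde{\textbf{I}}_h^e$ supplies the details the paper leaves implicit, and they all check out.
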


By rotating everything by $\pi/2$ corresponding to edge elements,   we  can also introduce a well defined  projection $\mathbf{\Pi}_{S}^{f}:  \textbf{V}_{k}^{f}(F)\rightarrow  (\mathbb{P}_{k}(F))^{2}$  by
\[
\begin{split}
&  \int_{\partial F}[(\textbf{v}_{h}-\mathbf{\Pi}_{S}^{f} \textbf{v}_{h})\cdot \textbf{n}_{\partial F}][\textbf{curl}_{F} \hspace{0.05cm} p_{k+1}\cdot \textbf{n}_{\partial F}]=0 \hspace{0.5cm}\forall p_{k+1}\in \mathbb{P}_{k+1}(F);\\
&\int_{\partial F} (\textbf{v}_{h}- \mathbf{\Pi}_{S}^{f} \textbf{v}_{h})\cdot \textbf{n}_{\partial F}  =0;\\
&\int_{F} \textrm{div}_{F}\hspace{0.05cm}(\textbf{v}_{h}-  \mathbf{\Pi}_{S}^{f} \textbf{v}_{h}) p_{k-1}^{0}   =0\hspace{0.5cm} \forall p_{k-1}^{0}\in \mathbb{P}_{k-1}^{0}(F)   \hspace{0.2cm} \textrm{only\ for}\  k>1;\\
&\int_{F}(\textbf{v}_{h}-\mathbf{\Pi}_{S}^{f} \textbf{v}_{h})\cdot\textbf{x}^{\perp}_{F} p_{\beta_{F}}=0\hspace{0.75cm}\forall p_{\beta_{F}} \in \mathbb{P}_{\beta_{F}}(F)\ \textrm{only\ for}\ \beta_{F}\geq 0.
\end{split}
\]
Eventually,  we  introduce the serendipity face virtual element space  on the face $F$
\[
 \textbf{SV}_{k}^{f}(F)=\left\{\textbf{v}_{h}\in \textbf{V}_{k}^{f}(F): \int_{F}(\textbf{v}_{h}-\mathbf{\Pi}_{S}^{f} \textbf{v}_{h})\cdot\textbf{x}^{\perp}_{F} p=0\hspace{0.2cm} \forall p \in \mathbb{P}_{\beta_{F}|k}(F)\right\},
\]
which is endowed with the set of unisolvent DoFs \eqref{dof1face} and \eqref{dof3face}, plus the moments
\begin{align}
\label{DoFsinternalf2dface}
\int_{F} \textbf{v}_{h}\cdot \textbf{x}^{\perp}_{F} p_{\beta_{F}}    \hspace{0.3cm} \forall p_{\beta_{F}}\in \mathbb{P}_{\beta_{F}}(F)   \hspace{0.2cm} \textrm{only\ for}\ \beta_{F}\geq 0.
\end{align}
We define the DoFs interpolation operator ${\textbf{I}}_{h}^{f}$ on the serendipity face virtual element space~$\textbf{SV}_{k}^{f}(F)$
by requiring that the values of the DoFs~\eqref{dof1face}, \eqref{dof3face}, and~\eqref{DoFsinternalf2dface}
of~$\textbf{I}^{f}_{h} \textbf{v}$ are equal to those of~$\textbf{v}$.
We inherit interpolation estimates from serendipity edge spaces.
In fact, the following result is proven as the rotated version of Theorem \ref{theorem31} (and thus also needs the additional mesh assumption (\textbf{MC})).
\begin{thm}
\label{theorem41}
For each $\textbf{v}\in \textbf {H}^{s}(F) $,  $0 < s\leq k+1$, with ${\rm div}_{F}\hspace{0.05cm}\textbf{v}\in  H^{r}(F)$, $0\leq r\leq k$, we have
\begin{align} \label{interpolation1div}
&\|\textbf{v}-{\textbf{I}}_{h}^{f}\textbf{v}\|_{F}
\lesssim h_{F}^{s} |\textbf{v}|_{s,F}
 + h_F \|{\rm div}_{F}\hspace{0.05cm}\textbf{v}\|_{F},\\
 \label{interpolation2ddiv}
&\|{\rm div}_{F}\hspace{0.05cm}(\textbf{v}-{\textbf{I}}_{h}^{f}\textbf{v})\|_{F}
\lesssim h_{F}^{r}|{\rm div}_{F}\hspace{0.05cm}\textbf{v}|_{r,F}.
\end{align}
The second term on the right-hand side of \eqref{interpolation1div} can be neglected if $s \ge 1$.
\end{thm}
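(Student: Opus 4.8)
The plan is to exploit the $\pi/2$ rotation that turns edge virtual elements into face virtual elements, so that Theorem~\ref{theorem41} becomes the exact image of Theorem~\ref{theorem31}. Writing $\textbf{v}^{\perp}$ for the rotation of a planar field, this map sends $\textbf{v}\cdot\textbf{t}_{e}$ into $\textbf{v}\cdot\textbf{n}_{e}$, interchanges $\textrm{rot}_{F}$ with $\textrm{div}_{F}$ and $\textbf{curl}_{F}$ with $\bm{\nabla}_{F}$, and replaces $\textbf{x}_{F}$ by $\textbf{x}_{F}^{\perp}$. Under this dictionary $\textbf{V}_{k}^{e}(F)$ maps to $\textbf{V}_{k}^{f}(F)$, $\textbf{SV}_{k}^{e}(F)$ to $\textbf{SV}_{k}^{f}(F)$, the DoFs~\eqref{dof1}, \eqref{dof3}, \eqref{DoFsinternal} to~\eqref{dof1face}, \eqref{dof3face}, \eqref{DoFsinternalf2dface}, the projection $\mathbf{\Pi}_{S}^{e}$ to $\mathbf{\Pi}_{S}^{f}$, and the interpolants $\widetilde{\textbf{I}}_{h}^{e}, \textbf{I}_{h}^{e}$ to $\widetilde{\textbf{I}}_{h}^{f}, \textbf{I}_{h}^{f}$; consequently \eqref{interpolation1} and \eqref{interpolation2} become precisely \eqref{interpolation1div} and \eqref{interpolation2ddiv}. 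First I would make this correspondence explicit by checking that each ingredient of the edge proof has a rotated counterpart: the decomposition~\eqref{decompo1} is replaced by~\eqref{decompo2}, the inverse estimates~\eqref{inversediv2dedge}–\eqref{inverserot} by their $\textrm{div}_{F}$-analogues, and the norm equivalences of Lemma~\ref{lemma41} together with the projection bound~\eqref{projectionerror1} by their rotated versions (still requiring the convexity assumption~(\textbf{MC}), since Lemma~\ref{pro31} and Corollary~\ref{cor31} are invoked through $\mathbf{\Pi}_{S}^{f}$).

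For the divergence estimate~\eqref{interpolation2ddiv}, I would verify the rotated commuting property, namely $\textrm{div}_{F}(\textbf{I}_{h}^{f}\textbf{v}) = \Pi_{k-1}^{0,F}(\textrm{div}_{F}\textbf{v})$, which follows from the DoFs~\eqref{dof1face} and~\eqref{dof3face} exactly as~\eqref{commuproperty} follows from~\eqref{dof1}--\eqref{dof3}; the bound is then immediate from standard polynomial approximation of $\textrm{div}_{F}\textbf{v}\in H^{r}(F)$.

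For the $\textbf{L}^{2}$ estimate~\eqref{interpolation1div}, I would mimic the argument of Theorem~\ref{theorem31}. The rotated version of~\eqref{standardinterpolation22dedge_proof1} together with the commuting property above gives $\textrm{div}_{F}(\widetilde{\textbf{I}}_{h}^{f}\textbf{v}-\textbf{I}_{h}^{f}\textbf{v})=0$, so the difference equals $\textbf{curl}_{F}\psi$ for some $\psi\in H^{1}(F)$ that may be taken to vanish on $\partial F$ (indeed $\textbf{curl}_{F}\psi\cdot\textbf{n}_{e}=\partial_{t}\psi$, and the normal traces of both interpolants coincide on $\partial F$ by~\eqref{dof1face}). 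Using~\eqref{decompo2} one produces $\textbf{x}_{F}^{\perp}q_{k}$ with the same $\textrm{rot}_{F}$ as the difference, controls $\|\textbf{x}_{F}^{\perp}q_{k}\|_{F}$ by the rotated inverse estimate, and arrives, through the key identity $\mathbf{\Pi}_{S}^{f}\textbf{I}_{h}^{f}\textbf{v}=\mathbf{\Pi}_{S}^{f}\textbf{v}$ (valid since $\textbf{I}_{h}^{f}\textbf{v}$ and $\textbf{v}$ share the DoFs~\eqref{dof1face}, \eqref{dof3face}, \eqref{DoFsinternalf2dface} that determine $\mathbf{\Pi}_{S}^{f}$), at
\[
\|\widetilde{\textbf{I}}_{h}^{f}\textbf{v}-\textbf{I}_{h}^{f}\textbf{v}\|_{F} \lesssim \|\textbf{v}-\mathbf{\Pi}_{S}^{f}\textbf{v}\|_{F}.
\]
The rotated projection estimate~\eqref{projectionerror1}, a triangle inequality, and the standard face interpolation bound~\eqref{interpolation1div2dfaceor} then close the argument, with the term $h_{F}\|\textrm{div}_{F}\textbf{v}\|_{F}$ dropping when $s\ge 1$.

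The main obstacle is not any single inequality but the bookkeeping of the rotation: I must confirm that every orientation-dependent object transforms consistently, in particular the tangential/normal traces, the sign change between $\textrm{rot}_{F}\,\textbf{curl}_{F}=-\Delta_{F}$ and $\textrm{div}_{F}\,\bm{\nabla}_{F}=\Delta_{F}$, and the perpendicular vector $\textbf{x}_{F}^{\perp}$, so that the Helmholtz decomposition~\eqref{vhHeldecom2dedge}, the a priori bound of Lemma~\ref{lem2degde}, and the norm~\eqref{newnorm1} all possess well-defined rotated analogues. Once this dictionary is fixed, the proof is a verbatim translation of Theorem~\ref{theorem31}.
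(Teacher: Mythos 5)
Your proposal is correct and coincides with the paper's own argument: the paper proves Theorem~\ref{theorem41} precisely by declaring it the $\pi/2$-rotated version of Theorem~\ref{theorem31} (under the same assumption~(\textbf{MC})), which is exactly the dictionary you spell out. Your more detailed walkthrough of the rotated ingredients (commuting property, Helmholtz decomposition with $\textbf{curl}_{F}\psi$, the identity $\mathbf{\Pi}_{S}^{f}\textbf{I}_{h}^{f}\textbf{v}=\mathbf{\Pi}_{S}^{f}\textbf{v}$) is consistent with that translation.
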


\section{Interpolation properties of  edge and face virtual element spaces  in 3D} \label{sec4}
In this section,
we prove interpolation properties for general order face and edge virtual element spaces on polyhedra.
More precisely we consider 
standard face virtual element spaces in Section~\ref{sec41};
standard edge virtual element spaces in Section~\ref{sec42};
serendipity edge virtual element space in Section~\ref{sec43}.

\subsection{Standard face virtual element space on polyhedrons}
\label{sec41}
We consider the face virtual element space\cite{beiraobrezzidasimaru2018siam}
\[
\begin{split}
\textbf{V}_{k-1}^{f}(E)=\big\{\textbf{v}_{h}\in \textbf{L}^{2}(E): \textrm{div} \hspace{0.05cm} \textbf{v}_{h}\in \mathbb{P}_{k-1}&(E),   \textbf{curl}  \hspace{0.05cm}  \textbf{v}_{h} \in ( \mathbb{P}_{k}(E))^{3}, \\ &\textbf{v}_{h} \cdot \textbf{n}_{F}\in  \mathbb{P}_{k-1}(F) \   \forall F\subseteq \partial E\big\},
\end{split}
\]
and endow it with the unisolvent set of DoFs\cite{beiraobrezzidasimaru2018siam,beiraobrezzimariniru2015}
\begin{align*}
&\bullet  \  \int_{F}\textbf{v}_{h}\cdot \textbf{n}_{F}p_{k-1}&&\forall p_{k-1}\in \mathbb{P}_{k-1}(F),  \hspace{0.1cm} \forall F\subseteq \partial E;\\
&\bullet\  \int_{E}  \textbf{v}_{h}\cdot \textbf{x}_{E} \wedge \textbf{p}_{k}    &&\forall \textbf{p}_{k} \in (\mathbb{P}_{k}(E))^{3};\\
&\bullet \  \int_{E} \textrm{div}
\hspace{0.05cm}\textbf{v}_{h}   p^{0}_{k-1}  &&\forall p^{0}_{k-1}\in \mathbb{P}_{k-1}^{0}(E)  \hspace{0.2cm} \textrm{only\ for}\  k>1.
\end{align*}
A simple computation reveals that the $\textbf{L}^{2}$ projection $\mathbf{\Pi}_{k+1}^{0,E}$:$\textbf{V}_{k-1}^{f}(E)\rightarrow (\mathbb{P}_{k+1}(E))^{3}$ is computable by means of such DoFs.

We first prove the following auxiliary bound for functions in~$\textbf{V}_{k-1}^{f}(E)$.
\begin{lem}
\label{lem3dfacebound}
For each $\textbf{v}_{h}\in \textbf{V}_{k-1}^{f}(E)$, we have
\begin{align} \label{3dfacePRIORIB}
\|\textbf{v}_{h}\|_{E} 
\lesssim h_{E} \|{\rm div}\hspace{0.05cm}\textbf{v}_{h} \|_{E}+h_{E}^{\frac12}\|\textbf{v}_{h}\cdot \textbf{n}_{\partial E}\|_{\partial E} 
+\!\!\!\!\!\!\!\sup_{\textbf{p}_{k}\in (\mathbb{P}_{k}(E))^3}\!\!\!\!\!\!
\frac{ \int_{E} \textbf{v}_{h}\cdot \textbf{x}_{E}\wedge \textbf{p}_{k}}{\|\textbf{x}_{E}\wedge \textbf{p}_{k}\|_{E}}.
\end{align}
\end{lem}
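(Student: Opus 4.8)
The plan is to mirror the proof of Lemma~\ref{lem2degde}, exchanging the roles of the divergence and curl operators as dictated by the $\textbf{H}(\textrm{div})$ structure. First I would introduce the Helmholtz decomposition $\textbf{v}_{h} = \bm{\nabla}\sigma + \textbf{curl}\bm{\rho}$, where $\sigma\in H^{1}(E)$ has zero average and solves, weakly, the Neumann problem $\Delta\sigma = \textrm{div}\hspace{0.05cm}\textbf{v}_{h}$ in $E$ with $\bm{\nabla}\sigma\cdot\textbf{n}_{\partial E} = \textbf{v}_{h}\cdot\textbf{n}_{\partial E}$ on $\partial E$ (solvable since $\int_{E}\textrm{div}\hspace{0.05cm}\textbf{v}_{h} = \int_{\partial E}\textbf{v}_{h}\cdot\textbf{n}_{\partial E}$), and $\textbf{curl}\bm{\rho} := \textbf{v}_{h}-\bm{\nabla}\sigma$ is the remainder. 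By construction $\textbf{curl}\bm{\rho}$ is divergence free with vanishing normal trace on $\partial E$, so that the boundary term in $(\bm{\nabla}\sigma,\textbf{curl}\bm{\rho})_{E} = -(\sigma,\textrm{div}\hspace{0.05cm}\textbf{curl}\bm{\rho})_{E} + \int_{\partial E}\sigma(\textbf{curl}\bm{\rho}\cdot\textbf{n}_{\partial E})$ vanishes and the decomposition is $L^{2}$-orthogonal, i.e. $\|\textbf{v}_{h}\|_{E}^{2} = \|\bm{\nabla}\sigma\|_{E}^{2}+\|\textbf{curl}\bm{\rho}\|_{E}^{2}$.

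For the gradient part (the analogue of \eqref{vhHeldecom32dedge}) I would integrate by parts, insert the Neumann data, and then apply the Poincar\'e inequality \eqref{poincarefridine} and the trace inequality \eqref{Traceinequality4}, both available since $\sigma$ has zero average:
\[
\|\bm{\nabla}\sigma\|_{E}^{2} = -(\sigma,\textrm{div}\hspace{0.05cm}\textbf{v}_{h})_{E} + \int_{\partial E}\sigma(\textbf{v}_{h}\cdot\textbf{n}_{\partial E}) \lesssim \left(h_{E}\|\textrm{div}\hspace{0.05cm}\textbf{v}_{h}\|_{E} + h_{E}^{\frac12}\|\textbf{v}_{h}\cdot\textbf{n}_{\partial E}\|_{\partial E}\right)\|\bm{\nabla}\sigma\|_{E},
\]
which delivers the first two terms on the right-hand side of \eqref{3dfacePRIORIB}.

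For the curl part (the analogue of \eqref{gradientestimates2dedge}) I would first prove, exactly as \eqref{inversediv2dedge} and using the quartic bubble together with \eqref{bubblefuncionproer} and \eqref{Polynomialinverseestimates}, the inverse estimate $\|\textbf{curl}\hspace{0.05cm}\textbf{v}_{h}\|_{E}\lesssim h_{E}^{-1}\|\textbf{v}_{h}\|_{E}$ for $\textbf{v}_{h}\in\textbf{V}_{k-1}^{f}(E)$ (here $\textbf{curl}(b_{E}\hspace{0.05cm}\textbf{curl}\hspace{0.05cm}\textbf{v}_{h})$ plays the role of $\bm{\nabla}_{F}(b_{F}\textrm{div}_{F}\textbf{v}_{h})$, the boundary contribution of the integration by parts vanishing because $b_{E}$ is a bubble). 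Since $\textbf{curl}\hspace{0.05cm}\textbf{v}_{h}\in(\mathbb{P}_{k}(E))^{3}$ is divergence free, the consequence of \eqref{decompo23d} (invariant under $\textbf{x}\mapsto\textbf{x}_{E}$) produces $\textbf{q}_{k}\in(\mathbb{P}_{k}(E))^{3}$ with $\textbf{curl}(\textbf{x}_{E}\wedge\textbf{q}_{k}) = \textbf{curl}\hspace{0.05cm}\textbf{v}_{h}$, and a scaling argument gives $\|\textbf{x}_{E}\wedge\textbf{q}_{k}\|_{E}\lesssim h_{E}\|\textbf{curl}\hspace{0.05cm}\textbf{v}_{h}\|_{E}\lesssim\|\textbf{v}_{h}\|_{E}$. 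The crux is the identity $\|\textbf{curl}\bm{\rho}\|_{E}^{2} = (\textbf{curl}\bm{\rho},\textbf{x}_{E}\wedge\textbf{q}_{k})_{E}$: indeed $\textbf{curl}(\textbf{curl}\bm{\rho}-\textbf{x}_{E}\wedge\textbf{q}_{k}) = \textbf{curl}\hspace{0.05cm}\textbf{v}_{h}-\textbf{curl}\hspace{0.05cm}\textbf{v}_{h}=0$, so on the star-shaped (hence simply connected) element $E$ there is $\phi\in H^{1}(E)$ with $\textbf{curl}\bm{\rho}-\textbf{x}_{E}\wedge\textbf{q}_{k}=\bm{\nabla}\phi$, and $(\textbf{curl}\bm{\rho},\bm{\nabla}\phi)_{E} = -(\textrm{div}\hspace{0.05cm}\textbf{curl}\bm{\rho},\phi)_{E} + \int_{\partial E}(\textbf{curl}\bm{\rho}\cdot\textbf{n}_{\partial E})\phi = 0$. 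Writing $\textbf{curl}\bm{\rho}=\textbf{v}_{h}-\bm{\nabla}\sigma$ in this identity and using Cauchy--Schwarz together with the supremum $\sup_{\textbf{p}_{k}}\int_{E}\textbf{v}_{h}\cdot(\textbf{x}_{E}\wedge\textbf{p}_{k})/\|\textbf{x}_{E}\wedge\textbf{p}_{k}\|_{E}$ then yields $\|\textbf{curl}\bm{\rho}\|_{E}^{2}\lesssim\big(\sup_{\textbf{p}_{k}}(\cdot)+\|\bm{\nabla}\sigma\|_{E}\big)\|\textbf{v}_{h}\|_{E}$.

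Finally, I would insert the two estimates into the orthogonality identity and absorb the factor $\|\textbf{v}_{h}\|_{E}$ by Young's inequality to reach \eqref{3dfacePRIORIB}. The main obstacle is precisely the curl part: unlike in two dimensions there is no scalar potential carrying a homogeneous boundary condition to exploit, so the whole argument rests on establishing $\|\textbf{curl}\bm{\rho}\|_{E}^{2}=(\textbf{curl}\bm{\rho},\textbf{x}_{E}\wedge\textbf{q}_{k})_{E}$, which in turn hinges on (i) the surjectivity statement in \eqref{decompo23d} to realize $\textbf{curl}\hspace{0.05cm}\textbf{v}_{h}$ as $\textbf{curl}(\textbf{x}_{E}\wedge\textbf{q}_{k})$, (ii) the simple-connectedness of $E$ guaranteed by assumption (\textbf{M}) to write the curl-free difference as a gradient, and (iii) the vanishing normal trace of $\textbf{curl}\bm{\rho}$ to annihilate the resulting orthogonality term.
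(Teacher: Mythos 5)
Your proposal is correct and follows the same skeleton as the paper's proof: the Helmholtz decomposition $\textbf{v}_h=\textbf{curl}\,\bm{\rho}+\bm{\nabla}\psi$ with $\psi$ solving the Neumann problem, the $L^2$-orthogonality identity, the estimate of $\|\bm{\nabla}\psi\|_E$ by integration by parts plus \eqref{Traceinequality4} and \eqref{poincarefridine}, and the treatment of the solenoidal part through the polynomial $\textbf{x}_E\wedge\textbf{q}_k$ from \eqref{decompo23d} combined with the inverse estimate $\|\textbf{curl}\,\textbf{v}_h\|_E\lesssim h_E^{-1}\|\textbf{v}_h\|_E$. The one place where you genuinely deviate is the justification of the key identity $\|\textbf{curl}\,\bm{\rho}\|_E^2=(\textbf{curl}\,\bm{\rho},\textbf{x}_E\wedge\textbf{q}_k)_E$: the paper realizes $\bm{\rho}$ explicitly as the solution of a \textbf{curl}-\textbf{curl} system with the tangential boundary condition $\bm{\rho}\wedge\textbf{n}_{\partial E}=0$ (citing the Helmholtz decomposition of Ref.~\cite{beiraolourenco2020}) and integrates by parts twice, using that boundary condition to kill the two surface terms; you never need $\bm{\rho}$ itself, only the remainder $\textbf{v}_h-\bm{\nabla}\sigma$, and you obtain the identity by writing the curl-free difference $\textbf{curl}\,\bm{\rho}-\textbf{x}_E\wedge\textbf{q}_k$ as $\bm{\nabla}\phi$ on the simply connected element and exploiting that the remainder is divergence free with vanishing normal trace. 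Your route is marginally more self-contained (it avoids invoking the well-posedness of the auxiliary \textbf{curl}-\textbf{curl} problem and requires only the solvability of the Neumann problem plus simple-connectedness, which assumption (\textbf{M}) supplies), while the paper's route reuses machinery it needs anyway for the edge space in Section~\ref{sec42}; both arrive at the same estimate with the same constants' dependencies.
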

\begin{proof}
The following Helmholtz decomposition of~$\textbf{v}_{h}$ is valid;
see Proposition 3.1 in Ref.~\cite{beiraolourenco2020}:
\begin{align}
 \label{standardinterpolation1_proof63dface}  \textbf{v}_{h}=\textbf{curl} \hspace{0.05cm}\bm{\rho}+\bm{\nabla}  \psi,
\end{align}
where
the function $\psi\in H^{1}(E)\setminus\mathbb{R}$ satisfies weakly
\[
\Delta \psi= \textrm{div} \hspace{0.05cm}\textbf{v}_{h} \ \textrm{in}\ E,\ \ \bm{\nabla} \psi\cdot \textbf{n}_{\partial E}
= \textbf{v}_{h} \cdot \textbf{n}_{\partial E}\ \textrm{on}\ \partial E,
\]
and the function $\bm{\rho}\in   \textbf{H}(\textbf{curl}, E) \cap  \textbf{H}(\textrm{div,E})$  satisfies weakly
 \begin{align}
 \label{standardinterpolation1_proof33dface}
\textbf{curl} \hspace{0.05cm}\textbf{curl} \hspace{0.05cm}\bm{\rho} = \textbf{curl} \hspace{0.05cm} \textbf{v}_{h} \ \textrm{in}\ E,\ \  \textrm{div} \hspace{0.05cm}\bm{\rho}=0\ \textrm{in}\  E,\ \ \bm{\rho}\wedge \textbf{n}_{\partial E} =0\ \textrm{on}\ \partial E.
\end{align}
We have
\begin{align} \label{standardinterpolation1_proof73dface}
(\textbf{curl} \bm{\rho} , \bm{\nabla}\psi)_{E}=0,
\qquad
\|\textbf{v}_{h} \|^{2}_{E}
=\|\textbf{curl} \hspace{0.05cm}\bm{\rho}\|^{2}_{E}
  +\|\bm{\nabla}  \psi\|^{2}_{E}.
\end{align}
By using \eqref{standardinterpolation1_proof63dface}, an integration by parts,   \eqref{Traceinequality4} and \eqref{poincarefridine}, it is immediate  that
\begin{eqnarray}
\begin{aligned}[b] \label{standardinterpolation1_proof93dface}
& \|\bm{\nabla}  \psi\|^{2}_{E} 
\overset{\eqref{standardinterpolation1_proof63dface}, \eqref{standardinterpolation1_proof73dface}}{=}
(\bm{\nabla}  \psi,\textbf{v}_{h} )_{E} 
\overset{\text{IBP}}{=}
\int_{\partial E} \textbf{v}_{h}\cdot\textbf{n}_{\partial E} \psi - \int_{  E} \textrm{div}\hspace{0.05cm} \textbf{v}_{h} \psi
 \!\lesssim\! \|\textbf{v}_{h} \!\cdot\! \textbf{n}_{\partial E}\|_{\partial E}\|\psi \|_{\partial E} \\
& \quad \!+\! \|  \textrm{div}\hspace{0.05cm} \textbf{v}_{h}\|_{E}\|  \psi\|_{E}
\!\overset{\eqref{Traceinequality4},\eqref{poincarefridine}}{\lesssim}\!  
\!\left(h_{E} \|  \textrm{div}\hspace{0.05cm} \textbf{v}_{h}\|_{E}
\!+\! h_{E}^{\frac12}\|\textbf{v}_{h} \!\cdot\! \textbf{n}_{\partial E}\|_{\partial E} \!\right)\! \|\bm{\nabla}  \psi\|_{E}.
\end{aligned}
\end{eqnarray}
Since $\textbf{curl}\hspace{0.05cm} \textbf{v}_{h}\in (\mathbb{P}_{k}(E))^3$
with $\textrm{div}\hspace{0.05cm}  (\textbf{curl}\hspace{0.05cm}  \textbf{v}_{h})=0$,
\eqref{decompo23d} implies the existence of~$\textbf{q}_{k}\in (\mathbb{P}_{k}(E))^3$ such that
\begin{align} \label{standardinterpolation2_proof13dface}
\textbf{curl} \hspace{0.05cm}( \textbf{x}_{E}\wedge \textbf{q}_{k})
=\textbf{curl} \hspace{0.05cm}  \textbf{v}_{h}
\ \textrm{and}\
\| \textbf{x}_{E}\wedge \textbf{q}_{k}\|_{E}
\lesssim h_{E}\|\textbf{curl} \hspace{0.05cm} \textbf{v}_{h} \|_{E}.
  \end{align}
The following inverse estimate inequality involving face virtual element functions is the three dimensional version of~\eqref{inverserot}
and is based on the existence of a shape-regular decomposition of~$E$ into tetrahedra
(see Remark~\ref{rem21}):
\begin{align} \label{3dcurlinverseesti}
\|\textbf{curl} \hspace{0.05cm}  \textbf{v}_{h} \|_{E} 
\lesssim h_{E}^{-1} \|\textbf{v}_{h} \|_{E}
\hspace{0.3cm} \forall \textbf{v}_{h} \in \textbf{V}_{k-1}^{f}(E).
\end{align}
Next, we  estimate the first term on the right-hand side of \eqref{standardinterpolation1_proof73dface}:
\begin{eqnarray}
\begin{aligned} \label{standardinterpolation2_proof103dface}
\|\textbf{curl}\hspace{0.05cm} \bm{\rho}\|^{2}_{E}
& \overset{\text{IBP}}{=}
\int_{E}  \bm{\rho}  \cdot \textbf{curl}\hspace{0.05cm} \textbf{curl}\hspace{0.05cm} \bm{\rho}
\overset{\eqref{standardinterpolation1_proof33dface}}{=}
\int_{E}  \bm{\rho}  \cdot   \textbf{curl}\hspace{0.05cm} \textbf{v}_{h}
\overset{\eqref{standardinterpolation2_proof13dface}}{=}
\int_{E}  \bm{\rho}  \cdot   \textbf{curl} \hspace{0.05cm}( \textbf{x}_{E}\wedge \textbf{q}_{k})\\
& \overset{\text{IBP},\eqref{standardinterpolation1_proof63dface}}{=}
\int_{E}   (\textbf{v}_{h}-\bm{\nabla}  \psi)   \cdot ( \textbf{x}_{E}\wedge \textbf{q}_{k})
+ \int_{\partial E} (\bm{\rho}\wedge \textbf{n}_{\partial E} )\cdot  ( \textbf{x}_{E}\wedge \textbf{q}_{k}) \\
& \overset{\eqref{standardinterpolation1_proof33dface}}{\leq}
\Big(\sup_{\textbf{p}_{k}\in (\mathbb{P}_{k}(E))^3}\frac{ \int_{E} \textbf{v}_{h}\cdot \textbf{x}_{E}\wedge \textbf{p}_{k}}{\|\textbf{x}_{E}\wedge \textbf{p}_{k}\|_{E}}
+ \|\bm{\nabla}  \psi\|_{E} \Big)\|\textbf{x}_{E}\wedge \textbf{q}_{k}\|_{E}\\
& \overset{\eqref{standardinterpolation2_proof13dface}, \eqref{3dcurlinverseesti}}{\lesssim}
\Big( \sup_{\textbf{p}_{k}\in (\mathbb{P}_{k}(E))^3}\frac{ \int_{E} \textbf{v}_{h}\cdot \textbf{x}_{E}\wedge \textbf{p}_{k}}{\|\textbf{x}_{E}\wedge \textbf{p}_{k}\|_{E}}
+ \|\bm{\nabla}  \psi\|_{E}\Big)\| \textbf{v}_{h} \|_{E}.
\end{aligned}
\end{eqnarray}
Bound~\eqref{3dfacePRIORIB} easily follows by combining~\eqref{standardinterpolation1_proof73dface},
\eqref{standardinterpolation1_proof93dface}, and~\eqref{standardinterpolation2_proof103dface}.
\end{proof}

The DoFs interpolation operator $\widetilde{\textbf{I}}_{h}^{f}$ on the space~$\textbf{V}_{k-1}^{f}(E)$ is well defined for functions
in $\textbf{H}^{s}(E)\cap \textbf{H}(\textrm{div}, E)$, $s>1 \slash 2$:
\begin{subequations} \label{dofface3Dinter}
\begin{align} \label{dof1face3Dinter}
&\int_{F}( \textbf{v} -\widetilde{\textbf{I}}_{h}^{f} \textbf{v})\cdot \textbf{n}_{F} p_{k-1}=0&&\forall p_{k-1}\in \mathbb{P}_{k-1}(F),  \hspace{0.1cm} \forall F\subseteq \partial E;\\
\label{dof2face3Dinter}
&  \int_{E}  (\textbf{v} - \widetilde{\textbf{I}}_{h}^{f} \textbf{v})\cdot \textbf{x}_{E} \wedge \textbf{p}_{k}   =0 &&\forall \textbf{p}_{k} \in (\mathbb{P}_{k}(E))^{3};\\
\label{dof3face3Dinter}
& \int_{E} \textrm{div}
\hspace{0.05cm}(\textbf{v} -\widetilde{\textbf{I}}_{h}^{f} \textbf{v})  p^{0}_{k-1}  =0&&\forall p^{0}_{k-1}\in \mathbb{P}_{k-1}^{0}(E)  \hspace{0.2cm} \textrm{only\ for}\  k>1.
\end{align}
\end{subequations}
From~\eqref{dof1face3Dinter} and~\eqref{dof3face3Dinter}, we have
\begin{align} \label{3dfacecommuni}
\textrm{div}
\hspace{0.05cm} (\widetilde{\textbf{I}}_{h}^{f} \textbf{v})={\Pi}_{k-1}^{0,E}(\textrm{div}
\hspace{0.05cm} \textbf{v} ).
\end{align}
Next, we prove interpolation estimates for the three-dimensional face virtual element space $\textbf{V}_{k-1}^{f}(E)$.
\begin{thm} \label{theorem3dfaceinter}
For each $\textbf{v}\in \textbf {H}^{s}(E) $, $1\slash2 < s\leq {k}$,
with ${\rm div}\hspace{0.05cm}\textbf{v}\in  H^{r}(E)$, $0\leq r\leq k$,we have
\begin{align} \label{interpolation1div3D}
&\|\textbf{v}- \widetilde{\textbf{I}}_{h}^{f} \textbf{v}\|_{E}
\lesssim h_{E}^{s} |\textbf{v}|_{s,E}
         + h_E \|{\rm div} \hspace{0.05cm}\textbf{v}\|_{E} ,\\
 \label{interpolation2div3D}
&\|{\rm div} \hspace{0.05cm}(\textbf{v} - \widetilde{\textbf{I}}_{h}^{f} \textbf{v})\|_{E}
\lesssim h_{E}^{r}|{\rm div} \hspace{0.05cm}\textbf{v}|_{r,E}.
\end{align}
The second term on the right-hand side  of \eqref{interpolation1div3D} can be neglected if $s \ge 1$.
\end{thm}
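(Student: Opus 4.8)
The plan is to mirror the argument for the two-dimensional edge case, Theorem~\ref{thm31edge2d}, replacing the a priori bound of Lemma~\ref{lem2degde} by its three-dimensional face counterpart, Lemma~\ref{lem3dfacebound}, the operator $\textrm{rot}_{F}$ by $\textrm{div}$, and the tangential edge traces by normal face traces. First I would establish~\eqref{interpolation2div3D}: the commuting relation~\eqref{3dfacecommuni} gives $\textrm{div}\hspace{0.05cm}(\textbf{v}-\widetilde{\textbf{I}}_{h}^{f}\textbf{v}) = \textrm{div}\hspace{0.05cm}\textbf{v} - \Pi_{k-1}^{0,E}(\textrm{div}\hspace{0.05cm}\textbf{v})$, whence~\eqref{interpolation2div3D} follows from standard polynomial approximation of $\textrm{div}\hspace{0.05cm}\textbf{v}\in H^{r}(E)$ by $\mathbb{P}_{k-1}(E)$.

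For~\eqref{interpolation1div3D}, I would first observe that $(\mathbb{P}_{k-1}(E))^{3}\subseteq \textbf{V}_{k-1}^{f}(E)$: the normal trace of a degree-$(k-1)$ polynomial lies in $\mathbb{P}_{k-1}(F)$, its divergence in $\mathbb{P}_{k-2}(E)$, and its curl in $(\mathbb{P}_{k-2}(E))^{3}$, all compatible with the space definition. This inclusion (and the fact that the normal trace of a full degree-$k$ field would be too rich) is precisely why the regularity index is capped at $s\le k$. Consequently $\textbf{w}_{h}:=\mathbf{\Pi}_{k-1}^{0,E}\textbf{v}-\widetilde{\textbf{I}}_{h}^{f}\textbf{v}$ belongs to $\textbf{V}_{k-1}^{f}(E)$, where $\mathbf{\Pi}_{k-1}^{0,E}$ denotes the $\textbf{L}^{2}(E)$ projection onto $(\mathbb{P}_{k-1}(E))^{3}$, so Lemma~\ref{lem3dfacebound} applies to it. I would then bound the three terms in~\eqref{3dfacePRIORIB} separately. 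The volume supremum is controlled using the interpolation condition~\eqref{dof2face3Dinter}, which replaces $\widetilde{\textbf{I}}_{h}^{f}\textbf{v}$ by $\textbf{v}$ inside the integral and yields, via Cauchy--Schwarz, the bound $\|\textbf{v}-\mathbf{\Pi}_{k-1}^{0,E}\textbf{v}\|_{E}$. The divergence term is split by a triangle inequality into $h_{E}\|\textrm{div}\hspace{0.05cm}(\textbf{v}-\mathbf{\Pi}_{k-1}^{0,E}\textbf{v})\|_{E}$, handled by the $H^{1}$ approximation/stability of the $\textbf{L}^{2}$ projection, and $h_{E}\|\textrm{div}\hspace{0.05cm}(\textbf{v}-\widetilde{\textbf{I}}_{h}^{f}\textbf{v})\|_{E}$, handled by the already proven~\eqref{interpolation2div3D}.

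The boundary term is where the real work lies. Using~\eqref{dof1face3Dinter} I would note that $\widetilde{\textbf{I}}_{h}^{f}\textbf{v}\cdot\textbf{n}_{F}=\Pi_{k-1}^{0,F}(\textbf{v}\cdot\textbf{n}_{F})$ on each face, and since $\mathbf{\Pi}_{k-1}^{0,E}\textbf{v}\cdot\textbf{n}_{F}\in\mathbb{P}_{k-1}(F)$ this gives $\textbf{w}_{h}\cdot\textbf{n}_{F}=\Pi_{k-1}^{0,F}\big((\mathbf{\Pi}_{k-1}^{0,E}\textbf{v}-\textbf{v})\cdot\textbf{n}_{F}\big)$, i.e.\ the face-wise $L^{2}$ projection of the normal trace of the polynomial approximation error. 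Hence $\|\textbf{w}_{h}\cdot\textbf{n}_{F}\|_{F}\le\|\textbf{v}-\mathbf{\Pi}_{k-1}^{0,E}\textbf{v}\|_{F}$, and since $s>1/2$ the normal trace is square integrable, so the scalar trace inequality~\eqref{Traceinequality3_1} applied componentwise bounds $\|\textbf{v}-\mathbf{\Pi}_{k-1}^{0,E}\textbf{v}\|_{\partial E}$ by $h_{E}^{-\frac12}\|\textbf{v}-\mathbf{\Pi}_{k-1}^{0,E}\textbf{v}\|_{E}+h_{E}^{s-\frac12}|\textbf{v}-\mathbf{\Pi}_{k-1}^{0,E}\textbf{v}|_{s,E}$. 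Summing over the uniformly bounded number of faces and multiplying by $h_{E}^{\frac12}$, then invoking polynomial approximation and seminorm stability of the projection, reduces $h_{E}^{\frac12}\|\textbf{w}_{h}\cdot\textbf{n}_{\partial E}\|_{\partial E}$ to $h_{E}^{s}|\textbf{v}|_{s,E}$. In contrast to the two-dimensional proof, the hypothesis $s>1/2$ makes this trace $\textbf{L}^{2}$, so no $L^{1}$-type substitute for Lemma~\ref{lemma44} is needed; this is exactly the reason the admissible range shrinks to $s>1/2$.

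Collecting the three estimates bounds $\|\textbf{w}_{h}\|_{E}$, and a triangle inequality with $\|\textbf{v}-\mathbf{\Pi}_{k-1}^{0,E}\textbf{v}\|_{E}$ yields~\eqref{interpolation1div3D} for $s\ge 1$. For $1/2<s<1$ I would, exactly as in~\eqref{standardinterpolation12dedge_proof3492}, replace $\mathbf{\Pi}_{k-1}^{0,E}$ by the mean value $\mathbf{\Pi}_{0}^{0,E}\textbf{v}$: its divergence vanishes, so every divergence contribution collapses to $h_{E}\|\textrm{div}\hspace{0.05cm}\textbf{v}\|_{E}$, precisely the second, non-removable term on the right-hand side of~\eqref{interpolation1div3D}, while the remaining terms are controlled by the Poincar\'e/fractional approximation estimate $\|\textbf{v}-\mathbf{\Pi}_{0}^{0,E}\textbf{v}\|_{E}+h_{E}^{s}|\textbf{v}|_{s,E}\lesssim h_{E}^{s}|\textbf{v}|_{s,E}$. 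The hard part, as signalled, is the boundary term: one must correctly exploit the face-wise $L^{2}$ projection of the normal trace together with the $s>1/2$ trace inequality, and keep the $s\ge 1$ versus $1/2<s<1$ dichotomy consistent with the two divergence-dependent contributions so that the extra $h_{E}\|\textrm{div}\hspace{0.05cm}\textbf{v}\|_{E}$ appears only in the low-regularity regime.
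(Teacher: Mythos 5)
Your proposal is correct and follows essentially the same route as the paper: the commuting property~\eqref{3dfacecommuni} for the divergence estimate, the a priori bound of Lemma~\ref{lem3dfacebound} applied to $\mathbf{\Pi}_{k-1}^{0,E}\textbf{v}-\widetilde{\textbf{I}}_{h}^{f}\textbf{v}$, the contraction property of the face-wise $L^{2}$ projection for the normal-trace term (the paper's~\eqref{cavallino}), and the replacement of $\mathbf{\Pi}_{k-1}^{0,E}$ by $\mathbf{\Pi}_{0}^{0,E}$ in the regime $1/2<s<1$. Your side observations (the inclusion $(\mathbb{P}_{k-1}(E))^{3}\subseteq\textbf{V}_{k-1}^{f}(E)$ capping $s$ at $k$, and $s>1/2$ obviating any $L^{1}$-type analogue of Lemma~\ref{lemma44}) are also consistent with the paper.
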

  \begin{proof}
By~\eqref{3dfacecommuni} and standard polynomial approximation properties,
we immediately get~\eqref{interpolation2div3D}.
Hence, we focus on bound~\eqref{interpolation1div3D}.

First, we observe that~\eqref{dof1face3Dinter} implies
\begin{equation}\label{cavallino}
h_{E}^{\frac12}\|(\mathbf{\Pi}_{k-1}^{0,E} \mathbf{v}- \widetilde{\textbf{I}}^{f}_{h}\textbf{v})\cdot \textbf{n}_{\partial E}\|_{\partial E}
 \le
h_{E}^{\frac12}\|(\mathbf{\Pi}_{k-1}^{0,E} \mathbf{v}- \textbf{v})\cdot \textbf{n}_{\partial E}\|_{\partial E}.
\end{equation}
Using the facts that $\mathbf{\Pi}_{k-1}^{0,E} \mathbf{v}\in(\mathbb{P}_{k-1}(E))^{3}\subseteq  \textbf{V}_{k-1}^{f}(E)$
and $\widetilde{\textbf{I}}^{f}_{h}\textbf{v} \cdot \textbf{n}_{F}\in \mathbb{P}_{k-1}(F)$, \eqref{3dfacePRIORIB}, and~\eqref{dofface3Dinter},
it follows that
\begin{eqnarray}
\begin{aligned}[b] \label{standardinterpolation12dedge_proof23dface}
&\| \mathbf{\Pi}_{k-1}^{0,E} \mathbf{v}- \widetilde{\textbf{I}}^{f}_{h}\textbf{v} \|_{E}
\overset{\eqref{3dfacePRIORIB}}{\lesssim}
h_{E} \|\textrm{div}\hspace{0.05cm}(\mathbf{\Pi}_{k-1}^{0,E} \mathbf{v}- \widetilde{\textbf{I}}^{f}_{h}\textbf{v})\|_{E}\\
& + h_{E}^{\frac12}\|(\mathbf{\Pi}_{k-1}^{0,E} \mathbf{v}- \widetilde{\textbf{I}}^{f}_{h}\textbf{v})\cdot \textbf{n}_{\partial E}\|_{\partial E}
+ \sup_{\textbf{p}_{k}\in (\mathbb{P}_{k}(E))^3}\frac{ \int_{E} (\mathbf{\Pi}_{k-1}^{0,E} \mathbf{v}- \widetilde{\textbf{I}}^{f}_{h}\textbf{v})\cdot \textbf{x}_{E}\wedge \textbf{p}_{k}}{\|\textbf{x}_{E}\wedge \textbf{p}_{k}\|_{E}} \\
& \overset{\eqref{dof2face3Dinter}, \eqref{cavallino}}{\lesssim}
h_{E}\|\textrm{div}\hspace{0.05cm}(\textbf{v} - \mathbf{\Pi}_{k-1}^{0,E} \mathbf{v} )\|_{E}
+h_{E}\|\textrm{div}\hspace{0.05cm}( \mathbf{v}- \widetilde{\textbf{I}}^{f}_{h}\textbf{v})\|_{E} \\ 
& \hspace{0.5cm} +h_{E}^{\frac12}\|(\textbf{v} - \mathbf{\Pi}_{k-1}^{0,E} \mathbf{v}) \cdot\textbf{n}_{\partial E}\|_{\partial E}
+ \| \textbf{v} - \mathbf{\Pi}_{k-1}^{0,E} \mathbf{v} \|_{E}.
\end{aligned}
\end{eqnarray}
We apply the triangle inequality  and    \eqref{standardinterpolation12dedge_proof23dface} to obtain
\begin{eqnarray}
\begin{aligned}[t] \label{standardinterpolation12dface_proof3}
&\|\textbf{v}- \widetilde{\textbf{I}}^{f}_{h}\textbf{v}\|_{E} \leq  \|\textbf{v}-\mathbf{\Pi}_{k-1}^{0,E} \mathbf{v}\|_{E}
+ \| \mathbf{\Pi}_{k-1}^{0,E} \mathbf{v} - \widetilde{\textbf{I}}^{f}_{h} \textbf{v} \|_{E}
\lesssim \|\textbf{v}-\mathbf{\Pi}_{k-1}^{0,E} \mathbf{v}\|_{E} \\
& +  h_{E}\|\textrm{div}\hspace{0.05cm}(\textbf{v} - \mathbf{\Pi}_{k-1}^{0,E} \mathbf{v}) \|_{E}
+h_{E}\|\textrm{div}\hspace{0.05cm}( \mathbf{v}- \widetilde{\textbf{I}}^{f}_{h}\textbf{v})\|_{E} 
+h_{E}^{\frac12 }\|(\textbf{v} - \mathbf{\Pi}_{k-1}^{0,E} \mathbf{v}) \cdot\textbf{n}_{\partial E}\|_{\partial E}.
\end{aligned}
 \end{eqnarray}
If~$s \ge 1$, standard polynomial approximation properties lead to
\[
\begin{split}
& \|\textbf{v}-  \widetilde{\textbf{I}}^{f}_{h}\textbf{v}\|_{E}
\!\!\!\!\!\overset{\eqref{standardinterpolation12dface_proof3}, \eqref{Traceinequality3_1}}{\lesssim}\!\!\!\!\!
\|\textbf{v}-\mathbf{\Pi}_{k-1}^{0,E} \mathbf{v}\|_{E}
+  h_{E}\|\textrm{div}\hspace{0.05cm}(\textbf{v} - \mathbf{\Pi}_{k-1}^{0,E} \mathbf{v})\|_{E}
+h_{E}\|\textrm{div}\hspace{0.05cm}( \mathbf{v}- \widetilde{\textbf{I}}^{f}_{h}\textbf{v})\|_{E} \\
& \hspace{0.5cm}
+ h_{E} | \textbf{v} - \mathbf{\Pi}_{k-1}^{0,E} \mathbf{v} |_{1, E}
\overset{\eqref{interpolation2div3D}}{\lesssim }
h_{E}^{s}\left(|\textbf{v}|_{s,E}
+ |\textrm{div}\hspace{0.05cm}\mathbf{v}|_{s-1,E}\right)
\lesssim h_{E}^{s} |\textbf{v}|_{s,E}.
\end{split}
\]
Instead, if~$1\slash2 < s < 1$, we replace the term $\mathbf{\Pi}_{k-1}^{0,E} \mathbf{v}$ by $\mathbf{\Pi}_{0}^{0,E} \mathbf{v}$ in \eqref{standardinterpolation12dedge_proof23dface} and \eqref{standardinterpolation12dface_proof3},
use standard polynomial approximation properties,
and write
\[
\begin{split}
& \|\textbf{v}-  \widetilde{\textbf{I}}^{f}_{h}\textbf{v}\|_{E}
\overset{\eqref{Traceinequality3_1}}{\lesssim}
\|\textbf{v}-\mathbf{\Pi}_{0}^{0,E} \mathbf{v}\|_{E}
+  h_{E} \|\textrm{div}\hspace{0.05cm}(\textbf{v} - \mathbf{\Pi}_{0}^{0,E} \mathbf{v})\|_{E}  \\
&\hspace{0.5cm} +h_{E}\|\textrm{div}\hspace{0.05cm}( \mathbf{v}- \widetilde{\textbf{I}}^{f}_{h}\textbf{v})\|_{E}  
+h^{s}_{E} | \textbf{v} - \mathbf{\Pi}_{0}^{0,E} \mathbf{v} |_{s, E}
\overset{\eqref{interpolation2div3D}}{\lesssim}
h_{E}^{s}|\textbf{v}|_{s,E} +h_{E} \|\textrm{div}\hspace{0.05cm}\mathbf{v}\|_{E}.
\end{split}
\]
\end{proof}

\subsection{Standard edge virtual element space  on polyhedrons} \label{sec42}
As in Ref.~\cite{beiraobrezzidasimaru2018siam,bebremaru2016RLMASerendipity},
we first introduce the boundary space
\begin{align} \label{B-standard-edge-3D}
\!\mathcal{B}_{k}(\partial E)\!
=\! \left\{\!\textbf{v}_{\!h} \!\in\! \textbf{L}^{\!\!2}(\partial E): \! 
\textbf{v}_{\!h}^{\!F} \!\in\! \textbf{V}_{\!k}^{\!e}(F)\ 
\forall F \!\subseteq\! \partial E, \textbf{v}_{\!h} \!\cdot\! \textbf{t}_{e}\
\textrm{is\ continuous}\ \forall e \!\subseteq\! \partial F \!\right\},
\end{align}
where $\textbf{v}_{h}^{F}$ denotes the tangential component of the vector $\textbf{v}_{h}$ over~$F$ given by
\begin{align} \label{tangentpaprt}
\textbf{v}_{h}^{F}=(\textbf{v}_{h}-(\textbf{v}_{h}\cdot \textbf{n}_{F}) \textbf{n}_{F})|_{F}.
\end{align}
The standard edge virtual element space in 3D  is defined as\cite{beiraobrezzidasimaru2018siam}
   \begin{align*}
\textbf{V}_{k}^{e}(E)=\{\textbf{v}_{h}\in \textbf{L}^{2}(E): \ 
& \textrm{div} \hspace{0.05cm} \textbf{v}_{h}\in \mathbb{P}_{k-1}(E), \textbf{curl}  \hspace{0.05cm} \textbf{curl} \hspace{0.05cm} \textbf{v}_{h} \in (\mathbb{P}_{k}(E))^3,  \\
& \textbf{v}_{h}^{F}\in \textbf{V}_{k}^{e}(F)\ \forall F \subseteq \partial E, \textbf{v}_{h}\cdot \textbf{t}_{e}\ \textrm{is\ continuous}\ \forall e\subseteq \partial F\}.
\end{align*}
We endow the space~$\textbf{V}_{k}^{e}(E)$ with the following set of DoFs:
\begin{align}
\label{dof13dedge}
&\bullet \   \int_{e}\textbf{v}_{h}\cdot \textbf{t}_{e}p_{k}&&\forall p_{k}\in \mathbb{P}_{k}(e),  \hspace{0.1cm} \forall e\subseteq \partial F;\\
\label{dof23dedge}
&\bullet \  \int_{F} \textbf{v}^{F}_{h}\cdot \textbf{x}^{F}_{F} p_{k}  &&\forall p_{k}\in \mathbb{P}_{k}(F);\\
\label{dof33dedge}
&\bullet\    \int_{F} \textrm{rot}_{F}\hspace{0.05cm}\textbf{v}^{F}_{h}  p_{k-1}^{0}   &&\forall p_{k-1}^{0}\in \mathbb{P}_{k-1}^{0}(F)  \hspace{0.2cm} \textrm{only\ for}\  k>1;  \\
\label{dof43dedge}
&\bullet\  \int_{E} \textbf{curl}\hspace{0.05cm} \textbf{v}_{h}\cdot \textbf{x}_{E} \wedge \textbf{p}_{k}    &&\forall \textbf{p}_{k} \in (\mathbb{P}_{k}(E))^{3};\\
\label{dof53dedge}
&\bullet \  \int_{E} \textbf{v}_{h}\cdot \textbf{x}_{E} p_{k-1}  &&\forall p_{k-1}\in \mathbb{P}_{k-1}(E).
\end{align}
The unisolvence of the above DoFs is proven in Section 8.6 of Ref.~\cite{bebremaru2016RLMASerendipity}.
From Proposition 3.7 in Ref.~\cite{beiraobrezzidasimaru2018siam},
the $\textbf{L}^{2}$ projection~$\mathbf{\Pi}_{k}^{0,E}$ from~$\textbf{V}_{k}^{e}(E)$ to $(\mathbb{P}_{k}(E))^{3}$ can be computed by such DoFs.

Next, we recall a well-posedness result for \textbf{curl}-\textbf{curl} systems; for the sake of completeness, we discuss its proof.

\begin{lem}  \label{weposednesscurcur}
For any given $\textbf{v}_{h}\in \textbf{V}_{k}^{e}(E)$, the problem
\begin{eqnarray} \label{eq:4.1}
\left\{
\begin{aligned}
& {\bf curl} \hspace{0.05cm} {\bf curl} \hspace{0.05cm}\bm{\rho}= {\bf curl} \hspace{0.05cm}\textbf{v}_{h},\hspace{0.9cm} {\rm div}\hspace{0.05cm}\bm{\rho}=0   && \textrm{in} \  E,\\
&{\bf curl} \hspace{0.05cm}\bm{\rho}\wedge \textbf{n}_{\partial E}= \textbf{v}_{h}\wedge \textbf{n}_{\partial E}, \hspace{0.4cm}  \bm{\rho}\cdot \textbf{n}_{\partial E}=0 && \textrm{on} \  \partial E,
\end{aligned}
\right.
\end{eqnarray}
has a unique solution $\bm{\rho}$ in $\textbf{H}( {\bf curl}, E)\cap  \textbf{H}({\rm div},E)$.
Moreover, the following a priori bound is valid:
\begin{align} \label{prioribound}
\|\bm{\rho}\|_{E} +h_{E} \|{\bf curl} \hspace{0.05cm}\bm{\rho}\|_{E}
\lesssim h^{2}_{E} \|{\bf curl} \hspace{0.05cm}\textbf{v}_{h}\|_{E}
+h_{E}^{\frac32}\|\textbf{v}_{h}\wedge \textbf{n}_{\partial E}\|_{\partial E}.
  \end{align}
\end{lem}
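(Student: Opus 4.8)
The plan is to reformulate \eqref{eq:4.1} as a coercive variational problem posed on the constrained space
\[
X := \big\{ \bm{\phi} \in \textbf{H}(\textbf{curl}, E) \cap \textbf{H}(\textrm{div}, E) : \textrm{div}\, \bm{\phi} = 0 \text{ in } E, \ \bm{\phi} \cdot \textbf{n}_{\partial E} = 0 \text{ on } \partial E \big\},
\]
in which the constraints $\textrm{div}\,\bm{\rho}=0$ and $\bm{\rho}\cdot\textbf{n}_{\partial E}=0$ are encoded, while $\textbf{curl}\,\bm{\rho}\wedge\textbf{n}_{\partial E}=\textbf{v}_h\wedge\textbf{n}_{\partial E}$ will play the role of a natural boundary condition. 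Testing the first equation in \eqref{eq:4.1} against $\bm{\phi}\in X$ and integrating by parts (the same identity used in \eqref{standardinterpolation2_proof103dface}, now retaining the boundary contribution since $\bm{\phi}\wedge\textbf{n}_{\partial E}$ need not vanish), the weak problem reads: find $\bm{\rho}\in X$ such that
\[
(\textbf{curl}\,\bm{\rho}, \textbf{curl}\,\bm{\phi})_E = (\textbf{curl}\,\textbf{v}_h, \bm{\phi})_E + \int_{\partial E}(\textbf{v}_h \wedge \textbf{n}_{\partial E}) \cdot \bm{\phi} \qquad \forall \bm{\phi} \in X.
\]

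For well-posedness I would invoke Lax--Milgram. The crucial point is coercivity of $(\textbf{curl}\cdot,\textbf{curl}\cdot)_E$ on $X$: every $\bm{\phi}\in X$ is divergence free with vanishing normal trace, so the Friedrichs inequality \eqref{poincarefridinediv} collapses to $\|\bm{\phi}\|_E\lesssim h_E\|\textbf{curl}\,\bm{\phi}\|_E$, whence $\|\textbf{curl}\cdot\|_E$ is a norm on $X$ equivalent to the graph norm. Continuity of the right-hand side follows from Cauchy--Schwarz on the volume term plus the trace inequality \eqref{Traceinequality5} applied to $\bm{\chi}=\bm{\phi}$: since $\textrm{div}\,\bm{\phi}=0$ and $\bm{\phi}\cdot\textbf{n}_{\partial E}=0$ (so that $\|\bm{\phi}\|_{\partial E}=\|\bm{\phi}\wedge\textbf{n}_{\partial E}\|_{\partial E}$), \eqref{Traceinequality5} reduces to $\|\bm{\phi}\|_{\partial E}\lesssim h_E^{-\frac12}\|\bm{\phi}\|_E+h_E^{\frac12}\|\textbf{curl}\,\bm{\phi}\|_E\lesssim h_E^{\frac12}\|\textbf{curl}\,\bm{\phi}\|_E$, which bounds the boundary functional (note $\textbf{v}_h\wedge\textbf{n}_{\partial E}\in\textbf{L}^2(\partial E)$ because the tangential traces of $\textbf{V}_k^e(E)$ are piecewise polynomial). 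Coercivity gives uniqueness directly. To recover the strong form I would test against compactly supported divergence-free fields to obtain $\textbf{curl}\,\textbf{curl}\,\bm{\rho}=\textbf{curl}\,\textbf{v}_h$ in $E$, then integrate by parts in the full weak form to read off the boundary condition $\textbf{curl}\,\bm{\rho}\wedge\textbf{n}_{\partial E}=\textbf{v}_h\wedge\textbf{n}_{\partial E}$.

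The a priori bound \eqref{prioribound} then follows by choosing $\bm{\phi}=\bm{\rho}$. The volume term is controlled by $\|\textbf{curl}\,\textbf{v}_h\|_E\|\bm{\rho}\|_E\lesssim h_E\|\textbf{curl}\,\textbf{v}_h\|_E\|\textbf{curl}\,\bm{\rho}\|_E$ via Friedrichs, and the boundary term by $\|\textbf{v}_h\wedge\textbf{n}_{\partial E}\|_{\partial E}\|\bm{\rho}\|_{\partial E}\lesssim h_E^{\frac12}\|\textbf{v}_h\wedge\textbf{n}_{\partial E}\|_{\partial E}\|\textbf{curl}\,\bm{\rho}\|_E$ via the trace estimate above. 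Dividing the resulting inequality $\|\textbf{curl}\,\bm{\rho}\|_E^2\lesssim\big(h_E\|\textbf{curl}\,\textbf{v}_h\|_E+h_E^{\frac12}\|\textbf{v}_h\wedge\textbf{n}_{\partial E}\|_{\partial E}\big)\|\textbf{curl}\,\bm{\rho}\|_E$ by $\|\textbf{curl}\,\bm{\rho}\|_E$ bounds $h_E\|\textbf{curl}\,\bm{\rho}\|_E$ with the claimed powers of $h_E$, and one further application of Friedrichs converts this into the estimate on $\|\bm{\rho}\|_E$.

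I expect the main obstacle to be the coercivity-and-trace machinery on the constrained space $X$: one must check that the divergence-free, normal-trace-free structure is precisely what makes both \eqref{poincarefridinediv} and \eqref{Traceinequality5} degenerate into the clean estimates $\|\bm{\rho}\|_E\lesssim h_E\|\textbf{curl}\,\bm{\rho}\|_E$ and $\|\bm{\rho}\|_{\partial E}\lesssim h_E^{\frac12}\|\textbf{curl}\,\bm{\rho}\|_E$ that are simultaneously responsible for the Lax--Milgram hypotheses and for the correct scaling in \eqref{prioribound}. A secondary technical point is justifying that the divergence-constrained weak solution solves the \emph{unconstrained} strong equation, which hinges on $\textbf{curl}\,\textbf{v}_h$ being itself divergence free, so that the residual $\textbf{curl}\,\textbf{curl}\,\bm{\rho}-\textbf{curl}\,\textbf{v}_h$ is orthogonal to all divergence-free fields and hence vanishes.
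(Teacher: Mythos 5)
Your proposal takes a genuinely different route from the paper's. The paper never sets up a variational problem: it introduces the auxiliary unknown $\bm{\sigma}:=\textbf{curl}\hspace{0.05cm}\bm{\rho}$, splits \eqref{eq:4.1} into two first--order div--curl systems (one for $\bm{\sigma}$ with prescribed tangential trace, one for $\bm{\rho}$ with vanishing normal trace), cites the literature for their unique solvability, and then obtains \eqref{prioribound} by applying \eqref{poincarefridinecurl} to $\bm{\sigma}$ and \eqref{poincarefridinediv} to $\bm{\rho}$ in sequence. Your Lax--Milgram argument on the constrained space $X$ is self-contained on the existence side rather than delegating to cited div--curl theory, and your energy estimate produces exactly the same two inequalities: testing with $\bm{\rho}$ and using \eqref{poincarefridinediv} for coercivity and \eqref{Traceinequality5} (with the divergence and normal-trace terms dropped) for the boundary functional gives $\|\textbf{curl}\hspace{0.05cm}\bm{\rho}\|_{E}\lesssim h_{E}\|\textbf{curl}\hspace{0.05cm}\textbf{v}_{h}\|_{E}+h_{E}^{\frac12}\|\textbf{v}_{h}\wedge\textbf{n}_{\partial E}\|_{\partial E}$, which is precisely what the paper gets from \eqref{poincarefridinecurl}. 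So the quantitative part of your argument is correct and matches the paper's scaling. (One small point worth making explicit: applying \eqref{Traceinequality5} to a generic $\bm{\phi}\in X$ presupposes that $\bm{\phi}$ has an $\textbf{L}^{2}$ trace; this holds because fields in $\textbf{H}(\textbf{curl},E)\cap\textbf{H}(\textrm{div},E)$ with $L^2$ normal trace lie in $\textbf{H}^{\frac12}(E)$, which is the regularity result behind \eqref{Traceinequality5}.)

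The one step whose justification is incomplete as written is the passage from the constrained weak solution to the strong equation. Orthogonality of the residual $\textbf{w}:=\textbf{curl}\hspace{0.05cm}\textbf{curl}\hspace{0.05cm}\bm{\rho}-\textbf{curl}\hspace{0.05cm}\textbf{v}_{h}$ to all compactly supported divergence-free fields does \emph{not} imply $\textbf{w}=\textbf{0}$: by de Rham it only yields $\textbf{w}=\bm{\nabla}p$ with $p$ harmonic (since $\textbf{w}$ is divergence free), and a nonconstant harmonic $p$ is not excluded by that argument alone. To close this you must additionally test the right-hand-side functional against gradients $\bm{\nabla}q$ and check that it vanishes there; it does, because $(\textbf{curl}\hspace{0.05cm}\textbf{v}_{h},\bm{\nabla}q)_{E}$ reduces to a boundary integral of $(\textbf{curl}\hspace{0.05cm}\textbf{v}_{h}\cdot\textbf{n}_{\partial E})\,q$ that is cancelled by the surface-divergence identity $\textrm{div}_{\partial E}(\textbf{v}_{h}\wedge\textbf{n}_{\partial E})=\textbf{curl}\hspace{0.05cm}\textbf{v}_{h}\cdot\textbf{n}_{\partial E}$ applied to the boundary term. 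In other words, what kills the spurious gradient is the compatibility of the \emph{pair} of data ($\textbf{curl}\hspace{0.05cm}\textbf{v}_{h}$ and $\textbf{v}_{h}\wedge\textbf{n}_{\partial E}$ being the curl and the tangential trace of one and the same field), not merely the fact that $\textbf{curl}\hspace{0.05cm}\textbf{v}_{h}$ is divergence free. With that computation added (and the standing assumption that $E$ is simply connected with connected boundary, which the mesh hypotheses guarantee), your proof is complete.
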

\begin{proof}
To see that~\eqref{eq:4.1} is well-posed, we introduce the auxiliary variable $\bm{\sigma}:={\bf curl} \hspace{0.05cm}\bm{\rho}$.
Then, \eqref{eq:4.1} can be equivalently decomposed into the two following   problems:\\
$\bullet$ for given $\textbf{v}_{h}\in \textbf{V}_{k}^{e}(E)$, find $\bm{\sigma}\in  \textbf{H}( {\bf curl}, E)\cap  \textbf{H}({\rm div},E)$ such that
\begin{eqnarray} \label{eq:4.1_1}
\left\{
\begin{aligned}
& {\bf curl} \hspace{0.05cm} {\bm \sigma}= {\bf curl} \hspace{0.05cm}\textbf{v}_{h},\hspace{0.4cm} {\rm div}\hspace{0.05cm}\bm{\sigma}=0   && \textrm{in} \  E,\\
& \bm{\sigma}\wedge \textbf{n}_{\partial E}= \textbf{v}_{h}\wedge \textbf{n}_{\partial E}  && \textrm{on} \  \partial E;
\end{aligned}
\right.
\end{eqnarray}
$\bullet$ find  $\bm{\rho}\in  \textbf{H}({\bf curl} , E)\cap  \textbf{H}({\rm div},E)$ such that
 \begin{eqnarray}
\label{eq:4.1_2}
\left\{
\begin{aligned}
& {\bf curl}  \hspace{0.05cm}\bm{\rho}= \bm{ \sigma},\hspace{0.4cm} {\rm div}\hspace{0.05cm}\bm{\rho}=0  && \textrm{in} \  E,\\
&  \bm{\rho}\cdot \textbf{n}_{\partial E}=0 && \textrm{on} \  \partial E.
\end{aligned}
\right.
\end{eqnarray}
Since the above div-${\bf curl}$ systems are uniquely solvable\cite{aualexa2005,beiraobrezzimariniru2015},
\eqref{eq:4.1} has a unique solution.
Next, we prove~\eqref{prioribound}. We first observe that
\begin{equation}
\begin{aligned} \label{prioriproof1}
\|{\bf curl} \hspace{0.05cm}\bm{\rho}\|_{E}
\overset{\eqref{eq:4.1_2}}{=}
\| \bm{\sigma}\|_{E}
\overset{\eqref{poincarefridinecurl},\eqref{eq:4.1_1}}{\lesssim}
 h_{E} \|{\bf curl} \hspace{0.05cm}\textbf{v}_{h}\|_{E}
+h_{E}^{\frac12}\|\textbf{v}_{h}\wedge \textbf{n}_{\partial E}\|_{\partial E}.
\end{aligned}
\end{equation}
Furthermore, we have
\begin{equation}
\begin{aligned} \label{prioriproof2}
\|\bm{\rho}\|_{E}
\overset{\eqref{poincarefridinediv}, \eqref{eq:4.1_2}}{\lesssim}
h_{E} \|{\bf curl}\hspace{0.05cm}\bm{\rho}\|_{E}
\overset{\eqref{prioriproof1}}{\lesssim}
h^{2}_{E} \|{\bf curl} \hspace{0.05cm}\textbf{v}_{h}\|_{E}
+h_{E}^{\frac32}\|\textbf{v}_{h}\wedge \textbf{n}_{\partial E}\|_{\partial E}.
\end{aligned}
\end{equation}
The assertion follows combining~\eqref{prioriproof1} and~\eqref{prioriproof2}.
\end{proof}
We could have proved Lemma~\ref{weposednesscurcur}
by writing~\eqref{eq:4.1} in mixed form\cite{kikuchi1989,sunjiguangNM2016}.
In the following result, we prove an auxiliary bound for  functions in $\textbf{V}_{k}^{e}(E)$.
\begin{lem} \label{lem3dedgebound}
For each $\textbf{v}_{h}\in \textbf{V}_{k}^{e}(E)$, we have
\begin{eqnarray}
\begin{aligned}[b] \label{3dedgePRIORIB}
& \|\textbf{v}_{h}\|_{E} 
\lesssim   \!\!\sum_{F\subseteq \partial E}\!\!\!
    \left(h_{F}^{\frac32} \| {\bf curl}\hspace{0.05cm} \textbf{v}_{h} \cdot\textbf{n}_{F} \|_{F}
    \!+\!  h_{F}   \|\textbf{v}_{h}^{F}\cdot\textbf{t}_{\partial F} \|_{\partial F} + 
    \!\!\!\!\!\!\sup_{p_{k}\in \mathbb{P}_{k}(F)}\!\!\!\!\!\!\!
    \frac{ h_{F}^{\frac12}\int_{F}  \textbf{v}^{F}_{h} \cdot \textbf{x}^{F}_{F} p_{k}}{\|\textbf{x}^{F}_{F} p_{k}\|_{F}}\right)   \\
&   \qquad \hspace{0.5cm}  + \!\!\!\!\!\!\sup_{\textbf{p}_{k}\in (\mathbb{P}_{k}(E))^3}\!\!\!\!\!\!
\frac{h_{E} \int_{E}  {\bf curl}  \hspace{0.05cm}  \textbf{v}_{h}\cdot \textbf{x}_{E}\wedge \textbf{p}_{k}}{\|\textbf{x}_{E}\wedge \textbf{p}_{k}\|_{E}}
+ \!\!\!\!\!\!\sup_{p_{k-1}\in \mathbb{P}_{k-1}(E)}\!\!\!\!\!\!
    \frac{\int_{E}  \textbf{v}_{h}   \cdot\textbf{x}_{E}p_{k-1}   }{\|\textbf{x}_{E}p_{k-1}\|_{E}}.
\end{aligned}
\end{eqnarray}
\end{lem}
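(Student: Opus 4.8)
The plan is to follow the template of the two-dimensional edge bound (Lemma~\ref{lem2degde}) and the three-dimensional face bound (Lemma~\ref{lem3dfacebound}), but with one extra layer: for edge elements only the \emph{double} curl $\textbf{curl}\,\textbf{curl}\,\textbf{v}_{h}\in(\mathbb{P}_{k}(E))^{3}$ is polynomial, whereas $\textbf{curl}\,\textbf{v}_{h}$ itself is virtual, so the curl-moment term cannot be produced as directly as in the face case. First I would fix the orthogonal Helmholtz decomposition $\textbf{v}_{h}=\textbf{curl}\,\bm{\rho}+\bm{\nabla}\psi$, where $\bm{\rho}$ solves the curl-curl problem \eqref{eq:4.1} of Lemma~\ref{weposednesscurcur} and $\psi\in H^{1}(E)$ solves $\Delta\psi=\textrm{div}\,\textbf{v}_{h}$ with $\psi=0$ on $\partial E$. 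Since $\textbf{curl}\,\bm{\rho}$ and $\textbf{v}_{h}$ share both curl and tangential trace (by the condition $\textbf{curl}\,\bm{\rho}\wedge\textbf{n}_{\partial E}=\textbf{v}_{h}\wedge\textbf{n}_{\partial E}$), their difference is a gradient with vanishing tangential trace, which forces the homogeneous Dirichlet datum on $\psi$; the same datum gives $(\textbf{curl}\,\bm{\rho},\bm{\nabla}\psi)_{E}=\int_{\partial E}(\textbf{curl}\,\bm{\rho}\cdot\textbf{n}_{\partial E})\psi=0$, hence $\|\textbf{v}_{h}\|_{E}^{2}=\|\textbf{curl}\,\bm{\rho}\|_{E}^{2}+\|\bm{\nabla}\psi\|_{E}^{2}$.

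The gradient part is routine. Using $\textrm{div}\,\textbf{v}_{h}\in\mathbb{P}_{k-1}(E)$ and the isomorphism in \eqref{decompo13d}, I would write $\textrm{div}\,\textbf{v}_{h}=\textrm{div}(\textbf{x}_{E}p_{k-1})$ with $\|\textbf{x}_{E}p_{k-1}\|_{E}\lesssim h_{E}\|\textrm{div}\,\textbf{v}_{h}\|_{E}$; integrating by parts twice, using $\psi=0$ on $\partial E$ and $\bm{\nabla}\psi=\textbf{v}_{h}-\textbf{curl}\,\bm{\rho}$, gives $\|\bm{\nabla}\psi\|_{E}^{2}=\int_{E}(\textbf{v}_{h}-\textbf{curl}\,\bm{\rho})\cdot\textbf{x}_{E}p_{k-1}$, controlled by $\|\textbf{x}_{E}p_{k-1}\|_{E}$ times the last ($\textbf{x}_{E}$-moment) term of \eqref{3dedgePRIORIB} plus $\|\textbf{curl}\,\bm{\rho}\|_{E}$. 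A divergence inverse estimate $\|\textrm{div}\,\textbf{v}_{h}\|_{E}\lesssim h_{E}^{-1}\|\textbf{v}_{h}\|_{E}$ (bubble argument as for \eqref{3dcurlinverseesti}) then yields $\|\bm{\nabla}\psi\|_{E}^{2}\lesssim\|\textbf{v}_{h}\|_{E}\big(\text{last term}+\|\textbf{curl}\,\bm{\rho}\|_{E}\big)$. For the solenoidal part I would invoke the a priori bound \eqref{prioribound}, $\|\textbf{curl}\,\bm{\rho}\|_{E}\lesssim h_{E}\|\textbf{curl}\,\textbf{v}_{h}\|_{E}+h_{E}^{\frac12}\|\textbf{v}_{h}\wedge\textbf{n}_{\partial E}\|_{\partial E}$, and split the boundary contribution over faces: on each $F$ the tangential trace $\textbf{v}_{h}^{F}$ lies in $\textbf{V}_{k}^{e}(F)$, so Lemma~\ref{lem2degde} with $\textrm{rot}_{F}\,\textbf{v}_{h}^{F}=\textbf{curl}\,\textbf{v}_{h}\cdot\textbf{n}_{F}$ and $h_{F}\approx h_{E}$ produces exactly the first three face terms of \eqref{3dedgePRIORIB}.

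The main obstacle is the remaining term $h_{E}\|\textbf{curl}\,\textbf{v}_{h}\|_{E}$, from which the volume curl-moment must emerge; a bare Friedrichs argument is circular here because the inverse estimate only reproduces $\|\textbf{curl}\,\textbf{v}_{h}\|_{E}$. Since $\textbf{curl}\,\textbf{v}_{h}$ is divergence-free, I would apply to it the face-type Helmholtz decomposition \eqref{standardinterpolation1_proof63dface}, namely $\textbf{curl}\,\textbf{v}_{h}=\textbf{curl}\,\bm{\xi}+\bm{\nabla}\chi$ with $\bm{\xi}\wedge\textbf{n}_{\partial E}=0$, $\textrm{div}\,\bm{\xi}=0$, and $\chi$ harmonic (as $\textrm{div}\,\textbf{curl}\,\textbf{v}_{h}=0$) with $\bm{\nabla}\chi\cdot\textbf{n}_{\partial E}=\textbf{curl}\,\textbf{v}_{h}\cdot\textbf{n}_{\partial E}$. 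The harmonic part satisfies $\|\bm{\nabla}\chi\|_{E}^{2}=\int_{\partial E}\chi\,(\textbf{curl}\,\textbf{v}_{h}\cdot\textbf{n}_{\partial E})$, whence $h_{E}\|\bm{\nabla}\chi\|_{E}\lesssim h_{E}^{\frac32}\|\textbf{curl}\,\textbf{v}_{h}\cdot\textbf{n}_{\partial E}\|_{\partial E}$ reproduces the face curl-normal term (after trace \eqref{Traceinequality4} and Poincar\'e \eqref{poincarefridine}). For the vector part, setting $\textbf{m}_{k}:=\textbf{curl}\,\textbf{curl}\,\textbf{v}_{h}\in(\mathbb{P}_{k}(E))^{3}$ and using its divergence-freeness, \eqref{decompo23d} gives $\textbf{m}_{k}=\textbf{curl}(\textbf{x}_{E}\wedge\textbf{q}_{k})$ with $\|\textbf{x}_{E}\wedge\textbf{q}_{k}\|_{E}\lesssim h_{E}\|\textbf{m}_{k}\|_{E}$; integrating by parts with $\bm{\xi}\wedge\textbf{n}_{\partial E}=0$ yields $\|\textbf{curl}\,\bm{\xi}\|_{E}^{2}=\int_{E}(\textbf{curl}\,\textbf{v}_{h}-\bm{\nabla}\chi)\cdot(\textbf{x}_{E}\wedge\textbf{q}_{k})$, whose first summand is precisely the volume curl-moment term of \eqref{3dedgePRIORIB} and whose second is absorbed by $\|\bm{\nabla}\chi\|_{E}$. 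A double-curl inverse estimate $\|\textbf{curl}\,\textbf{curl}\,\textbf{v}_{h}\|_{E}\lesssim h_{E}^{-1}\|\textbf{curl}\,\textbf{v}_{h}\|_{E}$ (again a bubble argument, the edge analogue of \eqref{3dcurlinverseesti}) makes $\|\textbf{x}_{E}\wedge\textbf{q}_{k}\|_{E}\lesssim\|\textbf{curl}\,\textbf{v}_{h}\|_{E}$, so after absorbing $\|\textbf{curl}\,\bm{\xi}\|_{E}^{2}+\|\bm{\nabla}\chi\|_{E}^{2}=\|\textbf{curl}\,\textbf{v}_{h}\|_{E}^{2}$, the quantity $h_{E}\|\textbf{curl}\,\textbf{v}_{h}\|_{E}$ is bounded by the curl-normal and curl-moment terms.

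Collecting everything, $\|\textbf{curl}\,\bm{\rho}\|_{E}$ is dominated by the four terms other than the $\textbf{x}_{E}$-moment, while the second step gives $\|\bm{\nabla}\psi\|_{E}^{2}\lesssim\|\textbf{v}_{h}\|_{E}\cdot(\text{right-hand side of \eqref{3dedgePRIORIB}})$; since also $\|\textbf{curl}\,\bm{\rho}\|_{E}\le\|\textbf{v}_{h}\|_{E}$ by orthogonality, one obtains $\|\textbf{v}_{h}\|_{E}^{2}\lesssim\|\textbf{v}_{h}\|_{E}\cdot(\text{right-hand side of \eqref{3dedgePRIORIB}})$, and dividing by $\|\textbf{v}_{h}\|_{E}$ proves the claim. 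I expect the nested Helmholtz decomposition of $\textbf{curl}\,\textbf{v}_{h}$ together with the double-curl inverse estimate to be the delicate point, all the remaining steps being adaptations of the two-dimensional and face arguments.
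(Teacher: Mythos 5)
Your proposal is correct and follows essentially the same route as the paper: the same Helmholtz splitting $\textbf{v}_{h}=\textbf{curl}\,\bm{\rho}+\bm{\nabla}\psi$ with the curl--curl problem of Lemma~\ref{weposednesscurcur}, the same treatment of $\bm{\nabla}\psi$ via \eqref{decompo13d} and a divergence inverse estimate, and the same reduction of the boundary term to the 2D bound \eqref{prioribound2dedge} on each face. The only (cosmetic) difference is in absorbing $h_{E}\|\textbf{curl}\,\textbf{v}_{h}\|_{E}$: the paper simply observes that $\textbf{curl}\,\textbf{v}_{h}\in\textbf{V}_{k-1}^{f}(E)$ by exactness of the complex and cites \eqref{3dfacePRIORIB} as a black box, whereas you re-derive that bound inline through a nested Helmholtz decomposition of $\textbf{curl}\,\textbf{v}_{h}$ — the same argument unpacked.
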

\begin{proof}
We first prove that there exist $\psi\in H^{1}(E)\setminus\mathbb{R}$ and $\bm{\rho}\in  \textbf{H}(\textbf{curl}, E)\cap  \textbf{H}(\textrm{div},E)$  such that
  the following Helmholtz decomposition of $\textbf{v}_{h}$ is valid:
\begin{align}
 \label{standardinterpolation1_proof63dedge}  \textbf{v}_{h}=\textbf{curl} \hspace{0.05cm}\bm{\rho}+\bm{\nabla}  \psi.
\end{align}
 To prove \eqref{standardinterpolation1_proof63dedge}, we   define a function $\psi\in H^{1}(E)$ satisfying weakly
 \begin{align}
 \label{standardinterpolation1_proof23dedge}
 \Delta \psi= \textrm{div} \hspace{0.05cm}\textbf{v}_{h} \ \textrm{in}\ E,\ \  \psi =0\ \textrm{on}\  \partial E,
\end{align}
and a function $\bm{\rho}\in  \textbf{H}(\textbf{curl}, E)\cap  \textbf{H}(\textrm{div},E)$  satisfying weakly
 \begin{eqnarray}
\label{standardinterpolation1_proof33dedge}
\left\{
\begin{aligned}
& {\bf curl} \hspace{0.05cm} {\bf curl} \hspace{0.05cm}\bm{\rho}= {\bf curl} \hspace{0.05cm}\textbf{v}_{h},\hspace{0.9cm} {\rm div}\hspace{0.05cm}\bm{\rho}=0   && \textrm{in} \  E,\\
&{\bf curl} \hspace{0.05cm}\bm{\rho}\wedge \textbf{n}_{\partial E}= \textbf{v}_{h}\wedge \textbf{n}_{\partial E}, \hspace{0.4cm}  \bm{\rho}\cdot \textbf{n}_{\partial E}=0 && \textrm{on} \  \partial E.
\end{aligned}
\right.
\end{eqnarray}
Lemma~\ref{weposednesscurcur} implies the well posedness of~\eqref{standardinterpolation1_proof33dedge}.
Identity~\eqref{standardinterpolation1_proof63dedge} easily follows from~\eqref{standardinterpolation1_proof23dedge}, \eqref{standardinterpolation1_proof33dedge},
and the fact that~$E$ is simply connected.
We also have
\begin{align} \label{standardinterpolation1_proof73dedge} 
(\textbf{curl} \hspace{0.05cm}\bm{\rho},\bm{\nabla}  \psi)_{E}=0,
\qquad 
\|\textbf{v}_{h} \|^{2}_{E}=\|\textbf{curl} \hspace{0.05cm}\bm{\rho}\|^{2}_{E}+\|\bm{\nabla}  \psi\|^{2}_{E}.
\end{align}
Since $\|  \bm{\rho}^{F}\|_{F}=\|  \bm{\rho} \wedge \textbf{n}_{F}\|_{F}$ for all~$F$ in~$\partial E$,
cf. \eqref{tangentpaprt}, we obtain
\begin{eqnarray}
\begin{aligned} \label{standardinterpolation1_proof93dedge}
& \|\textbf{curl}\hspace{0.05cm} \bm{\rho}\|^{2}_{E}  
\overset{\text{IBP}}{=}
\int_{E}  \bm{\rho}  \cdot \textbf{curl}\hspace{0.05cm} \textbf{curl}\hspace{0.05cm} \bm{\rho} \,
- \int_{\partial E}  (\textbf{curl}\hspace{0.05cm}\bm{\rho}\wedge \textbf{n}_{\partial E})  \cdot   \bm{\rho}  \\
& \overset{\eqref{standardinterpolation1_proof33dedge}, \eqref{tangentpaprt}}{=}
\int_{E}  \bm{\rho}  \cdot \textbf{curl}\hspace{0.05cm} \textbf{v}_{h} \,
 - \sum_{F \subset \partial E} \int_{F}  (\textbf{v}_{h} \wedge \textbf{n}_{F})  \cdot \bm{\rho}^F \\
& \le
\| \bm{\rho}\|_{E}  \| \textbf{curl}\hspace{0.05cm} \textbf{v}_{h}\|_{E} 
+ \| \textbf{v}_{h} \wedge \textbf{n}_{\partial E}\|_{\partial E}  \|\bm{\rho}\wedge \textbf{n}_{\partial E}\|_{\partial E}\\
& \overset{\eqref{Traceinequality5}, \eqref{standardinterpolation1_proof33dedge}}{\lesssim}
\| \bm{\rho}\|_{E}  \| \textbf{curl}\hspace{0.05cm} \textbf{v}_{h}\|_{E} + \left(h_{E}^{-\frac12} \| \bm{\rho}\|_{E} 
+h_{E}^{\frac12} \| \textbf{curl}\hspace{0.05cm} \bm{\rho}\|_{E}   \right)\| \textbf{v}_{h} \wedge \textbf{n}_{\partial E}\|_{\partial E}   \\
& \overset{\eqref{poincarefridinediv},\eqref{standardinterpolation1_proof33dedge}}{\lesssim}
\left( h_{E} \| \textbf{curl}\hspace{0.05cm} \textbf{v}_{h}\|_{E}
+h_{E}^{\frac12}\| \textbf{v}^{F}_{h} \|_{\partial E} \right) \| 
\textbf{curl}\hspace{0.05cm}\bm{\rho}\|_{E}.
\end{aligned}
\end{eqnarray}
In view of \eqref{decompo13d} and  $\textrm{div} \hspace{0.05cm}  \textbf{v}_{h}\in \mathbb{P}_{k-1}(E)$,
there exists $q_{k-1}\in \mathbb{P}_{k-1}(E)$ such that
\begin{align} \label{standardinterpolation2_proof1_13dedge}
\textrm{div}\hspace{0.05cm} (\textbf{x}_{E}q_{k-1})
=\textrm{div}\hspace{0.05cm}  \textbf{v}_{h}
\qquad \textrm{and} \qquad
\|\textbf{x}_{E}q_{k-1}\|_{E}
\lesssim h_{E}\|\textrm{div} \hspace{0.05cm}  \textbf{v}_{h}\|_{E}.
\end{align}
We obtain
\begin{eqnarray}
 \begin{aligned}[t]
 \label{standardinterpolation2_proof103dedgepriori}
&\hspace{0.5cm}\|\bm{\nabla}  \psi\|^{2}_{E}  
\overset{\text{IBP}, \eqref{standardinterpolation1_proof23dedge}}{=}
- \int_{E} \textrm{div}\hspace{0.05cm} \textbf{v}_{h} \psi
\overset{\eqref{standardinterpolation2_proof1_13dedge}}{=}
- \int_{E}  \textrm{div}\hspace{0.05cm} (\textbf{x}_{E}q_{k-1}) \psi \\
& \!\!\overset{\text{IBP}, \eqref{standardinterpolation1_proof23dedge}, \eqref{standardinterpolation1_proof63dedge}}{=}\!\!
\int_{E}  (\textbf{x}_{E}q_{k-1}) \!\cdot\! (\textbf{v}_{h}\!-\!\textbf{curl} \hspace{0.05cm}\bm{\rho})
\!\leq\! 
\|\textbf{x}_{E}q_{k-1}\|_{E}\|\textbf{curl} \hspace{0.05cm}\bm{\rho}\|_{E}
\!+\! \int_{E}  \textbf{v}_{h} \!\cdot\! \textbf{x}_{E}q_{k-1}   \\
& \!\!\overset{\eqref{standardinterpolation1_proof93dedge}, \eqref{standardinterpolation2_proof1_13dedge}}{\lesssim}\!\!\!\!\!\!\!
h_{E}\left( h_{E}\|  \textbf{curl}\hspace{0.05cm} \textbf{v}_{h}\|_{E}
+h_{E}^{\frac12}\| \textbf{v}^{F}_{h} \|_{\partial E}
+ \!\!\!\!\!\!\!\!\!\sup_{p_{k-1}\in \mathbb{P}_{k-1}(E)}\!\!\!\!\!
\frac{\int_{E}  \textbf{v}_{h}   \cdot\textbf{x}_{E}p_{k-1}   }{\|\textbf{x}_{E}p_{k-1}\|_{E} }\right)\| \textrm{div}\hspace{0.05cm}\textbf{v}_{h} \|_{E}.\\
\end{aligned}
\end{eqnarray}
Recall that the 2D and 3D spaces here analyzed
constitute an exact complex\cite{beiraobrezzidasimaru2018siam},
whence~$\textbf{curl}\hspace{0.05cm} \textbf{v}_{h} \in \textbf{V}_{k-1}^{f}(E)$.
Since $\textbf{v}_{h} \in  \textbf{V}_{k}^{e}(F)$ for each~$F$ in~$\partial E$, we have
\begin{equation}\label{4.36.5}
\begin{split}
&\| \textbf{v}_{h}^{F}\|_{F}
\overset{\eqref{prioribound2dedge}}{\lesssim}
h_{F}\|\textrm{rot}_{F}\hspace{0.05cm} \textbf{v}_{h}^{F}\|_{F}+h_{F}^{\frac12}\| \textbf{v}_{h}^{F}\cdot\textbf{t}_{\partial F}\|_{\partial F}+ 
\!\!\!\!\sup_{p_{k}\in \mathbb{P}_{k}(F)}\!\!\!\!
\frac{ \int_{F}  \textbf{v}^{F}_{h} \cdot \textbf{x}^{F}_{F} p_{k}}{\|\textbf{x}^{F}_{F} p_{k}\|_{F}},\\
&  \|\textbf{curl}  \hspace{0.05cm}  \textbf{v}_{h}\|_{E} 
\overset{\eqref{3dfacePRIORIB}}{\lesssim}
h_{E}^{\frac12}\|\textbf{curl}  \hspace{0.05cm}  \textbf{v}_{h}\cdot \textbf{n}_{\partial E}\|_{\partial E}  +
\!\!\!\!\!\!\!\!\sup_{\textbf{p}_{k}\in (\mathbb{P}_{k}(E))^3}\!\!\!\!\!\!\!\!
\frac{ \int_{E} \textbf{curl}  \hspace{0.05cm}  \textbf{v}_{h}\cdot \textbf{x}_{E}\wedge \textbf{p}_{k}}{\|\textbf{x}_{E}\wedge \textbf{p}_{k}\|_{E}}.
\end{split}
\end{equation}
By the fact that $\textrm{div}\hspace{0.05cm}\textbf{v}_{h} \in \mathbb{P}_{k-1}(E)$
and employing arguments similar to those used in proving~\eqref{3dcurlinverseesti},
we have the following inverse estimate  involving   edge virtual element functions in 3D:
\[
\| \textrm{div}\hspace{0.05cm}\textbf{v}_{h} \|_{E} 
\lesssim h_{E}^{-1}  \|  \textbf{v}_{h} \|_{E} \hspace{0.3cm} \forall \textbf{v}_{h}\in \textbf{V}_{k}^{e}(E).
\]
We plug this and~\eqref{4.36.5}
in~\eqref{standardinterpolation2_proof103dedgepriori},
and deduce
\begin{eqnarray}
 \begin{aligned}[t]
 \label{standardinterpolation2_proof103dedge}
& \|\bm{\nabla}  \psi\|^{2}_{E} 
\lesssim \Bigg[  \sup_{p_{k-1}\in \mathbb{P}_{k-1}(E)} \frac{\int_{E}  \textbf{v}_{h}   \cdot\textbf{x}_{E}p_{k-1}   }{\|\textbf{x}_{E}p_{k-1}\|_{E} } \\
&\hspace{0.5cm} + h_{E}^{\frac12} \sum_{F\subseteq\partial E}\left(h_{F}\|\textrm{rot}_{F}\hspace{0.05cm} \textbf{v}_{h}^{F}\|_{F}
+ h_{F}^{\frac12}\|\textbf{v}_{h}^{F}\cdot \textbf{t}_{\partial F}\|_{\partial F}
+ \sup_{p_{k}\in \mathbb{P}_{k}(F)}\frac{ \int_{F}  \textbf{v}^{F}_{h} \cdot \textbf{x}^{F}_{F} p_{k}}{\|\textbf{x}^{F}_{F} p_{k}\|_{F}}\right) \\
& \hspace{0.5cm} +h_{E}   \left ( h_{E}^{\frac12}\|\textbf{curl}  \hspace{0.05cm}  \textbf{v}_{h}\cdot \textbf{n}_{\partial E}\|_{\partial E} 
+\sup_{\textbf{p}_{k}\in (\mathbb{P}_{k}(E))^3}\frac{ \int_{E} \textbf{curl}  \hspace{0.05cm}  \textbf{v}_{h}\cdot \textbf{x}_{E}\wedge \textbf{p}_{k}}{\|\textbf{x}_{E}\wedge \textbf{p}_{k}\|_{E}}\right) \Bigg]\| \textbf{v}_{h} \|_{E}.
\end{aligned}
\end{eqnarray}
Inserting~\eqref{standardinterpolation1_proof93dedge}
and~\eqref{standardinterpolation2_proof103dedge}
into~\eqref{standardinterpolation1_proof73dedge},
using  $h_{F}\approx h_{E}$, and noting that
$\textrm{rot}_{F}\hspace{0.05cm} \textbf{v}_{h}^{F}
= (\textbf{curl}\hspace{0.05cm} \textbf{v}_{h})|_{F} \cdot\textbf{n}_{F}$ 
for all~$F$ in~$\partial E$, yield
\begin{align*}
& \|\textbf{v}_{h} \|^{2}_{E}
\lesssim \Bigg[ h_{E} \| \textbf{curl}\hspace{0.05cm} \textbf{v}_{h}\|_{E}+h_{E}^{\frac12}\| \textbf{v}^{F}_{h} \|_{\partial E} 
+  \!\!\!\!\!\!\!\!\sup_{p_{k-1}\in \mathbb{P}_{k-1}(E)}\!\!\!\!\!\!\!\!
\frac{\int_{E}  \textbf{v}_{h}   \cdot\textbf{x}_{E}p_{k-1}   }{\|\textbf{x}_{E}p_{k-1}\|_{E} }\\
& \hspace{0.5cm} +  h_{E}^{\frac12} \sum_{F\subseteq \partial E} \left(h_{F}\|\textrm{rot}_{F}\hspace{0.05cm} \textbf{v}_{h}^{F}\|_{F}
+ h_{F}^{\frac12}\|\textbf{v}_{h}^{F}\cdot \textbf{t}_{\partial F}\|_{\partial F}
+ \sup_{p_{k}\in \mathbb{P}_{k}(F)}\frac{ \int_{F}  \textbf{v}^{F}_{h} \cdot \textbf{x}^{F}_{F} p_{k}}{\|\textbf{x}^{F}_{F} p_{k}\|_{F}}\right)\\
& \hspace{0.5cm}
+h_{E}   \left ( h_{E}^{\frac12}\|\textbf{curl}  \hspace{0.05cm}  \textbf{v}_{h}\cdot \textbf{n}_{\partial E}\|_{\partial E}  
+\sup_{\textbf{p}_{k}\in (\mathbb{P}_{k}(E))^3}\frac{ \int_{E} \textbf{curl}  \hspace{0.05cm}  \textbf{v}_{h}\cdot \textbf{x}_{E}\wedge \textbf{p}_{k}}{\|\textbf{x}_{E}\wedge \textbf{p}_{k}\|_{E}}\right)\Bigg]\|\textbf{v}_{h}\|_{E} \\
& \lesssim
\Bigg[ \sum_{F\subseteq \partial E} \left(h_{F}^{\frac32} \| \textbf{curl}\hspace{0.05cm} \textbf{v}_{h} \cdot\textbf{n}_{F} \|_{F}+ h_{F}   \|\textbf{v}_{h}^{F}\cdot\textbf{t}_{\partial F} \|_{\partial F} + 
\!\!\!\!\!\!\!\sup_{p_{k}\in \mathbb{P}_{k}(F)}\!\!\!\!\!\!\!
\frac{ h_{F}^{\frac12}\int_{F}  \textbf{v}^{F}_{h} \cdot \textbf{x}^{F}_{F} p_{k}}{\|\textbf{x}^{F}_{F} p_{k}\|_{F}}\right)\\
&  \qquad\qquad
+ \!\!\!\!\!\!\!\! \sup_{\textbf{p}_{k}\in (\mathbb{P}_{k}(E))^3}\!\!\!\!\!\!\!\!
\frac{h_{E} \int_{E} \textbf{curl}  \hspace{0.05cm}  \textbf{v}_{h}\cdot \textbf{x}_{E}\wedge \textbf{p}_{k}}{\|\textbf{x}_{E}\wedge \textbf{p}_{k}\|_{E}}  
+ \!\!\!\!\!\!\!\!\! \sup_{p_{k-1}\in \mathbb{P}_{k-1}(E)}\!\!\!\!\!
\frac{\int_{E}  \textbf{v}_{h}   \cdot\textbf{x}_{E}p_{k-1}   }{\|\textbf{x}_{E}p_{k-1}\|_{E}} \Bigg]\|  \textbf{v}_{h}\|_{E}.
\end{align*}
\end{proof}

For each sufficiently regular $\textbf{v}$, we define the DoFs interpolation operator $\widetilde{\textbf{I}}_{h}^{e}$ on $\textbf{V}_{k}^{e}(E)$ by
   \begin{subequations}
   \label{dof3dedgeseinterini}
\begin{align}
\label{dof13dedgeseinterini}
&\int_{e}(\textbf{v}-\widetilde{\textbf{I}}_{h}^{e} \textbf{v})\cdot \textbf{t}_{e}p_{k}=0&&\forall p_{k}\in \mathbb{P}_{k}(e),  \hspace{0.1cm} \forall e\subseteq \partial F;\\
\label{dof23dedgeseinterini}
&\int_{F} (\textbf{v}-\widetilde{\textbf{I}}_{h}^{e} \textbf{v})^{F} \cdot \textbf{x}^{F}_{F} p_{k}  =0&&\forall p_{k}\in \mathbb{P}_{k}(F);\\
\label{dof33dedgeseinterini}
&\int_{F} \textrm{rot}_{F}\hspace{0.05cm}(\textbf{v}-\widetilde{\textbf{I}}_{h}^{e} \textbf{v})^{F}   p_{k-1}^{0} =0  &&\forall p_{k-1}^{0}\in \mathbb{P}_{k-1}^{0}(F)  \hspace{0.2cm} \textrm{only\ for}\  k>1;  \\
\label{dof43dedgeseinterini}
&\int_{E} \textbf{curl}\hspace{0.05cm} (\textbf{v}-\widetilde{\textbf{I}}_{h}^{e} \textbf{v})\cdot \textbf{x}_{E} \wedge \textbf{p}_{k}   =0 &&\forall \textbf{p}_{k} \in (\mathbb{P}_{k}(E))^{3};\\
\label{dof53dedgeseinterini}
& \int_{E} (\textbf{v}-\widetilde{\textbf{I}}_{h}^{e}\textbf{v})\cdot \textbf{x}_{E} p_{k-1}  =0&&\forall p_{k-1}\in \mathbb{P}_{k-1}(E).
\end{align}
\end{subequations}
Next, we prove interpolation estimates for the operator $\widetilde{\textbf{I}}_{h}^{e}$.
The following result includes different requirements on the regularity of the objective function; see also Remark \ref{rem:koala}.
Below, given any non-negative real number $s$, the symbol $[s]$ will denote the highest integer strictly smaller than~$s$
($[\cdot]$ differs from the $\text{floor}(\cdot)$ function;
for instance, $[1]=0$ while~$\text{floor}(1)=1$).
\begin{thm} \label{theorem3dedgeinter}
For each $\textbf{v}\in \textbf {H}^{s}(E)$, $1\slash2 < s\leq k+1$,
with ${\bf curl}\hspace{0.05cm}\textbf{v} \in \textbf {H}^{r}(E)$,
$1\slash2$ $< r \leq k$,
for $\widehat{r} = \min{\{r,[s]\}}$,
we have
\begin{align} \label{interpolation1curl3D1}
& \|\textbf{v}-\widetilde{\textbf{I}}_{h}^{e}\textbf{v}\|_{E}
\lesssim   h_{E}^{s} |    \textbf{v}|_{s,E} +  h_{E}^{\widehat{r}+1}| {\bf curl}\hspace{0.05cm} \textbf{v}  |_{\widehat{r},E} +  h_{E} \| {\bf curl}\hspace{0.05cm} \textbf{v}  \|_{E} , \\
\label{interpolation2curl3D}
&\|{\bf curl} \hspace{0.05cm}(\textbf{v}-\widetilde{\textbf{I}}_{h}^{e}\textbf{v})\|_{E}
\lesssim  h_{E}^{r}|{\bf curl}\hspace{0.05cm}\textbf{v}|_{r,E}.
\end{align}
The  third  term  on the right-hand side of \eqref{interpolation1curl3D1} can be neglected if $s \ge 1$.
\end{thm}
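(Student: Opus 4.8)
The plan is to treat the two estimates separately, starting from the simpler curl bound \eqref{interpolation2curl3D}. The key observation is that the interpolation operators fit into a commuting diagram with the face space $\textbf{V}_{k-1}^f(E)$ of Section~\ref{sec41}: using that the spaces form an exact complex (so $\textbf{curl}\,\widetilde{\textbf{I}}_h^e\textbf{v}\in\textbf{V}_{k-1}^f(E)$), the 2D commuting identity $\textrm{rot}_F(\widetilde{\textbf{I}}_h^e\textbf{v})^F=\Pi_{k-1}^{0,F}(\textrm{rot}_F\,\textbf{v}^F)$ inherited from \eqref{standardinterpolation22dedge_proof1}, and condition \eqref{dof43dedgeseinterini}, I would check that the three groups of DoFs of $\textbf{curl}\,\widetilde{\textbf{I}}_h^e\textbf{v}$ coincide with those of $\widetilde{\textbf{I}}_h^f(\textbf{curl}\,\textbf{v})$ (recall $\textrm{div}\,\textbf{curl}\,\textbf{v}=0$, so the scalar volume DoF vanishes on both sides). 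Unisolvence of the face DoFs then gives $\textbf{curl}\,\widetilde{\textbf{I}}_h^e\textbf{v}=\widetilde{\textbf{I}}_h^f(\textbf{curl}\,\textbf{v})$, and \eqref{interpolation2curl3D} follows immediately by applying the face interpolation estimate \eqref{interpolation1div3D} of Theorem~\ref{theorem3dfaceinter} to $\textbf{curl}\,\textbf{v}$ (whose divergence vanishes), with the role of ``$s$'' there played by $r$.

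For the $L^2$ bound \eqref{interpolation1curl3D1} I would follow the template of Theorems~\ref{thm31edge2d} and~\ref{theorem3dfaceinter}. Set $\mathbf{\Pi}:=\mathbf{\Pi}_{[s]}^{0,E}$ when $s\ge 1$ and $\mathbf{\Pi}:=\mathbf{\Pi}_0^{0,E}$ when $s<1$; since $[s]\le k$, $\mathbf{\Pi}\textbf{v}\in(\mathbb{P}_{[s]}(E))^3\subseteq\textbf{V}_k^e(E)$, so the auxiliary bound \eqref{3dedgePRIORIB} of Lemma~\ref{lem3dedgebound} applies to $\mathbf{\Pi}\textbf{v}-\widetilde{\textbf{I}}_h^e\textbf{v}$. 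Then I would dispatch each of the five groups of terms on the right-hand side of \eqref{3dedgePRIORIB} by using the defining DoF conditions \eqref{dof3dedgeseinterini} to replace $\widetilde{\textbf{I}}_h^e\textbf{v}$ by $\textbf{v}$: the two volume suprema collapse, via \eqref{dof43dedgeseinterini} and \eqref{dof53dedgeseinterini} together with Cauchy–Schwarz, to $h_E\|\textbf{curl}(\textbf{v}-\mathbf{\Pi}\textbf{v})\|_E$ and $\|\textbf{v}-\mathbf{\Pi}\textbf{v}\|_E$; the interior face supremum collapses, via \eqref{dof23dedgeseinterini}, to $h_F^{1/2}\|\textbf{v}-\mathbf{\Pi}\textbf{v}\|_F$; the face term $h_F^{3/2}\|\textbf{curl}(\cdot)\cdot\textbf{n}_F\|_F$ becomes, using the 2D commuting identity and that $\textbf{curl}\,\mathbf{\Pi}\textbf{v}\cdot\textbf{n}_F$ has degree $[s]-1\le k-1$, bounded by $h_F^{3/2}\|\textbf{curl}(\textbf{v}-\mathbf{\Pi}\textbf{v})\|_F$; and the boundary term $h_F\|(\cdot)^F\cdot\textbf{t}_{\partial F}\|_{\partial F}$ is handled edge by edge with Lemma~\ref{lemma44} (taking $\varepsilon=s$) and the edge inverse inequality $\|p_k\|_{L^\infty(e)}\lesssim h_e^{-1/2}\|p_k\|_e$, exactly as in the boundary estimate of Theorem~\ref{thm31edge2d}. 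All face quantities are then lifted to volume quantities by the trace inequalities \eqref{Traceinequality3_1}, using $h_F\approx h_E$.

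After these reductions the estimate hinges on two polynomial-approximation bounds: the standard $\|\textbf{v}-\mathbf{\Pi}\textbf{v}\|_E+h_E^{\delta}|\textbf{v}-\mathbf{\Pi}\textbf{v}|_{\delta,E}\lesssim h_E^s|\textbf{v}|_{s,E}$ (which produces the term $h_E^s|\textbf{v}|_{s,E}$ and needs $s>1/2$ for the traces of $\textbf{v}$), and the crucial curl estimate $\|\textbf{curl}(\textbf{v}-\mathbf{\Pi}_{[s]}^{0,E}\textbf{v})\|_E\lesssim h_E^{\widehat r}|\textbf{curl}\,\textbf{v}|_{\widehat r,E}+h_E^{s-1}|\textbf{v}|_{s,E}$, which ultimately feeds the term $h_E^{\widehat r+1}|\textbf{curl}\,\textbf{v}|_{\widehat r,E}$. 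I expect this last estimate to be the main obstacle. I would prove it by comparing $\textbf{curl}\,\mathbf{\Pi}_{[s]}^{0,E}\textbf{v}$ (a polynomial of degree $[s]-1$) with the $L^2$ projection $\textbf{P}_{[s]-1}$ of $\textbf{curl}\,\textbf{v}$ onto $(\mathbb{P}_{[s]-1}(E))^3$: best approximation gives $\|\textbf{curl}\,\textbf{v}-\textbf{P}_{[s]-1}\|_E\lesssim h_E^{\widehat r}|\textbf{curl}\,\textbf{v}|_{\widehat r,E}$ with $\widehat r=\min\{r,[s]\}$, while the polynomial difference $\textbf{d}:=\textbf{P}_{[s]-1}-\textbf{curl}\,\mathbf{\Pi}_{[s]}^{0,E}\textbf{v}$ is controlled by testing $\textbf{d}$ against itself, integrating by parts, exploiting that $\textbf{curl}\,\textbf{d}$ has degree $[s]-2$ and is therefore $L^2$-orthogonal to $\textbf{v}-\mathbf{\Pi}_{[s]}^{0,E}\textbf{v}$ (so only a boundary term survives), and closing with the polynomial inverse inequality \eqref{Polynomialinverseestimates} and a trace bound to obtain $\|\textbf{d}\|_E\lesssim h_E^{-1/2}\|\textbf{v}-\mathbf{\Pi}_{[s]}^{0,E}\textbf{v}\|_{\partial E}\lesssim h_E^{s-1}|\textbf{v}|_{s,E}$. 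Here the cap $\widehat r\le[s]$ is forced precisely by the degree $[s]-1$ of $\textbf{curl}\,\mathbf{\Pi}_{[s]}^{0,E}\textbf{v}$, and the hypothesis $r>1/2$ is exactly what lets the trace inequalities be applied to $\textbf{curl}\,\textbf{v}$ on the faces (a fractional counterpart of this estimate, needed to lift the face curl terms to the volume, follows by the same comparison argument). Finally, I would assemble the pieces: for $s\ge1$ the constant-projection remainder disappears and the surviving contributions combine (via the triangle inequality and \eqref{interpolation2curl3D}) into $h_E^s|\textbf{v}|_{s,E}+h_E^{\widehat r+1}|\textbf{curl}\,\textbf{v}|_{\widehat r,E}$; for $1/2<s<1$ one takes $\mathbf{\Pi}=\mathbf{\Pi}_0^{0,E}$, so $\textbf{curl}\,\mathbf{\Pi}\textbf{v}=0$, $\widehat r=0$, and the curl contribution reduces to the retained term $h_E\|\textbf{curl}\,\textbf{v}\|_E$.
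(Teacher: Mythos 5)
The overall architecture of your proof is the same as the paper's: the curl bound follows from the commuting property $\textbf{curl}\,\widetilde{\textbf{I}}_h^e\textbf{v}=\widetilde{\textbf{I}}_h^f(\textbf{curl}\,\textbf{v})$ together with Theorem~\ref{theorem3dfaceinter}, and the $L^2$ bound from applying the stability estimate \eqref{3dedgePRIORIB} to the difference between a polynomial approximant and the interpolant, then dispatching the five resulting terms via the DoF conditions \eqref{dof3dedgeseinterini}, Lemma~\ref{lemma44}, and the trace inequalities. The genuine divergence is the choice of polynomial approximant. The paper takes $\mathbf{\Pi}_{\bar k}^E$ to be the quasi-interpolant of Verf\"urth, whose defining property --- all partial derivatives up to order $\bar k=[s]$ have the same averages as those of $\textbf{v}$ --- delivers the simultaneous approximation bound $\|\textbf{curl}(\textbf{v}-\mathbf{\Pi}_{\bar k}^E\textbf{v})\|_E\lesssim h_E^{\widehat r}|\textbf{curl}\,\textbf{v}|_{\widehat r,E}$ of \eqref{God-bless} by iterated Poincar\'e inequalities. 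You instead use the plain $L^2$ projection and recover the same curl bound (up to a harmless extra $h_E^{s-1}|\textbf{v}|_{s,E}$) by comparing $\textbf{curl}\,\mathbf{\Pi}_{[s]}^{0,E}\textbf{v}$ with the $L^2$ projection of $\textbf{curl}\,\textbf{v}$ and controlling the polynomial discrepancy $\textbf{d}$ via orthogonality, integration by parts, and inverse/trace estimates; the argument is sound (the volume term vanishes because $\textbf{curl}\,\textbf{d}$ has degree $[s]-2\le[s]$, and the boundary term yields $h_E^{s-1}|\textbf{v}|_{s,E}$ for $s>1/2$), so your route works and trades the citation of an external projector for a self-contained, somewhat longer polynomial argument. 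Two small corrections: on a face, the trace of $\textbf{v}\in\textbf{H}^s(E)$ lives in $\textbf{H}^{s-\frac12}(F)$, so Lemma~\ref{lemma44} should be invoked with $\varepsilon=s-\frac12$ (as the paper does in \eqref{standardinterpolation13dcurl_proof177}), not $\varepsilon=s$; and for $s>1$ the tangential boundary term is handled in the paper directly with the trace inequalities \eqref{Traceinequality3_1} and \eqref{Traceinequality3_1_1}, Lemma~\ref{lemma44} being needed only in the low-regularity regime $1/2<s\le1$.
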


\begin{proof}
Following Proposition 4.2 in Ref.~\cite{beiraobrezzidasimaru2018siam},   we have
 \begin{align}
 \label{relationF_E}
 \textbf{curl}\hspace{0.05cm}(\widetilde{\textbf{I}}_{h}^{e} \textbf{v})= \widetilde{\textbf{I}}_{h}^{f} (\textbf{curl}\hspace{0.05cm}\textbf{v}).\end{align}
Recalling~\eqref{interpolation1div3D},
bound~\eqref{interpolation2curl3D} immediately follows.
 
Next, we prove bound~\eqref{interpolation1curl3D1}.
We define the natural number $\bar{k} = [s] \le k$ and consider $\mathbf{\Pi}_{\bar k}^E$ the (vector valued version of the)
projection operator from $H^{s}(E)$ in $\mathbb{P}_{\bar{k}}(E)$
defined in Ref.~\cite{Verfuerth1999}.
Such an operator guarantees the following approximation properties
\begin{equation} \label{God-bless}
\| \mathbf{v} - \mathbf{\Pi}_{\bar k}^E \mathbf{v} \|_E 
\lesssim h_E^s |\mathbf v|_{s,E} \, , \quad
\| \textbf{curl} \hspace{0.05cm} \mathbf{v} -\textbf{curl} \hspace{0.05cm} \mathbf{\Pi}_{\bar k}^E \mathbf{v} \|_E
\lesssim h_E^{\widehat{r}} | \textbf{curl}  \hspace{0.05cm} \mathbf{v}  |_{\widehat{r},E} .
\end{equation}
To show the second bound in~\eqref{God-bless},
it suffices to recall the properties of $\mathbf{\Pi}_{\bar k}^E \mathbf{v}$.
In particular, see~\cite[eqs. ($2.1$) and ($2.2$)]{Verfuerth1999},
all its partial derivatives (up to order ${\bar k}$) have the same average as those of~$\mathbf v$.
This implies that also the derivatives (up to one order less)
of the \textbf{curl} of the two functions have the same average.
The estimate follows from iterative applications of the Poincar\'e inequality.

Since~$\mathbf{\Pi}_{\bar k}^E \mathbf{v}\in (\mathbb{P}_{k}(E))^{3}\subseteq \textbf{V}_{k}^{e}(E)$, we obtain
\begin{eqnarray}  \label{standardinterpolation13dcurl_proof}
\begin{aligned}[t]
&\| \mathbf{\Pi}_{\bar k}^E \mathbf{v}- \widetilde{\textbf{I}}_{h}^{e}\textbf{v} \|_{E}
\overset{\eqref{3dedgePRIORIB}}{\lesssim}
\sum_{F\subseteq \partial E}h_{F}^{\frac32} \| \textbf{curl}\hspace{0.05cm}  ( \mathbf{\Pi}_{\bar k}^E \mathbf{v}- \widetilde{\textbf{I}}_{h}^{e}\textbf{v}) \cdot\textbf{n}_{F}\|_{F}\\
& +   \sum_{F\subseteq \partial E}
h_{F} \big(\|( \mathbf{\Pi}_{\bar k}^E \mathbf{v}- \widetilde{\textbf{I}}_{h}^{e}\textbf{v})^{F}\cdot\textbf{t}_{\partial F} \|_{\partial F}+
\!\!\!\!\!\!\!\sup_{p_{k}\in \mathbb{P}_{k}(F)}\!\!\!\!\!\!\!
\frac{ h_{F}^{\frac12}\int_{F}  ( \mathbf{\Pi}_{\bar k}^E \mathbf{v}- \widetilde{\textbf{I}}_{h}^{e}\textbf{v})^{F} \cdot \textbf{x}^F_{F} p_{k}}{\|\textbf{x}^F_{F} p_{k}  \|_{F}} \big)  \\
& + \!\!\!\!\!\!\!\!\sup_{\textbf{p}_{k}\in (\mathbb{P}_{k}(E))^3}\!\!\!\!\!\!\!\!
\frac{ h_{E} \int_{E} \textbf{curl}  \hspace{0.05cm}  ( \mathbf{\Pi}_{\bar k}^E \mathbf{v} \!-\! \widetilde{\textbf{I}}_{h}^{e}\textbf{v}) \!\cdot\! \textbf{x}_{E} \!\wedge\! \textbf{p}_{k}}{\| \textbf{x}_{E} \!\wedge\! \textbf{p}_{k}\|_{E}} 
+\!\!\!\!\!\!\!\!\!\!\!\sup_{p_{k-1}\in \mathbb{P}_{k-1}(E)}\!\!\!\!\!\!\!\!\!\!
\frac{ \int_{E}  ( \mathbf{\Pi}_{\bar k}^E \mathbf{v} \!-\! \widetilde{\textbf{I}}_{h}^{e}\textbf{v})   \!\cdot\! \textbf{x}_{E}p_{k-1}   }{\|\textbf{x}_{E}p_{k-1}  \|_{E}}
\!:=\!  \sum_{i=1}^{5}T_{i} . 
\end{aligned}
 \end{eqnarray}
We estimate the five terms on the right-hand side of~\eqref{standardinterpolation13dcurl_proof} separately.
First, we observe
\begin{align}
\label{eq173}
\textbf{curl}\hspace{0.05cm}  (\widetilde{\textbf{I}}_{h}^{e}\textbf{v})|_{F} \cdot\textbf{n}_{F}
\overset{\eqref{relationF_E}, \eqref{dof1face3Dinter}}{=}
\Pi_{k-1}^{0,F}((\textbf{curl}\hspace{0.05cm}\textbf{v})|_{F}\cdot\textbf{n}_{F})\hspace{0.3cm} \forall F\subseteq \partial E.
\end{align}
As for the term~$T_1$,
the triangle inequality and the trivial continuity of the
projector~$\Pi_{k-1}^{0,F}$ in the $L^2$ norm implies
\begin{eqnarray}
\begin{aligned}[b]
 \label{standardinterpolation13dcurl_proof_1}
T_{1}
&\leq  \sum_{F\subseteq \partial E}  h_{F}^{\frac32}  
\Big(\| \textbf{curl}\hspace{0.05cm}  ( \mathbf{v} \!-\! \mathbf{\Pi}_{\bar k}^E \textbf{v}) \cdot\textbf{n}_{F} \|_{F}
\! +\! \| \textbf{curl}\hspace{0.05cm}    \mathbf{v}\!\cdot\!\textbf{n}_{F} 
\!-\! \Pi_{k-1}^{0,F} (\textbf{curl}\hspace{0.05cm} \mathbf{v} \!\cdot\! \textbf{n}_{F}) \|_{F} \Big) \\
& \overset{\eqref{eq173}}{\lesssim}
\sum_{F\subseteq \partial E}  h_{F}^{\frac32} \| \textbf{curl}\hspace{0.05cm}  (\mathbf{v} \!-\! \mathbf{\Pi}_{\bar k}^E \textbf{v}) \cdot\textbf{n}_{F} \|_{F}.
\end{aligned}
 \end{eqnarray}
We estimate the terms~$T_3$, $T_4$, and~$T_5$ as follows:
\begin{eqnarray}
\begin{aligned}[t]
 \label{standardinterpolation13dcurl_proof_4}
&\sum_{i=3}^{5}T_i 
\overset{\eqref{dof23dedgeseinterini}, \eqref{dof43dedgeseinterini}, \eqref{dof53dedgeseinterini}}{=}
\sum_{F\subseteq \partial E}\sup_{p_{k}\in \mathbb{P}_{k}(F)} \frac{ h_{F}^{\frac12}\int_{F}  ( \mathbf{v} - \mathbf{\Pi}_{\bar k}^E  \textbf{v})^{F} \cdot \textbf{x}^F_{F} p_{k}}{\|\textbf{x}^F_{F} p_{k}  \|_{F}} \\
&   \hspace{0.5cm} +
\!\!\!\!\!\!\!\!\sup_{p_{k-1}\in \mathbb{P}_{k-1}(E)}\!\!\!\!\!\!\!\!
\frac{ \int_{E}  ( \mathbf{v} - \mathbf{\Pi}_{\bar k}^E \textbf{v})   \cdot\textbf{x}_{E}p_{k-1}   }{\|\textbf{x}_{E}p_{k-1}  \|_{E}}
+\!\!\!\!\!\!\!\!\sup_{\textbf{p}_{k}\in (\mathbb{P}_{k}(E))^3}\!\!\!\!\!\!\!\!
\frac{ h_{E} \int_{E} \textbf{curl}  \hspace{0.05cm}  (\mathbf{v} - \mathbf{\Pi}_{\bar k}^E \textbf{v})\cdot \textbf{x}_{E}\wedge \textbf{p}_{k}}{\| \textbf{x}_{E}\wedge \textbf{p}_{k}\|_{E}}\\
& \lesssim \sum_{F\subseteq \partial E}   h_{F}^{\frac12} \| (\mathbf{v} - \mathbf{\Pi}_{\bar k}^E \textbf{v})^{F}\|_{F} 
+ \| \mathbf{v} - \mathbf{\Pi}_{\bar k}^E \textbf{v} \|_{E}
+ h_{E}\| \textbf{curl}  \hspace{0.05cm}  (\mathbf{v} - \mathbf{\Pi}_{\bar k}^E \textbf{v}) \|_{E}.
\end{aligned}
 \end{eqnarray}
Inserting~\eqref{standardinterpolation13dcurl_proof_1} and~\eqref{standardinterpolation13dcurl_proof_4}
into~\eqref{standardinterpolation13dcurl_proof} yields
 \begin{eqnarray}
\begin{aligned}[b] \label{standardinterpolation13dcurl_prooflast}
& \| \mathbf{\Pi}_{\bar k}^E \mathbf{v}- \widetilde{\textbf{I}}_{h}^{e}\textbf{v} \|_{E}
\!\lesssim\!\! \sum_{F\subseteq \partial E} \Big(h_{F}^{\frac32} 
\| \textbf{curl}\hspace{0.05cm}  (\mathbf{v} - \mathbf{\Pi}_{\bar k}^E \textbf{v}) \cdot\textbf{n}_{F} \|_{F}
\!+\! h^{\frac12}_{F}   \|(\mathbf{v} - \mathbf{\Pi}_{\bar k}^E\textbf{v} )^{F}\|_{F}  \Big)\\
& \qquad\qquad\qquad\qquad + \| \mathbf{v} - \mathbf{\Pi}_{\bar k}^E \textbf{v} \|_{E}
+ h_{E}\| \textbf{curl}  \hspace{0.05cm}  (\mathbf{v} - \mathbf{\Pi}_{\bar k}^E \textbf{v}) \|_{E}+T_2.
\end{aligned}
\end{eqnarray}
We are left to estimate the term~$T_2$.
If $s>1$, then by~\eqref{dof13dedgeseinterini},
\eqref{Traceinequality3_1} with $1\slash2$ $< \delta < \min\{1,s- 1\slash2\}$, and~\eqref{Traceinequality3_1_1} with~$\varepsilon=\delta$, we have
\begin{eqnarray}
\begin{aligned} \label{standardinterpolation13dcurl_proof_3slarge_proof3} 
T_2 
& \!=\! h_{F} \| (\mathbf{\Pi}_{\bar k}^E \mathbf{v} \!-\! \widetilde{\textbf{I}}_{h}^{e}\textbf{v})^{F} \!\cdot\! \textbf{t}_{\partial F} \|_{\partial F}
\!\lesssim\! \sum_{F\subseteq \partial E}
h_{\!F}^{\!\frac12}\|(\mathbf{v} \!-\! \mathbf{\Pi}_{\bar k}^E \textbf{v})^{F} \|_{F}
+ h_{\!F}^{\!\delta+\frac12}|(\mathbf{v} \!-\! \mathbf{\Pi}_{\bar k}^E \textbf{v})^{F} |_{\delta,F})\\ 
&\lesssim \sum_{F\subseteq \partial E} 
h_{F}^{\frac12}(h_{E}^{-\frac12} \|\mathbf{v} - \mathbf{\Pi}_{\bar k}^E \textbf{v} \|_{E} 
+ h_{E}^{\delta-\frac12}| \mathbf{v} - \mathbf{\Pi}_{\bar k}^E \textbf{v} |_{\delta,E})\\ & \hspace{0.5cm} 
+ h_{F}^{\delta+\frac12} (h_{E}^{-(\delta+\frac12)} \|\mathbf{v} - \mathbf{\Pi}_{\bar k}^E \textbf{v}\|_{E} +  |\mathbf{v} - \mathbf{\Pi}_{\bar k}^E \textbf{v} |_{\delta+\frac12,E})\\ 
&= \| \mathbf{v} - \mathbf{\Pi}_{\bar k}^E \textbf{v} \|_{E} +  h_{E}^{\delta}|\mathbf{v} - \mathbf{\Pi}_{\bar k}^E\textbf{v}|_{\delta,E}+ h_{E}^{\delta+\frac12} |\mathbf{v} - \mathbf{\Pi}_{\bar k}^E \textbf{v}|_{\delta+\frac12,E}.\\
\end{aligned}
 \end{eqnarray}  
Substituting~\eqref{standardinterpolation13dcurl_proof_3slarge_proof3} into~\eqref{standardinterpolation13dcurl_prooflast},
and using~\eqref{Traceinequality3_1}, \eqref{God-bless},
and standard polynomial approximation properties, we obtain
\begin{eqnarray}
\begin{aligned} \label{standardinterpolation13dcurl_prooflast_1}
&\| \mathbf{\Pi}_{\bar k}^E \mathbf{v}- \widetilde{\textbf{I}}_{h}^{e}\textbf{v} \|_{E}
\lesssim    \sum_{F\subseteq \partial E} \big( h_{F}^{\frac32}   
\| \textbf{curl}\hspace{0.05cm}  (\mathbf{v} - \mathbf{\Pi}_{\bar k}^E \textbf{v}) \cdot\textbf{n}_{F} \|_{F} 
+ h^{\frac12}_{F}   \| (\mathbf{v} - \mathbf{\Pi}_{\bar k}^E \textbf{v} )^{F}\|_{F}  \big) \\
& \!\! + h_{E}\| \textbf{curl}  \hspace{0.05cm} (\mathbf{v} \!-\! \mathbf{\Pi}_{\bar k}^E \textbf{v}) \|_{E}
\!+\! \|\mathbf{v} \!-\! \mathbf{\Pi}_{\bar k}^E\textbf{v} \|_{E} 
\!+\!  h_{E}^{\delta}|\mathbf{v} \!-\! \mathbf{\Pi}_{\bar k}^E \textbf{v}|_{\delta,E}
\!+\! h_{E}^{\delta+\frac12}|\mathbf{v} \!-\! \mathbf{\Pi}_{\bar k}^E \textbf{v}|_{\delta+\frac12,E}\\
& \lesssim     h_{E}\| \textbf{curl}  \hspace{0.05cm}  ( \mathbf{v} - \mathbf{\Pi}_{\bar k}^E \textbf{v}) \|_{E} 
+ h_{E}^{\widehat{r}+1} | \textbf{curl}  \hspace{0.05cm} (\mathbf{v} - \mathbf{\Pi}_{\bar k}^E \textbf{v}) |_{\widehat{r},E}
+\|\mathbf{v} - \mathbf{\Pi}_{\bar k}^E \textbf{v}  \|_{E}\\
& +h_{E}^{s} |\mathbf{v}-\mathbf{\Pi}_{\bar k}^E\textbf{v}|_{s,E}
\lesssim  h_{E}^{\widehat{r}+1} | \textbf{curl}  \hspace{0.05cm}    \textbf{v}  |_{\widehat{r},E} 
+ h_{E}^{s} | \textbf{v} |_{s,E} .
\end{aligned} 
 \end{eqnarray}  
Instead, if $1\slash2 < s \le 1$, we replace the term $\bm{\Pi}_{\bar k}^{0,E}$ by $\bm{\Pi}_{0}^{0,E}$
in~\eqref{standardinterpolation13dcurl_proof} and~\eqref{standardinterpolation13dcurl_prooflast}.  
For the term $T_2$,  by the fact that
$( \mathbf{\Pi}_{0}^{0,E} \mathbf{v}-\widetilde{\textbf{I}}_{h}^{e}\textbf{v})^{F}\cdot\textbf{t}_{e}\in \mathbb{P}_{0}(e)\ 
\forall e\subseteq \partial F$,
\eqref{dof13dedgeseinterini},
\eqref{L1BOUND}
and the property that $\|p_{k}\|_{L^{\infty}(e)}\leq C h_{e}^{-\frac12} \|p_{k}\|_{e}$,
we arrive at
\[
\begin{split}
T_2
&\lesssim \sum_{F\subseteq \partial E} h_{F} 
\sum_{e\subseteq \partial F} \sup_{p_{0}\in \mathbb{P}_{0}(e)}
\frac{( (\mathbf{\Pi}_{0}^{0,E}\mathbf{v} - \widetilde{\textbf{I}}_{h}^{e}\textbf{v})^{F}\cdot\textbf{t}_{e}, p_{0} )_{e}  }{\|p_{0}\|_{e}}\\
& =  \sum_{F\subseteq \partial E}h_{F}  
\sum_{e\subseteq \partial F} \sup_{p_{0}\in \mathbb{P}_{0}(e)}
\frac{( (\mathbf{v} - \mathbf{\Pi}_{0}^{0,E} \textbf{v})^{F}\cdot\textbf{t}_{e}, p_{0} )_{e}  }{\|p_{0}\|_{e}}\\
& \!\lesssim\! \!\sum_{F\subseteq \partial E}h^{\frac12}_{F}\!  
\Big( \|(\mathbf{v} \!-\! \mathbf{\Pi}_{0}^{0,E} \textbf{v} )^{F}\|_{F}
\!+\! h_{F}^{\varepsilon}|( \mathbf{v} \!-\! \mathbf{\Pi}_{0}^{0,E} \textbf{v})^{F}|_{\varepsilon,F}
\!+\! h_{F}\|{\rm rot}_{F} \hspace{0.05cm}(\mathbf{v} \!-\! \mathbf{\Pi}_{0}^{0,E} \textbf{v})^{F}\|_{F} \Big).
\end{split}
\]
Combining this and~\eqref{standardinterpolation13dcurl_prooflast},
we choose~$\varepsilon =s-\frac12$
and apply~\eqref{Traceinequality3_1} with~$\delta=r$, \eqref{Traceinequality4} with~$\delta=s$,
\eqref{Traceinequality3_1_1}, and standard polynomial approximation properties, yielding
\begin{eqnarray}
\begin{aligned}[t] \label{standardinterpolation13dcurl_proof177}
& \|\mathbf{\Pi}_{0}^{0,E} \mathbf{v}- \widetilde{\textbf{I}}_{h}^{e}\textbf{v}\|_{E} \\
&  \lesssim    \sum_{F\subseteq \partial E} \Big( h_{F}^{\frac32}    
\| \textbf{curl}\hspace{0.05cm}  (\textbf{v} - \mathbf{\Pi}_{0}^{0,E} \mathbf{v}) \cdot\textbf{n}_{F} \|_{F} 
+ h^{\frac12}_{F}   \|(\textbf{v} - \mathbf{\Pi}_{0}^{0,E} \mathbf{v})^{F}\|_{F}  \\
& \hspace{0.5cm}+ h_{F}^{\varepsilon+\frac12}|(\textbf{v} - \mathbf{\Pi}_{0}^{0,E} \mathbf{v})^{F}|_{\varepsilon,F}\Big) 
+ \| \mathbf{v} - \mathbf{\Pi}_{0}^E \textbf{v} \|_{E}
+ h_{E}\| \textbf{curl}  \hspace{0.05cm} (\textbf{v} - \mathbf{\Pi}_{0}^{0,E} \mathbf{v}) \|_{E}\\ 
& \lesssim   h_{E} \| \textbf{curl}\hspace{0.05cm} (\textbf{v} - \mathbf{\Pi}_{0}^{0,E} \mathbf{v}) \|_{E} 
+ h^{r+1}_{E}| \textbf{curl}\hspace{0.05cm}  (\textbf{v} - \mathbf{\Pi}_{0}^{0,E} \mathbf{v}) |_{r,E}
+ \| \textbf{v} - \mathbf{\Pi}_{0}^{0,E} \mathbf{v}\|_{E} \\
&\hspace{0.5cm}  + h_{E}^{s}| \textbf{v} - \mathbf{\Pi}_{0}^{0,E} \mathbf{v}|_{s, E}
\lesssim  h^{s} |\textbf{v}|_{s,E}
+h^{r+1}_{E} | \textbf{curl}\hspace{0.05cm} \textbf{v}  |_{r,E}
+ h_{E} \| \textbf{curl}\hspace{0.05cm} \textbf{v}  \|_{E} .
\end{aligned}
 \end{eqnarray} 
The assertion follows from a triangle inequality and standard polynomial approximation properties.
\end{proof}

\begin{remark}\label{rem:koala}
Theorem \ref{theorem3dedgeinter} represents an optimal approximation result in the $H_{\rm curl}$ norm.
For low values of $s$ it requires some additional regularity on $\textbf{curl}\hspace{0.05cm}\textbf{v}$ due to the definition of the interpolation operator.
This requirement could be slightly relaxed employing arguments similar to those used in the proof of Lemma~\ref{lemma44}, requiring that $\textbf{curl}\hspace{0.05cm}\textbf{v}\cdot\textbf{n}_{F} $
is integrable on the element faces.
This comes at the price of more cumbersome technicalities, which we prefer to avoid.
Besides, for~$s > 3\slash 2$, we can choose $r=s-1$ in~\eqref{interpolation1curl3D1}
and eliminate also the second term on the right-hand side.
This same remark applies also to Theorem~\ref{theorem433dedges} below.
\end{remark}

\subsection{Serendipity edge virtual element space  on polyhedrons} \label{sec43}
We first change the boundary space~$\mathcal{B}_{k}(\partial E)$ in~\eqref{B-standard-edge-3D}
into its serendipity version:
\begin{align*}
\mathcal{B}^{S}_{k}(\partial E)
=\left\{\textbf{v}_{h}\in \textbf{L}^{2}(\partial E): 
\textbf{v}_{h}^{F}\in \textbf{SV}_{k}^{e}(F)\ \forall F \subseteq \partial E,
\textbf{v}_{h}\cdot \textbf{t}_{e}\ \textrm{is\ continuous}\ \forall e\subseteq \partial F\right\}.
\end{align*}
The serendipity edge virtual element space in 3D  is defined as\cite{beiraobrezzidasimaru2018siam,bebremaru2016RLMASerendipity}
\begin{align*}
\textbf{SV}_{k}^{e}(E)=\big\{\textbf{v}_{h}\in \textbf{L}^{2}(E):\  & \textrm{div} \hspace{0.05cm} \textbf{v}_{h}\in \mathbb{P}_{k-1}(E),  \textbf{curl}  \hspace{0.05cm} \textbf{curl}  \hspace{0.05cm}  \textbf{v}_{h}  \in  (\mathbb{P}_{k}(E))^3,  \\& \ \textbf{v}_{h}^{F}\in \textbf{SV}_{k}^{e}(F)\ \forall F \subseteq \partial E, \textbf{v}_{h}\cdot \textbf{t}_{e}\ \textrm{is\ continuous}\ \forall e\subseteq \partial F\big\}.
\end{align*}
We endow~$\textbf{SV}_{k}^{e}(E)$ with the unisolvent
DoFs~\eqref{dof13dedge}, \eqref{dof33dedge}, \eqref{dof43dedge}, \eqref{dof53dedge}, and 
\[
\bullet \  \int_{F} \textbf{v}^{F}_{h}\cdot \textbf{x}^{F}_{F} p_{\beta_{F}} \forall p_{\beta_{F}}\in \mathbb{P}_{\beta_{F}}(F)  \hspace{0.2cm} \textrm{only\ for}\ \beta_{F}\geq 0. 
\]
For each  sufficiently regular $\textbf{v}$, the DoFs interpolation operator $\textbf{I}_{h}^{e}$ on the space $\textbf{SV}_{k}^{e}(E)$ can be defined  through the above DoFs
enforcing the same conditions~\eqref{dof13dedgeseinterini}, \eqref{dof33dedgeseinterini}, \eqref{dof43dedgeseinterini}, and~\eqref{dof53dedgeseinterini},
and substituting \eqref{dof23dedgeseinterini} by 
\begin{align*}
    \int_{F} (\textbf{v}-\textbf{I}_{h}^{e} \textbf{v})^{F} \cdot \textbf{x}^{F}_{F} p_{\beta_{F}}  =0&&\forall p_{\beta_{F}}\in \mathbb{P}_{\beta_{F}}(F)  \hspace{0.2cm} \textrm{only\ for}\ \beta_{F}\geq 0. 
\end{align*}
From Proposition~$4.2$ in Ref.~\cite{beiraobrezzidasimaru2018siam}, we have
\begin{equation} \label{madonnina}
    \textbf{curl}\hspace{0.05cm}( \textbf{I}_{h}^{e} \textbf{v})= \widetilde{\textbf{I}}_{h}^{f} (\textbf{curl}\hspace{0.05cm}\textbf{v}).
\end{equation}
Next, we prove interpolation estimates for the operator $\textbf{I}^{e}_{h}$
on the serendipity edge virtual element space $\textbf{SV}_{k}^{e}(E)$.
\begin{thm}\label{theorem433dedges}
For each $\textbf{v}\in \textbf {H}^{s}(E)$, $1\slash2< s\leq k+1$,
with   ${\bf curl}\hspace{0.05cm}\textbf{v} \in \textbf {H}^{r}(E)$,   $1\slash2$ $< r\leq k$, we have
\begin{align}  \label{interpolation1curl3Ds}
&\|\textbf{v}- \textbf{I}_{h}^{e} \textbf{v}\|_{E}
\lesssim   h_{E}^{s} |    \textbf{v}|_{s,E} 
+  h_{E}^{\widehat{r}+1}| {\bf curl}\hspace{0.05cm} \textbf{v}  |_{\widehat{r},E}+h_{E}\| {\bf curl}\hspace{0.05cm} \textbf{v}  \|_{E} ,\\
\label{interpolation2curl3Ds}
& \|{\bf curl} \hspace{0.05cm}(\textbf{v}- \textbf{I}_{h}^{e} \textbf{v})\|_{E}
 \lesssim  h_{E}^{r}|{\bf curl}\hspace{0.05cm}\textbf{v}|_{r,E}.
\end{align}
where $\widehat{r} = \min{\{r,[s]\}}$.  
The  third  term  on the right-hand side of \eqref{interpolation1curl3D1} can be neglected if $s \ge 1$.
\end{thm}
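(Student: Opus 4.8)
The plan is to mirror the strategy of Theorem~\ref{theorem31}, comparing the serendipity interpolant~$\textbf{I}_h^e$ with the standard interpolant~$\widetilde{\textbf{I}}_h^e$ of Section~\ref{sec42}. Estimate~\eqref{interpolation2curl3Ds} is immediate: the commuting relation~\eqref{madonnina} gives $\textbf{curl}\hspace{0.05cm}(\textbf{v}-\textbf{I}_h^e\textbf{v}) = \textbf{curl}\hspace{0.05cm}\textbf{v}-\widetilde{\textbf{I}}_h^f(\textbf{curl}\hspace{0.05cm}\textbf{v})$, whence~\eqref{interpolation2div3D} yields the claim. For~\eqref{interpolation1curl3Ds} I would write $\textbf{v}-\textbf{I}_h^e\textbf{v} = (\textbf{v}-\widetilde{\textbf{I}}_h^e\textbf{v}) + \textbf{w}$ with $\textbf{w}:=\widetilde{\textbf{I}}_h^e\textbf{v}-\textbf{I}_h^e\textbf{v}$, bound the first summand by Theorem~\ref{theorem3dedgeinter} (which already produces the right-hand side of~\eqref{interpolation1curl3Ds}), and devote the remainder of the argument to~$\textbf{w}$.

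Since~$\textbf{SV}_k^e(E)\subseteq\textbf{V}_k^e(E)$, we have $\textbf{w}\in\textbf{V}_k^e(E)$, and the commuting relations~\eqref{relationF_E} and~\eqref{madonnina} give $\textbf{curl}\hspace{0.05cm}\textbf{w}=0$. Moreover, $\textbf{I}_h^e$ and $\widetilde{\textbf{I}}_h^e$ share the edge degrees of freedom~\eqref{dof13dedgeseinterini} and the volume moments~\eqref{dof53dedgeseinterini}, so that $\textbf{w}\cdot\textbf{t}_e=0$ on every edge and $\int_E\textbf{w}\cdot\textbf{x}_E p_{k-1}=0$ for all $p_{k-1}\in\mathbb{P}_{k-1}(E)$. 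Feeding these vanishing quantities into the auxiliary bound~\eqref{3dedgePRIORIB} applied to~$\textbf{w}$, every term on its right-hand side drops except the face moment one, leaving
\[
\|\textbf{w}\|_E \lesssim \sum_{F\subseteq\partial E}\ \sup_{p_k\in\mathbb{P}_k(F)}\frac{h_F^{\frac12}\int_F\textbf{w}^F\cdot\textbf{x}^F_F p_k}{\|\textbf{x}^F_F p_k\|_F}.
\]

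The core of the proof is the evaluation of the surviving face moments. Writing $\mathbb{P}_k(F)=\mathbb{P}_{\beta_F}(F)\oplus\mathbb{P}_{\beta_F|k}(F)$ and $p_k=p_{\beta_F}+p_k^{\rm hi}$, I would use that $(\widetilde{\textbf{I}}_h^e\textbf{v})^F$ matches $\textbf{v}^F$ against all of $\textbf{x}^F_F\mathbb{P}_k(F)$ (cf.~\eqref{dof23dedgeseinterini}), that $(\textbf{I}_h^e\textbf{v})^F$ matches $\textbf{v}^F$ only against $\textbf{x}^F_F\mathbb{P}_{\beta_F}(F)$ (cf.~\eqref{DoFsinternal}), and that $(\textbf{I}_h^e\textbf{v})^F\in\textbf{SV}_k^e(F)$, so that by the definition~\eqref{svspace} of the two dimensional serendipity space together with $\mathbf{\Pi}_S^e(\textbf{I}_h^e\textbf{v})^F=\mathbf{\Pi}_S^e\textbf{v}^F$ the high-order moments of $(\textbf{I}_h^e\textbf{v})^F$ coincide with those of $\mathbf{\Pi}_S^e\textbf{v}^F$. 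These identities collapse to
\[
\int_F\textbf{w}^F\cdot\textbf{x}^F_F p_k = \int_F(\textbf{v}^F-\mathbf{\Pi}_S^e\textbf{v}^F)\cdot\textbf{x}^F_F p_k^{\rm hi}.
\]
To obtain optimal orders I would not estimate this by the plain two dimensional projection error, which would reintroduce a suboptimal $\|\textbf{v}\|_E$ contribution after taking traces; instead, using that $\mathbf{\Pi}_S^e$ is the identity on $(\mathbb{P}_k(F))^2$, I would insert the Verf\"urth polynomial projection $\mathbf{\Pi}_{\bar k}^E$ of Theorem~\ref{theorem3dedgeinter}, set $\textbf{g}:=(\textbf{v}-\mathbf{\Pi}_{\bar k}^E\textbf{v})^F$, and write $\textbf{v}^F-\mathbf{\Pi}_S^e\textbf{v}^F = \textbf{g}-\mathbf{\Pi}_S^e\textbf{g}$. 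The two dimensional norm equivalence~\eqref{equivalencenorm} and~\eqref{equivalencenorm3} then gives $\|\mathbf{\Pi}_S^e\textbf{g}\|_F\lesssim\interleave\textbf{g}\interleave_F\lesssim\|\textbf{g}\|_F+h_F^\varepsilon|\textbf{g}|_{\varepsilon,F}+h_F\|\textrm{rot}_F\hspace{0.05cm}\textbf{g}\|_F$.

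It then remains to convert the face quantities of $\textbf{g}=(\textbf{v}-\mathbf{\Pi}_{\bar k}^E\textbf{v})^F$ into volume quantities. Choosing $\varepsilon=s-\tfrac12$ (legitimate since $s>\tfrac12$), I would apply the trace inequalities~\eqref{Traceinequality3_1} and~\eqref{Traceinequality3_1_1} to $\textbf{v}-\mathbf{\Pi}_{\bar k}^E\textbf{v}$ and, for the rotor term, note $\textrm{rot}_F\hspace{0.05cm}\textbf{g}=(\textbf{curl}\hspace{0.05cm}(\textbf{v}-\mathbf{\Pi}_{\bar k}^E\textbf{v}))|_F\cdot\textbf{n}_F$ and trace $\textbf{curl}\hspace{0.05cm}(\textbf{v}-\mathbf{\Pi}_{\bar k}^E\textbf{v})\in\textbf{H}^{r}(E)$; here the hypotheses $s>\tfrac12$ and $r>\tfrac12$ are exactly what makes these normal traces square integrable. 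Summing over the uniformly bounded number of faces and invoking the approximation estimates~\eqref{God-bless} of $\mathbf{\Pi}_{\bar k}^E$ produces $\|\textbf{w}\|_E\lesssim h_E^s|\textbf{v}|_{s,E}+h_E^{\widehat r+1}|\textbf{curl}\hspace{0.05cm}\textbf{v}|_{\widehat r,E}$, the extra term $h_E\|\textbf{curl}\hspace{0.05cm}\textbf{v}\|_E$ surfacing only in the low regularity regime $\tfrac12<s\le1$ (where $\bar k=0$), exactly as in Theorem~\ref{theorem3dedgeinter}; a triangle inequality with the bound for $\textbf{v}-\widetilde{\textbf{I}}_h^e\textbf{v}$ then yields~\eqref{interpolation1curl3Ds}. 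The main obstacle is precisely this last scaling step: the face moments must be matched to traces of the \emph{polynomial projection error} rather than of $\textbf{v}$ itself, and the insertion of $\mathbf{\Pi}_{\bar k}^E$ before applying the two dimensional stability~\eqref{equivalencenorm3} is what prevents the trace inequalities from degrading the approximation order.
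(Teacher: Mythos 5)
Your proof is correct and runs on the same core machinery as the paper's --- the a priori bound \eqref{3dedgePRIORIB}, the commuting relations \eqref{relationF_E} and \eqref{madonnina}, the identity $\mathbf{\Pi}_S^e(\textbf{I}_h^e\textbf{v})^F=\mathbf{\Pi}_S^e\textbf{v}^F$, the norm equivalence \eqref{equivalencenorm} and \eqref{equivalencenorm3}, and the insertion of the polynomial projector $\mathbf{\Pi}_{\bar k}^E$ before taking traces --- but it is organized around a different decomposition. The paper applies \eqref{3dedgePRIORIB} directly to $\mathbf{\Pi}_{\bar k}^E\textbf{v}-\textbf{I}_h^e\textbf{v}$ and reruns the whole five-term estimate from the proof of Theorem~\ref{theorem3dedgeinter}, observing that only the face-moment term changes (cf.~\eqref{3dedgePRIORIBserendipityprojecion} and \eqref{eq200}). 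You instead split $\textbf{v}-\textbf{I}_h^e\textbf{v}=(\textbf{v}-\widetilde{\textbf{I}}_h^e\textbf{v})+\textbf{w}$, invoke Theorem~\ref{theorem3dedgeinter} as a black box for the first piece, and note that for $\textbf{w}=\widetilde{\textbf{I}}_h^e\textbf{v}-\textbf{I}_h^e\textbf{v}$ four of the five terms in \eqref{3dedgePRIORIB} vanish identically (zero curl by the two commuting diagrams, matching tangential edge traces, matching curl and volume moments), so only the face moments survive. This mirrors the paper's own 2D serendipity argument (Theorem~\ref{theorem31}) more faithfully than the paper's 3D proof does, and isolates the serendipity correction as a single face-moment defect; the price is that you must verify the vanishing of the four terms separately, whereas the paper gets them estimated by simply repeating the standard-case computation. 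The remaining steps --- reducing the surviving moment to $\int_F(\textbf{v}^F-\mathbf{\Pi}_S^e\textbf{v}^F)\cdot\textbf{x}_F^F p_k^{\mathrm{hi}}$ via \eqref{svspace} and \eqref{eq:standard_projection4}, rewriting $\textbf{v}^F-\mathbf{\Pi}_S^e\textbf{v}^F=(\textbf{I}-\mathbf{\Pi}_S^e)(\textbf{v}-\mathbf{\Pi}_{\bar k}^E\textbf{v})^F$ since $\mathbf{\Pi}_S^e$ fixes $(\mathbb{P}_k(F))^2$, and the trace and approximation estimates with $\varepsilon=s-\tfrac12$ --- coincide with the paper's, so both routes land on the same right-hand side, including the low-regularity term $h_E\Vert{\bf curl}\hspace{0.05cm}\textbf{v}\Vert_E$ when $\bar k=0$.
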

\begin{proof}
The proof of bound~\eqref{interpolation2curl3Ds} is essentially identical to that of~\eqref{interpolation2curl3D};
see~\eqref{madonnina}.

Next, we prove bound~\eqref{interpolation1curl3Ds}.
By the inclusion that $\textbf{SV}_{k}^{e}(E)\subseteq \textbf{V}_{k}^{e}(E)$,
bound~\eqref{3dedgePRIORIB} holds true for functions in~$\textbf{SV}_{k}^{e}(E)$.
Thus, for all~$\textbf{v}_{h}$ in~$\textbf{SV}_{k}^{e}(E)$,
also making use of~\eqref{eq:standard_projection4} and~\eqref{svspace}, we can write
\begin{equation*}
\begin{aligned} \label{3dedgePRIORIBserendipity}
& \|\textbf{v}_{h}\|_{E} 
\!\!\!\!\overset{\eqref{3dedgePRIORIB}}{\lesssim}\!\!\!
\sum_{F\subseteq \partial E}
\Big(h_{F}^{\frac32} \|  {\bf curl}\hspace{0.05cm} \textbf{v}_{h} \!\cdot\! \textbf{n}_{F} \|_{F}
\!+\!  h_{F}   \|\textbf{v}_{h}^{F} \!\cdot\! \textbf{t}_{\partial F} \|_{\partial F}
\!+  \!\!\!\!\!\! \sup_{p_{k}\in \mathbb{P}_{k}(F)}\!\!\!\!\!\!
\frac{ h_{F}^{\frac12}\int_{F}  \mathbf{\Pi}_{S}^{e} \textbf{v}^{F}_{h} \cdot \textbf{x}^{F}_{F} p_{k}}{\|\textbf{x}^{F}_{F} p_{k}\|_{F}}\Big)   \\
& \hspace{0.5cm}  +\sup_{\textbf{p}_{k}\in (\mathbb{P}_{k}(E))^3}\frac{h_{E} \int_{E}  {\bf curl}  \hspace{0.05cm}  \textbf{v}_{h}\cdot \textbf{x}_{E}\wedge \textbf{p}_{k}}{\|\textbf{x}_{E}\wedge \textbf{p}_{k}\|_{E}}+\sup_{p_{k-1}\in \mathbb{P}_{k-1}(E)} \frac{\int_{E}  \textbf{v}_{h}   \cdot\textbf{x}_{E}p_{k-1}   }{\|\textbf{x}_{E}p_{k-1}\|_{E}} .
\end{aligned}
\end{equation*}
Let~$\bm{\Pi}_{\bar k}^{E}$ be the operator introduced in the proof of Theorem \ref{theorem3dedgeinter}. 
By replacing $\textbf{v}_{h}$ with $\bm{\Pi}_{\bar k}^{E}\textbf{v}-\textbf{I}^{e}_{h} \textbf{v}$, we can write
\begin{eqnarray}\label{3dedgePRIORIBserendipityprojecion}
\begin{aligned}[t] 
&  \|\bm{\Pi}_{\bar k}^{E}\textbf{v}- \textbf{I}^{e}_{h} \textbf{v} \|_{E} 
\lesssim  \sum_{F\subseteq \partial E} h_{F}^{\frac32} \|  {\bf curl}\hspace{0.05cm} (\bm{\Pi}_{\bar k}^{E}\textbf{v}- \textbf{I}^{e}_{h} \textbf{v} ) \cdot\textbf{n}_{F} \|_{F}\\
& + \sum_{F\subseteq \partial E} h_{F} 
\Big( \| (\bm{\Pi}_{\bar k}^{E}\textbf{v}- \textbf{I}^{e}_{h} \textbf{v})^{F}\cdot\textbf{t}_{\partial F} \|_{\partial F} 
+  \!\!\!\!\!\!\sup_{p_{k}\in \mathbb{P}_{k}(F)}\!\!\!\!\!\!
 \frac{ h_{F}^{\frac12}\int_{F}  \mathbf{\Pi}_{S}^{e} (\bm{\Pi}_{\bar k}^{E}\textbf{v}- \textbf{I}^{e}_{h} \textbf{v})^{F}  \cdot \textbf{x}^{F}_{F} p_{k}}{\|\textbf{x}^{F}_{F} p_{k}\|_{F}} \Big)   \\
 &  + \!\!\!\!\!\!\!\!\! \sup_{\textbf{p}_k \in (\mathbb{P}_{k}(E))^3}\!\!\!\!\!\!\!\!
 \frac{h_{E} \int_{E}  {\bf curl}  \hspace{0.05cm}  (\bm{\Pi}_{\bar k}^{E}\textbf{v} - \textbf{I}^{e}_{h} \textbf{v}) \!\cdot\! \textbf{x}_{E}\wedge \textbf{p}_{k}}{\|\textbf{x}_{E}\wedge \textbf{p}_{k}\|_{E}}
 + \!\!\!\!\!\!\!\!\!\!\!\sup_{p_{k-1}\in \mathbb{P}_{k-1}(E)}\!\!\!\!\!\!\!\!\!\!\!
 \frac{\int_{E}  (\bm{\Pi}_{\bar k}^{E}\textbf{v}- \textbf{I}^{e}_{h} \textbf{v})   \!\cdot\! \textbf{x}_{E}p_{k-1}   }{\|\textbf{x}_{E}p_{k-1}\|_{E}}.
\end{aligned}
\end{eqnarray}
The difference between~\eqref{standardinterpolation13dcurl_proof}
and~\eqref{3dedgePRIORIBserendipityprojecion}
resides only in the third term on the right-hand side,
whence we only discuss its upper bound.
The other four terms are dealt with exactly as in the proof of Theorem~\ref{theorem3dedgeinter}.

Due to the definition of the interpolation operator~$\textbf{I}^{e}_{h}$, 
the functions $(\textbf{I}^{e}_{h} \textbf{v})^{F}$ and $\textbf{v}^{F}$ share the same DoFs on each face~$F$ of~$E$.
Since the value of the projection $\mathbf{\Pi}_{S}^{e}$ only depends on such DoFs, we have
$\mathbf{\Pi}_{S}^{e}(\textbf{I}^{e}_{h} \textbf{v})^{F}=\mathbf{\Pi}_{S}^{e}\textbf{v}^{F}$.
This allows us to write
\begin{align*}
&\|\mathbf{\Pi}_{S}^{e} (\bm{\Pi}_{\bar k}^{E}\textbf{v}- \textbf{I}^{e}_{h} \textbf{v})^{F} \|_{F} 
= \|\mathbf{\Pi}_{S}^{e} (\textbf{v} - \bm{\Pi}_{\bar k}^{E}\textbf{v})^{F} \|_{F}
\overset{\eqref{equivalencenorm}}{\lesssim}
\interleave\mathbf{\Pi}_{S}^{e} (\textbf{v} - \bm{\Pi}_{\bar k}^{E}\textbf{v})^{F} \interleave_{F}\\
& \!\!\!\overset{\eqref{eq:standard_projection}}{=}\!
\interleave  (\textbf{v} \!-\! \bm{\Pi}_{\bar k}^{E}\textbf{v})^{F} \! \interleave_{F}
\!\!\!\!\!\overset{\eqref{equivalencenorm3}}{\lesssim}\!\!\!
\|(\textbf{v} \!-\! \bm{\Pi}_{\bar k}^{E}\textbf{v})^{F} \! \|_{F} 
\!+\! h_{F}^{\varepsilon} |  (\textbf{v} \!-\! \bm{\Pi}_{\bar k}^{E}\textbf{v})^{F} \!|_{\varepsilon,F} 
\!+\! h_{F}\| \textrm{rot}_{F} (\textbf{v} \!-\! \bm{\Pi}_{\bar k}^{E}\textbf{v})^{F} \!\|_{F}.
\end{align*}
This yields
\begin{eqnarray}
\begin{aligned}[t] \label{eq200}
& \sum_{F\subseteq \partial E}
\!\sup_{p_{k}\in \mathbb{P}_{k}(F)}\!\!\!\!
\frac{h_{F}^{\frac12}\int_{F}  \mathbf{\Pi}_{S}^{e} (\bm{\Pi}_{\bar k}^{E}\textbf{v} \!-\! \textbf{I}^{e}_{h} \textbf{v})^{F}  \!\cdot\! \textbf{x}^{F}_{F} p_{k}}{\|\textbf{x}^{F}_{F} p_{k}\|_{F}}
\!\lesssim\! \!\!\sum_{F\subseteq \partial E}\!\! h_{F}^{\frac12} \|\mathbf{\Pi}_{S}^{e} (\bm{\Pi}_{\bar k}^{E}\textbf{v} \!-\! \textbf{I}^{e}_{h} \textbf{v})^{F} \|_{F} \\
& \lesssim \sum_{F\subseteq \partial E}h_{F}^{\frac12}
\big( \|  (\textbf{v} - \bm{\Pi}_{\bar k}^{E}\textbf{v})^{F} \|_{F}
+h_{F}^{\varepsilon} |  (\textbf{v} - \bm{\Pi}_{\bar k}^{E}\textbf{v})^{F} |_{\varepsilon,F}
+h_{F}\| \textrm{rot}_{F} (\textbf{v} - \bm{\Pi}_{\bar k}^{E}\textbf{v})^{F} \|_{F} \big).
\end{aligned}
\end{eqnarray}
Inserting~\eqref{standardinterpolation13dcurl_proof_1}, \eqref{standardinterpolation13dcurl_proof_4}, and~\eqref{eq200}
into \eqref{3dedgePRIORIBserendipityprojecion}, we derive
\begin{align*}
&\| \mathbf{\Pi}_{\bar k}^{E} \mathbf{v} - \textbf{I}_{h}^{e} \textbf{v} \|_{E} 
\lesssim\sum_{F\subseteq \partial E} 
\Big( h_{F}^{\frac32} \| \textbf{curl}\hspace{0.05cm}  (\textbf{v} - \mathbf{\Pi}_{\bar k}^{E} \mathbf{v}) \cdot \textbf{n}_{F} \|_{F}
+ h^{\frac12}_{F}   \|(\textbf{v} - \mathbf{\Pi}_{\bar k}^{E} \mathbf{v} )^{F} \|_{F}\\
& \qquad\qquad + h_{F}^{\varepsilon+\frac12}|(\textbf{v} - \mathbf{\Pi}_{\bar k}^{E} \mathbf{v})^{F}|_{\varepsilon,F}\Big)
+ \| \textbf{v} - \mathbf{\Pi}_{\bar k}^{E} \mathbf{v} \|_{E}
+ h_{E}\| \textbf{curl}  \hspace{0.05cm}  (\textbf{v} - \mathbf{\Pi}_{\bar k}^{E} \mathbf{v}) \|_{E}
+T_2.
\end{align*}
Bound~\eqref{interpolation1curl3Ds} now follows from the same arguments as in  \eqref{standardinterpolation13dcurl_prooflast_1}-\eqref{standardinterpolation13dcurl_proof177}.
\end{proof}

\begin{remark} \label{remark:minimal-regularity}
Differently from the 2D case,
we proved interpolation estimates in 3D for face and edge elements
for functions in~$H^s$ with~$s>1/2$.
One might possibly try to design quasi-interpolation estimates for functions with minimal regularity, i.e., in~$H^s$, $s>0$, and some extra regularity condition on the divergence/curl, for instance by
taking the steps from the recent work \cite{DongErnGuermond} on finite elements.
Such additional developments are beyond the scope of this work.
\end{remark}

\section{Stability theory of the discrete bilinear forms} \label{sec5}
In this section, we focus on the stability properties of  $L^2$ discrete VEM bilinear forms  proposed for the discretization of  electromagnetic
problems in 2D and 3D\cite{lourencobrezzidassi2017cmame,beiraobrezzidasimaru2018siam}.
In Section \ref{sec51}, we define computable stabilizations for the VEM discretization of $L^2$ bilinear forms associated with  face
and   edge virtual element spaces in 2D, and prove their stability properties;
in Section \ref{sec52}, we consider the corresponding results in 3D.
Note that here we focus the attention on stability forms that have a ``functional'' expression with explicit integrals and projections (i.e. do not depend on the particular basis chosen for the VE space). With some additional work, the present results could be also easily extended to dofi-dofi type stabilizations, which are instead related to the basis adopted for the test polynomial spaces in the DoFs definition.

\subsection {The stability in 2D  edge and face virtual element spaces}
 \label{sec51}
For each face~$F$,
we introduce the discrete $L^{2}$ bilinear form $(\cdot,\cdot)_{F}:$
$\textbf{V}_{k}^{e}(F)\times \textbf{V}_{k}^{e}(F) \to \mathbb R$
as\cite{lourencobrezzidassi2017cmame,beiraobrezzidasimaru2018siam}
\begin{align} \label{bilinearform-1}
 [\textbf{v}_{h},\textbf{w}_{h}]_{e,F}:=(\mathbf{\Pi}_{{k}}^{0,F}\textbf{v}_{h},\mathbf{\Pi}_{k}^{0,F}\textbf{w}_{h})_{F}
 +S_{e}^{F}((\textbf{I}-\mathbf{\Pi}_{k}^{0,F})\textbf{v}_{h}, (\textbf{I}-\mathbf{\Pi}_{k}^{0,F})\textbf{w}_{h}).
\end{align}
In~\eqref{bilinearform-1},
$S_{e}^{F}(\cdot,\cdot)$ denotes any symmetric positive definite
bilinear form computable via the DoFs of~$\textbf{V}_{k}^{e}(F)$
such that there exist two positive constant $C_1$ and $C_2$ independent of the mesh size  for which
 \begin{align}
 \label{bilinearform-2}
 C_{1}\|\textbf{v}_{h}\|_{F}^{2}\leq   S_{e}^{F}(\textbf{v}_{h},\textbf{v}_{h})
 \leq  C_{2}\|\textbf{v}_{h}\|_{F}^{2} \hspace{0.3cm} \forall  \textbf{v}_{h}  \in \textbf{V}_{k}^{e}(F).
\end{align}
There are many stabilization choices in the literature.
We here analyze the following (computable) stabilization
$S_{e}^{F}:\textbf{V}_{k}^{e}(F)\times\textbf{V}_{k}^{e}(F)\rightarrow \mathbb{R}$ given by
 \begin{align*}
S_{e}^{F}(\textbf{v}_{h}, \textbf{w}_{h})
\!=\! h_{F} \!\!\!\sum_{e\subseteq \partial F}\! (\textbf{v}_{h} \!\cdot\! \textbf{t}_{e}, \textbf{w}_{h} \!\cdot\! \textbf{t}_{e})_{e}
\!+\! h^{2}_{F} (\textrm{rot}_{F}\hspace{0.05cm}\textbf{v}_{h}, \textrm{rot}_{F}\hspace{0.05cm}\textbf{w}_{h})_{F}
\!+\! (\mathbf{\Pi}_{{k+1}}^{0,F}\textbf{v}_{h},\mathbf{\Pi}_{k+1}^{0,F}\textbf{w}_{h})_{F}.
\end{align*}
\begin{thm}
\label{stab2dedge}
The stabilization~$S_{e}^{F}(\cdot, \cdot)$ satisfies the stability bounds in~\eqref{bilinearform-2}.
\end{thm}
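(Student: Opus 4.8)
The plan is to read off the three summands of $S_e^F(\textbf{v}_h,\textbf{v}_h)$ as precisely the three quantities appearing in the a priori bound of Lemma~\ref{lem2degde}, and then to establish the two inequalities of~\eqref{bilinearform-2} separately. Writing explicitly
\[
S_e^F(\textbf{v}_h,\textbf{v}_h)
= h_F \|\textbf{v}_h\cdot\textbf{t}_{\partial F}\|_{\partial F}^2
+ h_F^2 \|\textrm{rot}_F\hspace{0.05cm}\textbf{v}_h\|_F^2
+ \|\mathbf{\Pi}_{k+1}^{0,F}\textbf{v}_h\|_F^2,
\]
the lower bound $C_1\|\textbf{v}_h\|_F^2 \le S_e^F(\textbf{v}_h,\textbf{v}_h)$ will follow from Lemma~\ref{lem2degde}, while the upper bound will follow from inverse and trace estimates.

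For the lower bound I would start from~\eqref{prioribound2dedge} and observe that its first two terms are exactly the square roots of the first two summands of $S_e^F$. The only term that needs work is the supremum $\sup_{p_k\in\mathbb P_k(F)} \int_F \textbf{v}_h\cdot\textbf{x}_F p_k \,/\, \|\textbf{x}_F p_k\|_F$. Here the key remark is that $\textbf{x}_F p_k \in (\mathbb P_{k+1}(F))^2$ for every $p_k\in\mathbb P_k(F)$; hence, by definition of the $\textbf{L}^2$ projection $\mathbf{\Pi}_{k+1}^{0,F}$,
\[
\int_F \textbf{v}_h\cdot\textbf{x}_F p_k
= \int_F (\mathbf{\Pi}_{k+1}^{0,F}\textbf{v}_h)\cdot\textbf{x}_F p_k
\le \|\mathbf{\Pi}_{k+1}^{0,F}\textbf{v}_h\|_F \, \|\textbf{x}_F p_k\|_F,
\]
so that the supremum is bounded by $\|\mathbf{\Pi}_{k+1}^{0,F}\textbf{v}_h\|_F$. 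Plugging this into~\eqref{prioribound2dedge} and squaring (using $(a+b+c)^2\le 3(a^2+b^2+c^2)$) yields $\|\textbf{v}_h\|_F^2 \lesssim S_e^F(\textbf{v}_h,\textbf{v}_h)$, i.e.\ the constant $C_1$.

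For the upper bound I would treat the three summands in turn. The third is immediate, since $\mathbf{\Pi}_{k+1}^{0,F}$ is an $\textbf{L}^2$-orthogonal projection, whence $\|\mathbf{\Pi}_{k+1}^{0,F}\textbf{v}_h\|_F \le \|\textbf{v}_h\|_F$. For the second summand I would invoke the rot inverse estimate $\|\textrm{rot}_F\hspace{0.05cm}\textbf{v}_h\|_F \lesssim h_F^{-1}\|\textbf{v}_h\|_F$ valid on $\textbf{V}_k^e(F)$; this is proven exactly as~\eqref{inversediv2dedge} (equivalently~\eqref{inverserot}) by testing with $b_F\,\textrm{rot}_F\hspace{0.05cm}\textbf{v}_h$ and integrating by parts, and it gives $h_F^2\|\textrm{rot}_F\hspace{0.05cm}\textbf{v}_h\|_F^2\lesssim\|\textbf{v}_h\|_F^2$. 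For the boundary summand, since $\textbf{v}_h\cdot\textbf{t}_{\partial F}$ is a piecewise polynomial on $\partial F$, I would combine the polynomial inverse inequality~\eqref{Polynomialinverseestimates1} with the tangential trace inequality~\eqref{Traceinequality2} to get
\[
h_F^{\frac12}\|\textbf{v}_h\cdot\textbf{t}_{\partial F}\|_{\partial F}
\lesssim \|\textbf{v}_h\cdot\textbf{t}_{\partial F}\|_{-\frac12,\partial F}
\lesssim \|\textbf{v}_h\|_F + h_F\|\textrm{rot}_F\hspace{0.05cm}\textbf{v}_h\|_F
\lesssim \|\textbf{v}_h\|_F,
\]
the last step again by the rot inverse estimate. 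Squaring gives $h_F\|\textbf{v}_h\cdot\textbf{t}_{\partial F}\|_{\partial F}^2\lesssim\|\textbf{v}_h\|_F^2$, and summing the three contributions produces $C_2$.

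The main obstacle is the boundary term in the upper bound: $\textbf{v}_h$ is only an $\textbf{L}^2$ field with polynomial $\textrm{rot}_F$, so its tangential trace is a priori controlled only in the negative norm $\|\cdot\|_{-\frac12,\partial F}$; upgrading this to control of the full $L^2(\partial F)$ norm of $\textbf{v}_h\cdot\textbf{t}_{\partial F}$ is precisely what the polynomiality of the tangential trace, together with~\eqref{Polynomialinverseestimates1}, buys us. A secondary point worth stating explicitly is that the rot inverse estimate, although written only for $\textbf{SV}_k^e(F)$ in~\eqref{inverserot}, holds verbatim on the whole space $\textbf{V}_k^e(F)$, as its proof uses only $\textrm{rot}_F\hspace{0.05cm}\textbf{v}_h\in\mathbb P_{k-1}(F)$.
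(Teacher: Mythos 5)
Your proposal is correct and follows essentially the same route as the paper: the lower bound comes from Lemma~\ref{lem2degde} with the supremum term absorbed into $\|\mathbf{\Pi}_{k+1}^{0,F}\textbf{v}_h\|_F$ via the $\textbf{L}^2$-projection property, and the upper bound chains \eqref{Polynomialinverseestimates1}, \eqref{Traceinequality2}, and the rot inverse estimate (which the paper likewise notes extends from $\textbf{SV}_k^e(F)$ to all of $\textbf{V}_k^e(F)$). No gaps.
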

\begin{proof}
The lower bound in~\eqref{bilinearform-2} is proven as follows:
\begin{align*}
\|\textbf{v}_{h}\|_{F}
& \overset{\eqref{prioribound2dedge}}{\lesssim}
h_{F}\|\textrm{rot}_{F}\hspace{0.05cm}\textbf{v}_{h}\|_{F}
+h_{F}^{\frac12}\|\textbf{v}_{h}\cdot\textbf{t}_{\partial F}\|_{\partial F} 
+ \!\!\!\!\!\!\sup_{p_{k}\in \mathbb{P}_{k}(F)}\!\!\!\!\!\!
\frac{ \int_{F}  \textbf{v}_{h} \cdot \textbf{x}_{F} p_{k}}{\|\textbf{x}_{F} p_{k}\|_{F}} \\
& \lesssim h_{F}\|\textrm{rot}_{F}\hspace{0.05cm}\textbf{v}_{h}\|_{F}
+h_{F}^{\frac12}\|\textbf{v}_{h}\cdot\textbf{t}_{\partial F}\|_{\partial F}    
+ \|\bm{\Pi}_{{k+1}}^{0,F}\textbf{v}_{h}\|_{F},
\end{align*}
Next, we observe that the inverse inequality~\eqref{inverserot} is valid for functions in~$\textbf{V}_{k}^{e}(F)$ as well.
We deduce the upper bound in~\eqref{bilinearform-2}:
\[
\begin{split}
& h_{F}^{\frac12}\| \textbf{v}_{h}\cdot  \textbf{t}_{\partial F}\|_{\partial F}
+h_{F} \|\textrm{rot}_{F}\hspace{0.05cm}\textbf{v}_{h}\|_{F}+\|\bm{\Pi}_{k+1}^{0,F}\textbf{v}_{h}\|_{F}
\overset{\eqref{Polynomialinverseestimates1}}{\lesssim}
\| \textbf{v}_{h}\cdot  \textbf{t}_{\partial F}\|_{-\frac{1}{2},\partial F} \\
& + h_{F} \|\textrm{rot}_{F}\hspace{0.05cm}\textbf{v}_{h}\|_{F}
+\|\bm{\Pi}_{k+1}^{0,F}\textbf{v}_{h}\|_{F}
\overset{\eqref{Traceinequality2}}{\lesssim}
\| \textbf{v}_{h}\|_{F}+h_{F} \|\textrm{rot}_{F}\hspace{0.05cm}\textbf{v}_{h}\|_{F}
\overset{\eqref{inverserot}}{\lesssim}
\|\textbf{v}_{h}\|_{F}.
\end{split}
\]
\end{proof}

In the serendipity case,
we can still define a discrete bilinear form on  $\textbf{SV}_{k}^{e}(F)\times \textbf{SV}_{k}^{e}(F)$ as in~\eqref{bilinearform-1},
substituting the stabilization~$S_{e}^F(\cdot,\cdot)$
by the (computable) serendipity stabilization
 \begin{align*}
 S^{s,F}_{e}(\textbf{v}_{h}, \textbf{w}_{h})
 \!=\! h_{F} \!\! \sum_{e\subseteq \partial F}(\textbf{v}_{h} \!\cdot\! \textbf{t}_{e}, \textbf{w}_{h} \!\cdot\! \textbf{t}_{e})_{e}
 \!+\! h^{2}_{F} (\textrm{rot}_{F}\hspace{0.05cm}\textbf{v}_{h}, \textrm{rot}_{F}\hspace{0.05cm}\textbf{w}_{h})_{F}
 \!+\! (\bm{\Pi}_{S}^{e} \textbf{v}_{h}, \bm{\Pi}_{S}^{e} \textbf{w}_{h})_{F}.
\end{align*}

\begin{thm} \label{stab2dedges}
The stabilization $S_{e}^{s,F}(\cdot, \cdot)$ satisfies the bounds
\[
 C_{1}\|\textbf{v}_{h}\|_{F}^{2}
 \leq   S_{e}^{s,F}(\textbf{v}_{h},\textbf{v}_{h})
 \leq  C_{2}\|\textbf{v}_{h}\|_{F}^{2} \hspace{0.3cm} \forall  \textbf{v}_{h}  \in \textbf{SV}_{k}^{e}(F).
\]
\end{thm}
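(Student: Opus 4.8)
The plan is to follow the same two-sided argument used in the proof of Theorem~\ref{stab2dedge}, observing that the only structural change is the replacement of the full projection term $(\mathbf{\Pi}_{k+1}^{0,F}\textbf{v}_h,\mathbf{\Pi}_{k+1}^{0,F}\textbf{v}_h)_F$ by the serendipity projection term $(\bm{\Pi}_S^e\textbf{v}_h,\bm{\Pi}_S^e\textbf{v}_h)_F$. Writing out $S_e^{s,F}(\textbf{v}_h,\textbf{v}_h)$ and using $h_F\sum_{e\subseteq\partial F}\|\textbf{v}_h\cdot\textbf{t}_e\|_e^2 = h_F\|\textbf{v}_h\cdot\textbf{t}_{\partial F}\|_{\partial F}^2$, one sees that $S_e^{s,F}(\textbf{v}_h,\textbf{v}_h)^{1/2}$ is equivalent (up to the fixed number of edges) to $h_F^{1/2}\|\textbf{v}_h\cdot\textbf{t}_{\partial F}\|_{\partial F}+h_F\|\textrm{rot}_F\hspace{0.05cm}\textbf{v}_h\|_F+\|\bm{\Pi}_S^e\textbf{v}_h\|_F$. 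The whole task therefore reduces to bounding $\|\textbf{v}_h\|_F$ above and below by these three quantities, for $\textbf{v}_h\in\textbf{SV}_k^e(F)$.

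For the lower bound, I would start from the auxiliary estimate~\eqref{prioribound2dedge} of Lemma~\ref{lem2degde}, which already produces the first two terms directly; the only thing to control is the internal-moment supremum $\sup_{p_k}\int_F\textbf{v}_h\cdot\textbf{x}_F p_k/\|\textbf{x}_F p_k\|_F$. Here lies the crucial use of the serendipity structure: combining the defining condition~\eqref{svspace} (valid for all $p\in\mathbb{P}_{\beta_F|k}(F)$) with the projection condition~\eqref{eq:standard_projection4} (valid for all $p_{\beta_F}\in\mathbb{P}_{\beta_F}(F)$), and the direct-sum splitting $\mathbb{P}_k(F)=\mathbb{P}_{\beta_F}\oplus\mathbb{P}_{\beta_F|k}(F)$, one obtains
\[
\int_F(\textbf{v}_h-\bm{\Pi}_S^e\textbf{v}_h)\cdot\textbf{x}_F p_k=0\qquad\forall p_k\in\mathbb{P}_k(F).
\]
Consequently $\int_F\textbf{v}_h\cdot\textbf{x}_F p_k=\int_F\bm{\Pi}_S^e\textbf{v}_h\cdot\textbf{x}_F p_k\le\|\bm{\Pi}_S^e\textbf{v}_h\|_F\|\textbf{x}_F p_k\|_F$ by Cauchy--Schwarz, so the supremum is bounded by $\|\bm{\Pi}_S^e\textbf{v}_h\|_F$. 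Plugging this into~\eqref{prioribound2dedge} yields $\|\textbf{v}_h\|_F\lesssim S_e^{s,F}(\textbf{v}_h,\textbf{v}_h)^{1/2}$, i.e. the constant $C_1$.

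For the upper bound, I would treat the three contributions separately, exactly as in Theorem~\ref{stab2dedge} for the first two. The boundary term is handled by the polynomial inverse inequality~\eqref{Polynomialinverseestimates1} (recalling $\textbf{v}_h\cdot\textbf{t}_{\partial F}$ is a piecewise polynomial) followed by the trace inequality~\eqref{Traceinequality2}, giving $h_F^{1/2}\|\textbf{v}_h\cdot\textbf{t}_{\partial F}\|_{\partial F}\lesssim\|\textbf{v}_h\|_F+h_F\|\textrm{rot}_F\hspace{0.05cm}\textbf{v}_h\|_F$; the rotor term is absorbed via the inverse estimate~\eqref{inverserot}, which holds on $\textbf{SV}_k^e(F)$, so that $h_F\|\textrm{rot}_F\hspace{0.05cm}\textbf{v}_h\|_F\lesssim\|\textbf{v}_h\|_F$. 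The new ingredient is the projection term, for which I would invoke the norm-equivalence machinery of Lemma~\ref{lemma41}: since $\bm{\Pi}_S^e\textbf{v}_h\in(\mathbb{P}_k(F))^2$, inequality~\eqref{equivalencenorm} gives $\|\bm{\Pi}_S^e\textbf{v}_h\|_F\lesssim\interleave\bm{\Pi}_S^e\textbf{v}_h\interleave_F$; the identity $\interleave\bm{\Pi}_S^e\cdot\interleave_F=\interleave\cdot\interleave_F$ (established in the proof of the $\mathbf{\Pi}_S^e$ error estimate, as the norm $\interleave\cdot\interleave_F$ is built from the very functionals defining $\bm{\Pi}_S^e$) then lets us pass to $\interleave\textbf{v}_h\interleave_F$, and finally~\eqref{equivalencenorm2}, valid on $\textbf{V}_k^e(F)\supseteq\textbf{SV}_k^e(F)$, yields $\interleave\textbf{v}_h\interleave_F\lesssim\|\textbf{v}_h\|_F$. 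Collecting the three bounds gives $C_2$.

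I do not expect any genuine obstacle, as all the hard analytic work is already contained in Lemma~\ref{lem2degde} and Lemma~\ref{lemma41}; the single point requiring care is the verification of the serendipity orthogonality $\int_F(\textbf{v}_h-\bm{\Pi}_S^e\textbf{v}_h)\cdot\textbf{x}_F p_k=0$ for \emph{all} of $\mathbb{P}_k(F)$, which is precisely what allows the serendipity projection to play the role of the full $L^2$ projection in controlling the internal moments.
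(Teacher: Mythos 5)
Your proposal is correct and follows essentially the same route as the paper: the lower bound hinges on exactly the observation that \eqref{svspace} and \eqref{eq:standard_projection4} together let one replace $\textbf{v}_h$ by $\bm{\Pi}_S^e\textbf{v}_h$ in the internal-moment supremum of \eqref{prioribound2dedge}, followed by Cauchy--Schwarz, while the upper bound reuses the argument of Theorem~\ref{stab2dedge}. You are slightly more explicit than the paper in bounding $\|\bm{\Pi}_S^e\textbf{v}_h\|_F\lesssim\|\textbf{v}_h\|_F$ via \eqref{equivalencenorm}, the invariance of $\interleave\cdot\interleave_F$ under $\bm{\Pi}_S^e$, and \eqref{equivalencenorm2}, but this is precisely the machinery the paper's ``same lines'' remark implicitly relies on.
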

\begin{proof}
The proof follows along the same lines of that of Theorem~\ref{stab2dedge}.
The only difference resides in the lower bound, while treating the term involving the supremum.
It suffices to observe that, due to~\eqref{eq:standard_projection4} and~\eqref{svspace},
we have
\begin{align*}
\sup_{p_{k}\in \mathbb{P}_{k}(F)}\frac{ \int_{F}  \textbf{v}_{h} \cdot \textbf{x}_{F} p_{k}}{\|\textbf{x}_{F} p_{k}\|_{F}}
=\sup_{p_{k}\in \mathbb{P}_{k}(F)}\frac{ \int_{F}  \bm{\Pi}_{S}^{e}\textbf{v}_{h} \cdot \textbf{x}_{F} p_{k}}{\|\textbf{x}_{F} p_{k}\|_{F}},
\end{align*}
and then apply the Cauchy-Schwarz inequality.
\end{proof}
\begin{remark} \label{remark:stability-2D-face}
The stability theory of standard and serendipity face virtual element spaces in 2D follows from the above stability bounds for edge virtual element spaces,
changing ``$\textbf{t}_{e}$" into ``$\textbf{n}_{e}$"
and ``$\textrm{rot}_{F}$"  into ``$\textrm{div}_{F}$".  
\end{remark}

\subsection {The stability    in 3D    edge and face virtual element spaces} \label{sec52}
We first prove stability properties for 3D face virtual element space.
We introduce the symmetric, positive definite, and computable
bilinear form  $S_{f}^{E}(\cdot, \cdot)$ on $\textbf{V}_{k-1}^{f}(E)\times \textbf{V}_{k-1}^{f}(E)$ defined by
\begin{eqnarray} 
\begin{aligned}[b] \label{stab-face-3D}
S_{f}^{E}(\textbf{v}_{h}, \textbf{w}_{h})
& = h_{E} \sum_{F\subseteq \partial E}
(\textbf{v}_{h} \cdot \textbf{n}_{F}, \textbf{w}_{h} \cdot \textbf{n}_{F})_{F}
+ h^{2}_{E} (\textrm{div}\hspace{0.05cm}\textbf{v}_{h}, \textrm{div}\hspace{0.05cm}\textbf{w}_{h})_{E}\\
& \quad + (\mathbf{\Pi}_{{k+1}}^{0,E}\textbf{v}_{h},\mathbf{\Pi}_{k+1}^{0,E}\textbf{w}_{h})_{E}.
\end{aligned}
\end{eqnarray}
We define the local discrete bilinear form on $\textbf{V}_{k-1}^{f}(E)\times \textbf{V}_{k-1}^{f}(E)$:
\begin{align}\label{added-lore}
 [\textbf{v}_{h},\textbf{w}_{h}]_{f,E}
 :=(\mathbf{\Pi}_{{k-1}}^{0,E}\textbf{v}_{h},\mathbf{\Pi}_{k-1}^{0,E}\textbf{w}_{h})_{E}
 +S_{f}^{E} ((\textbf{I}-\mathbf{\Pi}_{k-1}^{0,E})\textbf{v}_{h}, (\textbf{I}-\mathbf{\Pi}_{k-1}^{0,E})\textbf{w}_{h}),
\end{align}
which is computable and approximates the $L^{2}$ bilinear form $(\cdot,\cdot)_{E}$.
Recalling Lemma~\ref{lem3dfacebound} and employing the same arguments as those used in the proof of Theorem~\ref{stab2dedge},
we have  the following stability property.
\begin{thm}\label{them3dface}
The following stability bounds are valid:
\begin{align}
 \label{bilinearform-3dface}
 C_{1}\|\textbf{v}_{h}\|_{E}^{2}
 \leq  S_{f}^{E}(\textbf{v}_{h},\textbf{v}_{h})
 \leq  C_{2}\|\textbf{v}_{h}\|_{E}^{2} \hspace{0.3cm} \forall  \textbf{v}_{h}  \in \textbf{V}_{k-1}^{f}(E).
\end{align}
\end{thm}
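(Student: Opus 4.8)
The plan is to mirror exactly the two-sided argument used in the proof of Theorem~\ref{stab2dedge}, replacing the planar auxiliary bound~\eqref{prioribound2dedge} by its three-dimensional volumetric counterpart~\eqref{3dfacePRIORIB} from Lemma~\ref{lem3dfacebound}. The three terms in the stabilization~\eqref{stab-face-3D} are designed precisely to control, respectively, the boundary term, the divergence term, and the projection/supremum term appearing in~\eqref{3dfacePRIORIB}. So the strategy is entirely structural: show that $\|\textbf{v}_h\|_E$ is bounded by a sum of quantities each dominated by $S_f^E(\textbf{v}_h,\textbf{v}_h)^{1/2}$ for the lower bound, and conversely that each of the three constituent terms of $S_f^E$ is bounded by $\|\textbf{v}_h\|_E$ for the upper bound.

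First I would establish the lower bound $\|\textbf{v}_h\|_E^2 \lesssim S_f^E(\textbf{v}_h,\textbf{v}_h)$. Applying~\eqref{3dfacePRIORIB} gives
\[
\|\textbf{v}_h\|_E \lesssim h_E\|\textrm{div}\hspace{0.05cm}\textbf{v}_h\|_E + h_E^{\frac12}\|\textbf{v}_h\cdot\textbf{n}_{\partial E}\|_{\partial E} + \sup_{\textbf{p}_k\in(\mathbb{P}_k(E))^3}\frac{\int_E \textbf{v}_h\cdot\textbf{x}_E\wedge\textbf{p}_k}{\|\textbf{x}_E\wedge\textbf{p}_k\|_E}.
\]
The first two terms are, up to the scaling factors $h_E$ and $h_E^{1/2}$, exactly the square roots of the second and first terms of $S_f^E$. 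For the supremum term, the key observation (as in the 2D serendipity argument and in the reduction used for the $L^2$ projection) is that $\textbf{x}_E\wedge\textbf{p}_k$ ranges over a polynomial space of degree at most $k+1$, so a Cauchy--Schwarz step shows $\int_E \textbf{v}_h\cdot\textbf{x}_E\wedge\textbf{p}_k = \int_E \mathbf{\Pi}_{k+1}^{0,E}\textbf{v}_h\cdot\textbf{x}_E\wedge\textbf{p}_k \le \|\mathbf{\Pi}_{k+1}^{0,E}\textbf{v}_h\|_E\,\|\textbf{x}_E\wedge\textbf{p}_k\|_E$, bounding the supremum by $\|\mathbf{\Pi}_{k+1}^{0,E}\textbf{v}_h\|_E$, which is the square root of the third term of $S_f^E$. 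Combining these and squaring yields the lower bound.

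For the upper bound $S_f^E(\textbf{v}_h,\textbf{v}_h)\lesssim\|\textbf{v}_h\|_E^2$, I would treat the three terms in turn. The projection term is immediate by $L^2$-stability of $\mathbf{\Pi}_{k+1}^{0,E}$. The divergence term needs the three-dimensional inverse inequality $\|\textrm{div}\hspace{0.05cm}\textbf{v}_h\|_E \lesssim h_E^{-1}\|\textbf{v}_h\|_E$, valid on $\textbf{V}_{k-1}^f(E)$ by the same bubble-function argument that gave~\eqref{3dcurlinverseesti} (the divergence is a polynomial, so one integrates against a scaled bubble and integrates by parts). The boundary term is the delicate one: writing $h_E^{\frac12}\|\textbf{v}_h\cdot\textbf{n}_{\partial E}\|_{\partial E}$, since $\textbf{v}_h\cdot\textbf{n}_F$ is a piecewise polynomial on $\partial E$, the polynomial inverse estimate~\eqref{Polynomialinverseestimates1} converts the $L^2(\partial E)$ norm into the scaled negative norm $\|\textbf{v}_h\cdot\textbf{n}_{\partial E}\|_{-\frac12,\partial E}$, and then the trace inequality~\eqref{Traceinequality3} bounds this by $\|\textbf{v}_h\|_E + h_E\|\textrm{div}\hspace{0.05cm}\textbf{v}_h\|_E$; a further application of the divergence inverse inequality collapses this to $\|\textbf{v}_h\|_E$.

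The main obstacle I anticipate is the boundary term, exactly as in 2D: one must have the correct $\textbf{H}(\textrm{div})$ trace machinery in place, namely the chain $\|\cdot\|_{\partial E}\to\|\cdot\|_{-\frac12,\partial E}\to(\|\textbf{v}_h\|_E+h_E\|\textrm{div}\,\textbf{v}_h\|_E)$ via~\eqref{Polynomialinverseestimates1} and~\eqref{Traceinequality3}. This is the three-dimensional normal-trace analogue of the tangential-trace estimate~\eqref{Traceinequality2} used in the 2D proof, and everything else is a routine transcription. Since the statement explicitly tells us to reuse Lemma~\ref{lem3dfacebound} and the arguments of Theorem~\ref{stab2dedge}, the proof is essentially a dimensional translation with the $\textrm{rot}_F$/$\textbf{t}_e$ structure replaced by the $\textrm{div}$/$\textbf{n}_F$ structure.
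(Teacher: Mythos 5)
Your proposal is correct and coincides with the paper's intended argument: the paper gives no detailed proof of Theorem~\ref{them3dface}, stating only that it follows from Lemma~\ref{lem3dfacebound} together with the same arguments as Theorem~\ref{stab2dedge}, which is precisely the dimensional translation you carry out (bound~\eqref{3dfacePRIORIB} plus Cauchy--Schwarz against $\mathbf{\Pi}_{k+1}^{0,E}$ for the lower bound; $L^2$-stability of the projection, the bubble-function divergence inverse estimate, and the chain \eqref{Polynomialinverseestimates1}--\eqref{Traceinequality3} for the upper bound). No gaps.
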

Next, we consider the stability analysis for   the VEM discrete form associated with the 3D   edge virtual element space\cite{beiraobrezzidasimaru2018siam}.
The VEM discrete form  of the $L^{2}$ bilinear form $(\cdot,\cdot)_{E}$ on   $\textbf{V}_{k}^{e}(E)\times\textbf{V}_{k}^{e}(E) $ is defined by
\begin{align} \label{added-lore-ArXiv}
 [\textbf{v}_{h},\textbf{w}_{h}]_{e,E}:=(\mathbf{\Pi}_{k}^{0,E}\textbf{v}_{h},\mathbf{\Pi}_{k}^{0,E}\textbf{w}_{h})_{E}+S_{e}^{E}((\textbf{I}-\mathbf{\Pi}_{k}^{0,E})\textbf{v}_{h}, (\textbf{I}-\mathbf{\Pi}_{k}^{0,E})\textbf{w}_{h}),
\end{align}
where $S_{e}^{E}(\cdot,\cdot)$ is a symmetric, positive definite, and computable
bilinear form defined by
\begin{equation*}
\begin{aligned} \label{stabilization-13dedge}
S_{e}^{E}(\textbf{v}_{h}, \textbf{w}_{h})
&=\sum_{F\subseteq \partial E}\left( h^{2}_{F}(\textbf{v}_{h}\cdot \textbf{t}_{\partial F}, \textbf{w}_{h}\cdot \textbf{t}_{\partial F})_{\partial F}
+ h_{F}  (\bm{\Pi}_{k+1}^{0,F}\textbf{v}_{h}^{F}, \bm{\Pi}_{k+1}^{0,F}\textbf{w}_{h}^{F})_{F}\right) \\
 & \quad +  h_{E}^{2} S_{f}^{E}(\textbf{curl}\hspace{0.05cm}\textbf{v}_{h},\textbf{curl}\hspace{0.05cm}\textbf{w}_{h}).
\end{aligned}
\end{equation*}
Before proving stability properties for the discrete bilinear form $[\cdot,\cdot]_{e,E}$,
we extend the inverse inequalities involving edge and face virtual element functions
in Lemma~$5.3$ of Ref.~\cite{beiraolourenco2020} to the general order case.
Such estimates are critical in the following.
\begin{lem}
The following inverse inequalities  hold true:
\begin{align} \label{newinver_3dface}
&\|\textbf{v}_{h}\|_{E} \lesssim h_{E}^{-1}   \|\textbf{v}_{h}\|_{-1,E} && \forall \textbf{v}_{h}\in \textbf{V}_{k-1}^{f}(E),\\
  \label{newinver_2dedge}
&\|\textbf{v}^{F}_{h}\|_{F}\lesssim h_{F}^{-\frac12}   \|\textbf{v}^{F}_{h}\|_{-\frac12,  F} && \forall \textbf{v}_{h}\in \textbf{V}_{k}^{e}(E), \ \forall F \subseteq \partial E.
 \end{align}
 \end{lem}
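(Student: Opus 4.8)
The plan is to prove both inverse inequalities by the same device used earlier for \eqref{inversediv2dedge}, \eqref{inverserot}, and \eqref{3dcurlinverseesti}: namely, to bound a virtual element function by testing it against a carefully chosen bubble-weighted polynomial, exploiting that the relevant \emph{differential image} of the function is a genuine polynomial of controlled degree. The key structural facts I would invoke are that, for $\mathbf v_h \in \textbf{V}_{k-1}^f(E)$, both $\textrm{div}\, \mathbf v_h \in \mathbb P_{k-1}(E)$ and $\textbf{curl}\, \mathbf v_h \in (\mathbb P_k(E))^3$ are polynomials, and that the $\textbf L^2$ projection $\mathbf\Pi^{0,E}_{k+1}\mathbf v_h$ is computable; and analogously, for $\mathbf v_h \in \textbf{V}_k^e(E)$, the tangential trace $\mathbf v_h^F$ lies in the two-dimensional space $\textbf{V}_k^e(F)$ on each face.

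For \eqref{newinver_3dface}, I would first reduce via a Helmholtz-type splitting to the polynomial and rotational parts. More directly, since $\|\cdot\|_{-1,E}$ is the natural weaker norm, the strategy is to use the auxiliary bound \eqref{3dfacePRIORIB} of Lemma~\ref{lem3dfacebound}, rewriting each term on its right-hand side in terms of the $\|\cdot\|_{-1,E}$ norm. The rotor/divergence terms $h_E \|\textrm{div}\,\mathbf v_h\|_E$ and the $\textbf{x}_E\wedge\textbf{p}_k$ moment term are all $L^2$-pairings against polynomials; each such pairing $\int_E \mathbf v_h \cdot \mathbf p$ can be bounded by $\|\mathbf v_h\|_{-1,E}\,\|\mathbf p\|_{1,E} \lesssim h_E^{-1}\|\mathbf v_h\|_{-1,E}\,\|\mathbf p\|_E$ using the polynomial inverse inequality $|\mathbf p|_{1,E}\lesssim h_E^{-1}\|\mathbf p\|_E$ from \eqref{Polynomialinverseestimates}. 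The boundary trace term $h_E^{1/2}\|\mathbf v_h\cdot\mathbf n_{\partial E}\|_{\partial E}$ is the delicate one, but since $\mathbf v_h\cdot\mathbf n_F$ is a piecewise polynomial on $\partial E$, one can pass through \eqref{Traceinequality3} and then the scaled dual norm \eqref{Polynomialinverseestimates1} to absorb it. Combining and dividing out one power of $\|\mathbf v_h\|_E$ yields $\|\mathbf v_h\|_E \lesssim h_E^{-1}\|\mathbf v_h\|_{-1,E}$.

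For \eqref{newinver_2dedge}, I would work face by face: fix $F\subseteq\partial E$ and note $\mathbf v_h^F \in \textbf{V}_k^e(F)$, so the two-dimensional auxiliary bound \eqref{prioribound2dedge} applies. I would then mimic the above, re-expressing the three terms of \eqref{prioribound2dedge} (the $\textrm{rot}_F$ term, the tangential boundary term, and the $\textbf{x}_F p_k$ moment term) against the $H^{-1/2}(F)$ dual norm rather than $H^{-1}$; the half-power scaling $h_F^{-1/2}$ matches the negative half-norm exactly, and one uses \eqref{Polynomialinverseestimates1}, \eqref{Traceinequality2}, and the scaled trace/inverse machinery, together with the in-plane inverse inequality \eqref{inverserot} to control $\|\textrm{rot}_F\,\mathbf v_h^F\|_F$.

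\textbf{Main obstacle.} The hard part will be handling the boundary (trace) contributions in the correct negative-order norm. Bounding an $L^2$-pairing of a VEM function against a polynomial by the $\|\cdot\|_{-1,E}$ (resp.\ $\|\cdot\|_{-1/2,F}$) norm is straightforward by duality, but the terms in \eqref{3dfacePRIORIB} and \eqref{prioribound2dedge} that live on $\partial E$ or $\partial F$ are not of pure $L^2$-pairing type, and rewriting them so that the scaled negative norm appears with the correct power of $h$ requires threading the trace inequalities \eqref{Traceinequality3} and \eqref{Traceinequality2} through the piecewise-polynomial structure of the normal/tangential traces and then invoking \eqref{Polynomialinverseestimates1}. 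Ensuring the bubble-function argument (as in \eqref{inversediv2dedge}) is correctly adapted so that all the $h$-powers balance is where the technical care concentrates.
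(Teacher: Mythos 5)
Your overall plan---start from the a priori bounds \eqref{3dfacePRIORIB} and \eqref{prioribound2dedge} and re-express each right-hand-side term in the negative norm---breaks down on two of the three terms, and the paper's proof is structured precisely to avoid these obstructions. First, the moment term: the bound $\int_E \mathbf v_h\cdot(\mathbf x_E\wedge\mathbf p_k)\le\|\mathbf v_h\|_{-1,E}\,\|\mathbf x_E\wedge\mathbf p_k\|_{1,E}$ is not available, because the norm $\|\cdot\|_{-1,E}$ used here (and needed downstream in Theorem~\ref{thm3dstaedge}) is the supremum over $\bm\xi\in\textbf H^1_0(E)$, and the polynomial $\mathbf x_E\wedge\mathbf p_k$ does not vanish on $\partial E$. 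One cannot repair this by inserting a bubble into the pairing either, since $\mathbf v_h$ is not a polynomial and \eqref{bubblefuncionproer} does not apply to it. Second, the boundary term: passing $h_E^{1/2}\|\mathbf v_h\cdot\mathbf n_{\partial E}\|_{\partial E}$ through \eqref{Polynomialinverseestimates1} and \eqref{Traceinequality3} returns $\|\mathbf v_h\|_E+h_E\|\mathrm{div}\,\mathbf v_h\|_E$ with a generic $O(1)$ constant, so the full $L^2$ norm reappears on the right-hand side with no small factor and cannot be ``absorbed''; the inequality becomes circular.

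The paper circumvents both problems by not using \eqref{3dfacePRIORIB} at all. It splits $\mathbf v_h=\mathbf x_E\wedge\mathbf q_k+\bm\nabla\psi$ with $\textbf{curl}(\mathbf x_E\wedge\mathbf q_k)=\textbf{curl}\,\mathbf v_h$, estimates the gradient part directly via the multiplicative trace inequality \eqref{Traceinequality4last} applied to the scalar potential $\psi$ (the half-powers there are what replace your absorption step) together with the Stokes inf-sup condition to convert $\|\psi\|_E$ into $\|\bm\nabla\psi\|_{-1,E}$; and for the polynomial part it builds an admissible test function $\textbf{curl}\,\mathbf w_E$ with $\mathbf w_E$ a squared-bubble-weighted polynomial, so that $\textbf{curl}\,\mathbf w_E\in\textbf H^1_0(E)$, and exploits the orthogonality $(\bm\nabla\psi,\textbf{curl}\,\mathbf w_E)_E=0$ when inserting $\mathbf z+\alpha\,\textbf{curl}\,\mathbf w_E$ into the supremum defining $\|\mathbf v_h\|_{-1,E}$. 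Finally, for \eqref{newinver_2dedge} the paper does not attempt your direct $H^{-1/2}(F)$ argument (which would face the same difficulties at half-integer order); it first deduces the 2D $H^{-1}$ inequality from the 3D one by a rotation argument and then obtains the $H^{-1/2}$ version by interpolating between $L^2$ and $H^{-1}$. You would need to import these three ideas---the Helmholtz-type splitting with the Stokes inf-sup, the bubble-weighted curl test function, and the final space-interpolation step---for your outline to become a proof.
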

  \begin{proof}
  We first prove \eqref{newinver_3dface}.
Recalling \eqref{standardinterpolation2_proof13dface}, for each $ \textbf{v}_{h}\in \textbf{V}_{k-1}^{f}(E)$,  there exists   $\textbf{q}_{k}\in (\mathbb{P}_{k}(E))^3$ with $\textrm{div}\hspace{0.05cm}\textbf{q}_{k}=0$ such that
\begin{align}   \label{standardinterpolation2_proof13dfacelast}
\textbf{curl} \hspace{0.05cm} ( \textbf{v}_{h} \!-\! \textbf{x}_{E} \!\wedge\! \textbf{q}_{k}) \!=\! \textbf{0},
\| \textbf{x}_{E} \!\wedge\! \textbf{q}_{k} \|_{E}
\!\lesssim\! h_{E}\|\textbf{curl} (\textbf{x}_{E} \!\wedge\! \textbf{q}_{k}) \|_{E}
\!\lesssim\! h_{E}\|\textbf{curl} \hspace{0.05cm}  \textbf{v}_{h} \|_{E}.
\end{align}
Moreover, the following polynomial inequality holds true:
\begin{align} \label{standardinterpolation2_proof13dfacelastpoinine}
\| \textbf{x}_{E}\wedge \textbf{q}_{k}\|_{-1,E}
= \sup_{0\neq \textbf{v}\in \textbf{H}_{0}^{1}(E)} \frac{(\textbf{x}_{E} \wedge \textbf{q}_{k}, \textbf{v})_{E}}{|\textbf{v}|_{1,E}}
\overset{\eqref{poincarefridine}}{\lesssim}
h_{E}\| \textbf{x}_{E}\wedge \textbf{q}_{k}\|_{E}.
\end{align}

\noindent \textbf{Part~$1$: proving the auxiliary bound~\eqref{standardinterpolation1_proof93dfacelastderive3} below.}
From~\eqref{standardinterpolation2_proof13dfacelast},
there exists a function $\psi\in H^{1}(E)\setminus\mathbb{R}$ such that
\begin{align} \label{sec5lemma1}
\textbf{v}_{h}-\textbf{x}_{E}\wedge \textbf{q}_{k}= \bm{\nabla} \psi.
\end{align}
Such a function is defined by
\begin{align*}
 \Delta \psi= \textrm{div} \hspace{0.05cm}(\textbf{v}_{h}-\textbf{x}_{E}\wedge \textbf{q}_{k}) \ \textrm{in}\ E,\ \ \bm{\nabla} \psi\cdot \textbf{n}_{\partial E}= (\textbf{v}_{h}-\textbf{x}_{E}\wedge \textbf{q}_{k}) \cdot \textbf{n}_{\partial E}\ \textrm{on}\ \partial E.
\end{align*}
Observing that~$\nabla \psi |_{\partial E} \cdot \textbf{n}_{\partial E}$ is a piecewise polynomial, we have
\begin{align*}
& \|\bm{\nabla}  \psi\|^{2}_{E}  
\!\overset{\text{IBP}}{=}\! 
\int_{\partial E} \bm{\nabla}  \psi\cdot\textbf{n}_{\partial E} \psi 
\!-\! \int_{  E} \psi\Delta \psi
\!\lesssim\! \|\bm{\nabla}  \psi\cdot\textbf{n}_{\partial E}\|_{\partial E}\|\psi \|_{\partial E} + \|  \Delta \psi\|_{E}\|  \psi\|_{E} \\
& \overset{\eqref{Polynomialinverseestimates1}, \eqref{Traceinequality3}}{\lesssim}
h_{E}^{-\frac12}\left(\|\bm{\nabla}  \psi \|_{E}
+ h_{E}\| \Delta \psi\|_{E}\right) \|\psi \|_{\partial E}
+ \| \Delta \psi\|_{E}\|  \psi\|_{E} \\
& \overset{\eqref{Polynomialinverseestimates}}{\lesssim}
h_{E}^{-\frac12}\left(\|\bm{\nabla} \psi \|_{E}
+\| \Delta \psi\|_{-1,E}\right) \|\psi \|_{\partial E} 
+ h_{E}^{-1}\| \Delta \psi\|_{-1,E}\|  \psi\|_{E} \\
& \!\lesssim\! \left(\!h_{E}^{\!-\frac12}   \|\psi \|_{\partial E}
\!+\! h_{E}^{\!-1} \| \psi\|_{E}  \right)\|\bm{\nabla} \psi \|_{E}
\!\!\!\overset{\eqref{Traceinequality4last}}{\lesssim}\!\!\!
\left(h_{E}^{\!-\frac12}   \|\psi \|^{\frac12}_{E} \|\bm{\nabla} \psi \|^{\!\frac12}_{E}
\!+\! h_{E}^{\!-1}  \| \psi\|_{E}  \right)\|\bm{\nabla} \psi \|_{E}.
\end{align*}
Also using~\eqref{poincarefridine}, this implies
\begin{eqnarray}
\begin{aligned} \label{standardinterpolation1_proof93dfacelastderive}
\!\!\!\!\!\!\!\!\!\! \|\bm{\nabla}  \psi\| _{E}
\! \lesssim\! h_{E}^{\!-\frac12}   \|\psi \|^{\!\frac12}_{E} \|\bm{\nabla} \psi \|^{\!\frac12}_{E}
\!+\! h_{E}^{\!-1}  \| \psi\|^{\!\frac12}_{E} h_{E}^{\!\frac12} \|\bm{\nabla} \psi \|^{\!\frac12}_{E}
\!=\! h_{E}^{\!-\frac12} \|\psi \|^{\!\frac12}_{E}\|\bm{\nabla} \psi \|^{\!\frac12}_{E}.
\end{aligned}
\end{eqnarray}
Further, by using the continuous inf-sup condition of the Stokes problem,
see, e.g., Section 8.2.1 in Ref.~\cite{boffibookmixed},
we have the following upper bound on~$\|\psi\|_{E}$:
\begin{eqnarray}
\begin{aligned} \label{standardinterpolation1_proof93dfacelastderive1}
\|\psi\| _{E}
\lesssim  \sup_{\bm{\xi}\in \textbf{H}^{1}_{0}(E)}\frac{(\psi, \textrm{div}\hspace{0.05cm}\bm{\xi})_{E}}{|\bm{\xi}|_{1,E}}
= \sup_{\bm{\xi}\in \textbf{H}^{1}_{0}(E)}\frac{(\bm{\nabla}\psi, \bm{\xi})_{E}}{|\bm{\xi}|_{1,E}}
= \|\bm{\nabla}\psi\|_{-1,E}.
\end{aligned}
\end{eqnarray}
Combining~\eqref{standardinterpolation1_proof93dfacelastderive} and~\eqref{standardinterpolation1_proof93dfacelastderive1}, we arrive at
\begin{eqnarray}
\begin{aligned} \label{standardinterpolation1_proof93dfacelastderive2}
\|\bm{\nabla}  \psi\| _{E}
\lesssim h_{E}^{-1}    \|\bm{\nabla}\psi\|_{-1,E}.
\end{aligned}
\end{eqnarray}
Using~\eqref{sec5lemma1}, \eqref{standardinterpolation1_proof93dfacelastderive2} yields
\begin{eqnarray}
\begin{aligned} \label{standardinterpolation1_proof93dfacelastderive3}
\|\textbf{v}_{h}-\textbf{x}_{E}\wedge \textbf{q}_{k}\| _{E}
\lesssim h_{E}^{-1}    \|\textbf{v}_{h}-\textbf{x}_{E}\wedge \textbf{q}_{k}\|_{-1,E}.
\end{aligned}
\end{eqnarray}

\noindent \textbf{Part~$2$: proving~\eqref{newinver_3dface}.}
We  introduce the auxiliary function $\textbf{z}\in \textbf{H}_{0}^{1}(E)$ that realizes the supremum in the definition of
$\|\textbf{v}_{h}-\textbf{x}_{E}\wedge \textbf{q}_{k}\|_{-1,E}$,
i.e., let~$\textbf z$ be the function in~$\textbf{H}_{0}^{1}(E)$ such that 
\begin{eqnarray}
\begin{aligned} \label{sec5lemma1PROOF3}
\|\textbf{v}_{h}-\textbf{x}_{E}\wedge \textbf{q}_{k}\|_{-1,E}
\lesssim   (\textbf{v}_{h}-\textbf{x}_{E}\wedge \textbf{q}_{k},\textbf{z})_{E}    \ \ \textrm{with}\ \ |\textbf{z}|_{1,E}=1.
   \end{aligned}
\end{eqnarray}
As in Remark~\ref{rem21}, we split~$E$ into shape-regular tetrahedra~$\widetilde{\mathcal{T}}_{h}$.
Define~$\psi_{E}$ as the square of the piecewise   quartic bubble function over $\widetilde{\mathcal{T}}_{h}$, scaled such that $\| \psi_{E}\|_{L^{\infty}(E)}=1$.  
We take $\widetilde{\textbf{w}}_{E}= \psi_{E} \textbf{curl}\hspace{0.05cm}(\textbf{x}_{E}\wedge \textbf{q}_{k})$
and defined its scaled version
$\textbf{w}_{E} = \widetilde{\textbf{w}}_{E}/ |\textbf{curl}\hspace{0.05cm}\widetilde{\textbf{w}}_{E}|_{1, E}$.
We have $\textbf{w}_{E} \in \textbf{H}_{0}^{2}(E)$ and $|\textbf{curl}\hspace{0.05cm} \textbf{w}_{E}|_{1, E}=1$.
Furthermore, by~\eqref{sec5lemma1}, we get 
\begin{align}
\label{ortholast}
(\textbf{v}_{h}-\textbf{x}_{E}\wedge \textbf{q}_{k},\textbf{curl}\hspace{0.05cm}\textbf{w}_{E} )_{E}=(\bm{\nabla} \psi,\textbf{curl}\hspace{0.05cm}\textbf{w}_{E})_{E}=0.
\end{align}
We write
\begin{eqnarray}
\begin{aligned}[b] \label{sec5lemma1PROOF4}
&  (\textbf{x}_{E}\wedge \textbf{q}_{k},\textbf{curl}\hspace{0.05cm}\textbf{w}_{E})_{E}
\overset{\text{IBP}}{=}
(\textbf{curl}\hspace{0.05cm}(\textbf{x}_{E}\wedge \textbf{q}_{k}),\textbf{w}_{E})_{E}\\
&  =  \frac{(\textbf{curl}\hspace{0.05cm}(\textbf{x}_{E}\wedge \textbf{q}_{k}), \psi_{E} \textbf{curl}\hspace{0.05cm}(\textbf{x}_{E}\wedge \textbf{q}_{k}))_{E}}{|\textbf{curl}\hspace{0.05cm}(\psi_{E} \textbf{curl}\hspace{0.05cm}(\textbf{x}_{E}\wedge \textbf{q}_{k}))|_{1, E}}  
\!\!\overset{\eqref{Polynomialinverseestimates}}{\geq}\!\! 
\frac{(\textbf{curl}\hspace{0.05cm}(\textbf{x}_{E}\wedge \textbf{q}_{k}), \psi_{E} \textbf{curl}\hspace{0.05cm}(\textbf{x}_{E}\wedge \textbf{q}_{k}))_{E}}{h_{E}^{-1}\|\textbf{curl}\hspace{0.05cm}(\psi_{E} \textbf{curl}\hspace{0.05cm}(\textbf{x}_{E}\wedge \textbf{q}_{k}))\|_{E}}  \\
& \overset{\eqref{Polynomialinverseestimates}, \eqref{bubblefuncionproer}}{\geq}
\frac{ C \|\textbf{curl}\hspace{0.05cm}(\textbf{x}_{E}\wedge \textbf{q}_{k})\|^{2}_{E}}{h_{E}^{-2}\|  \textbf{curl}\hspace{0.05cm}(\textbf{x}_{E}\wedge \textbf{q}_{k}) \|_{E}} =  C h^{2}_{E} \|\textbf{curl}\hspace{0.05cm}(\textbf{x}_{E}\wedge \textbf{q}_{k})\|_{E} \\
& \overset{\eqref{standardinterpolation2_proof13dfacelast}}{\geq}
C  h_{E}  \| \textbf{x}_{E}\wedge \textbf{q}_{k}\|_{E}
\overset{\eqref{standardinterpolation2_proof13dfacelastpoinine}}{\geq} C_1\| \textbf{x}_{E}\wedge \textbf{q}_{k}\|_{-1,E}.
   \end{aligned}
\end{eqnarray}
From the definition of negative norm~$\|\cdot\|_{-1,E}$,
the fact that $\textbf{curl}\hspace{0.05cm}\textbf{w}_{E}\in \textbf{H}_{0}^{1}(E)$, and~\eqref{ortholast}, we can write
\begin{eqnarray}
 \begin{aligned}
 \label{sec5lemma1PROOF2}
  \|\textbf{v}_{h}\|_{-1,E}&=\sup_{\bm{\xi}\in \textbf{H}_{0}^{1}(E)}\frac{(\textbf{v}_{h},\bm{\xi})_{E}}{|\bm{\xi}|_{1,E}}=\sup_{\bm{\xi}\in \textbf{H}_{0}^{1}(E)}\frac{(\textbf{v}_{h}-\textbf{x}_{E}\wedge \textbf{q}_{k},\bm{\xi})_{E}+(\textbf{x}_{E}\wedge \textbf{q}_{k},\bm{\xi})_{E}}{|\bm{\xi}|_{1,E}}\\
  &\geq  \frac{(\textbf{v}_{h}-\textbf{x}_{E}\wedge \textbf{q}_{k},\textbf{z}+\alpha\textbf{curl}\hspace{0.05cm} \textbf{w}_{E})_{E}+(\textbf{x}_{E}\wedge \textbf{q}_{k},\textbf{z}+\alpha\textbf{curl}\hspace{0.05cm} \textbf{w}_{E})_{E}}{|\textbf{z}+\alpha\textbf{curl}\hspace{0.05cm} \textbf{w}_{E}|_{1,E}}\\
  &\geq \frac{(\textbf{v}_{h}-\textbf{x}_{E}\wedge \textbf{q}_{k},\textbf{z})_{E}+(\textbf{x}_{E}\wedge \textbf{q}_{k},\textbf{z})_{E}+(\textbf{x}_{E}\wedge \textbf{q}_{k},\alpha\textbf{curl}\hspace{0.05cm} \textbf{w}_{E})_{E}}{1+\alpha },
\end{aligned}
\end{eqnarray}
where~$\alpha$ is a positive constant, which we shall fix in what follows.
Next, we obtain
\begin{align*}
\|\textbf{v}_{h}\|_{-1,E}
& \!\!\!\!\!\!\!\!\!\!\overset{\eqref{sec5lemma1PROOF2}, \eqref{sec5lemma1PROOF3}, \eqref{sec5lemma1PROOF4}}{\geq}\!\!
\frac{C\|\textbf{v}_{h}-\textbf{x}_{E}\wedge \textbf{q}_{k}\|_{-1,E}-\|\textbf{x}_{E}\wedge \textbf{q}_{k}\|_{-1,E}+C_1\alpha \| \textbf{x}_{E}\wedge \textbf{q}_{k}\|_{-1,E}}{1+\alpha}\\
& =\frac{C}{1+\alpha} \|\textbf{v}_{h}-\textbf{x}_{E}\wedge \textbf{q}_{k}\|_{-1,E}+ \frac{C_1\alpha-1}{1+\alpha}\|\textbf{x}_{E}\wedge \textbf{q}_{k}\|_{-1,E}\\
& \overset{\eqref{standardinterpolation1_proof93dfacelastderive3},  \eqref{Polynomialinverseestimates}}{\geq}
Ch_{E}(\|\textbf{v}_{h}-\textbf{x}_{E}\wedge \textbf{q}_{k}\|_{E}+\|\textbf{x}_{E}\wedge \textbf{q}_{k}\|_{E})\geq Ch_{E} \|\textbf{v}_{h}\|_{E}.
\end{align*}
where we have fixed $\alpha=2/C_1$. This completes the proof of~\eqref{newinver_3dface}.

\noindent \textbf{Part~$3$: proving~\eqref{newinver_2dedge}.}
We first recall that $\textbf{v}_{h}^{F}$ belongs to $\textbf{V}_{k}^{e}(F)$ for each $\textbf{v}_{h}\in \textbf{V}_{k}^{e}(E)$.
Next, we observe that the inverse estimate~\eqref{newinver_3dface} for functions in~$\textbf{V}_{k-1}^{f}(E)$,
implies an 2D analogous counterpart on the space~$\textbf{V}_{k}^{f}(F)$:
\begin{align*}
\|\textbf{v}_{h}\|_{F} 
\lesssim h_{F}^{-1}   \|\textbf{v}_{h}\|_{-1,F} \hspace{0.4cm }\forall \textbf{v}_{h}\in \textbf{V}_{k}^{f}(F).
 \end{align*}
The counterpart for the 2D edge virtual element space~$\textbf{V}_{k}^{e}(F)$
is obtained via a ``rotation'' argument as in Section~\ref{sec4faceVEM}:
\begin{align*}
\|\textbf{v}_{h}\|_{F}
\lesssim h_{F}^{-1}   \|\textbf{v}_{h}\|_{-1,F} \hspace{0.4cm} \forall \textbf{v}_{h}\in \textbf{V}_{k}^{e}(F).
 \end{align*}
Hence, we arrive at
\begin{align*}
\|\textbf{v}^{F}_{h}\|_{F}
\lesssim h_{F}^{-1}   \|\textbf{v}^{F}_{h}\|_{-1,  F} \hspace{0.4cm} \forall \textbf{v}_{h}\in \textbf{V}_{k}^{e}(E),\  \forall F\subseteq \partial E.
\end{align*}
The assertion follows from classical results in space interpolation theory\cite{2007An}.
\end{proof}

With these tools at hand,  we can prove  the following stability property.
\begin{thm}
\label{thm3dstaedge}
The following stability bounds are valid:
\begin{align} \label{bilinearform-3dedge}
C_{1}\|\textbf{v}_{h}\|_{E}^{2}
\leq  S_{e}^{E}(\textbf{v}_{h}, {\textbf{v}_{h}})
\leq  C_{2}\|\textbf{v}_{h}\|_{E}^{2} \hspace{0.3cm} 
\forall \textbf{v}_{h}  \in \textbf{V}_{k}^{e}(E) .
\end{align}
\end{thm}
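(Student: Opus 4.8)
The plan is to mirror the two-dimensional proof of Theorem~\ref{stab2dedge}, now relying on the auxiliary bound of Lemma~\ref{lem3dedgebound} (inequality~\eqref{3dedgePRIORIB}) for the lower bound and on the newly established inverse inequalities~\eqref{newinver_3dface}--\eqref{newinver_2dedge} for the upper bound. I would first unpack the definition of $S_{e}^{E}(\textbf{v}_{h},\textbf{v}_{h})$, observing that it is a sum of three nonnegative contributions: the boundary edge-tangential term $\sum_{F}h_{F}^{2}\|\textbf{v}_{h}\cdot\textbf{t}_{\partial F}\|_{\partial F}^{2}$, the face projection term $\sum_{F}h_{F}\|\bm{\Pi}_{k+1}^{0,F}\textbf{v}_{h}^{F}\|_{F}^{2}$, and the curl term $h_{E}^{2}S_{f}^{E}(\textbf{curl}\,\textbf{v}_{h},\textbf{curl}\,\textbf{v}_{h})$. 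For the last of these, since $\textbf{curl}\,\textbf{v}_{h}\in\textbf{V}_{k-1}^{f}(E)$ (the complex is exact), Theorem~\ref{them3dface} gives $S_{f}^{E}(\textbf{curl}\,\textbf{v}_{h},\textbf{curl}\,\textbf{v}_{h})\approx\|\textbf{curl}\,\textbf{v}_{h}\|_{E}^{2}$, so the curl term is equivalent to $h_{E}^{2}\|\textbf{curl}\,\textbf{v}_{h}\|_{E}^{2}$.

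For the lower bound, I would apply~\eqref{3dedgePRIORIB} to bound $\|\textbf{v}_{h}\|_{E}$ from above by the six terms appearing there, and then bound each of those terms by a contribution already present in $S_{e}^{E}$. The edge term $h_{F}\|\textbf{v}_{h}^{F}\cdot\textbf{t}_{\partial F}\|_{\partial F}$ is controlled directly. The face supremum term $h_{F}^{1/2}\sup_{p_{k}}\int_{F}\textbf{v}_{h}^{F}\cdot\textbf{x}_{F}^{F}p_{k}/\|\textbf{x}_{F}^{F}p_{k}\|_{F}$ is controlled by $h_{F}^{1/2}\|\bm{\Pi}_{k+1}^{0,F}\textbf{v}_{h}^{F}\|_{F}$ via a Cauchy--Schwarz argument exactly as in 2D, since $\textbf{x}_{F}^{F}p_{k}\in(\mathbb{P}_{k+1}(F))^{2}$ makes the integral equal to $\int_{F}\bm{\Pi}_{k+1}^{0,F}\textbf{v}_{h}^{F}\cdot\textbf{x}_{F}^{F}p_{k}$. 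The volume curl term $\|\textbf{curl}\,\textbf{v}_{h}\cdot\textbf{n}_{F}\|_{F}$, the interior curl supremum, and the interior $\textbf{x}_{E}p_{k-1}$ supremum must all be absorbed into $h_{E}\|\textbf{curl}\,\textbf{v}_{h}\|_{E}$ and hence into the curl stabilization term; here I would use $h_{F}\approx h_{E}$, a trace/inverse bound to pass from $\|\textbf{curl}\,\textbf{v}_{h}\cdot\textbf{n}_{F}\|_{F}$ to $h_{E}^{-1/2}\|\textbf{curl}\,\textbf{v}_{h}\|_{E}$ using that $\textbf{curl}\,\textbf{v}_{h}\in\textbf{V}_{k-1}^{f}(E)$ together with~\eqref{3dfacePRIORIB} for that face space, and a direct Cauchy--Schwarz estimate for the two interior suprema.

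For the upper bound I would bound each of the three defining terms of $S_{e}^{E}$ by $\|\textbf{v}_{h}\|_{E}^{2}$. The boundary term $h_{F}^{2}\|\textbf{v}_{h}\cdot\textbf{t}_{\partial F}\|_{\partial F}^{2}$ and the face-projection term $h_{F}\|\bm{\Pi}_{k+1}^{0,F}\textbf{v}_{h}^{F}\|_{F}^{2}$ are handled via the face-to-element passage using~\eqref{newinver_2dedge}, namely $\|\textbf{v}_{h}^{F}\|_{F}\lesssim h_{F}^{-1/2}\|\textbf{v}_{h}^{F}\|_{-1/2,F}$, combined with the trace inequalities~\eqref{Traceinequality2}, \eqref{Polynomialinverseestimates1}, and the $L^{2}$-stability of $\bm{\Pi}_{k+1}^{0,F}$, tracing the 2D argument in Theorem~\ref{stab2dedge} verbatim but now summing over the faces $F\subseteq\partial E$ (whose number is uniformly bounded). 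The curl term $h_{E}^{2}S_{f}^{E}(\textbf{curl}\,\textbf{v}_{h},\textbf{curl}\,\textbf{v}_{h})\approx h_{E}^{2}\|\textbf{curl}\,\textbf{v}_{h}\|_{E}^{2}$ is controlled by the inverse estimate $\|\textbf{curl}\,\textbf{v}_{h}\|_{E}\lesssim h_{E}^{-1}\|\textbf{v}_{h}\|_{E}$ established in the proof of Lemma~\ref{lem3dedgebound}. The main obstacle is the lower bound, specifically the careful bookkeeping needed to absorb the three genuinely interior/volume terms of~\eqref{3dedgePRIORIB} (the two $E$-suprema and the normal-curl face term) into the single curl stabilization contribution $h_{E}^{2}\|\textbf{curl}\,\textbf{v}_{h}\|_{E}^{2}$, since these require invoking the face space structure of $\textbf{curl}\,\textbf{v}_{h}$ and the equivalence from Theorem~\ref{them3dface} rather than a direct termwise match as in the 2D setting.
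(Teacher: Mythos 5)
Your overall strategy---lower bound via Lemma~\ref{lem3dedgebound}, upper bound via the inverse inequalities \eqref{newinver_3dface}--\eqref{newinver_2dedge} combined with \eqref{Polynomialinverseestimates1}, \eqref{Traceinequality2}, \eqref{Traceinequality4_1}---is the paper's strategy, and your upper bound is essentially the paper's argument. One misattribution there: the inverse estimate $\|\textbf{curl}\hspace{0.05cm}\textbf{v}_h\|_E\lesssim h_E^{-1}\|\textbf{v}_h\|_E$ on $\textbf{V}_k^e(E)$ is \emph{not} established in the proof of Lemma~\ref{lem3dedgebound} (only the divergence version is; \eqref{3dcurlinverseesti} concerns the face space, where the curl is polynomial). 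Since $\textbf{curl}\hspace{0.05cm}\textbf{v}_h$ is itself a non-polynomial virtual function, the paper obtains the needed control through the chain $h_E\|\textbf{curl}\hspace{0.05cm}\textbf{v}_h\|_E\lesssim\|\textbf{curl}\hspace{0.05cm}\textbf{v}_h\|_{-1,E}\lesssim\|\textbf{v}_h\|_E$, using \eqref{Traceinequality4_1}, \eqref{newinver_3dface}, and an integration by parts; you have the ingredients, but the bubble-type argument you implicitly invoke does not apply here.

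The genuine gaps are in your lower bound. First, you propose to pass from $\|\textbf{curl}\hspace{0.05cm}\textbf{v}_h\cdot\textbf{n}_F\|_F$ to $h_E^{-1/2}\|\textbf{curl}\hspace{0.05cm}\textbf{v}_h\|_E$ using \eqref{3dfacePRIORIB}; that inequality goes the wrong way (it bounds the volume norm \emph{by} the normal trace), and an $L^2$ trace inverse estimate for functions of $\textbf{V}_{k-1}^f(E)$ is nowhere available in the paper. It is also unnecessary: $h_E\sum_F\|\textbf{curl}\hspace{0.05cm}\textbf{v}_h\cdot\textbf{n}_F\|_F^2$ is literally the first addend of $S_f^E(\textbf{curl}\hspace{0.05cm}\textbf{v}_h,\textbf{curl}\hspace{0.05cm}\textbf{v}_h)$ in \eqref{stab-face-3D}, so $h_F^{3/2}\|\textbf{curl}\hspace{0.05cm}\textbf{v}_h\cdot\textbf{n}_F\|_F\lesssim h_E\,S_f^E(\textbf{curl}\hspace{0.05cm}\textbf{v}_h,\textbf{curl}\hspace{0.05cm}\textbf{v}_h)^{1/2}$ by definition; this is exactly why the paper keeps $S_f^E(\textbf{curl}\hspace{0.05cm}\cdot,\textbf{curl}\hspace{0.05cm}\cdot)$ rather than replacing it, as you do from the outset, by the equivalent quantity $\|\textbf{curl}\hspace{0.05cm}\cdot\|_E^2$, which discards the face-trace information you then cannot recover. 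Second, the volume supremum $\sup_{p_{k-1}}\int_E\textbf{v}_h\cdot\textbf{x}_Ep_{k-1}/\|\textbf{x}_Ep_{k-1}\|_E$ cannot be ``absorbed into $h_E\|\textbf{curl}\hspace{0.05cm}\textbf{v}_h\|_E$'': Cauchy--Schwarz bounds it by $\|\mathbf{\Pi}_k^{0,E}\textbf{v}_h\|_E$, which carries no curl information (consider $\textbf{v}_h=\bm{\nabla}\psi$ with $\psi=0$ on $\partial E$ and $\Delta\psi\in\mathbb{P}_{k-1}(E)$: the curl and all tangential traces vanish while this supremum does not). Be aware that the paper's own displayed chain silently drops this term in its second inequality, so you have put your finger on a delicate point rather than on something the paper resolves; but the justification you offer for it does not work.
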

\begin{proof}
The lower bound in \eqref{bilinearform-3dedge} is proved as follows:
\begin{equation*}
 \begin{aligned}
 \label{lem3dedgeboundapply}
&\|\textbf{v}_{h}\|_{E}
\!\overset{\eqref{3dedgePRIORIB}}{\lesssim}\!
\!\! \sum_{F\subseteq \partial E}\!\! \left(h_{F}^{\frac32} \|\textbf{curl}\hspace{0.05cm} \textbf{v}_{h} \! \cdot \! \textbf{n}_{F} \|_{F} \!+\!  h_{F}   \|\textbf{v}_{h}^{F} \!\cdot\! \textbf{t}_{\partial F} \|_{\partial F}
+ \!\!\!\!\!\!\sup_{p_{k}\in \mathbb{P}_{k}(F)}\!\!\!\!\!\!
\frac{ h_{F}^{\frac12}\int_{F}  \textbf{v}^{F}_{h} \cdot \textbf{x}^{F}_{F} p_{k}}{\|\textbf{x}^{F}_{F} p_{k}\|_{F}}\right)\\
&\hspace{0.5cm} + \sup_{\textbf{p}_{k}\in (\mathbb{P}_{k}(E))^3}
\frac{h_{E} \int_{E} \textbf{curl}  \hspace{0.05cm}  \textbf{v}_{h}\cdot \textbf{x}_{E}\wedge \textbf{p}_{k}}{\|\textbf{x}_{E}\wedge \textbf{p}_{k}\|_{E}}  +\sup_{p_{k-1}\in \mathbb{P}_{k-1}(E)} \frac{\int_{E}  \textbf{v}_{h}   \cdot\textbf{x}_{E}p_{k-1} }{\|\textbf{x}_{E}p_{k-1}\|_{E}}\\
& \!\overset{\eqref{stab-face-3D}}{\lesssim}\! \! h_{E}  S_{f}^{E} (\textbf{curl}\hspace{0.05cm}\textbf{v}_{h},\textbf{curl}\hspace{0.05cm}\textbf{v}_{h})^{\frac12} 
\!\!+\!\!\!\! \sum_{F\subseteq \partial E} \!\!\!
\left(h_{F}   \|\textbf{v}_{h}^{F}\!\!\cdot\!\textbf{t}_{\partial F} \|_{\partial F}
\!+\!   h_{F}^{\frac12} \|\bm{\Pi}_{k+1}^{0,F}\!\! \textbf{v}^{F}_{h}\|_{F} \right) ,
\end{aligned}
\end{equation*}
As for the upper bound in~\eqref{bilinearform-3dedge}, we write
\begin{equation*}
\begin{aligned}
 \label{vhHeldecom73dedgesa}
&\sum_{F\subseteq \partial E}\! \left(\! h_{F}   \|\textbf{v}_{h}^{F}\cdot\textbf{t}_{\partial F} \|_{\partial F} \!+\! h_{F}^{\frac12} \|\bm{\Pi}_{k+1}^{0,F} \textbf{v}^{F}_{h}\|_{F} \!\right)  \!+\! h_{E}  S_{f}^{E}(\textbf{curl}\hspace{0.05cm}\textbf{v}_{h},\textbf{curl}\hspace{0.05cm}\textbf{v}_{h})^{\frac12}  \\
& \overset{\eqref{Polynomialinverseestimates1}, \eqref{bilinearform-3dface}}{\lesssim}
\sum_{F\subseteq \partial E}\left( h_{F}^{\frac12}\| \textbf{v}^{F}_{h}\cdot  \textbf{t}_{\partial F}\|_{-\frac12,\partial F}
+  h_{F}^{\frac12} \|\bm{\Pi}_{k+1}^{0,F} \textbf{v}^{F}_{h}\|_{F} \right) 
\!+\!  h_{E}  \| \textbf{curl}\hspace{0.05cm} \textbf{v}_{h} \|_{E} 
\!+\!  \|  \textbf{v}_{h} \|_{E} \\
& \overset{\eqref{Traceinequality2}}{\lesssim}
\! \sum_{F\subseteq \partial E}\!\! \left(\! h_{F}^{\frac12}\| \textbf{v}^{F}_{h} \|_{F} 
\!+\! h_{F}^{\frac32} \|\textrm{rot}\hspace{0.05cm} \textbf{v}^{F}_{h} \|_{F}
\!+\! h_{F}^{\frac12} \|\bm{\Pi}_{k+1}^{0,F} \textbf{v}^{F}_{h}\|_{F} \!\right)  
\!\!+\! h_{E}  \|\textbf{curl}  \hspace{0.05cm}  \textbf{v}_{h}\|_{E} \!+\! \| \textbf{v}_{h} \|_{E} \\
& \overset{\eqref{inverserot}, \eqref{newinver_2dedge}}{\lesssim}
\| \textbf{v}_{h}\wedge \textbf{n}_{\partial E} \|_{-\frac12,  \partial E} 
\!+\! h_{E}  \| \textbf{curl}\hspace{0.05cm} \textbf{v}_{h} \|_{E}   
\!+\!  \|  \textbf{v}_{h}\|_{E} 
\!\!\!\!\!\!\!\!\overset{\eqref{Traceinequality4_1}, \eqref{newinver_3dface}}{\lesssim}\!\!
\| \textbf{curl}\hspace{0.05cm} \textbf{v}_{h} \|_{-1,E} 
\!+\!  \|  \textbf{v}_{h}\|_{E} \\
& =  \sup_{\bm{\psi}\in \textbf{H}^{1}_{0}(E) } \frac{   (\textbf{curl}\hspace{0.05cm} \textbf{v}_{h},\bm{\psi})_{E}} {|\bm{\psi}|_{1,E}}
+  \| \textbf{v}_{h}\|_{E} 
\overset{\text{IBP}}{\lesssim}
\|\textbf{v}_{h}\|_{E}.
\end{aligned}
\end{equation*}
\end{proof}

\begin{remark} \label{remark:stability-3D-face}
Following the definition of $S^{s,E}_{e}(\cdot, \cdot)$, we can also define the following alternative stabilization for the case of the serendipity edge virtual element space in 3D:
\begin{equation}
 \begin{aligned} \label{added-lore-2}
S^{^{s,E}}_{e}(\textbf{v}_{h}, \textbf{w}_{h})
&=\sum_{F\subseteq \partial E}
\left( h^{2}_{F}(\textbf{v}_{h}\cdot \textbf{t}_{\partial F}, \textbf{w}_{h}\cdot \textbf{t}_{\partial F})_{\partial F}
+ h_{F} (\bm{\Pi}_{S}^{e}\textbf{v}_{h}^{F}, \bm{\Pi}_{S}^{e}\textbf{w}_{h}^{F})_{F} \right)\\
& \qquad \qquad 
+ h_{E}^{2}S_{f}^{E}(\textbf{curl}\hspace{0.05cm}\textbf{v}_{h},\textbf{curl}\hspace{0.05cm}\textbf{w}_{h}).
\end{aligned}
\end{equation}
The advantage of the variant above is that,
if we substitute $(\textbf{I}-\mathbf{\Pi}_{k-1}^{0,E})$ by $(\textbf{I}-\bm{\Pi}_{S}^{e})$ in the stabilization term of the scalar product \eqref{added-lore-ArXiv},
then the second addendum in definition \eqref{added-lore-2} will vanish, thus leading to a lighter form. 
Employing analogous arguments, we can prove the same stability bounds as in Theorem~\ref{thm3dstaedge} also for choice~\eqref{added-lore-2}.
\end{remark}



\section*{Acknowledgements}
The work of J. M. is partially supported
by the China Scholarship Council (No. $202106280167$)
and the Fundamental Research Funds for the Central Universities (No. xzy $022019040$).
L. B. d. V. was partially supported by the italian PRIN 2017
grant “Virtual Element Methods: Analysis and Applications” and the
PRIN 2020 grant “Advanced polyhedral discretisations of heterogeneous
PDEs for multiphysics problems”.
L. M. acknowledges support from the Austrian Science Fund (FWF) project P33477.

{\footnotesize
\bibliography{bibliography}
\bibliographystyle{plain}
}

\end{document}